\def\1{1\!{\rm l}}
\newcommand{\leqa}{\lesssim}
\newcommand{\EM}{\ensuremath}
\newcommand{\al}{\alpha}
\newcommand{\be}{\beta}
\newcommand{\ga}{\gamma}
\newcommand{\la}{\lambda}
\newcommand{\La}{\Lambda}
\newcommand{\si}{\sigma}
\newcommand{\te}{\theta}
\newcommand{\ta}{\tau}
\newcommand{\veps}{\varepsilon}
\newcommand{\vphi}{\varphi}
\newcommand{\cA}{\EM{\mathcal{A}}}
\newcommand{\cC}{\EM{\mathcal{C}}}
\newcommand{\cE}{\EM{\mathcal{E}}}
\newcommand{\cF}{\EM{\mathcal{F}}}
\newcommand{\cN}{\EM{\mathcal{N}}}
\newcommand{\cR}{\EM{\mathcal{R}}}
\newcommand{\psg}{{\langle}}
\newcommand{\psd}{{\rangle}}
\definecolor{blendedblue}{rgb}{0.2,0.2,0.7}
\DeclareMathAlphabet{\mathpzc}{OT1}{pzc}{m}{it}
\newcommand{\RR}{\mathbb{R}}
\newcommand{\CC}{\mathbb{C}}
\newcommand{\mb}{\mathbb{B}}
\newcommand{\mh}{\mathbb{H}}
\newcommand{\N}{\mathbb{N}}
\newcommand{\given}{\,|\,}
\newcommand{\eps}{\varepsilon}
\newcommand{\R}{\mathds{R}}
\newcommand{\bi}{\begin{enumerate}[label=\roman*)]}
\newcommand{\ei}{\end{enumerate}}
\newcommand{\ba}{\begin{array}{rcl}}
\newcommand{\ea}{\end{array}}
\newcommand{\bok}{{\boldsymbol{k}}}
\newcommand*\bigcdot{\mathpalette\bigcdot@{.5}}
\newcommand*\bigcdot@[2]{\mathbin{\vcenter{\hbox{\scalebox{#2}{$\m@th#1\bullet$}}}}}
\newcommand\restr[2]{{
  \left.\kern-\nulldelimiterspace 
  #1 
  \vphantom{\big|} 
  \right|_{#2} 
  }}
\newcommand{\norm}[1]{\left\lVert#1\right\rVert}
\theoremstyle{plain}
\newtheorem{proposition}{Proposition}
\newtheorem{corollary}{Corollary}
\newtheorem{theorem}{Theorem}
\newtheorem{lemma}{Lemma}
\theoremstyle{remark}
\newtheorem{remark}{Remark}
\newtheorem{example}{Example}
\newtheoremstyle{case}{}{}{}{}{}{:}{ }{}
\theoremstyle{case}
\begin{document}

\begin{frontmatter}

\title{Deep Horseshoe Gaussian Processes}
\runtitle{Deep Horseshoe Gaussian Processes }

\begin{aug}
\author[A]{\fnms{Ismaël}~\snm{Castillo}\ead[label=e1]{ismael.castillo@upmc.fr}},
\and
\author[B]{\fnms{Thibault}~\snm{Randrianarisoa}\ead[label=e3]{t.randrianarisoa@utoronto.ca}}
\address[A]{Laboratoire de Probabilités, Statistique et Modélisation, Sorbonne Université\printead[presep={,\ }]{e1}} 
 
\address[B]{Department of Statistical Sciences, University of Toronto\printead[presep={,\ }]{e3}}
\end{aug}

\begin{abstract}
Deep Gaussian processes 
have recently been  proposed as natural objects to fit, similarly to deep neural networks, possibly complex features present 
 in modern data samples, such as compositional structures. 
Adopting a Bayesian nonparametric approach, it is natural to use deep Gaussian processes as prior distributions, and  use the corresponding posterior distributions for statistical inference.  We introduce the deep
Horseshoe Gaussian process   \textsf{Deep--HGP}, a new simple prior based on deep Gaussian processes with a squared-exponential kernel, that in particular enables data-driven choices of the key lengthscale parameters. For nonparametric regression with random design, we show that the associated posterior distribution recovers the unknown true regression curve optimally in terms of  quadratic loss, up to a logarithmic factor, in an adaptive way. The convergence rates are {\em simultaneously} adaptive to both the smoothness of the regression function and to its structure in terms of compositions. The dependence of the rates in terms of dimension are explicit, allowing in particular for  input spaces of dimension increasing with the number of observations.
\end{abstract}
 
\begin{keyword}[class=MSC]
\kwd[Primary ]{62G20}
\end{keyword}

\begin{keyword}
\kwd{Bayesian nonparametrics, Deep Gaussian processes, multibandwidth Gaussian process priors, fractional posteriors, high-dimensional regression}
\end{keyword}

\end{frontmatter}


\section{Introduction}
 
 
Gaussian processes (henceforth GPs) are among the most used machine learning methods, with  applications ranging from inference in regression models to classification, see e.g. \cite{rasmussenbook} for an overview. Due to their flexibility, in recent years GPs have been used as tools for geometric inference and deep learning. Before turning to deep Gaussian processes, and since our results are also relevant for standard GPs, we start with a brief overview of recent results for Gaussian processes.

A particularly natural field of application where there now exists at least partial theory to explain and validate practical successes of GPs is that of {\em Bayesian nonparametrics}:  the posterior distribution corresponding to taking a GP as prior distribution on functions can  be used for function estimation as well as for the practically essential task of {\em uncertainty quantification}. 
In a regression setting, when putting a GP prior distribution on the unknown regression function,  the corresponding posterior distribution can often be efficiently implemented \cite{rasmussenbook}
and comes with theoretical convergence guarantees: the works \cite{aadharry07, aadharry08, ic08} indeed show that the posterior contraction rate in terms of relevant loss functions (e.g. $L^2$--loss for regression) is completely determined (both upper and lower bounds) by the behaviour of its concentration function. Shortly thereafter, van der Vaart and van Zanten also showed that statistical {\em adaptation to smoothness} was possible with GPs with optimal minimax contraction rates by simply drawing at random its scaling parameter \cite{aadharry09} in fixed design regression; see \cite{patietal15, aadharry11} for extensions to random design regression and \cite{aretha20} to inverse problems. Results on uncertainty quantification include \cite{svv15}, \cite{yangetalpreprint} in nonparametric models and \cite{c12, cr15} in semiparametric settings.
 
Let us mention a few applications of posterior distributions arising from GPs that illustrate their flexibility and are related to the setting considered below. 
 
{\em GPs flexibility: geometric settings.} In modern statistical models, it is frequent that data naturally sit on a geometric object such as a compact manifold (one can think of a sphere, a swissroll etc.). It is tempting to use GPs in this setting as well, although  some care is needed in their construction. For instance, the celebrated GP with  squared--exponential kernel (thereafter \textsf{SqExp}) has no immediate analog in a manifold setting, as replacing the euclidian metric in the exponential defining   \textsf{SqExp} with the geodesic distance does not form a covariance kernel. This can be remediated by using a kernel coming from heat equation solutions on the manifold \cite{ckp14}, and this kernel can be shown to be a natural geometric analog of \textsf{SqExp}. Alternatively, one may put a prior directly on the ambient space equipped with the standard euclidean metric: the authors in \cite{yangdunson16} obtain a posterior rate that under some (smoothness) conditions adapts to the unknown dimension of the manifold with a rescaled \textsf{SqExp} exponential GP, when the loss function is the quadratic loss but restricted to sit on the manifold; this is further refined in the recent work \cite{nanetal24}.

{\em GPs flexibility: adaptation to anisotropy and variable selection}. By drawing independent lengthscale parameters along different dimensions, \cite{bpd14} shows that posteriors arising from \textsf{SqExp} GPs contract at near-optimal minimax anisotropic rates. A related problem is that of variable selection in (possibly high-dimensional) regression. The unknown regression function may indeed depend only on a few coordinates (although these are not known in advance). 
 By considering variable selection type priors and then drawing lengthscale parameters of \textsf{SqExp} GPs, \cite{yangtokdar15} and \cite{jiangtokdar21} provide theory for this setting and respectively investigate optimal rates and variable selection properties for the corresponding posterior distributions.
 

Recent years have seen a number of remarkable applications of {\em deep learning} methods, where `deep' typically refers to a certain (often compositional) structure in terms of a number of layers. For instance, deep neural networks are now routinely trained for image or speech processing, giving excellent empirical performance. Theoretical understanding in terms of convergence of statistical procedures is recent and includes \cite{kohlerlanger21, jsh20} for results on empirical risk minimisers for classes of deep neural networks with ReLU activation function in regression settings.  A Bayesian counterpart of the results in \cite{jsh20} with theoretical guarantees is considered in \cite{rp18}, where spike-and-slab priors are placed on network coefficients. Sampling directly from the corresponding posterior can be costly due to the combinatorial nature of the search of nonzero network coefficients; the works \cite{chengetal20,badr20} consider theory and implementation for mean-field variational Bayes versions thereof;  the work \cite{ce24} considers rescaled heavy-tailed priors on weights. Among similarities between GPs and neural networks, it has been shown  in \cite{Neal1996, hazan2015steps, Matthews2018} that both deep and shallow Bayesian neural networks with random parameters (appropriately rescaled to avoid degeneracy) and with all layers of width growing to infinity asymptotically behave like GPs, with covariance kernel depending on the network structure. The Bayesian approach we describe in the next paragraphs avoids the use of large networks using activation functions by modelling layers directly through independent Gaussian processes. 
  
Deep Gaussian processes \cite{damianou13} (henceforth DeepGPs) correspond to iterated compositions of Gaussian processes and broadly speaking can be seen as 
a possible Bayesian analogue of deep neural networks. Figure \ref{fig: DGP sqexp} depicts the sample path of a simple DeepGP obtained from two independent GPs with squared-exponential kernels.  The random paths resulting from DeepGPs have greater modelling flexibility compared to single Gaussian processes, enabling for instance to capture different spatial behaviours; \cite{grsh22} shows that single GPs cannot reach optimal rates for compositional structures, see also \cite{abraham2023deep}. While the infinite-width limits of deep Bayesian neural networks behave like GPs, forcing instead some layers of the network to be of fixed width while letting others grow leads to a deepGP (see Section 7 of \cite{fsh21}). One then indeed obtains in the limit the composition of the limiting GPs in-between the fixed layers. There is a lot of recent activity for providing efficient sampling methods for deepGPs \cite{GPflux, salimbeni2017, salimbeni2019}. Yet, theory is just starting to emerge.
 
 \begin{figure}[h!]
\center
  \caption{Composition of two Gaussian processes with \textsf{SqExp} covariance kernel $K(s,t)=e^{-(s-t)^2}$.}
  \includegraphics[scale=0.44]{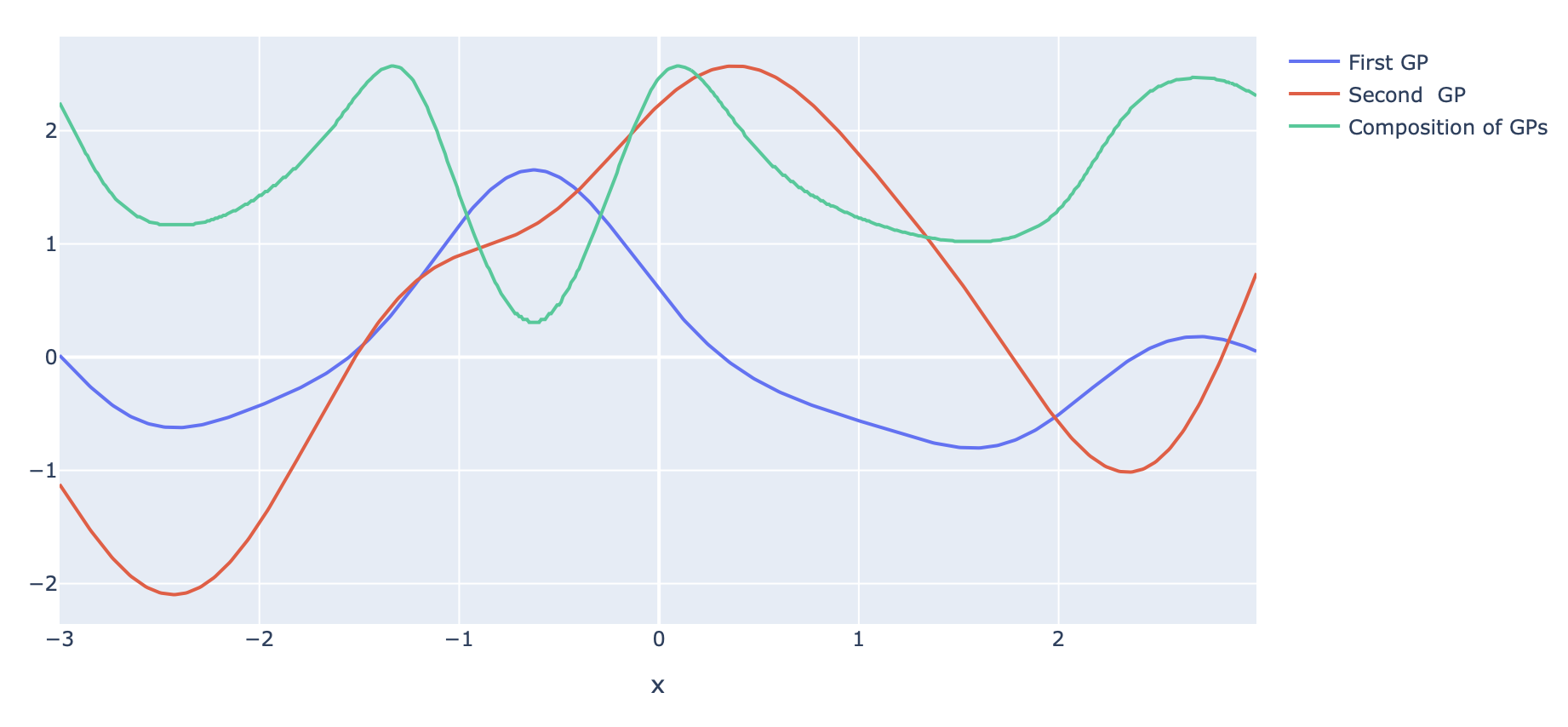}
  \label{fig: DGP sqexp}
\end{figure}
 
The recent seminal work by Finocchio and Schmidt--Hieber \cite{fsh21} on deepGPs shows that using  a model selection prior to select active variables in the successive Gaussian processes, and conditioning individual GP sample paths to verify certain smoothness constraints, the induced posterior distributions contract nearly optimally and adaptively in quadratic loss for compositional structures in regression, for a variety of kernel choices. 
Focusing on compositions of constrained GPs (with bounded sample paths and derivatives), the paper \cite{bl21} uses
 an adapted concentration function for deepGPs and derives near-optimal contraction rates in density estimation and classification. In \cite{mt_dgp23}, using a recursive representation, the authors derive convergence rates  for the posterior mean of deepGPs in a regression setting both in a noiseless case and with noisy data. In \cite{abraham2023deep}, the authors investigate the use of deepGPs for a class of nonlinear inverse problems. 
 
 This work follows the footsteps of \cite{fsh21} and aims at answering the following two questions. The first concerns the possibility to obtain theory and optimality results for a deepGP construction as simple as possible that comes closer to current implementations of deepGPs in practice. The second concerns the possibility to allow for a high-dimensional ambient space as well as a smaller intrinsic dimension.
 \begin{enumerate}
 \item {\em Can deepGP priors avoid an explicit model selection step?}\\
   While the deepGP prior construction in  \cite{fsh21} is completely natural and `canonical' from the theoretical perspective, both the conditioning step (to match smoothness constraints) and the model selection prior (for which the posterior on submodels is often expensive to compute) make posterior sampling more challenging in view of practical implementation. One main objective here is to try to simplify the construction of the prior as much as possible while keeping optimality properties, and thereby come closer to the practically used deepGPs, for which lengthscale parameters are often kept free and then adjusted in an empirical Bayes \cite{damianou13} or hierarchical Bayes  \cite{saueretal23} fashion. In view of this last observation, we propose a prior with a `soft' model selection based on a prior on lengthscales instead of the previous `hard' model selection prior. 
 \item {\em How do deepGPs scale with respect to `dimension'?}\\
 Below we shall allow in some results the input space dimension $d$ to grow with $n$. Even though any method must then face a `curse of dimensionality',  if the effective `intrinsic' dimension of the problem remains fixed or very slowly grows with $n$, it is conceivable that rates of convergence can still be obtained. While recent work on deep methods has shown that convergence rates  only depending on intrinsic dimension(s) can be derived \cite{jsh20}, most results are quite generous in the dependence on dimension of the constant factor in the rate.  In particular, we are not aware of works allowing for input and `intrinsic' dimensions to possibly grow with $n$ (\cite{ohn22} considers an example of Bayesian deep ReLU network with growing $d$ but {\em fixed} intrinsic dimension). 
  We demonstrate below that our construction can adapt to the intrinsic dimension even for a high-dimensional ambient space (with $d$ sublinear in $n$). This requires a careful tracking of the dependence on dimension,  in particular revisiting earlier results in the GP literature to make the dependence on $d$ precise.
 \end{enumerate}
The main contributions of the paper are as follows:
\begin{enumerate}
\item we introduce a new idea of {\em freezing--of--paths} for multi-bandwidth Gaussian processes.  The benefit of random lengthscales of a stationary kernel for adaptation to {\em smoothness} has been established for a while \cite{aadharry09, aadharry11, bpd14, patietal15}. Such an adaptation to the regularity of the underlying truth is made possible  by letting the lengthscales grow polynomially with $n$ in a suitable way with sufficient probability under the prior. 
 In the present paper, we show that letting lengthscales appropriately {\em vanish} (instead of diverge) enables adaptation to {\em structure} (instead of smoothness) or in other words to adapt to sparsity in the covariates dependence, by `freezing' irrelevant dimensions through the corresponding 
 posterior distributions. Intuitively, sample paths become almost constant in the directions with vanishing lengthscales, performing effectively a form of `soft' model selection. 
 \item we show that the previous two effects of lengthscale parameters, namely adaptation to smoothness and to structure (by using respectively diverging and vanishing lengthscales), can be obtained using a {\em single} prior for lengthscales: the {\em horseshoe} distribution \cite{carvalho10}, that both puts a lot of mass near zero and at the tails, is shown to lead to optimal contraction rates with near-optimal scaling in terms of dimension. Our results also include exponential prior distributions on lengthscales as in earlier contributions on Gaussian processes (e.g. \cite{aadharry09}), although dependence on dimension may not be optimal in `large $d$' regimes.
\item 
we study a high-dimensional setting where the input space has growing dimension combined with a compositional structure and functions in the composition having few active coordinates; in particular we allow the input dimension to grow polynomially with $n$ and the number of actually relevant variables in the input layer to grow slowly with $n$. A main technical contribution of the paper consists in deriving dimension-dependent analogues of the inequalities that are at the heart of GP regression theory with a squared-exponential kernel. Namely, we give precise dependence on ambient and intrinsic dimensions of the metric entropy of the unit ball of the RKHS of the covariance kernel, of the small ball probability of the GP and on quantities measuring approximation properties of this RKHS.
\end{enumerate}
We note that the results are not only relevant for deep learning applications, but also already for  shallow (standard) Gaussian processes, for which the freezing-of-paths effect described above is shown to lead to effective `variable selection' in that the achieved convergence rate only depends on the number of truly present variables. Also, from the technical perspective,
in order to leverage the smaller intrinsic dimensionality of the problem, a key new ingredient in the proofs  consists in replacing the prior by a `low-dimensional' oracle GP defined on the relevant coordinates.   Finally in this paper for technical convenience we mostly focus on  {\em tempered} posterior distributions, for which the likelihood in Bayes formula is raised to a fixed power $\rho$ smaller than $1$; we do however also obtain results for the standard posterior ($\rho=1$) when the nonparametric prior is coupled with an appropriate prior on the noise variance. 
 
The paper is organised as follows:  Section \ref{sec: the prior} introduces the statistical model and deep Gaussian process priors. We recall the main elements of the frequentist theoretical analysis of GP regression in Section \ref{sec: key ingredients}. Our main results are split into two parts. Section \ref{sec:mainres1} considers a setting without compositions, and Theorem \ref{thmvs} therein shows that under mild conditions multibandwidth GPs effectively achieve a form of variable selection through a freezing-of-paths effect. Section \ref{sec:mainres2} considers adaptation to compositional structures: Theorems \ref{theorem: posterior contraction rates hdgp} and \ref{theorem: posterior contraction rates hdgp 2} therein demonstrate that deep horseshoe GPs lead to near-minimax optimal contraction rates  both in fixed dimensions and in the high-dimensional case, while Section \ref{sec:truepost} explains how results for tempered posteriors can be transferred to standard posteriors. A discussion follows in  Section \ref{sec:disc}. Proofs are provided in Section \ref{sections: proof rho post}. A key result underlying the proofs and bounding the GPs' concentration function is presented in Section \ref{sec:boundcf}.  Auxiliary lemmas and their proofs can be found in the appendix \cite{AnnexeDGP}.
 
{\em Notation.} For two real numbers $a,b$, we let $a\wedge b=\min(a,b)$, $a\vee b=\max(a,b)$. We denote by $\phi$ the density of a standard normal random variable. The $\varepsilon$-covering number $N\left(\varepsilon, S, D\right)$ of a semimetric space $S$ equipped with a semimetric $D$ is the minimal number of balls of radius $\varepsilon$ needed to cover $S$. For a vector $A=\left(A_1,\dots,A_d\right)\in\mathbb{R}_+^d$, denote 
\[ \bar{A}\coloneqq \underset{i=1,\dots,d}{\max}\ A_i,\qquad |A|=\sum_i A_i,\]
and $A_I=(A_i)_{i\in I}$ for $I\subset [\! [ 1,n]\! ]$, the set of all integers between $1$ and $n$. Also, for any vector $\boldsymbol{x}\in\mathbb{R}^d$, we note $|\boldsymbol{x}|_\infty\coloneqq \max_i |x_i|$ its supnorm. For $f$ integrable on $\R^d$,  let $\hat{f}(t)=\int_{\R^d} e^{-i\langle t, s\rangle} f(s) ds$ denote its Fourier transform, with $\psg \cdot,\cdot\psd$ the euclidean inner product. In the following, $C,C_1, c_1, C_2, c_2,\dots$ denote absolute constants whose values may change from line to line.
 
%

 \section{The deep horseshoe GP prior}\label{sec: the prior}
 
Consider a nonparametric regression model with random design, where one observes $(X,Y):=(X_i,Y_i)_{1\leq i\leq n}$, with $X_1,\dots,X_n$ independent identically distributed design points sampled from a probability measure $\mu$ on $I^d$ for $I$ an interval of $\RR$ chosen for simplicity to be $\left[-1,1\right]$ in the sequel and
\begin{equation} \label{def:rdreg}
Y_i=f_0(X_i)+\veps_i,
\end{equation}
for $f_0:I^d\to \RR$ an unknown regression function and  $\veps_i$  independent $\mathcal{N}(0,\sigma_0^2)$ errors, with $\sigma_0$ assumed known for simplicity. We consider estimation of $f_0$ with respect to the integrated quadratic loss

\[ \norm{f_0-f}_{L^2(\mu)}^2 = \int (f_0-f)^2d\mu.\]
For a given regression function $f$, let $P_f$ denote the distribution of one observation $(X_i,Y_i)$ under model \eqref{def:rdreg}, which has density
\[p_f(x,y)=\left(2\pi\sigma_0^{2}\right)^{-1/2}e^{-(y-f(x))^2/(2\sigma_0^2)}\] with respect to $\mu\otimes \lambda$, for $\lambda$ the Lebesgue measure on $\RR$.  \\


For a real $\be>0$, $\lfloor \beta \rfloor$ the largest integer strictly smaller than $\beta$ and $r$ an integer, 
let $\mathcal{C}^{\beta}[-1,1]^{r}$ denote the classical Hölder space equipped with the norm $\norm{\,\cdot\,}_{\beta,\infty}$. It consists of functions $f:[-1,1]^r\to \RR$ whose norm defined as
\begin{equation} \label{defno}
  \norm{f}_{\beta,\infty}=  \max\left(\max_{ |\boldsymbol{\alpha}|\le \lfloor \beta \rfloor} \norm{\partial^{\boldsymbol{\alpha}} f}_{\infty} , \max_{\boldsymbol{\alpha}: |\boldsymbol{\alpha}|=\lfloor \beta \rfloor} \underset{\boldsymbol{x}, \boldsymbol{y}\in [-1,1]^r,\ \boldsymbol{x}\neq \boldsymbol{y}}{\sup} \frac{\left|\partial^{\boldsymbol{\alpha}} f(\boldsymbol{x}) - \partial^{\boldsymbol{\alpha}} f(\boldsymbol{y})\right|}{\left|\boldsymbol{x}- \boldsymbol{y}\right|^{ \beta-\lfloor \beta \rfloor}_{\infty}}\right) 
\end{equation}  
 is finite, 
with the multi-index notation $\boldsymbol{\alpha}=(\alpha_1,\dots,\alpha_d)\in\N^d$ and $\partial^{\boldsymbol{\alpha}} =\partial^{\boldsymbol{\alpha}_1} \dots \partial^{\boldsymbol{\alpha}_d}$. We note that functions with finite Hölder norm are bounded, for any $\beta>0$.

\subsection{Structural assumptions for multivariate regression}\label{subsec: model}

In order to assess the performance of machine learning methods, a popular benchmark is the regression setting \eqref{def:rdreg} equipped with some `structural' assumptions. In the unconstrained case where only a smoothness condition is assumed on $f_0$, rates for $\beta$--H\"older smooth functions are typically of the form $n^{-\be/(2\be+d)}$, and so are prone to the curse of dimensionality (the rate becomes extremely slow for large $d$). A common approach is to assume that  the multivariate regression function $f_0$ admits a certain {\em unknown} `structure', of `effective dimension $d^*$' possibly much smaller than $d$. For instance, in the simplest setting considered below, $f_0$ may only depend on a small but unknown number of coordinates. The goal is then to find algorithms that are able to achieve  optimal risk bounds that adapt to the unknown underlying structure, and that therefore scale with $d^*$ instead of $d$. \\

{\em A first basic setting: effective variable selection}. 
Let us first consider the simple setting where $f_0:\left[-1,1\right]^d =I^d\to \RR$ only depends on $d^*$ variables, that is
\begin{equation}  \label{vs}
f_0(x_1,\dots,x_d)=g(x_{i_1},\dots,x_{i_{d^*}}), 
\end{equation}
for some $g\in \mathcal{C^{\beta}}(I^{d^*})$ and $\beta>0$. The subset of indices $i_1,\ldots,i_{d^*}$ is unknown to the statistician and the target convergence rate in quadratic loss is $n^{-\beta/(2\beta+d^*)}$. We call this setting {\em effective variable selection}, where by this we mean that one aims at achieving the same performance as if the indices of the truly present variables were known. We note that we do not consider here the problem of {\em actual} variable selection, where the goal would be to recover this set of  indices, and which would require some further conditions; we refer to \cite{jiangtokdar21} for more details on this task.  

Define, for $K, \be>0$ and  $d^*\le d$ two integers, and recalling that $I=[-1,1]$,
\begin{align} 
\cF_{VS}(K,\beta, d,d^*) 
&  = \Big\{ f_0:I^d \to \RR \text{ such that } \eqref{vs} \text{ holds for some }  g\in \cC^\be(I^{d^*}), \label{defvs} \\ 
&\quad \quad\norm{g}_{\infty}\leq 1\text{ and } \| g\|_{\be,\infty} \le K \Big\}. \nonumber
\end{align} 
As in recent works in deep learning and similar to \cite{fsh21}, we assume that an upper-bound $M_0$ is known for the true function $f_0$ and without loss of generality we assume $M_0=1$. 
\\

{\em Compositional structure.}  
Following \cite{jsh20}, suppose that $f$ can be written as a composition
 \begin{equation}\label{compo}
  f=h_q\circ\dots\circ h_0, 
 \end{equation} 
 with $h_i:\, I^{d_i}\to I^{d_{i+1}}$, for $(d_i)$ a sequence of integers such that $d_0=d$ and $d_{q+1}=1$. Since $h_i$ takes values in $\RR^{d_{i+1}}$, one may write $h_i= (h_{ij})$, where $h_{ij}$ for $j=1,\dots,d_{i+1}$ are its univariate coordinate functions. 
 
{\em The compositional class $\mathcal{F}_{\text{deep}}(\lambda,\mathbb{\beta},K)$.} 
Let us further assume that $h_{ij}$'s as above only depend on a subset $\mathcal{S}_{ij}$ of at most $t_i\leq d_i$ variables, and $h_{ij}$ restricted to the variables $\mathcal{S}_{ij}$ belongs to $\mathcal{C}^{\beta_i}(I^{t_i})$. 
For $\lambda=(q,d_1,\dots,d_q,t_0,\dots,t_q)$, 
$\mathbb{\beta}=(\beta_0,\dots,\beta_{q})$ and $K>0$,  let 
\begin{align} 
\mathcal{F}_{deep}(\lambda,\mathbb{\beta},K)
 = \left\{ h_q\circ\dots\circ h_0:\, I^{d}\to I,\ h_i:\, I^{d_i}\to I^{d_{i+1}}, 
\ h_{ij}\in \cF_{VS}(K,\beta_i,d_i,t_i) \right\}.
\label{defcld}
\end{align}

These compositional classes encompass several well-studied structural models in the literature. For instance, in an additive model with a fixed number $D>0$ of covariates,  the regression function can be expressed as
\begin{equation}\label{eq: additive model}f_0(x_1,\dots,x_D)=\sum_{i=1}^D g_i(x_i)=h_1\circ h_0(x),\end{equation}
where $h_0(x)=(g_1(x_1),\dots,g_d(x_D))$ and $h_1(y)=\sum_{i=1}^D y_i$. The original function $f_0$ is then the composition of two functions: one where each component is univariate and depends on a single variable ($h_0$) so that $d_1=t_0=1$, and another that depends on all variables but is infinitely smooth ($h_1$), so $t_1=D$. Therefore, further assuming $g_i\in \mathcal{C}^{\beta}(I)$, $\norm{g_i}_\infty\leq 1$ and $\norm{g_i}_{\beta,\infty}\leq K$ and , we have $f_0\in \mathcal{F}_{\text{deep}}((1,1,1,D), (\beta, \beta'),\max(D,K))$ for arbitrarily large $\beta'$ (taking $M_0=D$ in this case). We refer the reader to Section 4 of \cite{jsh20} for more examples highlighting the link between compositional classes and usual structural constraints in nonparametric regression.

{\em Minimax optimal rate.} The minimax rate of estimation in quadratic loss over this class \[( r_n^* )^2  = \inf_{T} \sup_{f\in \mathcal{F}_{deep}(\lambda,\mathbb{\beta},K)} 
E_f\|T - f\|_2^2,  
\]
for $T$ ranging over the set of estimators of $f$, is, up to logarithmic factors, 
\begin{equation} \label{targetrate}
r_n^* \asymp \underset{i=0,\dots,q}{\max}\ n^{-\frac{\beta_i\alpha_i}{2\beta_i\alpha_i+t_i}},\qquad \text{where }\alpha_i:=\prod_{l=i+1}^q (\beta_l \wedge 1),
\end{equation} 
under the mild condition $t_i\le \min(d_0,\ldots,d_{i-1})$, see \cite{jsh20}. For example, in the above additive model, for $\beta'\geq 1\vee D\beta$, the rate becomes, up to a multiplicative constant depending polynomially on $D$, \[n^{-\frac{\beta}{2\beta+1}} \vee n^{-\frac{\beta'}{2\beta'+D}}=n^{-\frac{\beta}{2\beta+1}}.\] These fast rates are attainable because for both functions in the composition \eqref{eq: additive model}, 
fast rates can be achieved without suffering from the curse of dimensionality ($D$ will still feature as a multiplicative constant in the rate, but importantly not in the exponent of $1/n$).
 
 \subsection{Key ingredients}\label{sec: key ingredients}

{\em Posterior distributions and frequentist analysis.} Given a prior distribution $\Pi$ on regression functions, 
 the posterior mass $\Pi[B \given X,Y]$ of a measurable set $B$ is given by Bayes' formula: this is the next display for $\rho=1$. More generally, one may set, for any $\rho\in(0,1]$,
 \[\Pi_{\rho}[B\given X,Y] = \frac{\int_B  \prod_{1\leq i\leq n} p_f(X_i,Y_i)^\rho d\Pi(f) }{\int \prod_{1\leq i\leq n} p_f(X_i,Y_i)^\rho d\Pi(f)}. \]
When $\rho=1$ this is the usual conditional probability that $f$ belongs to $B$ given the data. If $0<\rho<1$, this quantity is called 
{\em ${\rho}$--posterior} (or tempered posterior). Its use is very much widespread in machine learning, in particular in PAC--Bayesian settings  \cite{catoni04, tzhang2006}. We use the tempered posterior in our main results, and also provide results for the standard posterior $\rho=1$ and an augmented prior in Section \ref{sec:truepost}. Links with the case $\rho=1$ are also further discussed in Section \ref{sec:disc}.
 
 {\em Gaussian process ($\rho$--)\,posteriors: theory.}
  For any centered Gaussian process $W$ on the Banach space of continuous functions on $I^d$ equipped with the $\norm{\,\cdot\,}_\infty$ norm, the probability measure of any ball $\left\{f: \norm{f-g}_\infty<\varepsilon \right\}$ is lower bounded by a quantity depending on the mass of the centered ball of radius $\varepsilon$ and on how well $g$ can be approximated by elements of the RKHS $(\mathbb{H}, \norm{\,\cdot\,}_{\mathbb{H}})$ 
  of the covariance kernel of the process. More precisely, according to Proposition 11.19 of \cite{MR3587782}, for any $\veps>0$,
 \begin{align}\label{eq: concentration function}
 P\left[\norm{W-g}_\infty <\varepsilon\right]&\geq e^{-\varphi_g(\varepsilon/2)},\nonumber\\
 \varphi_g(\varepsilon)&= \underset{h\in\mathbb{H}:\ \norm{h-g}_\infty\leq \varepsilon}{\inf} \frac{1}{2} \norm{h}_{\mathbb{H}}^2- \log P\left[\norm{W}_\infty<\varepsilon\right].
\end{align}
 The function $\vphi_g$ in \eqref{eq: concentration function} is called the {\em concentration function} of the Gaussian process $W$ and plays a key role for contraction rates of GPs \cite{aadharry08, ic08}. It is the sum of two terms: the first term with the infimum is the approximation term whereas the second term is called the small ball term (the probability within the log is the small ball probability of the process $W$).
 
 In nonparametric regression with fixed design, 
  \cite{aadharry09} proved that  posterior contraction rates that are adaptive to the unknown smoothness of the regression function are achievable for stationary Gaussian process priors, with a dilatation parameter of the sample paths distributed as a Gamma variable. 
 As a particular case, consider the {\em squared exponential} process \textsf{SqExp} defined  as the zero-mean Gaussian process with covariance kernel $K(s,t)=\exp(-\norm{t-s}^2)$ (and $\norm{\,\cdot\,}$ the euclidean norm) on $[-1,1]^d$. Next, for $k,\theta>0$, one sets
  \begin{align*}
 A^d &\sim \text{Gamma}\left(k,\theta\right)\\
f\ \big|\ A &\sim \left\{W_{At}:\ t\in [-1,1]^d \right\}.
 \end{align*}
This construction induces a prior on the Banach space of continuous functions for which the posterior concentrates 
in the empirical $L_2-$norm at rate $\varepsilon_n\asymp n^{-\beta/(2\beta+d)}$ (up to a log factor) whenever $f_0$ has $\beta$-Hölder regularity, $\beta>0$. 
%
 
Although this rate coincides with the minimax estimation rate over a ball in $\mathcal{C^{\beta}}[-1,1]^d$, it becomes very slow for large $d$. When the regression function $f_0$ depends on a small number of variables $d^*$ only, a special case of the add-GP prior from \cite{yangtokdar15} gives optimal posterior contraction rates $n^{-\beta/(2\beta+d^*)}$ without the need to estimate $d^*$. This is achieved by the introduction of an additional layer in the prior, drawing via Bernoulli random variables in which direction the Gaussian sample paths have to be dilated (the sample paths being constant in the other directions). From a practical point-of-view, this `hard' selection of variables adds a combinatorial complexity to posterior sampling.
Similarly, if the design points are located on a $d^*$-dimensional Riemannian manifold, $d^*<d$, of the ambient space $[-1,1]^d$,  we expect the faster rate $n^{-\beta/(2\beta+d^*)}$ to be attainable. The work \cite{yangdunson16} achieves it with a dilated Gaussian process as well, the dilatation factor $A$ being distributed as $A^{d^*} \sim \text{Gamma}\left(k,\theta\right)$. One may note that this approach requires an estimate of $d^*$ to be applied and that posterior contraction rates are obtained for local distances {\em on the manifold} (such as the empirical $L_2$-norm) only, but not on the ambient space.

 
Technically, in the works above, adaptation to smoothness is achieved using that the distribution on $A^d$ in the last display selects large values for the dilatation parameters with large enough probability, together with a study of the dependence of the concentration function \eqref{eq: concentration function} on the lengthscale parameter. 
In particular, \cite{aadharry09} (isotropic case) and \cite{bpd14} (anisotropic case) develop a theory of approximation of (H\"older--)smooth functions in $\mathbb{R}^d$ by elements of RKHS from such Gaussian processes, in the case $A\to\infty$. In our results below on `effective' variable selection, the regime $A\to 0$ corresponding to small lengthscale will be particularly relevant, see Section \ref{sec:mainres1} for more on this. We note also that our results on posterior contraction will be expressed in the natural global $L^2$ loss (in contrast to a loss e.g. restricted only on active directions). 


\subsection{Deep Horseshoe Gaussian Process prior}\label{section: Deep Horseshoe Gaussian Process}

We introduce a Gaussian process prior with independent inverse--lengthscales distributed following a half-horseshoe distribution. This distribution possesses two interesting properties for our goals. Its density has a pole at $0$, which allows to `freeze' irrelevant dimensions, drawing small inverse lengthscales with high probability. It also has heavy 
 tails, so that it performs an adequate scaling on the ambient dimensions with sufficiently large probability.


{\em The single-layer case.} 
In the following, we use the real map \[\Psi\colon\ x \mapsto \left(x\wedge 1\right)\ \vee\ (-1).\] 
For $\pi$ a density function  on $\mathbb{R}_+$, consider the following prior $\Pi$ on regression functions $f$ 
  \begin{align}\label{eq: gp prior dilatation}
 A_j &\overset{\text{i.i.d.}}{\sim}\pi\\
 f\ |\ (A_1,\dots,A_d) & \sim \Psi(W^A), \nonumber
 \end{align}
 where \[W^A=\left\{W_{(A_1s_1,\dots,A_ds_d)}:\ s=(s_1,\dots,s_d)\in [-1,1]^d \right\},\] for $W$ the squared exponential process \textsf{SqExp}. For a given value of $A$, we call $W^A$ a multibandwidth Gaussian process.
 
Although the theory below can be applied to arbitrary scaling distributions $\pi$, we consider two main examples in the sequel: an exponential and a horseshoe distribution.
Let us define $\pi_{\tau}$, $\tau>0$, as the (half-) \textit{horseshoe} density (introduced in 
\cite{carvalho10})
 i.e. the density of a random variable $X_\tau$ distributed as
 \begin{align*}
 \xi &\sim C^+(0,1)\\
 X_\tau\ |\ \xi\ &\sim \mathcal{N}^+(0, \tau^2\xi^2),
 \end{align*}
 with $C^+(0,1)$ a standard half-Cauchy distribution and $\mathcal{N}^+(\mu,\sigma^2)$ the half-normal distribution of $|X|$, $X\sim\mathcal{N}(\mu,\sigma^2)$. We refer to \cite{10.1214/14-EJS962} for posterior contraction results in the case of estimation of sparse vectors, with priors based on $\pi_\tau$ and discussions on the influence of $\tau$. 
 
 When $\pi=\pi_\tau$ the horseshoe density on $\R_+$ in \eqref{eq: gp prior dilatation}, we call the above hierarchical prior the {\em Horseshoe Gaussian Process} prior and denote it $\textsf{HGP}(\tau)$.

{\em The multi-layer case.}
In order to perform inference in more complex models, we introduce a deepGP-type prior, mixing ideas from \cite{fsh21} and the just introduced prior \eqref{eq: gp prior dilatation}. 

We first place discrete priors $\Pi_q$ on the number of layers $q$ and $\Pi_d[d_1,\dots,d_q|q]$ on the successive ambient dimensions in the composition \eqref{compo}. We assume that $\Pi_q[q]>0$ and $\Pi_d[d_1,\dots,d_q|q]>0$ for any integers $q\ge 0$ and $d_i\ge 1$.

  Given $q, d_1,\ldots,d_q$, we define a random regression function $f=W_q\circ \dots\circ W_0$ where, for $i=0,\dots,q$, the map $W_i\colon I^{d_i}\to  \RR^{d_{i+1}}$ is a multivariate Gaussian process indexed by $I^{d_i}$ and to which $\Psi$ is applied element-wise. We assume that for $j=1,\dots,d_{i+1}$, the coordinates $(W_{i})_j$ are independently (accross $i$ and $j$) and identically (across $j$) distributed as a prior of the form \eqref{eq: gp prior dilatation}. Constraining the sample paths between $-1$ and $1$ ensures that the composition is well-defined. 
  
  The {\em Deep Horseshoe Gaussian Process} \textsf{Deep--HGP} is a special case of this construction where the prior on lengthscales is a horseshoe prior: it is defined as the hierarchical prior
 \begin{equation}\label{eq: deep gp prior dilatation}
 \begin{split}
 q&\sim\Pi_q \\
 d_1,\dots,d_q\ |\ q&\sim\Pi_d[\cdot|q]\\
g_{ij}\ |\ (q, d_1,\dots,d_q) &\overset{\text{ind.}}{\sim} \textsf{HGP}(\tau_i) \\
 f\ |\ (q, d_1,\dots,d_q,g_{ij}) &=g_q\circ \dots\circ g_0
 \end{split}
 \end{equation} 
for $\tau_i>0$, $i=0,\ldots, q$. In the above, $g_i=(g_{ij})_j$ with $g_{ij}:I^{d_i}\to I$, $1\le j\le d_{i+1}$. 
At the $j$--th level of the composition, there are $d_{j+1}$  coordinate functions $(g_{ij})$ distributed as the \textsf{HGP} process, each of these functions depending on $d_j$ parameters (and setting $d_{q+1}=1$ the output dimension). Note that on each layer $j$, all $d_j$ variables are present simultaneously as input to each Gaussian process component, although the scalings (distributed as horseshoe random variables) calibrate  the `strength' (or `importance') of these variables.  

In the construction \eqref{eq: deep gp prior dilatation}, the depth $q$ and dimensions $d_i$ are given prior distributions, which is perhaps the most natural Bayesian way to model these unknown quantities. However, simulating from the posterior distributions of $q$ and $d$ may be expensive, so it is common practice in this setting to take these parameters to be fixed  `large enough' values, say $q=q_{max}$ and $d_i= d_{max}$ for all $i\le q$. This leads to the following simpler prior  \textsf{Deep--HGP}$(q_{max},d_{max})$
 \begin{equation}\label{eq:dgpmax}
 \begin{split} 
g_{ij}\  &\overset{\text{ind.}}{\sim} \textsf{HGP}(\tau_i) \\
 f\ |\ (g_{ij}) &=g_q\circ \dots\circ g_0
 \end{split}
 \end{equation}
for $\ta_i>0$, $i=0,\ldots, q_{max}$ and now $g_i=(g_{ij})_j$ with $g_{ij}:I^{d_{max}}\to I$, for $1\le j\le d_{max}$ and $i>0$. Our main result on deep Gaussian processes in fixed dimensions, Theorem \ref{theorem: posterior contraction rates hdgp} below, shows that both constructions of \textsf{Deep--HGP}, with either random or fixed $q,d$, lead to near-optimal and adaptive rates for compositions, with the only restriction for  \textsf{Deep--HGP}$(q_{max},d_{max})$ being that  the `true' dimensions are indeed smaller than $q_{max},d_{max}$.

In the following, we use $q,d_i$ to denote both the parameters of the class
$\mathcal{F}_{deep}(\lambda,\mathbb{\beta},K)$ in \eqref{defcld} and the hyperparameters of the prior \eqref{eq: deep gp prior dilatation}, as the context will make it clear what we are referring to.

\section{Main results I: shallow case and ``freezing of paths"}  \label{sec:mainres1}

To gain intuition on our proposed procedure, we now build up progressively the results, starting from a standard smoothness condition in regression (no composition) where the regression function depends on an effective number of coordinates possibly smaller than the input dimension $d$.  Section \ref{sec:freeze} presents a simple oracle result while Section \ref{sec:shallowvs}
considers more precise results allowing for adaptation and growing dimension. The `deep' case of compositional structures is considered in Section \ref{sec:mainres2}. Recall that $\sigma_0^2$ is the variance of the noise in \eqref{def:rdreg} and set
\begin{equation} \label{defxi}
\xi :=  2\sigma_0^2/\sqrt{1+4\sigma_0^2}.
\end{equation}
For simplicity, in Sections \ref{sec:mainres1}
and \ref{sec:mainres2} we mostly focus on $\rho$--posteriors for given $\rho<1$ (e.g. $\rho=1/2$) and assume that $\sigma_0^2$ is known. We refer to Section \ref{sec:truepost} below for results on the standard posterior and possibly unknown noise variance.

\subsection{``Freezing of paths" for effective variable selection: a new property of scalings of Gaussian processes} \label{sec:freeze}

The first result assumes that the true regression function depends on a number $d^*$ of  
coordinates only, and that for now the indices of the active variables are {\em known}.
\begin{theorem}[freezing of paths]\label{thmtoy}  
Let $d\ge 2$ be a fixed integer and  for $K\ge 1, \be>0$, set $\cF(K):=\cF_{VS}(K,\be, d, d^*)$. Fix $\rho\in(0,1)$, let $f_0\in \cF(K)$ and suppose
\[ f_0(x_1,\ldots,x_d) = g_0(x_{i_1},\ldots,x_{i_{d^*}}), \]
with $S_0:=\{i_1,\ldots,i_{d^*}\}\subset\{1,\ldots,n\}$ and some  $1\le d^* \le d$. Let $\Pi$ be a multibandwidth prior 
 \[ f(x) \sim \Psi( W_{(a_1x_1,\ldots,a_dx_d)} ) \]  
with $W$ a $d$--dimensional \textsf{SqExp} Gaussian process with deterministic scaling parameters 
\begin{equation} \label{scale-freeze}
a_ i = 
\begin{cases}
n^{\frac{1}{2\be+d^*}}, & \quad\text{if } i\in S_0 \\
1/\sqrt{n}, & \quad \text{if } i\notin S_0
\end{cases}.
\end{equation}
Then, there exists $M>0$ such that 
 the $\rho$--posterior $\Pi_\rho[\cdot\given X,Y]$ verifies 
 \[ \sup_{f_0\in \cF(K)} E_{f_0} \Pi_\rho\left[f:\ \norm{f-f_0}_{L^2(\mu)} \ge 
 M\ \log^{1+d^*}(n)\ n^{-\frac{\be}{2\be+d^*}} \given X,Y \right]\to 0. \] 
  \end{theorem}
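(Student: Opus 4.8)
The plan is to reduce the statement to a single \emph{prior-mass} lower bound and then to estimate the concentration function of a suitably rescaled \textsf{SqExp} process on a thin box. Since the prior is supported on functions bounded by $1$ and $\norm{f_0}_\infty\le 1$, and we use a $\rho$-posterior with $\rho<1$, the general contraction theory for tempered posteriors (recalled in Section~\ref{sections: proof rho post}; no entropy or testing condition is required) reduces the claim to showing, with $\varepsilon_n:=\log^{1+d^*}(n)\,n^{-\beta/(2\beta+d^*)}$, that $\Pi\big(f:\ \norm{f-f_0}_{L^2(\mu)}^2\le \varepsilon_n^2\big)\ge e^{-c_1 n\varepsilon_n^2}$. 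In Gaussian regression with known variance the Kullback--Leibler and variance functionals between $P_{f_0}$ and $P_f$ are comparable to $\norm{f-f_0}_{L^2(\mu)}^2$, so it is in fact enough to lower bound $\Pi\big(\norm{f-f_0}_\infty<\varepsilon_n\big)$ by $e^{-c_1 n\varepsilon_n^2}$.

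Next, since $\Psi$ is $1$-Lipschitz and $\Psi\circ f_0=f_0$ (as $\norm{f_0}_\infty\le 1$), one has $\{\norm{W^a-f_0}_\infty<\varepsilon_n\}\subseteq\{\norm{\Psi(W^a)-f_0}_\infty<\varepsilon_n\}$, so it suffices to bound $P(\norm{W^a-f_0}_\infty<\varepsilon_n)$ from below. Changing variables $s\mapsto(a_1s_1,\dots,a_ds_d)$, this probability equals $P(\norm{W-\tilde f_0}_{\infty,J}<\varepsilon_n)$, where $W$ is the base $d$-dimensional \textsf{SqExp} process, $J=\prod_{i}[-a_i,a_i]$ is a box of side $2A:=2n^{1/(2\beta+d^*)}$ in the coordinates $i\in S_0$ and of side $2/\sqrt n$ in the remaining $d-d^*$ coordinates, and $\tilde f_0(u)=g_0(u_{i_1}/A,\dots,u_{i_{d^*}}/A)$ depends only on the coordinates in $S_0$. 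Applying the concentration-function bound \eqref{eq: concentration function} to $W$ on $J$, it remains to show that the small-ball term $-\log P(\norm{W}_{\infty,J}<\varepsilon_n/4)$ and the approximation term $\inf\{\tfrac12\norm{h}_{\mathbb{H}_J}^2:\ \norm{h-\tilde f_0}_{\infty,J}\le\varepsilon_n/4\}$ are each $\lesssim n\varepsilon_n^2$.

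The crux is the \emph{freezing} phenomenon: the $d-d^*$ coordinates carrying the vanishing scaling $1/\sqrt n$ must contribute only logarithmic factors — not powers of $A$ — to both terms. For the small-ball term, $1/\sqrt n$ is small enough that the oscillation of $W$ across such a thin interval is negligible at scale $\varepsilon_n$ (it is a fixed fraction of $\tfrac1{\sqrt n}\norm{\partial_i W}_{\infty,J}$, of order $\tfrac1{\sqrt n}\sqrt{\log n}\ll\varepsilon_n$ since $\beta/(2\beta+d^*)<1/2$), so one may effectively replace the thin directions by a point; combining this with the dimension-explicit small-ball estimate for \textsf{SqExp} of Section~\ref{sec:boundcf} yields $-\log P(\norm{W}_{\infty,J}<\varepsilon)\lesssim A^{d^*}\big(\log(A/\varepsilon)\big)^{1+d^*}$, which for $A=n^{1/(2\beta+d^*)}$ and $\varepsilon\asymp\varepsilon_n$ is $\lesssim n^{d^*/(2\beta+d^*)}\log^{1+d^*}(n)\le n\varepsilon_n^2$. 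For the approximation term, $\tilde f_0$ restricted to the active coordinates is the rescaling by $A$ of the $d^*$-variate $\beta$-Hölder function $g_0$ with $\norm{g_0}_{\beta,\infty}\le K$; the RKHS approximation theory for rescaled \textsf{SqExp} (van der Vaart--van Zanten type, cf. \cite{aadharry09, bpd14}, in the dimension-explicit form of Section~\ref{sec:boundcf}) provides $h^*\in\mathbb{H}_{J_{S_0}}$ with $\norm{h^*-g_0(\cdot/A)}_\infty\lesssim A^{-\beta}\,\mathrm{polylog}(A)$ and $\norm{h^*}_{\mathbb{H}_{J_{S_0}}}^2\lesssim A^{d^*}\,\mathrm{polylog}(A)$. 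Taking $h=h^*\otimes\bigotimes_{i\notin S_0}\psi_i$ with each $\psi_i$ a fixed Gaussian bump equal to $1$ at $0$ (so $\psi_i\ge 1-\varepsilon_n$ on $[-1/\sqrt n,1/\sqrt n]$), the product structure of the \textsf{SqExp} kernel gives $\norm{h}_{\mathbb{H}_J}^2\le\norm{h^*}^2\prod_{i\notin S_0}\norm{\psi_i}^2_{\mathbb{H}}\lesssim A^{d^*}\,\mathrm{polylog}(A)$ and $\norm{h-\tilde f_0}_{\infty,J}\lesssim A^{-\beta}\,\mathrm{polylog}(A)+(d-d^*)\varepsilon_n$. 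Choosing $M$ in the definition of $\varepsilon_n$ large enough makes $\norm{h-\tilde f_0}_{\infty,J}\le\varepsilon_n/4$ and both terms $\le c_1 n\varepsilon_n^2$, hence $\Pi(\norm{f-f_0}_\infty<\varepsilon_n)\ge e^{-c_1 n\varepsilon_n^2}$. All constants depend only on $(\beta,d,d^*,\rho,\sigma_0,K)$, so the bound, and with it the posterior-contraction conclusion, holds uniformly over $g_0$ in the Hölder ball, i.e. over $\cF(K)$.

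The main obstacle is precisely this freezing estimate: showing that scalings tending to $0$ do not inflate either the small-ball entropy or the RKHS approximation norm beyond logarithmic factors, while keeping explicit track of the dependence on $d$ and $d^*$. This is the rigorous counterpart of the heuristic that sample paths become essentially constant in directions with vanishing lengthscale, and it rests on the dimension-explicit entropy and RKHS computations for the squared-exponential kernel carried out in Section~\ref{sec:boundcf}; the remaining steps (the tempered-posterior reduction, the Lipschitz link $\Psi$, and the rescaling) are routine.
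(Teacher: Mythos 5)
Your overall architecture (reduction to a prior-mass bound via the tempered-posterior theory, the Lipschitz link $\Psi$, and a ``freeze the inactive directions'' comparison) matches the paper's, and your rate bookkeeping is correct. Where you genuinely diverge is in the approximation term: you attack the RKHS of the full $d$--dimensional process on the thin box directly, via the tensor product $h^*\otimes\bigotimes_{i\notin S_0}\psi_i$ with kernel sections $\psi_i=K(\cdot,0)$ in the frozen directions. The paper deliberately avoids this (it remarks that the small--$A$ RKHS regime is delicate because constants are not in the \textsf{SqExp} RKHS) and instead never touches the $d$--dimensional RKHS: it compares the sample paths of $W^A$ to those of the oracle process $W^{\Tilde A}$ living on the $d^*$ active coordinates, and only invokes the concentration function of that $d^*$--dimensional process. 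Your bump construction is a legitimate alternative here, precisely because the frozen domain has width $O(1/\sqrt n)$, on which $K(\cdot,0)$ approximates the constant $1$ to within $O(1/n)$; note, though, that Theorem \ref{concd} as stated only covers scalings bounded below by $\sqrt{\log 2/d}/2$, so you cannot cite it for the frozen directions and must carry out this tensor-product step by hand.

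The genuine gap is in your small-ball estimate. Writing $W(u)=W(u_{S_0},0)+X(u)$ with $X(u)=W(u)-W(u_{S_0},0)$, the event $\{\norm{W}_{\infty,J}<\veps_n\}$ is not the event $\{\norm{W(\cdot,0)}_{\infty,J_{S_0}}<\veps_n\}$, and the two processes $W(\cdot,0)$ and $X$ are not independent, so ``replacing the thin directions by a point'' cannot be done by multiplying probabilities. The only available route is the subtraction $P(\norm{W}_{\infty,J}<\veps_n)\ge P(\norm{W(\cdot,0)}_{\infty}<\veps_n/2)-P(\norm{X}_{\infty}>\veps_n/2)$, and for this to be useful the second term must be \emph{exponentially} smaller than the first, i.e.\ $P(\norm{X}_\infty>\veps_n/2)\le e^{-Cn\veps_n^2}$ with $C$ exceeding the constant in the small-ball exponent. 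Your justification --- that the oscillation is of order $\sqrt{\log n}/\sqrt n\ll\veps_n$ --- is the wrong comparison: it compares magnitudes, not exponents of probabilities, and by itself gives nothing after subtraction against a probability that is already $e^{-cn\veps_n^2}$. The correct argument bounds $\sup_u\mathrm{Var}(X(u))\le 2(d-d^*)\max_{i\notin S_0}a_i^2\lesssim 1/n$, controls $E\norm{X}_\infty$ by Dudley's entropy integral, and applies Borell--TIS to get $P(\norm{X}_\infty>\veps_n/2)\le e^{-c\,n\veps_n^2\cdot n\sigma^{-2}/n}= e^{-\rho n\veps_n^2}$ with the right constant --- this is exactly the $P_1-P_2$ decomposition in the paper's proof of Theorem \ref{thmvs}, and it is also what forces the scaling $a_i\le C/\sqrt n$ rather than merely $a_i=o(1)$. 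With that step supplied, your proof goes through.
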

Theorem \ref{thmtoy} shows that by taking GP lengthscale parameters that are very small, of order $1/\sqrt{n}$, for the coordinates $j$ such that $f_0$ in fact does not depend on $x_j$, and taking lengthscales equal to the `standard nonparametric cut-off' $n^{1/(2\be+d^*)}$ (for estimating $\be$--smooth functions in dimension $d^*$) on the other coordinates, leads to an optimal minimax contraction rate $n^{-\be/(2\be+d^*)}$ for the integrated squared loss up to a logarithmic factor for the $\rho$--posterior distribution (the power in the log factor is improved in the next result). 
Inspection of the proof shows that for $i\notin S_0$, one may take $a_i$ to be any value smaller than $C/\sqrt{n}$  for some fixed $C>0$. 

The intuition behind the result is that taking a small lengthscale for coordinate $i$ `freezes' the GP path along this coordinate, making it almost constant in that coordinate, which corresponds to the limiting case $a_i=0$. Note that Theorem \ref{thmtoy} is an `oracle' result in that it assumes both $\be$ and $d^*$ (and even the indices $i_j$) to be {\em known}. Adaptive versions are considered below. While the result is somewhat expected if one sets $a_i=0$ for $i\notin S_0$ (this would correspond to a `hard' variable selection), the fact that the rate remains optimal for small but non-zero values of $a_i$ suggests that there may be room for a `soft' variable selection procedure that would allow for small $a_i$ in a data-driven way: this is the purpose of our next Theorem.


\subsection{Single layer setting: horseshoe GP} \label{sec:shallowvs}

The next statement is our main result on effective variable selection 
for (non-deep) Gaussian processes -- recall that by this we mean achieving the same rates as if active coordinates were known (not recovering the truly active ones) --. It is a non-asymptotic result that allows for dimensions varying with $n$. It is stated for an arbitrary prior $\pi$ on lengthscales. We then particularise it over the next paragraphs by stating simpler asymptotic versions and giving examples of lengthscale priors that satisfy the conditions. We consider both fixed dimensions and the case where both $d^*, d$ vary with $n$.

\begin{theorem}[Single layer, generic result]\label{thmvs}
Let $1\le d^*\le d$ be two  integers and  for $K\ge 1, \be>0$, set $\cF(K):=\cF_{VS}(K,\be, d,d^*)$. Let $\xi$ be as in \eqref{defxi} and fix $0<\rho<1$.
 Let $\Pi$ be a multibandwidth prior \eqref{eq: gp prior dilatation} with density $\pi$ on scaling parameters that satisfies
 \begin{equation} \label{condpr} 
 \left(\int_0^{\frac{\xi}{8d\sqrt{\rho n}}} \pi(a)da\right)^{d-d^*} \left(\int_{a^*}^{2a^*} \pi(a)da\right)^{d^*} \ge 2\exp(-n\rho\veps_n^2/2),
\end{equation}
where $a^*=a_n^*$ and $\veps_n$ verify $a^*\ge 1$, that $1/\sqrt{n}\le \veps_n\le 1$, and 
\begin{equation}\label{opteps}
64\xi^{-2}\geq\veps_n^2 \ge \{ B_1 {a^*}^{-2\be} \} \vee \{ B_2  {a^*}^{d^*}\log^{1+d^*}(n)/n\},
\end{equation}
where $B_1=c_1K^2 c_2^{d^*}$ and $B_2=K^2 (c_3d^{*c_4})^{d^{*}}$, with $c_1,\ldots,c_4\ge 1$ constants depending only on $\be$, $\xi$ and $\rho$.
 Then, there exists $M=M(\rho,\xi)>0$ such that,  
 for $n\ge 3$,
 \[ \sup_{f_0\in \cF(K)} E_{f_0} \Pi_\rho[f:\ \norm{f-f_0}_{L^2(\mu)} \ge M\veps_n\given X,Y]\leq \frac{1}{n\veps_n^2} + e^{-\rho n\veps_n^2}. \] 
  \end{theorem}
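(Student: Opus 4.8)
The plan is to deploy the general-purpose testing/prior-mass framework for tempered posteriors (à la \cite{MR3587782}, Theorem 8.something for $\rho$-posteriors), which reduces the statement to verifying two ingredients: (i) a \emph{prior mass} lower bound $\Pi[f:\ \norm{f-f_0}_\infty\leq \veps_n]\geq \exp(-n\rho\veps_n^2/2)$ (or the same with a Hellinger/empirical neighbourhood of $p_{f_0}$), and (ii) a sieve construction with exponentially small prior mass on its complement and controlled metric entropy. For tempered posteriors one can in fact avoid tests entirely: the standard argument (see e.g. the PAC–Bayes route) gives, on an event of probability at least $1-\tfrac{1}{n\veps_n^2}$, that $\Pi_\rho[f:\ \text{KL or Rényi divergence}(p_{f_0},p_f)\geq M'\veps_n^2]\lesssim e^{-\rho n\veps_n^2}$, and in this Gaussian regression model the Rényi divergence of order $\rho$ between $p_{f_0}$ and $p_f$ is a constant (depending on $\rho,\sigma_0$) times $\norm{f-f_0}_{L^2(\mu)}^2$, which converts the divergence statement into the claimed $L^2(\mu)$ statement. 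This is precisely where $\xi=2\sigma_0^2/\sqrt{1+4\sigma_0^2}$ enters.

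The core work is then the prior mass bound (i). First I would fix the oracle: the true $f_0$ depends only on coordinates in $S_0$, $|S_0|=d^*$, and is $\beta$-Hölder. I would compare the multibandwidth GP $\Psi(W^A)$ with a lower-dimensional ``oracle'' \textsf{SqExp} process on $I^{d^*}$ with a common well-chosen scaling $a^*$ on the coordinates in $S_0$; this is the ``low-dimensional oracle GP'' trick advertised in the introduction. On the event that $A_i\in[a^*,2a^*]$ for $i\in S_0$ (prior probability $\geq(\int_{a^*}^{2a^*}\pi)^{d^*}$) and $A_i\leq \xi/(8d\sqrt{\rho n})$ for $i\notin S_0$ (prior probability $\geq(\int_0^{\xi/(8d\sqrt{\rho n})}\pi)^{d-d^*}$), two things must be shown: (a) the ``frozen'' directions contribute negligibly, i.e. $\Psi(W^A)$ is within $O(\veps_n)$ in sup-norm of a function that does not depend on the frozen coordinates — this is the quantitative ``freezing of paths'' estimate, controlled by the modulus of continuity of \textsf{SqExp} sample paths scaled by the tiny lengthscale $A_i\lesssim 1/\sqrt n$ over the bounded domain $[-1,1]$, giving increments of order $A_i\cdot(\text{Gaussian fluctuation})\lesssim 1/\sqrt n\leq\veps_n$; and (b) for the active directions, the concentration function bound \eqref{eq: concentration function} for the $d^*$-dimensional scaled \textsf{SqExp} process at radius $\veps_n$ is $\lesssim n\veps_n^2$, which by the Hölder-approximation theory for rescaled \textsf{SqExp} RKHS (van der Vaart--van Zanten, Bhattacharya--Pati--Dunson, with the dimension-explicit refinements announced for Section \ref{sec:boundcf}) requires balancing $a^{*-2\beta}$ (approximation error) against $a^{*d^*}\log^{1+d^*}(n)/n$ (small-ball/RKHS-norm cost) — exactly the two bounds imposed in \eqref{opteps}, with the constants $B_1,B_2$ carrying the $K^2$ and the $d^*$-dependence $(c_3 d^{*c_4})^{d^*}$ from those refined inequalities. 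Combining (a) and (b), the sup-norm ball around $f_0$ receives prior mass at least the product of the three factors above, which by hypothesis \eqref{condpr} is $\geq 2\exp(-n\rho\veps_n^2/2)$; the extra factor $2$ absorbs the $\Psi$-truncation (which only helps, being $1$-Lipschitz and fixing $f_0$ since $\norm{f_0}_\infty\leq 1$).

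For ingredient (ii), the sieve, I would take $\mathcal{F}_n=\{\Psi(W^A):\ \bar A\leq A_{\max}\}$ for a polynomial-in-$n$ cutoff $A_{\max}$ chosen so that $\Pi[\bar A>A_{\max}]\leq d\cdot \Pi[A_1>A_{\max}]$ is $\leq e^{-Cn\veps_n^2}$ — this uses only a crude tail bound on $\pi$, and is why the statement is clean for generic $\pi$ with the single integral condition (the heavy tails of the horseshoe are more than enough). On this sieve, the metric entropy of $\{\Psi(W^A):\ \bar A\leq A_{\max}\}$ in sup-norm is controlled by combining the Borell--TIS / Dudley bound for a \emph{fixed} scaling with a union over an $A$-net, giving $\log N(\veps_n,\mathcal{F}_n,\norm{\cdot}_\infty)\lesssim n\veps_n^2$ up to the logarithmic factors already present; here one again needs the dimension-dependent RKHS-unit-ball entropy bound from Section \ref{sec:boundcf}. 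The $\Psi$ map is $1$-Lipschitz so it does not increase entropy. Feeding (i) and (ii) into the tempered-posterior master theorem yields the displayed bound with the stated two-term right-hand side $\tfrac{1}{n\veps_n^2}+e^{-\rho n\veps_n^2}$, uniformly over $f_0\in\cF(K)$ since all constants depend only on $\beta,\xi,\rho,K$ and the constraints are uniform over the class.

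The main obstacle is the quantitative freezing estimate (a) combined with making every constant's dependence on $d^*$ (and the absence of dependence on $d-d^*$ beyond the benign prior-mass factor) fully explicit: one must show that shrinking a lengthscale to $O(1/\sqrt n)$ genuinely kills that coordinate's contribution to the \emph{sup-norm} over the whole cube (not just pointwise or in $L^2$), uniformly, and that this does not interact badly with the approximation step in the active directions — i.e. that the approximating RKHS element for $g_0$ can be taken to also be (nearly) constant in the frozen directions so the two estimates compose. This is the genuinely new piece relative to \cite{aadharry09, bpd14}; everything else is a careful, dimension-tracked reprise of known GP-regression technology.
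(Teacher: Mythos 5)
Your core argument — the prior-mass bound via comparison with a low-dimensional oracle \textsf{SqExp} process on the active coordinates, a quantitative freezing estimate for the inactive coordinates, and the balance $a^{*-2\beta}$ versus $a^{*d^*}\log^{1+d^*}(n)/n$ for the concentration function — is exactly the paper's route. The paper splits $P[\norm{W^A-f_0}_\infty<\xi\veps\given A]\ge P_1-P_2$ where $P_1$ involves the oracle process $W^{\tilde A}$ (scalings zeroed off $S_0$) and $P_2=P[\norm{W^{\tilde A}-W^A}_\infty>\xi\veps/2\given A]$ is controlled by Dudley's entropy bound plus Gaussian concentration, with $\sigma^2\le 2(d-d^*)\max_{i\in S_0}A_i^2\le \xi^2/(32\rho n)$; your heuristic "increments of order $A_i$ times Gaussian fluctuation" is the right mechanism. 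Two remarks. First, your worry that the RKHS approximant of $g_0$ must be made nearly constant in the frozen directions does not arise in this decomposition: the approximation step lives entirely in the $d^*$-dimensional world (for $W^{A^*}$), and the frozen directions are handled purely probabilistically through $P_2$; this is precisely why the paper compares processes rather than studying the RKHS in the $A\to0$ regime (where constants are not in the RKHS). Also, the factor $2$ in \eqref{condpr} absorbs the subtraction $P_1-P_2\ge P_1/2$, not the $\Psi$-truncation (which is free since $\norm{f_0}_\infty\le1$ makes $\Psi$ a contraction towards $f_0$). Second, and more substantively: your ingredient (ii), the sieve, is not needed and should be dropped. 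The paper's whole point in using $\rho$-posteriors is that Theorem \ref{bat_result} requires \emph{only} the prior-mass condition \eqref{eq: supball condition}, yielding directly the two-term bound $e^{-\rho n\veps_n^2}+(n\veps_n^2)^{-1}$; you correctly mention this route in your first paragraph but then revert to building $\mathcal{F}_n=\{\Psi(W^A):\bar A\le A_{\max}\}$. As sketched, that sieve does not work: for fixed $A$ the support of $W^A$ is not totally bounded in sup-norm, so you would need a Borell-type sieve $M_n\mathbb{H}^A_1+\veps_n\mathbb{B}_1$ together with lower bounds on small-ball probabilities to control the complement's mass — exactly the technical burden (leading to conditioning on derivative bounds in \cite{fsh21}) that the fractional-posterior framework is designed to avoid. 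Sticking with the prior-mass-only theorem closes the argument and matches the stated error bound.
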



Theorem \ref{thmvs} gives a contraction of the $\rho$--posterior distribution at rate  $\veps_n$ around the true $f_0$, provided $n\veps_n^2$ is suitably large.  A more explicit expression of $\veps_n$ is given in the next Corollary. 

\begin{corollary}[Optimal $a^*$ and posterior rate]\label{cor: optim bandwidth and rate}
Optimising $a^*$ in \eqref{opteps}, leads to setting 
\begin{equation} \label{optastar}
 (a^*)^{2\be+d^*}=(B_1n)/(B_2 \log^{1+d^*}(n)) \vee 1.
\end{equation} 
Condition \eqref{opteps} then becomes, for $a^*$ as in \eqref{optastar},
\begin{align} 
64\xi^{-2} \ge \veps_n^2 & 
 \ge \left[ 
 B_3 (\log{n})^{\frac{2\be(1+ d^*)}{2\be+d^*}} 
n^{-\frac{2\be}{2\be+d^*}}\right] \vee \left[B_2\log^{1+d^*}(n)/n\right], \label{vepsfinal}
\end{align}
where 
$B_3 = K^2c_5^{d^*}(d^*)^{c_6d^*/(2\beta+d^*)}$,
  recalling $B_2=K^2 (c_3d^{*c_4})^{d^{*}}$, and $c_3,\ldots,c_6\ge 1$ are constants only depending on $\beta,\xi, \rho$. If additionally \eqref{condpr} holds (conditions for this are given below), then the $\rho$--posterior rate can be taken as the right-hand side of \eqref{vepsfinal}. 
\end{corollary}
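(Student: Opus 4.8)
The statement is an elementary one-parameter optimisation of the two-sided constraint \eqref{opteps} in $a^*$, followed by an appeal to Theorem \ref{thmvs}; no probabilistic argument enters until the very last step. First I would observe that in \eqref{opteps} the lower bound on $\veps_n^2$ is the maximum of the strictly decreasing map $a^*\mapsto B_1(a^*)^{-2\be}$ and the strictly increasing map $a^*\mapsto B_2(a^*)^{d^*}\log^{1+d^*}(n)/n$. The pointwise maximum of a decreasing and an increasing positive function is minimised at their crossing point, i.e. where $B_1(a^*)^{-2\be}=B_2(a^*)^{d^*}\log^{1+d^*}(n)/n$, which rearranges to $(a^*)^{2\be+d^*}=B_1 n/(B_2\log^{1+d^*}(n))$. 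Since Theorem \ref{thmvs} also demands $a^*\ge1$, one takes the maximum of this value with $1$, which is precisely \eqref{optastar}. Note that $a^*$ also enters the prior-mass condition \eqref{condpr}, but the corollary keeps \eqref{condpr} as a separate hypothesis, so only \eqref{opteps} is being optimised here.

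Next I would substitute \eqref{optastar} back into \eqref{opteps}, distinguishing two regimes. When $B_1 n/(B_2\log^{1+d^*}(n))\ge1$, the two terms of the maximum coincide at the optimal $a^*$, with common value $B_1^{d^*/(2\be+d^*)}B_2^{2\be/(2\be+d^*)}\bigl(\log^{1+d^*}(n)/n\bigr)^{2\be/(2\be+d^*)}$. Setting $B_3:=B_1^{d^*/(2\be+d^*)}B_2^{2\be/(2\be+d^*)}$ and inserting $B_1=c_1K^2c_2^{d^*}$ and $B_2=K^2(c_3 d^{*c_4})^{d^*}$, the exponent of $K$ equals $(2d^*+4\be)/(2\be+d^*)=2$, while the remaining constant is bounded — using $(d^*)^2/(2\be+d^*)\le d^*$ and $d^*/(2\be+d^*),\,2\be/(2\be+d^*)\in[0,1]$ — by $c_5^{d^*}(d^*)^{c_6 d^*/(2\be+d^*)}$ for constants $c_5,c_6\ge1$ depending only on $\be,\xi,\rho$ (one may take $c_6=2\be c_4$). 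In the complementary regime $a^*=1$ one has $B_1\le B_2\log^{1+d^*}(n)/n$, so the maximum in \eqref{opteps} is simply $B_2\log^{1+d^*}(n)/n$. Taking the maximum over the two regimes reproduces the lower bound of \eqref{vepsfinal}, while the upper bound $64\xi^{-2}$ is inherited unchanged from \eqref{opteps}.

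To conclude, with $a^*$ as in \eqref{optastar} (hence $a^*\ge1$) and $\veps_n$ set equal to the right-hand side of \eqref{vepsfinal}, I would verify the remaining hypotheses of Theorem \ref{thmvs}: $\veps_n^2\ge B_2\log^{1+d^*}(n)/n\ge1/n$ since $B_2\ge1$, and $\veps_n\le1$ for $n$ large, which is the sense in which the stated rate is asymptotic. Since \eqref{opteps} now holds by construction, Theorem \ref{thmvs} applies as soon as \eqref{condpr} holds for this particular $a^*$, and its conclusion is exactly contraction of the $\rho$--posterior at rate $M\veps_n$; as the theorem grants this for \emph{every} admissible $\veps_n$, one may retain the smallest one, namely the right-hand side of \eqref{vepsfinal}, as the posterior rate. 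The only mildly delicate point is the constant bookkeeping of the second paragraph — checking that the $K$-powers recombine to exactly $K^2$ and that $c_2^{(d^*)^2/(2\be+d^*)}$ and $(d^*)^{2\be c_4 d^*/(2\be+d^*)}$ really do fit the template $c_5^{d^*}(d^*)^{c_6 d^*/(2\be+d^*)}$ — everything else being one-variable calculus.
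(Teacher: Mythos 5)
Your proposal is correct and is exactly the computation the paper has in mind (the paper leaves the proof of this corollary implicit, as it is a direct balancing of the two terms in \eqref{opteps} followed by substitution and the constant bookkeeping you carry out, then an appeal to Theorem \ref{thmvs}). The only point worth noting is that in the balanced regime the first term of \eqref{vepsfinal} already dominates $B_2\log^{1+d^*}(n)/n$ because $a^*\ge 1$, so taking the maximum of the two terms is harmless — which you implicitly use and which confirms that \eqref{vepsfinal} is indeed sufficient for \eqref{opteps}.
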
 
In the asymptotic regime $n\to\infty$, for $\veps_n$ taken equal to the right hand-side of \eqref{vepsfinal}, it follows that $\veps_n\to 0$ and $n\veps_n^2\to \infty$, so that provided \eqref{condpr} holds, the posterior mass in the last display of Theorem \ref{thmvs} goes to $0$, and the posterior contracts to $f_0$ at rate $\veps_n$ asymptotically. For reasonable (i.e. fixed or growing slowly with $n$) values of $d^*$, the first term in \eqref{vepsfinal} dominates and, again under \eqref{condpr}, the resulting rate $\veps_n$ goes to $0$ with $n$. Next we investigate a few examples of priors for which \eqref{condpr} holds with a resulting $\veps_n$ given by \eqref{vepsfinal}.

The proof of Theorem \ref{thmvs} is based on 
considering an oracle process defined on the $d^*$ relevant dimensions. The rates can then carry over to the full prior thanks to Condition \eqref{condpr}, which ensures that the lengthscale prior $\pi$ tunes down irrelevant dimensions, so that the difference between the processes is small with high probability. These deviations are controlled via new dimension--dependent estimates of characteristics of the squared-exponential Gaussian process (via its RKHS), see Theorem \ref{concd} and Lemmas \ref{lemma:: approx rkhs bound}--\ref{lemma bound entropy unit ball rkhs} \cite{AnnexeDGP}, coupled with concentration of measures tools. This theorem is a main theoretical novelty of the paper: a lengthscale prior which puts large mass on small values allows to `freeze' irrelevant directions, so that the overall prior behaves like a smaller-dimensional one. 

The case $A\to 0$ corresponding to a \textit{freezing-of-paths} effect has not been studied so far to the best of our knowledge; although it is conceivable that a study of the RKHS of the \textsf{SqExp} process in the small $A$ regime (similar to the case $A\to\infty$ discussed in Section \ref{sec: key ingredients}) would also lead to a proof of Theorem \ref{thmvs}, the case $A\to 0$ proves to be quite challenging: for instance, one may note that constant functions do not belong to the RKHS of the squared-exponential process; one should then find the best possible rate of approximation of constants by elements of the RKHS. This is why, in the present work, we have followed a different route by directly comparing to an oracle process, as explained in the previous paragraph. 
\\

  
 
{\em Fixed dimensions.} Let us now examine the case where the dimensions $d, d^*$ are fixed, independent of $n$. We derive conditions on two natural priors: an exponential prior, as used e.g. in \cite{aadharry08} for adaptation to smoothness, and a horseshoe prior.
 
\begin{example}[Exponential prior with fixed scaling $\la$] \label{exa:exp}
Consider $\pi(a)=\la e^{-\la a}\1_{a>0}$ an exponential prior of parameter $\la>0$. A simple calculation, see Lemma \ref{lemexp} \cite{AnnexeDGP}, shows that \eqref{condpr} is verified if 
\begin{equation} \label{condexp}
n\veps_n^2 \ge (2/\rho)\left[d\log\left(\frac{16d\sqrt{\rho n}}{\xi \la} \right)+2\la d^*a^* +\log 2\right],
\end{equation}
as long as $\la\in[1/a^*, 8d\sqrt{\rho n }/\xi]$. As a particular case, for fixed $d, d^*, K, \la$, and $a^*$ as in \eqref{optastar}, this condition is automatically satisfied for large enough $n$ if \eqref{vepsfinal} holds.
\end{example}  
 
\begin{example}[Horseshoe prior with fixed parameter $\tau$] \label{exa:hs1}
 Consider $\pi=\pi_\ta$ a horseshoe prior of parameter $\tau>0$. Then  \eqref{condpr} is verified if, setting $e_0=2/(2\pi)^{3/2}$, 
 \begin{equation} \label{condhs}
n\veps_n^2 \ge (2/\rho)\left[d\log\left(\frac{8d\sqrt{\rho n}}{\xi e_0 \ta} \right)+ d^*\log(10a^*/\ta) + \log 2\right],
\end{equation}
as long as $\ta\in[\xi/(8d\sqrt{\rho n}),a^*]$, see Lemma \ref{lem-hs-fix} \cite{AnnexeDGP}. In particular, for fixed $d, d^*,K, \ta$, and $a^*$ as in \eqref{optastar}, this condition is automatically satisfied for large enough $n$  if \eqref{vepsfinal} holds.
\end{example}
To obtain \eqref{condhs}, one simply uses that the horseshoe density is bounded from below by a constant on the integration interval; this is sensible for a fixed $\tau$ but  can be significantly improved for small $\ta$, as seen in Corollary \ref{cor-hdhs}. Corollary \ref{corfinite} is a direct consequence of above examples and Corollary \ref{cor: optim bandwidth and rate}.

\begin{corollary}[Fixed dimensions] \label{corfinite}
In the setting of Theorem \ref{thmvs}, suppose the input dimension $d$ is fixed (independent of $n$). Let $\pi$ be either the exponential prior or the horseshoe prior with fixed (independent of $n$) respective parameters $\la$ and $\ta$. 
Then for large enough $M>0$ (depending on $\be,d^*$ only), as $n\to\infty$, 
 \[ \sup_{f_0\in \cF(K)} E_{f_0} \Pi_\rho[f:\ \norm{f-f_0}_{L^2(\mu)} \ge M\veps_n\given X,Y]\to 0, \]
 where $\veps_n$ is given by 
$\veps_n= (\log{n})^{\frac{2\be(1+ d^*)}{2\be+d^*}}  n^{-\frac{2\be}{2\be+d^*}}$. In particular, the posterior distribution achieves the minimax convergence rate up to a logarithmic factor. 
\end{corollary}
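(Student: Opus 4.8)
The plan is to derive Corollary~\ref{corfinite} as a straightforward consequence of Theorem~\ref{thmvs}, Corollary~\ref{cor: optim bandwidth and rate}, and Examples~\ref{exa:exp}--\ref{exa:hs1}. First I would fix the input dimension $d$ (hence also $d^*\le d$), and the Hölder parameters $\be,K$, all independent of $n$. Following Corollary~\ref{cor: optim bandwidth and rate}, I would choose $a^*=a_n^*$ as in \eqref{optastar}, namely $(a^*)^{2\be+d^*}=(B_1 n)/(B_2\log^{1+d^*}(n))\vee 1$, so that for $n$ large one has $a^*\asymp (n/\log^{1+d^*}n)^{1/(2\be+d^*)}\ge 1$, consistent with the requirement $a^*\ge 1$ in Theorem~\ref{thmvs}. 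Since $d,d^*,\be,K$ are fixed, all constants $B_1,B_2,B_3$ are genuine constants, so that the first term in \eqref{vepsfinal} dominates the second for $n$ large, and setting $\veps_n$ equal to the right-hand side of \eqref{vepsfinal} gives precisely $\veps_n\asymp (\log n)^{\frac{2\be(1+d^*)}{2\be+d^*}} n^{-\frac{2\be}{2\be+d^*}}$ up to the constant $B_3$ (absorbed into $M$). One checks that this $\veps_n$ satisfies the hypotheses of Theorem~\ref{thmvs}: $1/\sqrt n\le\veps_n\le 1$ for $n$ large (since $2\be/(2\be+d^*)<1$ forces $\veps_n\gg 1/\sqrt n$), and $\veps_n^2\le 64\xi^{-2}$ for $n$ large.

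Next I would verify the crucial prior-mass condition \eqref{condpr}. For the exponential prior this is handled by Example~\ref{exa:exp}: with $\la$ fixed, the admissibility window $\la\in[1/a^*,\,8d\sqrt{\rho n}/\xi]$ is satisfied for $n$ large (the lower endpoint $1/a^*\to 0$ and the upper endpoint $\to\infty$), and condition \eqref{condexp} reads $n\veps_n^2\gtrsim d\log(\sqrt n)+\la d^* a^*+\mathrm{const}$. Since $a^*\asymp (n/\log^{1+d^*}n)^{1/(2\be+d^*)}$ while $n\veps_n^2\asymp (\log n)^{\frac{2\be(1+d^*)}{2\be+d^*}} n^{\frac{d^*}{2\be+d^*}}$, and because $a^*=o(n\veps_n^2)$ (as $\frac{1}{2\be+d^*}<\frac{d^*}{2\be+d^*}$ when $d^*\ge 1$... more precisely one compares the exponents and the logarithmic factors, and $n^{d^*/(2\be+d^*)}$ beats $n^{1/(2\be+d^*)}$ for $d^*\ge 1$ with room to spare for the $\log\sqrt n$ term too), condition \eqref{condexp} holds for $n$ large. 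For the horseshoe prior, Example~\ref{exa:hs1} applies identically: the window $\ta\in[\xi/(8d\sqrt{\rho n}),\,a^*]$ is eventually satisfied, and \eqref{condhs} requires $n\veps_n^2\gtrsim d\log(\sqrt n)+d^*\log(a^*/\ta)+\mathrm{const}=O(\log n)$, which is dominated by $n\veps_n^2\to\infty$. Thus \eqref{condpr} holds for all $n$ large enough in both cases.

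Having checked all hypotheses, I would invoke Theorem~\ref{thmvs}: there exists $M=M(\rho,\xi)>0$ (then enlarged to absorb $\sqrt{B_3}$, which depends only on $\be,d^*$ for fixed $d$) such that
\[ \sup_{f_0\in\cF(K)} E_{f_0}\Pi_\rho\bigl[f:\ \norm{f-f_0}_{L^2(\mu)}\ge M\veps_n\given X,Y\bigr]\le \frac{1}{n\veps_n^2}+e^{-\rho n\veps_n^2}. \]
Since $\veps_n$ is the right-hand side of \eqref{vepsfinal}, we have $n\veps_n^2\to\infty$ as $n\to\infty$, so both terms on the right tend to $0$, which yields the claimed convergence. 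Finally, $\veps_n\asymp (\log n)^{\frac{2\be(1+d^*)}{2\be+d^*}} n^{-\frac{2\be}{2\be+d^*}}$ and the minimax rate over $\cF_{VS}(K,\be,d,d^*)$ is $n^{-\be/(2\be+d^*)}$, so $\veps_n$ equals the minimax rate up to a logarithmic factor, as stated.

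I do not expect any serious obstacle here, since the corollary is essentially a bookkeeping consequence of the preceding results in the fixed-dimension regime; the only point requiring mild care is the asymptotic comparison showing that the prior-mass bounds \eqref{condexp}--\eqref{condhs} are dominated by $n\veps_n^2$, i.e.\ that the term $\la d^* a^*$ (resp.\ $d^*\log(a^*/\ta)$) is negligible against $n\veps_n^2$, which follows from comparing the exponents $1/(2\be+d^*)$ and $d^*/(2\be+d^*)$ of $n$ together with the respective powers of $\log n$. Everything else — the verification of $1/\sqrt n\le\veps_n\le 1$, $a^*\ge 1$, $\veps_n^2\le 64\xi^{-2}$, and the admissibility windows for $\la$ and $\ta$ — holds for all sufficiently large $n$ by elementary estimates.
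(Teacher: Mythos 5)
Your proposal is correct and follows exactly the route the paper intends: the paper states that Corollary \ref{corfinite} is a direct consequence of Examples \ref{exa:exp}--\ref{exa:hs1} together with Corollary \ref{cor: optim bandwidth and rate}, and your write-up simply spells out the required checks (admissibility windows for $\la$ and $\ta$, domination of $\la d^* a^*$ and $d^*\log(a^*/\ta)$ by $n\veps_n^2$, and $n\veps_n^2\to\infty$). The asymptotic comparison you flag as the only delicate point is handled correctly, including the borderline case $d^*=1$ where the logarithmic factors decide.
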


An important consequence of Corollary \ref{corfinite} is that it is possible to derive a (near-)optimal rate adapting to the unknown number $d^*$ and coordinates of the active variables  with continuous priors, that is,  even without setting the scaling exactly to zero on certain coordinates (i.e. without performing a `hard model selection'). Even more surprisingly at first, such `soft model selection' is possible (at least with tempered posteriors) using a prior not putting a particularly large amount of mass near $0$, such as an exponential prior. In particular, simple priors on scalings such as exponentials or gamma distributions considered in \cite{aadharry08}  have prior mass permitting for `enough' variable selection in order for their (tempered)--posterior distribution to contract at near optimal rate, without using oracle knowledge of which coordinates are active or not. 

At this point it may seem as if effective variable selection can be achieved at no cost with just simple random scalings on coordinates. This is not (completely)  so, the reason being that the dependence on the input dimension $d$ in the convergence rate that arises from e.g. putting exponential priors on scalings is far from optimal. This can be seen from \eqref{condexp}, or similarly \eqref{condhs} for $\pi_\ta$ with fixed $\ta$, as follows. Recall that \eqref{condexp}--\eqref{condhs} are non-asymptotic conditions, so one may let $d, d^*$ depend on $n$. Suppose for instance $d=n^\delta$ for some $\delta>0$ and $d^*$ is fixed, say $d^*=1$ to fix ideas. Then \eqref{condexp} cannot hold with $\veps_n$ the lower bound in \eqref{vepsfinal}: indeed, the latter is up to a log-factor of order $n^{1/(2\be+1)}$, so is a $o(n^\delta)$ as soon as $\delta>1/(2\be+1)$,  which shows that in this setting the rate is suboptimal for large enough $\be$'s. 


The previous comments naturally make one wonder if 
effective variable selection is still feasible with a better dependence on dimensions with continuous scaling. The next section investigates this, in a setting where $d$ can go to $\infty$ with $n$.\\

 
{\em High-dimensional variable selection. 
} Let us now study the problem of inference for a small number of truly active covariates if $d$ is possibly allowed to depend on $n$. In the high-dimensional sparsity adaptation problem as in the first setting of Section \ref{subsec: model}, the work \cite{yangtokdar15} derives up to constants the minimax rate of estimation for the squared $\norm{\, \cdot\, }_{L^2(\mu)}$ loss which is 
 up to a constant factor {\em depending on} $K$, $\beta$ and $d^*$, 
\begin{equation*}
(\epsilon_n^*)^2 \asymp  n^{-\frac{2\beta}{2\beta+d^*}}+\frac{d^*}{n}\log(d/d^*).
\end{equation*}
The first term corresponds to the rate of estimation of a low-dimensional function $g\in\mathcal{C}^\beta[-1,1]^{d^*}$ and the second is the rate for the variable selection problem. Under Condition \eqref{dims} below, the first term dominates.
Note however that, as the dependence of the constants in $d^*$ is not explicit in the last display, this result allows for $d\to\infty$ and a fixed $d^*$ but not  both $d^*, d$ going to infinity.

Let us consider the sparse high-dimensional setting where $d$ can go to infinity;  we also allow the effective dimension $d^*$ to slowly grow with $n$: more precisely for some $\delta<1/2$ and $C_1, C_2>0$ suppose
\begin{equation} \label{dims}
1\le d^* \le (\log{n})^{1/2-\delta},\qquad 1\le d^*\le d\le C_1n^{C_2}.
\end{equation}
One may hope  to obtain a convergence rate that depends on the effective dimension $d^*$ only, not on $d$. In Appendix \ref{app-rate-d} \cite{AnnexeDGP}, Corollary \ref{cor-vs}, we derive a lower bound result that shows that under mild conditions on the design distribution, and if the radius of the considered H\"older ball is not too large, the minimax rate for the integrated quadratic risk is bounded below by $C_2D_2^{d^*}n^{-2\be/(2\be+d^*)}$, for constants $C_2, D_2$ independent of $d^*$. We show below that this rate can be achieved by a well-chosen horseshoe GP in the regime \eqref{dims}, up to a slowly-varying term $D_3^{d^*}(\log{n})^c$. To do so, one first determines a horseshoe scaling parameter $\tau$ for which condition \eqref{condpr} holds, and then we state the sparse high-dimensional result as Corollary \ref{cor-hdhs} (more details on optimality can be found below in Appendix \ref{app-rate-d} \cite{AnnexeDGP}, Corollary \ref{cor-vs}).



\begin{example}[Horseshoe prior with vanishing parameter $\tau$] \label{exa:hs2}
Consider $\pi=\pi_\ta$ a horseshoe prior of parameter $\tau>0$. Then  \eqref{condpr} is verified for  large enough $n$ if
\begin{equation}
10a^*e^{-n\rho\varepsilon_n^2/2d^*}  \le \tau \le 
\frac{\xi}{d^2}\frac{1}{\sqrt{\rho n}}.
\end{equation}
For $a^*, \veps_n$ as in \eqref{optastar}--\eqref{vepsfinal}, the last display holds for large enough $n$ and fixed $K$ (or more generally $K\le C^{d^*}$ for some $C>1$) if one sets for some $\delta>0$
\begin{equation*} 
\ta = \ta^*:=(n^{1+\delta} d^4)^{-1/2}.
\end{equation*}
For a proof of both claims, see Lemma \ref{lem-hs-van} \cite{AnnexeDGP}.
\end{example}  
  
 \begin{corollary}[High-dimensional horseshoe GP]\label{cor-hdhs}
 In the setting of Theorem \ref{thmvs}, suppose $d^*,d$ verify Condition \eqref{dims}.  
Let $\Pi$ be the multibandwidth prior \eqref{eq: gp prior dilatation} with horseshoe scaling density $\pi_{\ta^*}$ and $\ta^*=(n^{1+\delta} d^4)^{-1/2}$, $\delta>0$. Then, for $K\ge 1$, there exists $M=M(\xi,\rho)>0$ such that 
 \[ \sup_{f_0\in \cF(K)} E_{f_0} \Pi_\rho[f:\ \norm{f-f_0}_{L^2(\mu)} \ge M\veps_n\given X,Y]
 \to 0, 
 \]   
 as $n\to\infty$   
 where, for some constant $C$ that depends on $\beta, \xi,\rho$ only,
  \[ \veps_n^2 = K^2C^{d^*}
 (\log{n})^{\frac{2\be(1+ d^*)}{2\be+d^*}} 
n^{-\frac{2\be}{2\be+d^*}}. \] 
In particular, for $K^2\le C^{d^*}$, the rate $\veps_n^2$ is of order $n^{-2\be/(2\be+d^*)}$, up to a smaller order term at most of order $C^{2d^*}(\log{n})^{\frac{2\be(1+ d^*)}{2\be+d^*}}$. 
 \end{corollary}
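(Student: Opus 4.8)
The plan is to deduce Corollary~\ref{cor-hdhs} by simply combining Example~\ref{exa:hs2} with Corollary~\ref{cor: optim bandwidth and rate} and Theorem~\ref{thmvs}, being careful about which terms dominate in the regime \eqref{dims}. First I would recall that Theorem~\ref{thmvs} yields $\rho$--posterior contraction at rate $\veps_n$ provided one can exhibit $a^*\ge 1$ and $\veps_n\in[1/\sqrt n,1]$ satisfying the lengthscale condition \eqref{condpr} together with the smoothness/bias balance \eqref{opteps}. Corollary~\ref{cor: optim bandwidth and rate} already identifies the optimal choice of $a^*$ in \eqref{optastar} and the resulting value of $\veps_n^2$ in \eqref{vepsfinal}, namely
\[
\veps_n^2 = \Big[ B_3 (\log n)^{\frac{2\be(1+d^*)}{2\be+d^*}} n^{-\frac{2\be}{2\be+d^*}}\Big] \vee \Big[ B_2 \log^{1+d^*}(n)/n\Big],
\]
with $B_2=K^2(c_3 d^{*c_4})^{d^*}$ and $B_3=K^2 c_5^{d^*}(d^*)^{c_6 d^*/(2\be+d^*)}$. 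So the only thing left to check is that condition \eqref{condpr} can indeed be met with a horseshoe scaling prior $\pi_{\ta^*}$ in the high-dimensional regime, and that the two terms in the displayed $\veps_n^2$ both collapse into the stated form $K^2 C^{d^*}(\log n)^{\frac{2\be(1+d^*)}{2\be+d^*}} n^{-2\be/(2\be+d^*)}$.

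For the first point I would invoke Example~\ref{exa:hs2}: with $\ta^*=(n^{1+\delta}d^4)^{-1/2}$, Lemma~\ref{lem-hs-van} guarantees that for $n$ large and $K$ fixed (or $K\le C^{d^*}$) the sandwich
\[
10a^* e^{-n\rho\veps_n^2/2d^*} \le \ta^* \le \frac{\xi}{d^2}\frac{1}{\sqrt{\rho n}}
\]
holds, hence \eqref{condpr} is satisfied. Here the constraint $d\le C_1 n^{C_2}$ from \eqref{dims} is what keeps $\ta^*$ from becoming too small relative to the lower cut-off $\xi/(8d\sqrt{\rho n})$ in \eqref{condpr}, and $d^*\le(\log n)^{1/2-\delta}$ is what ensures $n\rho\veps_n^2/(2d^*)$ diverges fast enough that the exponential lower bound $10a^* e^{-n\rho\veps_n^2/2d^*}$ drops below $\ta^*$; I would spell out this comparison, noting $a^*$ is only polynomial in $n$ (from \eqref{optastar}) while the exponential term decays super-polynomially because $n\veps_n^2/d^* \gtrsim n^{2\be/(2\be+d^*)}/(\log n)^{1/2-\delta}$, which grows faster than any power of $\log n$.

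Next, to collapse $\veps_n^2$ into the advertised shape, I would observe that under \eqref{dims} the first term dominates: indeed $B_3(\log n)^{\frac{2\be(1+d^*)}{2\be+d^*}}n^{-2\be/(2\be+d^*)}$ versus $B_2\log^{1+d^*}(n)/n$ — dividing, the ratio is $\asymp (B_3/B_2)(\log n)^{-\text{(something)}} n^{1-2\be/(2\be+d^*)} = (B_3/B_2)(\cdots) n^{d^*/(2\be+d^*)}$, which tends to infinity since $d^*\ge 1$ and the $d^*$-dependence of $B_2,B_3$ is only of the form $C^{d^*}(d^*)^{cd^*}$ with $d^*=O((\log n)^{1/2})$, so $\log(B_3/B_2) = O(d^*\log d^*) = O((\log n)^{1/2}\log\log n) = o(\log n) = o\big(\tfrac{d^*}{2\be+d^*}\log n\big)$ — wait, one must be slightly careful when $d^*$ is as large as $(\log n)^{1/2-\delta}$, but then $\frac{d^*}{2\be+d^*}\log n \gtrsim \frac{(\log n)^{1/2-\delta}}{2\be}\log n = (\log n)^{3/2-\delta}$ which still dominates $d^*\log d^* = O((\log n)^{1/2}\log\log n)$; so the first term always wins. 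Finally I would absorb $(d^*)^{c_6 d^*/(2\be+d^*)}\le (d^*)^{c_6}$ (since $d^*/(2\be+d^*)\le 1$) and then $(d^*)^{c_6}\le C^{d^*}$ for a suitable absolute $C$ because $d^*\ge 1$, so $B_3 = K^2 c_5^{d^*}(d^*)^{c_6 d^*/(2\be+d^*)} \le K^2 C^{d^*}$, giving $\veps_n^2 = K^2 C^{d^*}(\log n)^{\frac{2\be(1+d^*)}{2\be+d^*}}n^{-2\be/(2\be+d^*)}$ as claimed, with the last sentence (order $n^{-2\be/(2\be+d^*)}$ up to a term at most $C^{2d^*}(\log n)^{\frac{2\be(1+d^*)}{2\be+d^*}}$ when $K^2\le C^{d^*}$) being immediate from this expression. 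The last sentence of Theorem~\ref{thmvs}'s bound, $\tfrac{1}{n\veps_n^2}+e^{-\rho n\veps_n^2}$, then tends to $0$ because $n\veps_n^2\to\infty$ in this regime, yielding the stated convergence of the expected posterior mass to $0$.

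The main obstacle I anticipate is not any single deep step — everything reduces to earlier results — but rather the bookkeeping of the $d^*$-dependent constants $B_1,B_2,B_3$ through the optimisation \eqref{optastar} and the verification that, simultaneously, (i) $a^*$ as defined there is $\ge 1$, (ii) $\veps_n\le 1$ (needs $B_3 (\log n)^{\cdots}\le n^{2\be/(2\be+d^*)}$, i.e. $C^{d^*}\le n^{2\be/(2\be+d^*)}$ up to logs, which holds since $d^*=O((\log n)^{1/2})$), and (iii) the Example~\ref{exa:hs2} sandwich for $\ta^*$ is compatible with the admissible range $[\xi/(8d\sqrt{\rho n}),a^*]$ — all three must hold in the same parameter regime \eqref{dims}, and keeping the chain of inequalities uniform over $K\le C^{d^*}$ and over $d\le C_1 n^{C_2}$ is where the care is needed.
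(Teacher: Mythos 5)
Your proposal is correct and takes essentially the same route as the paper, which likewise obtains Corollary \ref{cor-hdhs} by feeding the horseshoe verification of \eqref{condpr} from Example \ref{exa:hs2} (Lemma \ref{lem-hs-van}) into Theorem \ref{thmvs} with the optimised $a^*$ and rate of Corollary \ref{cor: optim bandwidth and rate}, then absorbing $B_2,B_3$ into $K^2C^{d^*}$ under regime \eqref{dims}. One harmless slip: your intermediate bound $\frac{d^*}{2\be+d^*}\log n\gtrsim\frac{d^*}{2\be}\log n$ goes the wrong way for large $d^*$; the correct (and sufficient) bound is $\frac{d^*}{2\be+d^*}\log n\ge\frac{\log n}{2\be+1}$, which still dominates $d^*\log d^*=O((\log n)^{1/2}\log\log n)$, so your conclusion that the first term in \eqref{vepsfinal} dominates stands.
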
  
 The rate $\veps_n^2$ obtained in Corollary \ref{cor-hdhs} is optimal in the minimax sense up to the smaller order term $K^2C^{d^*}$ (up to a log factor). 
As mentioned above, as long as $K$ does not grow faster than $C_4^{d^*}$ this is a slower order term compared to the main term $n^{-2\be/(2\be+d^*)}$ (in regime \eqref{dims}) in the minimax lower bound from Corollary \ref{cor-vs} \cite{AnnexeDGP}: in this case the rate is minimax optimal up to a slower order term $C_5^{d^*}$. We also note that a growth in $C^{d^*}$ for the radius $K$ of the H\"older ball is typical for functions in H\"older spaces of dimension $d^*$, see Appendix \ref{app:hold} \cite{AnnexeDGP}, where this is checked for functions of product form.

The idea behind Corollary \ref{cor-hdhs} is that for small values of the parameter $\tau$, the horseshoe distribution becomes very `sparse' in the sense that most nonzero values are very close to $0$: this is reminiscent of the high-dimensional statistics literature for sparse models, see e.g. \cite{pashs14, pasdisc17}, where near-optimal posterior rates for horseshoe posteriors are derived in sparse settings. We now turn to a deep learning setting, where the prior process is allowed to have several Gaussian compositional layers.


\section{Main results II: deep simultaneous adaptation to structure and smoothness}    \label{sec:mainres2}
   
\subsection{Multilayer setting: Deep Horseshoe GP} \label{sec:deep}
We now consider the problem of adaptation to an unknown compositional structure, first in the fixed dimensional case.  The following result shows that, assuming the regression function can be expressed as a composition \eqref{compo}, such adaptation can be achieved with a prior mimicking this structure and organizing Gaussian processes in layers.
In particular, in the \textsf{Deep--HGP} prior, the distribution on the scalings of the individual Gaussian processes allows for adaptation to the regularity as we have seen above, but also adaptation to a sparse network of compositions.

 \begin{theorem}\label{theorem: posterior contraction rates hdgp}
 Let $\lambda=(q,d_1,\dots,d_q,t_0,\dots,t_q)$, $\mathbb{\beta}=(\beta_0,\dots,\beta_{q})$, $d\geq1$, $K\geq1$ and suppose $f_0\in \mathcal{F}_{\text{deep}}(\lambda,\mathbb{\beta},K)$.  Let $\Pi$ be the \textsf{Deep--HGP} prior \eqref{eq: deep gp prior dilatation} with fixed parameters $\tau_i>0$.
 Then, for any $0<\rho<1$, $\Pi_\rho[\cdot\given X,Y]$ contracts to $f_0$ at the rate \[\veps_n = \max_{i=0,\dots,q} (\log{n})^{\frac{\alpha_i\be_i(1+ t_i)}{2\alpha_i\be_i+t_i}} 
n^{-\frac{\alpha_i\be_i}{2\alpha_i\be_i+t_i}} \] in $\norm{\,\cdot\,}_{L^2(\mu)}$ distance, where $\alpha_i = \prod_{l=i+1}^q (\beta_l\wedge 1)$: for any $M_n \to \infty$
  \[ E_{f_0} \Pi_\rho[f:\ \norm{f-f_0}_{L^2(\mu)} \geq M_n \varepsilon_n\given X,Y]\to 0. \]
The same conclusion holds for the prior \textsf{Deep--HGP}$(q_{max},d_{max})$ as in \eqref{eq:dgpmax}, provided the parameters of $\la$ verify $q\le q_{max}$ and $d_i\le d_{max}$ for all $i\le d_{max}$. 
 \end{theorem}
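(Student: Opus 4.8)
The plan is to follow the general recipe for posterior contraction via the concentration function (the Ghosal--Ghosh--van der Vaart / Kleijn--van der Vaart machinery, adapted to $\rho$--posteriors), but applied recursively layer-by-layer to the composition, and -- crucially -- to replace each individual \textsf{HGP} layer by a lower-dimensional \emph{oracle} Gaussian process defined only on the active coordinates $\mathcal{S}_{ij}$ of $h_{ij}$. First I would reduce the problem to a prior mass / entropy / testing triple in the spirit of \eqref{eq: concentration function}: (i) a prior mass lower bound of the form $\Pi[\|f-f_0\|_\infty \le \veps_n] \ge e^{-cn\veps_n^2}$, (ii) construction of sieves $\mathcal{F}_n$ with $\Pi[\mathcal{F}_n^c]\le e^{-Cn\veps_n^2}$ and controlled $\veps_n$-metric entropy, and (iii) exponentially powerful tests separating $f_0$ from $\{f:\|f-f_0\|_{L^2(\mu)}\ge M\veps_n\}$; for $\rho$--posteriors the testing step is replaced by the simpler argument using the affinity / Rényi divergence, so steps (i) and (ii) carry essentially all the weight (this is presumably why the authors temper the posterior).

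For the prior mass bound I would exploit the compositional structure: writing $f = g_q\circ\cdots\circ g_0$ and $f_0 = h_q\circ\cdots\circ h_0$, a Lipschitz-type telescoping bound gives $\|f-f_0\|_\infty \lesssim \sum_i (\prod_{l>i}L_l)\, \max_j \|g_{ij}-h_{ij}\circ(\text{earlier layers})\|_\infty$, so it suffices to lower bound, for each layer $i$, the \textsf{HGP}$(\tau_i)$ probability that its coordinate process is within $\veps_{n,i}$ of the relevant $\beta_i$--Hölder function $h_{ij}$ on $t_i$ active variables. Here I would invoke exactly the single-layer analysis already developed: Theorem~\ref{thmvs} together with Example~\ref{exa:hs1} (fixed $\tau$) shows that for the right choice of local rate $\veps_{n,i} \asymp (\log n)^{\frac{\beta_i(1+t_i)}{2\beta_i+t_i}} n^{-\frac{\beta_i}{2\beta_i+t_i}}$, the \textsf{HGP} layer puts mass $\ge e^{-cn\veps_{n,i}^2}$ near $h_{ij}$ while freezing the $d_i - t_i$ irrelevant coordinates; the concentration-function bound of Theorem~\ref{concd} (the dimension-dependent RKHS estimates) is the quantitative engine. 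One then composes the smoothness exponents: because $\|g_{i}\circ\cdots - h_i\circ\cdots\|$ incurs a factor governed by the Hölder-$(\beta_l\wedge 1)$ moduli of the later layers, the effective local rate for layer $i$ becomes $n^{-\alpha_i\beta_i/(2\alpha_i\beta_i+t_i)}$ with $\alpha_i = \prod_{l>i}(\beta_l\wedge 1)$, and $\veps_n = \max_i \veps_{n,i}$ balances all of them. The priors $\Pi_q$, $\Pi_d$ contribute only a constant ($e^{-c}$, since $\Pi_q[q]\Pi_d[d_1,\dots,d_q\mid q]>0$ for the true values), which is absorbed; for the fixed-architecture variant \textsf{Deep--HGP}$(q_{max},d_{max})$ the extra ``dummy'' layers and coordinates beyond the true $q,d_i$ are handled by the same freezing-of-paths mechanism (a dummy GP layer close to a coordinate projection, or an identity-like map, contributes a further $e^{-cn\veps_n^2}$ factor), which is why one only needs $q\le q_{max}$, $d_i\le d_{max}$.

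For the sieve and entropy step I would take $\mathcal{F}_n$ to be (a composition of) the standard Gaussian sieves $\{g : g = \text{RKHS part} + \text{small}\}$ intersected with the event that all scalings $A_{ij}$ lie in a bounded range $[\tau_i/(8d\sqrt{\rho n}),\, M]$ — the horseshoe tails give $\Pi[A_{ij} > M] \le e^{-cn\veps_n^2}$ for $M$ a suitable power of $n$, and the pole at $0$ is harmless — and then bound the metric entropy of the composition by a sum of the per-layer entropies of rescaled \textsf{SqExp} balls, using Lemma~\ref{lemma bound entropy unit ball rkhs} \cite{AnnexeDGP} and the Lipschitz composition bound again. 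The main obstacle, I expect, is the \emph{bookkeeping of dimension-dependent constants through the composition}: each layer contributes multiplicative constants of the form $K^2 (c\, t_i^{c'})^{t_i}$ (the $B_1, B_2$ of Theorem~\ref{thmvs}), and one must check that composing $q$ layers keeps these subordinate to the polynomial-in-$n$ main terms and that the Lipschitz constants $L_l$ (coming from the $\mathcal{C}^{\beta_l}$ norms, hence $\le K$) do not blow up — in the fixed-dimensional regime this is routine but tedious, and it is the part of the argument where the recursive structure and the precise form of the local rates $\veps_{n,i}$ must be reconciled so that $\max_i \veps_{n,i}$ indeed serves as a valid global rate in both $\|\cdot\|_\infty$ (for the prior mass / sieve) and $\|\cdot\|_{L^2(\mu)}$ (for the final statement). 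Modulo that bookkeeping, the conclusion for both \textsf{Deep--HGP} and \textsf{Deep--HGP}$(q_{max},d_{max})$ follows from the standard $\rho$--posterior contraction theorem.
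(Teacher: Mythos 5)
Your core argument --- reduce to a supnorm prior-mass lower bound, telescope the composition into per-layer errors raised to the powers $\alpha_i$, and lower-bound each layer's \textsf{HGP} mass by comparing to an oracle \textsf{SqExp} process on the $t_i$ active coordinates while freezing the remaining $d_i-t_i$ directions, with Theorem \ref{concd} as the quantitative engine --- is exactly the paper's proof. The weights $\Pi_q[q]\,\Pi_d[\cdot\,|\,q]$ are indeed absorbed as constants, and the fixed-architecture variant is handled by padding $f_0$ with identity layers of arbitrarily large smoothness, which matches your ``dummy layer'' intuition.

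The one substantive problem is your step (ii). For $\rho$--posteriors with $\rho<1$, the contraction result used here (Theorem \ref{bat_result}, from \cite{alik22}) requires \emph{only} the prior mass condition \eqref{eq: supball condition}: there is no sieve, no remaining-mass bound $\Pi[\mathcal{F}_n^c]\le e^{-Cn\veps_n^2}$, and no entropy control anywhere in the argument. Your parenthetical guess (``this is presumably why the authors temper the posterior'') is correct, but you then retain the sieve step anyway and devote a paragraph to it. This is not harmless padding. Constructing a sieve of controlled $\veps_n$-entropy capturing all but $e^{-Cn\veps_n^2}$ of the mass of a deep GP with continuous, unbounded scalings is precisely the obstruction that forces \cite{fsh21} to condition sample paths on derivative bounds, and the paper states that for the unconditioned \textsf{Deep--HGP} prior such a construction presently seems out of reach; truncating the scalings to a range $[\ta_i/(8d\sqrt{\rho n}),M]$ does not by itself yield a per-layer entropy bound of order $n\veps_n^2$ for the composed process, because the sample paths are not confined to rescaled RKHS balls. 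Once the sieve paragraph is deleted, the ``bookkeeping'' you worry about reduces to checking that the per-layer targets $\veps_n(i)^{2\alpha_i}$ satisfy the analogues of \eqref{condieps}--\eqref{condiepsb} and that $\prod_k\pi_{\ta_i}(I_k)\ge e^{-\rho n\veps_n(i)^{2\alpha_i}/2}$, which for fixed $q$, $d_i$, $\ta_i$ is routine. So your outline is correct after removing the entropy step; as written, it rests on a step that is both unnecessary and, for this prior, likely impossible to carry out.
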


Theorem \ref{theorem: posterior contraction rates hdgp} shows that the fractional posterior attains the minimax rate of convergence  of contraction  \eqref{targetrate} over the class $\mathcal{F}_{\text{deep}}(\lambda,\beta,K)$ up to the logarithmic factor $\log^{\gamma} n$, $\gamma=\max_{i=0,\dots,q}\ 2\alpha_i\be_i(1+ t_i)/(2\alpha_i\be_i+t_i)$. For simplicity, we only considered the situation where inverse-bandwidths are distributed as horseshoe random variables. As above in the fixed $d$ setting with GPs, one can derive similar results for exponential priors on scalings. However, given the benefits of the horseshoe prior in high-dimensional settings (see also below), we focus on this choice.

Theorem \ref{theorem: posterior contraction rates hdgp} can be compared to Theorem 2 of \cite{fsh21}, providing rates for a deepGP construction over a compositional functional class. The present result on the \textsf{Deep--HGP} prior now shows that adaptation to the structure can be achieved with $\rho$--posteriors without imposing a hyperprior on all the parameters describing the structure. Even if a prior can still be put on the depth and width of the composition, as in the first part of the statement,  it is enough to chose these deterministically under the mild condition above. As in our results of Section \ref{sec:shallowvs} on (single-layer) GPs, instead of imposing a `hard' selection of relevant variables on each layer, a continuous distribution on the lengthscales, with sufficient mass on small values, is enough for {\em simultaneous} adaptation to smoothness and  sparse compositional structure.
The proof of Theorem \ref{theorem: posterior contraction rates hdgp} can be found in Section \ref{proofthm3} for random $q,d$ (see Appendix  \ref{app: fixed dmax and qmax} \cite{AnnexeDGP} for the version for deterministic $q,d$).

\begin{remark}[Benign overfitting]\label{rk: fixed depth and width}
The prior with deterministic $q,d$ in the last part of the statement of Theorem \ref{theorem: posterior contraction rates hdgp} closely matches current practice in deep GPs implementation: recent works show that a depth of just a few layers already enables important gains compared to traditional (single-layer, or `shallow') GPs, see e.g. \cite{wade23} ($q=2$), \cite{zhao_et_al_21} ($q=2$ and $q=3$), \cite{saueretal23} ($q=2$). Interestingly, our result shows that `overestimating' $q$ and $d$ (i.e. choosing $q_{max}, d_{max}$ `too large' in the above statement) does not prevent one to still obtain an {\em adaptive} rate (i.e. knowledge of `true' $q,d$ is not needed). In this sense we see that a form of {\em benign overfitting} is at play, with an overfitted architecture specified by $(q_{max},d_{max})$ still leading to the optimal minimax rate with adaptation both to smoothness and structure.
\end{remark}

In view of Corollary \ref{cor-hdhs}, one naturally wonders if the \textsf{Deep--HGP} prior is able to perform adaptation to the structure and the regularity in a high-dimensional framework as well. More precisely, suppose $f_0:\left[-1,1\right]^d \to \RR$ belongs to $\mathcal{F}_{\text{deep}}(\lambda,\mathbb{\beta},K)$  with $d=d_0$ and $t_0$ possibly depending on $n$. As in \eqref{dims}, we suppose $t_0,d_0$ verify, for some $\delta<1/2$ and $C_1, C_2>0$,
\begin{equation} \label{dims2}
1\le t_0 \le (\log{n})^{1/2-\delta},\qquad 1\le t_0\le d_0=d\le C_1n^{C_2}.
\end{equation}
The next result shows that letting the horseshoe scale parameter $\tau_0$ of the first layer in the composition vanish in the \textsf{Deep--HGP} prior, while keeping the other scale parameters $\tau_i, i=1,\ldots q$ across the other layers, constant, is enough to still obtain a near-minimax rate of contraction for the fractional posterior. Choosing $\ta_0$ appropriately small (although independent of the true unknown $t_0$) allows one to obtain sparsity on the first layer, mitigating the effect of the growing input dimension $d$.

 \begin{theorem}\label{theorem: posterior contraction rates hdgp 2}
 Under the same assumptions as in Theorem \ref{theorem: posterior contraction rates hdgp}, now suppose $t_0$ and $d_0$ satisfy Condition \eqref{dims2}, and that $d_1,\ldots, d_q$ are fixed, non $n$--dependent integers. 
Let $\Pi$ be the \textsf{Deep--HGP} prior with $\tau_0=d^{-2}n^{-1}$ and $\tau_i>0, i=1,\ldots, q$ be fixed.
For any $0<\rho<1$, there exists $c_1,c_2$ depending on $\beta_0$ and $\rho$ such that the $\rho$--posterior contracts to $f_0$ at the rate \[
\veps_n^2 = \left[c_1K^2c_2^{t_0}
 (\log{n})^{\frac{2\be_0\alpha_0(1+ t_0)}{2\be_0\alpha_0+t_0}} 
n^{-\frac{2\be_0\alpha_0}{2\be_0\alpha_0+t_0}}\right] \vee \max_{i=1,\dots,q} (\log{n})^{\frac{2\alpha_i\be_i(1+ t_i)}{2\alpha_i\be_i+t_i}} 
n^{-\frac{2\alpha_i\be_i}{2\alpha_i\be_i+t_i}}\] in $\norm{\,\cdot\,}_{L^2(\mu)}$ distance, that is, for any $M_n\to\infty$,
 \[ E_{f_0} \Pi_\rho[f:\ \norm{f-f_0}_{L^2(\mu)} \geq M_n \veps_n \given X,Y]\to 0. 
 \]
 \end{theorem}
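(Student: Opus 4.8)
The plan is to mirror the proof of Theorem \ref{theorem: posterior contraction rates hdgp}, using the same layer-by-layer decomposition of the composition, but handling the first layer $W_0$ with the high-dimensional horseshoe argument from Corollary \ref{cor-hdhs} (Example \ref{exa:hs2}) while treating the remaining layers $W_1,\ldots,W_q$ exactly as in the fixed-dimensional case. The general strategy for deepGP contraction in this line of work is to verify the usual testing/prior-mass conditions for fractional posteriors: one needs a lower bound $\Pi[\,\|f-f_0\|_\infty \le \veps_n\,]\ge e^{-cn\veps_n^2}$ and a sieve of controlled entropy carrying most of the prior mass. For a composition $f_0=h_q\circ\cdots\circ h_0$ with a matching composition $f=W_q\circ\cdots\circ W_0$ of (constrained) GPs, one writes the telescoping bound
\[
\|W_q\circ\cdots\circ W_0 - h_q\circ\cdots\circ h_0\|_\infty \le \sum_{i=0}^q L_i\,\|W_i - h_i\|_{\infty,\text{(on the relevant range)}},
\]
where $L_i$ collects Lipschitz/Hölder moduli of the outer maps $h_{i+1},\ldots,h_q$ (this is where the $\alpha_i=\prod_{l>i}(\beta_l\wedge 1)$ exponents enter). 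It then suffices to make each $\|W_i-h_{ij}\|_\infty$ of order a layer-specific radius, whose effective contribution to the overall rate is $(\,\text{layer-}i\text{ rate}\,)^{\alpha_i}$.

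The key steps, in order. First, fix the target $\veps_n$ as the displayed maximum and split $\veps_n\ge \veps_n^{(0)}\vee\max_{i\ge1}\veps_n^{(i)}$ with $\veps_n^{(0)}$ the high-dimensional first-layer rate and $\veps_n^{(i)}$ (for $i\ge1$) the fixed-dimensional rates already used in Theorem \ref{theorem: posterior contraction rates hdgp}. Second, for the first layer, invoke Example \ref{exa:hs2}/Lemma \ref{lem-hs-van} \cite{AnnexeDGP}: with $\tau_0=d^{-2}n^{-1}$ (which satisfies the required upper bound $\xi/(d^2\sqrt{\rho n})$ up to constants, and the lower bound $10a^*e^{-n\rho\veps_n^2/2t_0}\le\tau_0$ holds for large $n$ since $t_0\le(\log n)^{1/2-\delta}$ makes $n\veps_n^2/t_0\to\infty$ fast enough), condition \eqref{condpr} holds for the first-layer prior $\pi_{\tau_0}$ at radius $\veps_n^{(0)}$ with effective dimension $t_0$. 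This gives, via the freezing-of-paths mechanism behind Theorem \ref{thmvs} (comparison to a $t_0$-dimensional oracle \textsf{SqExp} process on the active coordinates $\cS_{0j}$), the concentration-function bound $\varphi_{h_{0j}}(\veps_n^{(0)})\lesssim n(\veps_n^{(0)})^2$ for each coordinate $h_{0j}$, $j=1,\ldots,d_1$; since $d_1$ is fixed, summing over $j$ only costs a constant factor. Third, for layers $i\ge1$, reuse verbatim the fixed-dimensional concentration-function bounds from the proof of Theorem \ref{theorem: posterior contraction rates hdgp} (these use fixed $\tau_i$, fixed $d_i$, and the dimension-dependent \textsf{SqExp} estimates of Theorem \ref{concd} and Lemmas \ref{lemma:: approx rkhs bound}--\ref{lemma bound entropy unit ball rkhs} \cite{AnnexeDGP}). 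Fourth, combine: using the telescoping Lipschitz bound and the fact that $\Psi$ is $1$-Lipschitz (so constraining sample paths to $[-1,1]$ is harmless), one gets $\Pi[\|f-f_0\|_\infty\le \veps_n]\ge e^{-cn\veps_n^2}$; the sieve is the product over layers of the sieves from the single-layer constructions (its entropy is controlled by the dimension-dependent RKHS entropy bound of Lemma \ref{lemma bound entropy unit ball rkhs} \cite{AnnexeDGP}, and here the only layer with growing dimension is layer $0$, whose sieve complexity is already tracked in Corollary \ref{cor-hdhs}), and the prior mass outside the sieve is exponentially small by the horseshoe tail estimates. Finally, apply the standard fractional-posterior contraction theorem (as in the proof of Theorem \ref{thmvs}) to conclude $E_{f_0}\Pi_\rho[\|f-f_0\|_{L^2(\mu)}\ge M_n\veps_n\mid X,Y]\to0$, using $\|\cdot\|_{L^2(\mu)}\le\|\cdot\|_\infty$.

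The main obstacle is bookkeeping the interaction between the growing first-layer dimension $d=d_0$ and the composition: one must check that the $C^{t_0}$-type constants from the high-dimensional single-layer analysis (Corollary \ref{cor-hdhs}) propagate through the $q$ outer Hölder compositions without being amplified into something depending on $d$, and that the choice $\tau_0=d^{-2}n^{-1}$ is simultaneously compatible with \eqref{condpr} at the first-layer radius $\veps_n^{(0)}$ \emph{and} with the sieve/prior-mass-outside-sieve estimates when $d$ is as large as $C_1n^{C_2}$. Concretely, the delicate point is that freezing the $d_0-t_0$ irrelevant first-layer directions must be uniform over the $d_1$ coordinate functions $h_{0j}$ and must leave a residual error that is genuinely $o(\veps_n^{(0)})$ after being passed through the Lipschitz factor $L_0\asymp\prod_{l\ge1}\|h_l\|_{\beta_l\wedge1}$; since $L_0$ may itself grow like $C^{t_0}$ (product-form Hölder functions, cf. Appendix \ref{app:hold} \cite{AnnexeDGP}) but not with $d$, this is exactly the regime where Condition \eqref{dims2}, $t_0\le(\log n)^{1/2-\delta}$, is used — it guarantees $C^{t_0}=n^{o(1)}$ is absorbed into the logarithmic slack, while $d\le C_1n^{C_2}$ only enters through $\log d$ terms that are dominated once $\tau_0$ is chosen polynomially small in $d$. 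Everything else is a routine adaptation of the arguments already developed for Theorems \ref{thmvs} and \ref{theorem: posterior contraction rates hdgp}.
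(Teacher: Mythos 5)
Your layer-by-layer prior-mass argument — treating layer $0$ with the vanishing-$\tau$ horseshoe analysis of Example \ref{exa:hs2}/Lemma \ref{lem-hs-van} and layers $i\ge 1$ exactly as in Theorem \ref{theorem: posterior contraction rates hdgp}, then combining via the compositional Lipschitz bound — is precisely the paper's route. But there is one conceptual error and one imprecision worth fixing.

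The error is your insertion of a sieve/entropy step and the claim that the required sieve's "complexity is already tracked in Corollary \ref{cor-hdhs}." No sieve is constructed anywhere in the paper: Corollary \ref{cor-hdhs} is a pure prior-mass statement, and the whole point of working with $\rho$--posteriors (Theorem \ref{bat_result}) is that contraction follows from the single condition \eqref{eq: supball condition}, with no entropy or remaining-mass requirement. The paper's discussion is explicit that building sieves for this prior "looks particularly difficult," and that this difficulty is exactly what the fractional-posterior framework circumvents. So if the sieve were actually needed, your justification for it would not stand; as it is, you should simply delete that step — the proof closes by verifying \eqref{eq: supball condition} and then passing from the Rényi divergence to $\norm{\cdot}_{L^2(\mu)}$ via \eqref{eq: from renyi to l2} (note that your final inequality $\norm{\cdot}_{L^2(\mu)}\le\norm{\cdot}_\infty$ is not the relevant link; what is used is that a small Rényi divergence forces a small $L^2(\mu)$ distance for uniformly bounded $f,f_0$).

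The imprecision concerns the admissible range for $\tau_0$. You check $\tau_0=d^{-2}n^{-1}$ against the single-layer upper bound $\xi/(d^2\sqrt{\rho n})$ from Example \ref{exa:hs2}, but in the deep setting the freezing threshold for the irrelevant first-layer directions is $v_{0,n}\asymp \veps_n(0)^{1-\al_0}/\sqrt{\rho n d_0}$ (cf. \eqref{condiAdeep2}): the Gaussian deviation of $W^{\tilde A}-W^A$ must be beaten at the level $\exp(-\rho n\veps_n(0)^{2\al_0})$, not $\exp(-\rho n\veps_n(0)^{2})$, which shrinks the allowed interval by the factor $\veps_n(0)^{1-\al_0}$. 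This is exactly why the theorem takes $\tau_0\asymp d^{-2}n^{-1}$ rather than the $d^{-2}n^{-(1+\delta)/2}$ of Corollary \ref{cor-hdhs}; your chosen $\tau_0$ does satisfy the corrected bound, but the condition you verify is the wrong (weaker) one. The rest — the $C^{t_0}=n^{o(1)}$ bookkeeping under \eqref{dims2}, the fixed $d_1,\ldots,d_q$ keeping the multiplicative factor $\sum_i d_i$ bounded, and the lower bound $10a^*e^{-n\rho\veps_n(0)^{2\al_0}/4t_0}\le\tau_0$ — is handled as you describe.
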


To the best of our knowledge, Theorem \ref{theorem: posterior contraction rates hdgp 2} is the first result on deep methods in high-dimensional regression where both the input dimension $d_0=d_0(n)$ and first effective dimension $t_0=t_0(n)$ are allowed to grow with $n$. It combines both the ability of the horseshoe prior to select relevant dimensions in the input space and its ability to perform model selection in presence of a compositional parameter. It is particularly interesting given that these methods are most often applied to high-dimensional problems.

Compared to the condition on $\tau$ in Corollary \ref{cor-hdhs}, the preceding result requires a smaller $\tau_0$. In the present setting, given the flexibility of the sample paths from deep Gaussian processes, it is necessary to `stabilize' each GP to avoid `wild' behavior. From a technical point of view, it translates into more restrictive prior mass conditions for these GPs and the need for more efficient variable selection. This is achieved with a horseshoe prior that is more peaked near $0$, given the choice of the smaller parameter.

\begin{remark}
    In Theorem \ref{theorem: posterior contraction rates hdgp 2}, we let the dimensions indexed by $i=0$ on the first layer of the composition to possibly grow with $n$. Extending this result to a situation where $d_i, i>0$ can also grow with $n$ is not straightforward. Indeed, inspection of the proof of Theorems \ref{theorem: posterior contraction rates hdgp} and \ref{theorem: posterior contraction rates hdgp 2} shows that the rate involves a multiplicative factor $\sum_{i=1}^{q+1} d_i$ whose dependence on the inner dimensions of the composition is linear and thus prevents polynomially growing $d_i$'s. As this factor does not involve $d_0$, it still allows for $t_0, d_0$ as in \eqref{dims2}. 
\end{remark}

\subsection{Results for standard posteriors} \label{sec:truepost} 
We now derive results for the posterior distribution (that is $\rho=1$ in the fractional posterior). To do so one keeps the same prior on the regression function $f$ as in the previous results, the only difference being that we now also put a prior on the noise variance $\sigma^2$. The following Proposition \ref{proppost} is inspired by an idea of The Thien Mai  \cite{mai24}, who in high-dimensional regression under sparsity, notices the link between standard posterior and fractional posterior for an updated prior in that sparse setting. Here, in the different context of nonparametric regression, we use a different $n$-dependent prior on $\si^2$. The model now allows for possibly unknown noise variance: the observations $(Y_i,X_i)$ are an independent sample
\begin{equation} \label{modelsi}
 Y_i = f(X_i) + \sigma\eta_i,
\end{equation} 
with $X_i\sim \mu$ and $\eta_i\sim\cN(0,1)$, where we treat both $f$ and the variance parameter $\sigma^2$ as unknown. We denote by $P_{f,\si^2}$ the distribution of $(X_1,Y_1)$ from model \eqref{modelsi} given $f,\si^2$.

Let $\Pi$ be a prior on $(f,\sigma^2)$ defined as a product $\Pi_f\otimes \pi_{\si^2}$. We take $\pi_{\si^2}=\pi_{\sigma^2,n}$  to be, for some fixed $b\in(0,1)$,
\begin{equation}\label{priorsig}
\pi_{\si^2} = \text{Gamma}\Big( \Big\{\frac{1-b}{2}\Big\}n+1,b \big),
\end{equation}
where $\text{Gamma}(a_1,a_2)$ denotes a Gamma distribution with shape parameter $a_1>0$ and rate parameter $a_2>0$, of density proportional to $x\to x^{a_1-1}e^{-a_2 x}$ on $(0,\infty)$. This prior induces a posterior distribution $\Pi[\cdot\given (X,Y)]$ jointly on $f$ and $\si^2$, and the next result examines the  behaviour of its marginal on $f$, that is $\Pi[f\in \cdot\,,\,\sigma^2\in \RR^+\given (X,Y)]$. 



\begin{proposition}  \label{proppost}
Let $\Pi$ be a prior on $(f,\si^2)$ of product form $\Pi_f\otimes \pi_{\si^2}$, with $\pi_{\si^2}$ given by \eqref{priorsig}. Suppose, for some rate $\veps_n=o(1)$ with $n\veps_n^2\to\infty$ and a constant $D>0$, that
\begin{equation}\label{eq:small ball full post} \Pi_f[ \|f-f_0\|_\infty \le \veps_n ] \ge  e^{-Dn\veps_n^2}.\end{equation}
Then, for a constant $M>0$ large enough, as $n\to\infty$, for $D_b$  the $b$--R\'enyi divergence, the marginal posterior distribution on $f$ given observations from \eqref{modelsi} verifies
\[ E_{f_0,\sigma_0^2}\Pi\left[ D_b(p_{f,2\si_0^2},p_{f_0,2\si_0^2}) \le M \veps_n \given (X,Y)\right] \to 1. \]
If $\Pi[\|f\|_\infty\le K\given (X,Y)]=1+o(1)$ for some fixed constant $K>0$, we also have 
\[ E_{f_0,\sigma_0^2}\Pi\left[ \|f-f_0\|^2 \le M \veps_n \given (X,Y)\right] \to 1. \]
\end{proposition}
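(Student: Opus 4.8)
The starting point is an algebraic identity in the spirit of \cite{mai24}: with the Gamma prior \eqref{priorsig} on $\si^2$, the standard posterior is, after reparametrisation, a $b$--fractional posterior. Writing Bayes' formula for the joint prior $\Pi_f\otimes\pi_{\si^2}$, the Gaussian likelihood contributes $(2\pi\si^2)^{-n/2}\exp(-\tfrac1{2\si^2}\sum_i(Y_i-f(X_i))^2)$ and the density of $\si^2$ contributes $(\si^2)^{\{(1-b)/2\}n}e^{-b\si^2}$ up to a constant; the two powers of $\si^2$ combine into $(\si^2)^{-bn/2}$, and $(\si^2)^{-bn/2}\exp(-\tfrac1{2\si^2}\sum_i(Y_i-f(X_i))^2)$ equals, up to a factor depending only on $(n,b)$, $\big(\prod_i p_{f,b\si^2}(X_i,Y_i)\big)^{b}$. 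Hence, setting $v=b\si^2$, the joint posterior is proportional to $\big(\prod_i p_{f,v}(X_i,Y_i)\big)^{b}e^{-v}\,d\Pi_f(f)\,dv$; in other words, the marginal in $f$ of the posterior of interest coincides with the marginal in $f$ of the $b$--fractional posterior built from the prior $\Pi_f\otimes\operatorname{Exp}(1)$ (on $(f,v)$) in the correctly specified regression model $Y_i=f(X_i)+\sqrt v\,\eta_i$, whose true parameter is $(f_0,\si_0^2)$.

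The plan is then to apply the standard contraction bound for fractional posteriors, which requires only a prior--mass condition. Since $\|f-f_0\|_\infty\le\veps_n$ and $|v-\si_0^2|\le\veps_n^2$ make the Kullback--Leibler divergence between $p_{f_0,\si_0^2}$ and $p_{f,v}$, and its second moment, of order $\veps_n^2$, hypothesis \eqref{eq:small ball full post} together with the positivity and continuity of the $\operatorname{Exp}(1)$ density at $\si_0^2$ give $(\Pi_f\otimes\operatorname{Exp}(1))\big[\|f-f_0\|_\infty\le\veps_n,\ |v-\si_0^2|\le\veps_n^2\big]\ge c\,\veps_n^2 e^{-Dn\veps_n^2}=e^{-n\tilde\veps_n^2}$ with $\tilde\veps_n^2=D\veps_n^2+n^{-1}\log(1/(c\veps_n^2))$; as $n\veps_n^2\to\infty$ forces $\veps_n\gg n^{-1/2}$, one has $\tilde\veps_n^2\le C\veps_n$ for $n$ large, which is the source of $\veps_n$ --- rather than $\veps_n^2$ --- in the conclusion. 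The fractional--posterior contraction bound then yields, using additivity of the R\'enyi divergence over the i.i.d.\ sample, $E_{f_0,\si_0^2}\Pi\big[D_b(p_{f,v},p_{f_0,\si_0^2})\ge M\veps_n\mid(X,Y)\big]\to0$ for $M$ large.

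It remains to extract the two advertised conclusions from the smallness of $D_b(p_{f,v},p_{f_0,\si_0^2})$, and here lies the delicate step, since this divergence mixes a variance--mismatch contribution (from $v\neq\si_0^2$) with the informative one. By the closed form for Gaussians, $D_b(p_{f,v},p_{f_0,\si_0^2})=T(v)+R(f,v)$ with both terms nonnegative, $T(v)$ depending only on $(v,\si_0^2)$ and increasing in $v$ away from $\si_0^2$, and $R(f,v)=-\tfrac1{1-b}\log E_{X}\exp\!\big(-\tfrac{b(1-b)}{2(b\si_0^2+(1-b)v)}(f(X)-f_0(X))^2\big)$. Nonnegativity forces $T(v)\le M\veps_n$, hence $v\le 2\si_0^2$ on an event of posterior probability tending to one; on that event $b\si_0^2+(1-b)v\le2\si_0^2$, and since $c\mapsto-\tfrac1{1-b}\log E_X e^{-(1-b)c(f(X)-f_0(X))^2}$ is nondecreasing, $D_b(p_{f,2\si_0^2},p_{f_0,2\si_0^2})\le R(f,v)\le D_b(p_{f,v},p_{f_0,\si_0^2})\le M\veps_n$, which is the first assertion (and explains the appearance of $2\si_0^2$ rather than $\si_0^2$).

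For the second assertion one writes $D_b(p_{f,2\si_0^2},p_{f_0,2\si_0^2})=-\tfrac1{1-b}\log E_X\exp(-\tfrac{b(1-b)}{4\si_0^2}(f(X)-f_0(X))^2)$ and uses, on $\{\|f\|_\infty\le K\}$ where the exponent is bounded, the elementary inequality $e^{-w}\le 1-c_0w$ for $w$ in a bounded range to obtain the reverse bound $D_b(p_{f,2\si_0^2},p_{f_0,2\si_0^2})\ge c\,\|f-f_0\|_{L^2(\mu)}^2$, whence $\|f-f_0\|_{L^2(\mu)}^2\le M'\veps_n$ on a posterior--probability--one event. I expect the reduction of the first paragraph to be short but conceptually the key point; the main obstacle is this last conversion, because the R\'enyi divergence controls $\|f-f_0\|_{L^2(\mu)}$ only when $|f-f_0|$ is uniformly bounded (a single high spike can inflate $\|f-f_0\|_{L^2(\mu)}$ while leaving the divergence small), which is precisely why the $L^2$ statement needs the extra hypothesis $\Pi[\|f\|_\infty\le K\mid(X,Y)]=1+o(1)$; disentangling $T(v)$ from $R(f,v)$ and controlling the random noise level $v$ is the other point requiring care.
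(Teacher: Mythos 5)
Your proposal is correct and follows essentially the same route as the paper's proof: the same reparametrisation $v=b\sigma^2$ identifying the marginal posterior with the $b$--fractional posterior for the prior $\Pi_f\otimes\mathrm{Exp}(1)$, the same KL prior--mass verification feeding the generic fractional--posterior contraction theorem, the same decomposition of the Gaussian Rényi divergence into a nonnegative variance term and a mean term with monotonicity in the variance to pass to $2\sigma_0^2$, and the same elementary lower bound of the divergence by $\|f-f_0\|_{L^2(\mu)}^2$ under the posterior boundedness hypothesis. Your bookkeeping of the extra $\varepsilon_n^2$ prior-mass factor from the $\mathrm{Exp}(1)$ window, which is what turns the rate $\varepsilon_n^2$ into $\varepsilon_n$ in the conclusion, is if anything slightly more careful than the paper's.
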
 
This result shows that, modulo defining an appropriate prior on $\si^2$, results for fractional posteriors (that effectively only require the prior mass condition as in the statement), also hold for the (marginal in $f$ of the) classical posterior; see Corollary \ref{cor: from frac to actual post} below. Proposition \ref{proppost} is also of independent interest, as it holds for any choice of prior on $\Pi_f$ in random design regression; its proof can be found in the Appendix \cite{AnnexeDGP}, Section I. We also note that sampling from the marginal posterior on $f$ from the above augmented prior does not add any fundamental difficulty, see Section \ref{sec:disc}.


\begin{corollary}\label{cor: from frac to actual post}
Provided the respective priors $\Pi_f$ are replaced with augmented priors $\Pi_f\otimes \pi_{\sigma^2}$, with $\pi_{\sigma^2}$  the  prior \eqref{priorsig} on $\sigma^2$, the results stated in Theorems \ref{thmtoy}–\ref{theorem: posterior contraction rates hdgp 2} still hold for the corresponding standard marginal posterior in $f$ (instead of the fractional posterior corresponding to $\Pi_f$ in these statements). Specifically, the nonparametric convergence rates derived in Theorems \ref{thmtoy}–\ref{theorem: posterior contraction rates hdgp 2} remain valid under these modifications.
\end{corollary}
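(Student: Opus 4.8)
The plan is to derive Corollary \ref{cor: from frac to actual post} directly from Proposition \ref{proppost}, applied separately to each of Theorems \ref{thmtoy}--\ref{theorem: posterior contraction rates hdgp 2} with its nonparametric prior $\Pi_f$ augmented by the noise-variance prior $\pi_{\sigma^2}$ of \eqref{priorsig}. Proposition \ref{proppost} has essentially one substantive input, the sup-norm prior small-ball bound \eqref{eq:small ball full post}, so the whole task reduces to checking, for each theorem, that $\Pi_f[\norm{f-f_0}_\infty \le \veps_n] \ge e^{-Dn\veps_n^2}$ holds with $\veps_n$ equal (up to a fixed multiplicative constant) to the rate stated there, and that $\veps_n = o(1)$, $n\veps_n^2\to\infty$ in the relevant regimes.

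First I would note that this prior mass bound is not something new to prove: it is precisely the ingredient already used in the fractional-posterior proofs. For the shallow statements (Theorems \ref{thmtoy}, \ref{thmvs}, hence Corollaries \ref{corfinite}--\ref{cor-hdhs}), combining the dimension-dependent concentration function estimate of Theorem \ref{concd} with the lengthscale prior mass condition \eqref{condpr} yields a lower bound of exactly the form $e^{-Dn\veps_n^2}$ for $\Pi_f[\norm{f-f_0}_\infty \le \veps_n]$, with $\veps_n$ as in \eqref{vepsfinal} (respectively the rates of Corollaries \ref{corfinite}--\ref{cor-hdhs}); this is contained in the analysis of Section \ref{sections: proof rho post}. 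For the deep statements (Theorems \ref{theorem: posterior contraction rates hdgp}, \ref{theorem: posterior contraction rates hdgp 2}), I would extract the same sup-norm prior small-ball bound from the compositional argument of Section \ref{proofthm3} (and Appendix \ref{app: fixed dmax and qmax} \cite{AnnexeDGP} for deterministic $q,d$), where layerwise sup-norm approximations are propagated through the composition using the Hölder continuity of each $h_{ij}$. In all these cases $\veps_n\to 0$ and $n\veps_n^2\to\infty$, as already observed after Corollary \ref{cor: optim bandwidth and rate}.

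Next I would check the boundedness hypothesis needed for the $L^2$ conclusion of Proposition \ref{proppost}. All priors considered here apply the clipping map $\Psi$ (elementwise in the multilayer construction), so under $\Pi_f$ one has $\norm{f}_\infty \le 1$ almost surely, hence $\Pi[\norm{f}_\infty\le 1\given (X,Y)] = 1$ and the hypothesis holds with $K=1$. Proposition \ref{proppost} then gives that the standard marginal posterior in $f$ of the augmented prior concentrates on $\{\norm{f-f_0}_{L^2(\mu)}^2 \le M\veps_n^2\}$, i.e.\ contracts at the same rate $\veps_n$; and since the small-ball bound \eqref{eq:small ball full post} holds uniformly over the relevant function class, the $\sup_{f_0\in\cF}$ formulations transfer unchanged. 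The $n$-dependence of $\pi_{\sigma^2}$ causes no difficulty, as it is entirely absorbed into Proposition \ref{proppost}.

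The main obstacle is purely bookkeeping: confirming that the prior mass bounds buried in the $\rho<1$ proofs are genuinely stated (or immediately restatable) in sup-norm rather than in some weaker metric, and that the $\veps_n$ entering \eqref{eq:small ball full post} is, up to a constant, the announced rate --- including tracking the exponent constant $D$, which merely rescales $\veps_n$ and is harmless. For the shallow case this is immediate, since GP concentration functions are sup-norm objects; for the deep case the Hölder-continuity propagation of approximation errors is also carried out in sup-norm, so the same is true, but it requires a careful pass through the compositional estimates.
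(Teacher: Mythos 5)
Your proposal is correct and follows essentially the same route as the paper: the fractional-posterior proofs of Theorems \ref{thmtoy}--\ref{theorem: posterior contraction rates hdgp 2} already establish the sup-norm prior small-ball bound \eqref{eq:small ball full post} (via \eqref{eq: supball condition}, with constants absorbed by replacing $\veps_n$ with $R\veps_n$), and the clipping map $\Psi$ gives $\norm{f}_\infty\le 1$ under the prior, so Proposition \ref{proppost} applies directly.
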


To prove this result, note that Theorems \ref{thmtoy}–\ref{theorem: posterior contraction rates hdgp 2} are established by proving the corresponding inequality \eqref{eq:small ball full post}, as outlined in the preliminaries of Section \ref{sections: proof rho post} (the slightly different constants can be accommodated by checking the condition \eqref{eq: supball condition} therein for $\veps_n'=R\veps_n$ and a large enough constant $R>0$). Given that the prior distributions have samples almost surely bounded by $1$, Corollary \ref{cor: from frac to actual post}  follows from Proposition \ref{proppost}.

\section{Discussion and open questions} \label{sec:disc}


In this work, we provide theoretical guarantees for the convergence of  posterior distributions using deep Gaussian process priors. One key insight is in the role played by lengthscale parameters: not only do these enable adaptation to smoothness, but they can also at the same time perform an effective variable selection (adaptation to sparsity) 
 by `freezing' the Gaussian process paths in suitable directions, a point relevant also for standard (non-deep) Gaussian processes; this has not been recognised so far in the literature to the best of our knowledge. The fact that adaptation to smoothness and structure can be performed {\em simultaneously} is particularly appealing computationally in that there is no need to include a model selection part in the building of the prior (if that was the case, posterior sampling would require to have access to the posterior distribution over models, which is often costly to implement). The obtained deepGP prior is then simple enough so that it corresponds to recently proposed algorithms, see below for more on this.

 We also derive new results on deep methods for high-dimensional input spaces, and on the way obtain explicit dimension-dependent constants for the characteristics of the involved GPs.

{\em The use of fractional posteriors.}  Many of our results consider fractional posterior distributions, where the parameter $\rho$ can be taken to be any constant in $(0,1)$. The main reason for this is of technical nature: in order for one to use the general theory of convergence of Bayesian posteriors in \cite{ggv00}, one needs to build sieve sets, capturing most prior mass, whose entropy or `complexity' is well controlled.
 However, especially in complex settings such as deep learning models, sieves can be difficult to construct, in particular since the probability of the complement of the sieve is  required to have a form of exponentially fast decrease, with at the same time the requirement to control the entropy of the sieve set. This difficulty leads \cite{fsh21} to condition sample paths of Gaussian processes to verify certain smoothness constraints. This can be avoided using $\rho$--posteriors, since convergence of these can be guaranteed under prior mass conditions only 
\cite{10.1214/18-AOS1712, MR3202636, alik22, tzhang2006}, so we do not need to condition on boundedness of derivatives in our prior construction. This is an advantage also computationally, as adding more conditioning constraints may typically slow down MCMC samplers. 
 
Beyond fractional posteriors, we have also obtained results for the standard posterior ($\rho=1$) and avoid the just-mentioned technical difficulties: the idea then has been to use an augmented prior that also models the noise variance. 
For the original prior (without a prior on $\sigma^2$) one can conjecture that the standard posterior also achieves optimal rates; although it seems delicate to prove this using the current tools available for proving posterior convergence, given that construction of sieves (while keeping the prior simple) looks particularly difficult, it is conceivable that, at least in say regression settings, one may be able to state an adapted version of the generic theorem of \cite{ggv00}. Although beyond the scope of the present contribution, one may note that promising results are obtained in \cite{ac23}, where in regression with heavy-tailed priors both standard and fractional posteriors are shown to converge at the same rate up to constants, for a prior for which the construction of sieves seems also presently out of reach, which suggests that posterior convergence for $\rho=1$ under  prior mass conditions only is not exceptional. 
  
 On the other hand, we argue that, at least for the set of applications of Bayesian (possibly tempered) posteriors considered here, one does not loose much with fractional posteriors, except slightly in the constants in the convergence rate: here as $\rho$ is fixed (and can be chosen e.g. to be $\rho=1/2$) we did not keep the dependence in $\rho$ in the constants, but it is shown in \cite{alik22} that nonparametric rates arising from $\rho$--fractional posteriors are typically the same as for the usual posterior but with effective sample size $n'=n\rho$; for $\rho=1/2$ the constant in terms of $\rho$ in the rate is not particularly large then, so is not a main concern. Also, regarding sampling algorithms in practice, most sampling methods such as MCMC are of similar difficulty with the fractional or the original likelihood, so this is not a main concern computationally (we discuss below sampling from deepGP fractional posteriors from Example \ref{exa:exp}). One loses, though, the interpretation of the posterior as a conditional distribution and possibly efficiency for $\sqrt{n}$--estimable parameters that comes with the Bernstein--von Mises theorem, which will not hold as such for $\rho$--posteriors but again typically with a variance inflated by $1/\rho$, see \cite{alik22} for more discussion. 
 However, again, this is not a main concern here, as we are interested in estimation rates up to constants. Another interesting topic not covered in the present work is uncertainty quantification. It should be possible to prove that, modulo certain (unavoidable) structural conditions on the regression function such as self-similarity, certain credible sets from the deepGP (possibly, fractional) posterior are also confidence sets. In case one uses a fractional posterior, this may slightly increase the radius compared to the classical posterior, although, we expect, not in a significant way if $\rho$ is kept far away from $0$; we refer to \cite{ac23} for an empirical study of the influence of $\rho$ on nonparametric credible sets.

 \vspace{.5cm}
  
\noindent{\em Implementation.} Although our main focus here is on theoretical guarantees, we note that sampling from the deep Gaussian process fractional posteriors with exponential priors on GP lengthscales (Example \ref{exa:exp}) is readily available using the R package {\tt deepgp} \cite{deepgpR}. The later provides MCMC samples for standard posteriors ($\rho=1$) using Vecchia approximation; one can similarly obtain MCMC samples from fractional posteriors with any $\rho<1$ using the following remark. Note that using a fractional likelihood with a given $\rho\in(0,1)$ to form the fractional posterior in Gaussian regression with independent errors $\cN(0,\sigma^2)$ is equivalent to using the standard likelihood in the case the errors are {\em misspecified as} independent $\cN(0,\sigma^2/\rho)$. Since posterior sampling is conditional on the observed values, and the  {\tt deepgp} package allows for specifying a given noise level, it suffices to specify it to the misspecified value $\sigma^2/\rho$ (while data will truly be generated with errors of variance $\sigma^2$). We also note that for the implementation of the augmented prior $\Pi_f\otimes \pi_{\si^2}$ discussed in Section \ref{sec:truepost}, it is enough to sample from the marginal posterior in $f$, and {\tt deepgp} allows for a Gamma prior on $\sigma^2$. 
We refer to \cite{saueretal23} for illustrations and details on the sampling schemes (we note that it should also be possible to modify the code, which presently allows for Gamma priors, to include horseshoe priors on lengthscales as in Examples \ref{exa:hs1}--\ref{exa:hs2}).

Considering fixed deterministic composition depth $q$ and width $d$ (in the spirit of Remark \ref{rk: fixed depth and width}
)
 and given lengthscale parameters, the prior considered in the present work (but without $\Psi$) coincides with that considered in the paper \cite{damianou13} introducing deepGPs, where the kernel is termed ARD (Automatic Relevance  Determination) and the lengthscales are called weights. In \cite{damianou13}, the weights/lengthscales are then calibrated using a variational approach. Many different posterior approximating schemes for deepGPs have been proposed over the last few years, using in particular variational approximations; we refer to \cite{saueretal23} for an overview and discussion. Obtaining theoretical guarantees for these different approaches, in particular for the Deep HGP posteriors introduced here, is an interesting avenue for future work.\\ 

%


\section{Proof of the main results}\label{sections: proof rho post}

We denote by $\nu$ the spectral measure of the squared-exponential \textsf{SqExp} process. Let us recall that this process is stationary with covariance $K(s,t)=\exp(-\|s-t\|^2)=k(s-t)$ and that by Bochner's theorem $k(t)=\int e^{-t\psg u,t \psd} \nu(du)$; in particular it follows that $\nu$ has Lebesgue density $u\to \exp(-\|u\|^2/4)/(2^d\pi^{d/2})$.

{\em Preliminaries: reducing the problem to a prior mass condition.}
 Given a rate $(\varepsilon_n)$, Theorem \ref{bat_result} and Proposition \ref{prop: sup in KL} in Appendix \ref{app: Frac post} \cite{AnnexeDGP} ensure that if $\Pi$ satisfies, for $f_0\in C[-1,1]^d$ and $\rho\in (0,1)$,
 \begin{equation}\label{eq: supball condition}
     \Pi\left[\norm{f-f_0}_{\infty} \leq \frac{2\sigma_0^2}{\sqrt{1+4\sigma_0^2}}\varepsilon_n\right] \geq e^{-n\rho \varepsilon_n^2},
 \end{equation} 
then the fractional posterior is such that, for $D_\rho$ the $\rho$--R\'enyi divergence as in \eqref{dal},
 \[
E_0\Pi_\rho \left( \eta: \: \frac{1}{n}D_{\rho}(p_{\eta}^n, p_{\eta_0}^n) \ge 4 \frac{\rho\eps_n^2}{1-\rho}   \given  X \right) \le e^{- n\rho\eps_n^2}  + (n\eps_n^2)^{-1}.
 \]
Let us now focus on the considered regression model. Assuming that the regression functions we consider are bounded, say $\norm{f}_\infty, \norm{g}_\infty \leq 1$, using  $1-x\leq -\log x$ and $1-e^{-x}\geq xe^{-x}$ both for $x\ge 0$ and the additivity of the Rényi divergence for densities of independent observations, it follows that
 \begin{align}\label{eq: from renyi to l2}
\frac1n D_\rho(p_f^{\otimes n},p_g^{\otimes n})&=-\frac{1}{1-\rho}\log \int e^{-\frac{\rho-\rho^2}{2\sigma_0^2}(f-g)^2}d\mu
 \ge \frac{1}{1-\rho}\left[1- \int e^{-\frac{\rho-\rho^2}{2\sigma_0^2}(f-g)^2}d\mu\right]\nonumber\\
&\ge  \frac{\rho}{2\sigma_0^2} e^{-2\frac{\rho-\rho^2}{\sigma_0^2}}\norm{f-g}^2_{L_2(\mu)}.
\end{align}
We note that under the regularity assumptions on $f_0$ and with the use of the `link' function $\Psi$, the boundedness assumption is satisfied for both  $f_0$ and a draw $f$ from the posterior in our different theorems.
Consequently, in the following, the proofs consist in proving  \eqref{eq: supball condition} for $f_0$ as in the different statements and for the different priors considered. This will imply
 \[
E_0\Pi_\rho \left( f: \: \norm{f-g}^2_{L_2(\mu)} \ge \frac{8\sigma_0^2}{1-\rho}e^{2\frac{\rho-\rho^2}{\sigma_0^2}}\eps_n^2   \given  X \right) \le e^{- n\rho\eps_n^2}  + (n\eps_n^2)^{-1},
 \]
which suffices to conclude.
 
 \subsection{Proof of Theorem \ref{thmtoy}}

The proof of the theorem is a (simpler) variation on the proof of  Theorem \ref{thmvs} to follow: therein, it suffices to work with the fixed values of scaling parameters specified in \eqref{scale-freeze}. For easy reference the precise argument is given below the end of the proof of Theorem \ref{thmvs}.
 

\subsection{Proof of Theorem \ref{thmvs}}
  
Given $A\in \R_+^d$, let us denote by $\varphi_{f_0}^{A}$ the concentration function of the Gaussian process $W^A$ as in \eqref{eq: gp prior dilatation} and its RKHS by $\mathbb{H}^A$. To derive the result, we  prove below that \eqref{eq: supball condition} is satisfied, since any $f_0\in \mathcal{F}(K)$ satisfies $\norm{f_0}_\infty\leq 1$  and $\norm{\Psi(W^A)}_\infty\leq 1$ by definition. Also, since $\norm{f_0-\Psi(W^A)}_\infty\leq \norm{f_0-W^A}_\infty$, we bound this last quantity instead as it then implies \eqref{eq: supball condition} for $f=\Psi(W^A)$.

Since we assume $f_0\in\cF_{VS}(K,\beta,d,d^*)$, we have $f_0(x_1,\dots,x_d)=g_0(x_{i_1},\dots,x_{i_{d^*}})$ for some $g_0\in \cF(K,\be,d^*)$.  
Let us set, for  $S_0=\{1,\ldots, d \} \setminus \{i_1,\dots,i_{d^*}\}$ and $\xi = 2\sigma_0^2/\sqrt{1+4\sigma_0^2}$, 
 \[
I_i=
\begin{cases}
[0,\xi/(8d\sqrt{\rho n})],&\ \text{if }i \in S_0,\\
[a^*, 2a^*],&\ \text{otherwise},
\end{cases}
\]
for  $a^*\in[1,n]$ as in the statement of the Theorem. For a given vector $A$, let us introduce $\Tilde{A}=(\Tilde{A}_1,\ldots,\Tilde{A}_d)$ with $\Tilde{A}_i=A_i\mathds{1}_{i\in\left\{i_{1},\dots,i_{d^*}\right\}}$ for  $i=1,\ldots,d$. 
For any $\veps>0$,
\begin{align*} 
\lefteqn{\Pi\left[f:\ \norm{f-f_0}_\infty <\xi\veps \right]  = P\left[\norm{W^A-f_0}_\infty<\xi\varepsilon  \right]} & \\
& \quad \ge \int_{I_1} \dots \int_{I_d} P\left[\norm{W^A-f_0}_\infty<\xi\varepsilon \given A \right] \prod_{i=1}^d \pi(A_i) dA_i. 
\end{align*}
One may now split the contribution of $A_i$'s into subsets of indices as follows
 \begin{align*}
P\left[\norm{W^A-f_0}_\infty<\xi\varepsilon \given A \right] \ &\ge \ P\left[\norm{W^{\Tilde{A}}-f_0}_\infty<\xi\varepsilon/2\,,\, \norm{W^{\Tilde{A}}-W^A}_\infty<\xi\varepsilon/2 \given A \right]\\
 &\ge \  P\left[\norm{W^{\Tilde{A}}-f_0}_\infty<\xi\varepsilon/2  \given A \right]
  - P\left[\norm{W^{\Tilde{A}}-W^A}_\infty>\xi\varepsilon/2  \given A \right].
 \end{align*}
In what follows we bound from below and above respectively the quantities, for $\eta=\xi\veps/2$,
\begin{align}
P_1(\eta) & =  \int_{I_1} \dots \int_{I_d}  P\left[\norm{W^{\Tilde{A}}-f_0}_\infty<\eta  \given A \right]\prod_{i=1}^d \pi(A_i) dA_i,  \label{pun} \\
P_2(\eta) & = \int_{I_1} \dots \int_{I_d}  P\left[\norm{W^{\Tilde{A}}-W^A}_\infty>\eta  \given A \right]\prod_{i=1}^d \pi(A_i) dA_i. \label{pdeux}
\end{align}
In the bounds $P_1, P_2$ to follow, we use that the involved scaling parameters $A_i$ belong to the respective intervals $I_i$  defined above.

Starting with \eqref{pun}, denote  $A^*=\left(A_{i_1},\dots,A_{i_{d^*}}\right)\in \R_+^{d^*}$. Conditionally on the $A_i$'s, the Gaussian process $W^{\Tilde{A}}$, interpreted as a process on variables indexed by $i_1,\ldots,i_{d^*}$ only, has the same distribution as the Gaussian process $W^{A^*}$ on  $\left[-1,1\right]^{d*}$ (they are both centered with same covariance kernel; in slight abuse of notation we denote also $W$ for the process in the $d^*$--dimensional space). 
Since $W^{A^*}$ is independent of $A_i$ for $i\in S_0$, for  $\eta>0$,
\[ P\left[ \norm{W^{\Tilde{A}}-f_0}_\infty <\eta \given A \right] = P\left[ \norm{W^{A^*}-g_0}_\infty <\eta \given A^* \right].
\] 
The term $P_1$ can then be bounded from below by
\begin{align*} 
P_1(\eta) & =  \prod_{i\in S_0} \pi(I_i) \cdot \int_{I_{i_1}} \dots \int_{I_{i_{d^*}}} 
P\left[\norm{W^{A^*}-g_0}_\infty < \eta \given A^* \right] \prod_{j=1}^{d^*} \pi(A_{i_j}) dA_{i_j} \\
& \ge
\prod_{i\in S_0} \pi(I_i)  \cdot \int_{I_{i_1}} \dots \int_{I_{i_{d^*}}} 
e^{-\vphi_{g_0}^{A^*}(\eta/2)} \prod_{j=1}^{d^*} \pi(A_{i_j}) dA_{i_j},
\end{align*}    
where we use Lemma \ref{lem: concentration} to bound from below the probability in the display.

 
We now use Theorem \ref{concd} applied to $g_0$, a function with input dimension $d^*$. We set $\veps=\veps_n$ to be chosen so that $\eta=\xi\veps_n/2$. Suppose,
\begin{alignat}{2} 
8\geq&\quad \xi\veps_n && \ge 4\cC_1
K(a^*)^{-\be}, \label{condieps}\\
\ &\rho n\veps_n^2/2 && \ge \cC_2
K^2  (a^*)^{d^*} + (C{d^*}^{c}a^*)^{d^*}\log^{1+d^*}(8a^*/\veps_n), \label{condiepsb}
\end{alignat}
where $\cC_i=\cC_i(\be,d^*), i=1,2$ are constants of the form $c_i C_i^{d^*}$ as in the statement of Theorem \ref{concd}. 
Up to making $\cC_1, \cC_2$ larger, one can always assume $\cC_1, \cC_2\ge 1$. Below we will also use that if $\veps_n\le 1, a^*\ge 1$ verifying the last display exist, then $a^*\le n$. Indeed, the last term in the second inequality is bounded from below by $(C{d^*}^{c}a^*)^{d^*}\log^{1+d^*}(8a^*)$. As $8a^*\geq e$ one must have $(C{d^*}^{c}a^*)^{d^*}\le\rho n\veps_n^2/2$ where $C\ge 1$, so that $a^{*}\le (n\veps_n^2)^{1/d^*}\le n$.  

By Theorem \ref{concd} we have $\vphi_{g_0}^{A^*}(\veps_n) \le \rho n\veps_n^2/2$, uniformly for $a^*\le A_{i_j}\le 2a^*$. One deduces
\[ P_1(\xi \veps_n/2) \ge \prod_{i\in S_0} \pi(I_i)  \cdot e^{-\rho n\veps_n^2/2} \cdot \prod_{i\notin S_0} \pi(I_i). \] 
Let us now deal with the term $P_2$ in \eqref{pdeux}. First one notes that, given A,
\[ X :=W^{\Tilde{A}}-W^A \] 
is a centered Gaussian process.  In order to bound it, one first computes 
 \begin{align*}
 \sigma^2 &= \underset{t\in [-1,1]^d}{\sup} \mathbb{E}[X^2(t)\given A]
 =  \underset{t\in [-1,1]^d}{\sup}  2\left(1-e^{-\sum_{i\in S_0}A_i^2 t_i^2}\right)\\
 &= 2\left(1-e^{-\sum_{i\in S_0}A_i^2 }\right)\leq 2 \left(d-d^*\right)\  \underset{i\in S_0}{\max}\ A_i^2,
 \end{align*}
using $e^{-u}\geq 1-u$ for any $u$ and $\left|S_0\right|=d-d^*$.
Setting $M\coloneq e^{-\sum_{i\in S_0}A_i^2s_i^2}$, $N\coloneq e^{-\sum_{i\in S_0}A_i^2t_i^2}$, $P\coloneq e^{-\sum_{i\not\in S_0}A_i^2(s_i-t_i)^2}$, $Q\coloneq e^{-\sum_{i\in S_0}A_i^2(s_i-t_i)^2}$,
  \begin{align*}
D\left(s,t\right)^2 &\coloneqq \mathbb{E} [\left|X(s)-X(t)\right|^2\given A]\\
 &=  \mathbb{E}\left[ \left(W^{\Tilde{A}}(t)-W^A(t)-W^{\Tilde{A}}(s)+W^A(s)\right)^2 \given A\right]\\
 &= 4 + 2PM - 2PQ - 2M -2N + 2PN -2P\\
 &= 2\left(1-Q\right) + 2\left(1-P\right)\left(1+Q-M-N\right).
 \end{align*}
 For $s,t\in[-1,1]^d$, using again $e^{-u}\geq 1-u$,
 \[ 1-Q\leq \left|s-t\right|_\infty^2\ \left(d-d^*\right)\ \underset{i\in S_0}{\max}\ A_i^2,\quad 1-P\leq \left|s-t\right|_\infty^2\ d^*\ \underset{i\notin S_0}{\max}\ A_i^2.\]
For any $x,y,z\ge 0$, we have $1+e^{-z}-e^{-x}-e^{-y}\le 2 - e^{-x}-e^{-y}\le x+y$, using the inequalities $e^{-z}\le 1$ and $e^{-x}\ge 1-x$. Deduce
\[ 1+Q-M-N \le \sum_{i\in S_0} (s_i^2+t_i^2) A_i^2  \leq  2\left(d-d^*\right)\underset{i\in S_0}{\max}\ A_i^2.\]
Combining the previous bounds one obtains, for any $s,t\in [-1,1]^d$,
\[ D(s,t)^2\leq  2\left|s-t\right|_\infty^2\left(d-d^*\right)\underset{i\in S_0}{\max}\ A_i^2\ \left[1+ 2d^*\ \underset{i\notin S_0}{\max}\ A_i^2\right]. \]
On the other hand, using $D(s,t)^2\le 2\mathbb{E}[X(s)^2+X(t)^2\given A]\le 4\sigma^2$ for any $s,t$, we have
\[ \underset{s,t\in[0,1]^d}{\sup}\ D(s,t) \leq 2\sigma. \] 
According to Lemma \ref{lemma: dudley bound} \cite{AnnexeDGP}, since $X(0)=0$ almost surely, we have
\[ \mathbb{E} \left[ \norm{X}_\infty \given A \right] \le 4\sqrt{2}\int_0^{\sigma} \sqrt{\log\ 2N\left(\epsilon, [-1,1]^d,D\right)}d\epsilon.\]
Writing $Z^2=2\left(d-d^*\right)\underset{i\in S_0}{\max}\ A_i^2\ \left[1+ 2d^*\ \underset{i\notin S_0}{\max}\ A_i^2\right]$, this quantity is upper bounded by
\[ 4\sqrt{2}\int_0^{\sigma} \sqrt{\log\ 2N\left(\epsilon/Z, [-1,1]^d,|\cdot|_\infty\right)}d\epsilon \leq 4\sqrt{2d}\int_0^{\sigma} \sqrt{\log\ \frac{4Z}{\epsilon}}d\epsilon,\]
using $2N(\eta,[-1,1]^d,|\cdot|_\infty)\le (4/\eta)^d$ for $\eta\le 1$,  which is the case here since $\sigma/Z\le 1$ (see also below).
By a change of variable $v = \sqrt{\log(4Z/\epsilon)}$ 
, the upper bound is, with $C'=32\sqrt{2}$,
\begin{align*}
4\sqrt{2d}\int_{\sqrt{\log\ \frac{4Z}{\sigma}}}^{\infty} 8Z v^2 e^{-v^2} dv = C' \sqrt{d} Z \int_{\sqrt{\log\ \frac{4Z}{\sigma}}}^{\infty} v^2 e^{-v^2} dv.
\end{align*}
Integrating by parts, $\int_a^{\infty} v^2 e^{-v^2} dv = a e^{-a^2}/2 + \int_a^{\infty} e^{-v^2} dv/2$. For $a\ge 1$ we have $\int_a^{\infty} e^{-v^2} dv\le \int_a^{\infty} v^2 e^{-v^2} dv$ so that  $\int_a^{\infty} v^2 e^{-v^2} dv\le a e^{-a^2}$. Let us apply this to the previous bound, noting that $\log4Z/\sigma\ge \log 4 \ge 1$, using the upper bound on $\sigma$ obtained above and the definition of $Z$. One obtains, using that $\sigma\to \sigma\sqrt{\log(4Z/\sigma)}$ is increasing on $[0,4Z/\sqrt{e}]$, and that $\sigma^2\le\bar{\sigma}^2:=2(d-d^*)\max_{i\in S_0} A_i^2\le Z^2\le (4Z/e^{1/2})^2,$
\[  \mathbb{E} \left[ \norm{X}_\infty \given A \right] \le 8\sqrt{2d}\sigma \sqrt{\log 4Z/\sigma} 
 \le 16 d\ \underset{i\in S_0}{\max}\ A_i\  \sqrt{\frac12 \log\left( 16[ 1+ 2d^*\ \underset{i\notin S_0}{\max}\ A_i^2]\right)}.\]
Assuming $\max_{i\notin S_0} A_i^2\ge 1$, we obtain for some universal $c_1>0$,
\[  \mathbb{E} \left[ \norm{X}_\infty \given A \right] \le c_1 d \cdot \underset{i\in S_0}{\max}\ A_i \cdot
\sqrt{\log\left(1+ 2d^*\ \underset{i\notin S_0}{\max}\ A_i^2\right)}.
\]
One can now use Gaussian concentration to control the deviations of $\|X\|_\infty$ from its expectation. Suppose
\begin{equation} \label{condieps2}  
\veps_n \ge 4\xi^{-1}c_1 d \cdot \underset{i\in S_0}{\max}\ A_i \cdot
\sqrt{\log\left(1+ 2d^*\ \underset{i\notin S_0}{\max}\ A_i^2\right)}.
\end{equation}
Combining Lemma \ref{lemma: gaussian sup concentration} \cite{AnnexeDGP} and the above bound on $ \mathbb{E} \left[ \norm{X}_\infty \given A \right]$ gives
 \begin{equation}\label{eq: Dudley ineq} 
 P\left[\norm{X}_\infty>\xi \varepsilon_n/2 \given A \right]
 \le P\left[\norm{X}_\infty- \mathbb{E}\norm{X}_\infty >\xi \varepsilon_n/4 \given A \right]  
 \le e^{- \xi^2 \varepsilon_n^2/32\sigma^2}.
 \end{equation}
Recall that, for $A_i\in I_i$,
\begin{equation*}
 \max_{i\in S_0} A_i \le \xi/(8d\sqrt{\rho n}),
\end{equation*} 
which gives $\sigma^2\le \xi^2/(32\rho n)$, so that the last but one display is bounded from above by $e^{-\rho n\veps_n^2}$, uniformly for $A_i$ in the corresponding interval $I_i$. One deduces
\[ P_2(\xi \veps_n/2) \le e^{-\rho n\veps_n^2}  \prod_{i=1}^d \pi(I_i).
 \]
Combining this with the obtained lower-bound on $P_1(\xi \veps_n/2)$, one gets, using $e^{-\rho n\veps_n^2/ 2}\ge 2e^{-\rho n\veps_n^2}$ if $n\veps_n^2\ge 1/4\rho$, that $P_1(\xi \veps_n/2)-P_2(\xi \veps_n/2)\ge e^{-\rho n\veps_n^2/2}\prod \pi(I_i)/2$, so that
\begin{equation}\label{eq: final lower bound} \Pi\left[f:\ \norm{f-f_0}_\infty <\xi \veps_n \right] \ge e^{ -\rho n\veps_n^2/2} \prod_{i=1}^d \pi(I_i)/2\geq e^{ -\rho n\veps_n^2}, \end{equation}
where we used \eqref{condpr}.
 
Let us now optimise in terms of $\veps_n$ verifying the conditions \eqref{condieps}--\eqref{condiepsb}--\eqref{condieps2}. 
Since
\begin{equation*}
 \max_{i\in S_0} A_i \le \xi/(8d\sqrt{\rho n}),\qquad \max_{i\notin S_0} A_i \le n
\end{equation*}
for $A_i\in I_i$, we have that \eqref{condieps2} holds if, for some $c_2>0$ depending on $\rho$,
\begin{equation} \label{condieps3}
\veps_n^2 \ge c_2 \frac{\log(1+2d^*n^2)}{n}.
\end{equation}
Now turning to $\eqref{condieps}-\eqref{condiepsb}$, recalling $\cC_i=\cC_i(\be,d^*)\ge 1$ and $K\geq 1$, it suffices to have, using $\veps_n\ge 1/\sqrt{n}$ and  $a^*\le n$ as noted earlier,
\begin{equation}\label{trveps}
\veps_n^2 \ge \{ B_1 {a^*}^{-2\be} \} \vee \{ B_2  {a^*}^{d^*}\log^{1+d^*}(n)/n\},
\end{equation}
where $B_1=C(\xi^{-1}\cC_1 K)^2$ and $B_2=C\rho^{-1}{K}^2\cC_2 (c_1d^{*c_2})^{d^*}$, with $c_1,c_2,C$ universal constants. We note that \eqref{trveps} implies \eqref{condieps3} for $C$ large enough, using $a^*\geq 1$ and $n\ge 3$ (which implies $\log(d^*)\leqa \log^{1+d^*}(n)$). This concludes the proof of Theorem \ref{thmvs}, provided $\veps_n\leq 8\xi^{-1}$.\\

{\em Proof of Theorem \ref{thmtoy}.}  One follows the 
proof of  Theorem \ref{thmvs}, noting that the fixed values of scaling parameters specified in \eqref{scale-freeze} belong to the intervals $I_i$ from the proof of Theorem \ref{thmvs}, and where now there is no conditioning on $A_i$ (those have given values now).

Let us set  $a^* = n^{1/(2\beta+s)}$, $\veps_n^2 = M\log^{1+s}(n) n^{-2\beta/(2\beta+s)}$   and $d^*=s$. Then the conditions \eqref{condieps}, \eqref{condiepsb}, \eqref{condieps2}  and \eqref{trveps} arising on $\veps_n$ in the proof of Theorem \ref{thmvs} are satisfied for $M$ large enough depending on $s$, $K$, $\beta$, $\xi$ and $\rho$. This gives 
\[ \Pi\left[f:\ \norm{f-f_0}_\infty <\xi \veps_n \right] \ge e^{ -\rho n\veps_n^2}.\]
One concludes similarly as for the proof of Theorem 2, using the discussion  following \eqref{eq: supball condition}.

\subsection{Proof of Theorem \ref{theorem: posterior contraction rates hdgp}}
\label{proofthm3} 
The proof of Theorem \ref{thmvs} needs to be suitably generalized and modified: as we shall see below, the considered $L^\infty$ balls for the various layers of the composition need to have carefully chosen radii. To obtain the results, one needs to verify the prior mass condition \eqref{eq: supball condition} for $\veps_n$ as in the statement of the theorem. For any $f_0\in \mathcal{F}_{\text{deep}}(\lambda,\mathbb{\beta},K)$ where $\lambda=(q,d_1,\dots,d_q,t_0,\dots,t_q)$, we now have
 \begin{align*}
 \Pi\left[f:\ \norm{f-f_0}_\infty <\xi \varepsilon_n\right]\geq  &\Pi_q[\{q\}]\Pi[\{d_1,\dots,d_q\}|\ q]\\
& \Pi\left[\norm{g_q\circ \dots\circ g_0-h_q\circ\dots\circ h_0}_\infty <\xi\varepsilon_n\ |\ q, d_1,\dots,d_q\right].\\
 \end{align*}
 Let us now fix $0\leq i\leq q$ and $1\leq j\leq d_i$, such that we focus on the marginal distribution of $g_{ij}$. Lemma \ref{lemma: structure regression distance} \cite{AnnexeDGP} indeed ensures that for any $\varepsilon_n(i)>0$, denoting $\mathbf{d}=(d_1,\ldots,d_q)$,
 \begin{align}
 \Pi\left[\norm{g_q\circ \dots\circ g_0-h_q\circ\dots\circ h_0}_\infty <\xi q \prod_{i=0}^q [2^{|\beta_{i}-1|} t_{i}K\vee 1]\max_{i=0,\dots,q} \varepsilon_n(i)^{\alpha_i}\ |\ q, \mathbf{d} \right]\nonumber\\ 
 \geq \ \prod_{i=0}^q \prod_{j=1}^{d_{i+1}} \Pi\left[\norm{W^{A_{ij}}-h_{ij}}_\infty \leq \xi^{1/\alpha_i} \varepsilon_n(i)\ |\ q,\mathbf{d}\right],\label{eq: sep lower bound}
 \end{align}
where $W^{A_{ij}}$ is as in \eqref{eq: gp prior dilatation} (with random bandwidths $A_{ij}$) and $\alpha_i=\prod_{l=i+1}^q (\beta_l\wedge 1)$. Since the $h_{ij}$ are bounded by $1$ in supnorm, the factors in the above product are lower bounded by \[\Pi\left[\norm{g_{ij}-h_{ij}}_\infty \leq \xi^{1/\alpha_i}\varepsilon_n(i)\ |\ q,d_1,\dots,d_q\right].\] If we can find $\varepsilon_n(i)$ such that the above quantity is lower bounded by $e^{-\rho n \varepsilon_n(i)^{2\alpha_i}}$, then we can verify that $\varepsilon_n=C(K, \lambda)\max_{i=0,\dots,q} \varepsilon_n(i)^{\alpha_i}$ such that $n\varepsilon_n^2\to\infty$, for \[C(K,\lambda)= \left[q \prod_{i=0}^q [2^{|\beta_{i}-1|} t_{i}K\vee 1]\right]\vee \sum_{i=0}^{q}d_{i+1},\] is a posterior contraction rate thanks to \eqref{eq: supball condition}. Having $n\veps_n^2\to\infty$ ensures that the remaining mass in Theorem \ref{bat_result} in Appendix \ref{app: Frac post} \cite{AnnexeDGP} is vanishing, so that $\veps_n$ is indeed a contraction rate. Indeed, up to the constant factor \[L=\Pi_q[\{q\}]\Pi[\{d_1,\dots,d_q\}|\ q]\] independent of $n$, we can derive \eqref{eq: supball condition} from the lower bound on the right-hand side of \eqref{eq: sep lower bound}
 \[Le^{-\rho n\veps_n^2}.\]Indeed, as long as $n\varepsilon_n^{2}\to \infty$, we could replace $\veps_n$ with $C\veps_n$, $C\geq1$, for $C$ such that $Le^{-\rho n \veps_n^2}\geq e^{-\rho n C\veps_n^2}$. Since the left side of \eqref{eq: supball condition} increases when replacing $\veps_n$ with $C\veps_n$, \eqref{eq: supball condition} would be satisfied with $C\veps_n$. This is enough as we seek to express $\veps_n$ up to a large enough constant.
 
 From here, we can continue as in the proof of Theorem \ref{thmvs}. Since we assume $h_{ij}\in\cF_{VS}(K,\beta_i,d_i,t_i)$, we have $h_{ij}(x_1,\dots,x_d)=f_{ij}(x_{k_1},\dots,x_{k_{t_i}})$ for some $f_{ij}\in \cF(K,\be_i,t_i)$.  
Let us set, for  $S_0=\{1,\ldots, d_i \} \setminus \{k_1,\dots,k_{t_i}\}$, $\xi = 2\sigma_0^2/\sqrt{1+4\sigma_0^2}$, and \[v_{i,n}\coloneqq \frac{\xi \veps_n(i)^{1-\al_i}}{8\sqrt{\rho n d_i}},\]the intervals
 \[
I_k=
\begin{cases}
[0,v_{i,n}],&\ \text{if }k \in S_0,\\
[a^*, 2a^*],&\ \text{otherwise},
\end{cases}
\]
for  $a^*\in[1,n/2]$. Let's also consider an arbitrary vector $A_{ij}$ such that $(A_{ij})_k\in I_k$ for $1\leq k\leq d$ (in the following, we note $\mathcal{A}_k=(A_{ij})_k$ for simplicity). If we can show $\prod_{k=1}^{d_i} \pi(I_k) \geq 2e^{-\rho n \varepsilon_n(i)^{2\alpha_i}/2}$, and, $\cC_{1,i},\cC_{2,i}$ constants of the form $c_{j,i} C_{j,i}^{t_i}$, $j=1,2$, as in the statement of Theorem \ref{concd},

\begin{align} 
4\geq \xi^{1/\alpha_i}\veps_n(i)/2 & \ge 4\cC_{1,i}
K(a^*)^{-\be_i}, \label{condiepsdeep}\\
\rho n\veps_n(i)^{2\alpha_i}/2 & \ge \cC_{2,i}
K^2  (a^*)^{t_i} + (Ct_i^{c}a^*)^{t_i}\log^{1+t_i}(8a^*/\xi^{1/\alpha_i}\veps_n(i)),
 \label{condiepsbdeep}
 \end{align}
 (counterparts of \eqref{condieps} and \eqref{condiepsb}) and, for some $c_1>0$,
 \begin{align} 
\veps_n(i) & \ge 4\xi^{-1/\alpha_i}c_1 d_i \cdot \underset{k\in S_0}{\max}\ \mathcal{A}_k \cdot
\sqrt{\log\left(1+ 2t_i\ \underset{k\notin S_0}{\max}\ \mathcal{A}_k^2\right)}, \label{condiepsbdeep2}\\
\max_{k\in S_0} \cA_k & \le \frac{\xi \veps_n(i)^{1-\al_i}}{8\sqrt{\rho n d_i}}\label{condiAdeep2}
\end{align} 
(the first is a  counterpart of \eqref{condieps2}, the second ensures that an upper bound $e^{- \xi^2 \varepsilon_n(i)^2/32\sigma_i^2}$ obtained as in \eqref{eq: Dudley ineq} is further upper bounded by $\exp(-\rho n\veps_n(i)^{2\al_i})$, as $\sigma_i^2\le 2d_i\max_k \cA_k^2$). Under these conditions, we can conclude 
\[ \Pi\left[\norm{W^{A_{ij}}-h_{ij}}_\infty \leq \xi^{1/\alpha_i}\varepsilon_n(i)\ |\ q,d_1,\dots,d_q\right] \geq e^{-\rho n \varepsilon_n(i)^{2\alpha_i}}\]
in the same way we obtained \eqref{eq: final lower bound}.

From \eqref{condiAdeep2}, which is satisfied by definition of $v_{i,n}$, and $\underset{k\notin S_0}{\max}\ A_k \leq n$, \eqref{condiepsbdeep2} is satisfied whenever
\begin{equation}\label{condiAdeep23}
   \veps_n(i)^{4\alpha_i-2}  \ge c_2 \frac{\log(1+2t_in^2)}{n} 
\end{equation}
for some $c_2>0$. Now turning to $\eqref{condiepsdeep}-\eqref{condiepsbdeep}$, recalling $\cC_{j,i}=\cC_{j,i}(\be_i,t_i)\ge 1$ and $K_+=K\vee 1$, it suffices to have, using $\veps_n(i)\ge 1/\sqrt{n}$ and  $a^*\le n$ as noted earlier,
\begin{equation}\label{trveps_deep}
\veps_n(i)^{2\alpha_i} \ge \{ B_1 {a^*}^{-2\alpha_i\be_i} \} \vee \{ B_2  {a^*}^{t_i}\log^{1+t_i}(n)/n\},
\end{equation}
where $B_1^{1/\alpha_i}=C(\cC_{1,i} K_+t_i^{2})^2$ and $B_2=C\rho^{-1}{K_+}^2\cC_{2,i} (c_1t_i^{c_2})^{t_i}$, with $c_1,c_2,C$ universal constants. If $\veps_n(i)\leq 1$, we note that \eqref{trveps_deep} implies \eqref{condiAdeep23} for $C$ large enough, using $\alpha_i\leq 1$, $a^*\geq 1$ and $n\ge 3$ (which implies $\log(t_i)\leqa \log^{1+t_i}(n)$).

Optimising $a^*$ in \eqref{trveps}, leads to setting 
\begin{equation} \label{optastardeep}
 (a^*)^{2\alpha_i\be_i+t_i}=(B_1n)/(B_2 \log^{1+t_i}(n)) \vee 1.
\end{equation} 
Condition \eqref{trveps_deep} then becomes, for $a^*$ as in \eqref{optastardeep},
\begin{equation} \label{almosttherate}
\veps_n(i)^{2\alpha_i} 
 \ge \left[ B_3 (\log{n})^{\frac{2\alpha_i\be_i(1+ t_i)}{2\alpha_i\be_i+t_i}} 
n^{-\frac{2\alpha_i\be_i}{2\alpha_i\be_i+t_i}}\right] \vee \left[B_2\log^{1+t_i}(n)/n\right], 
\end{equation}
where $B_3 = K^2c_5^{t_i}t_i^{t_i\frac{c_6}{2\alpha_i\beta_i+t_i}}$, recalling $B_2=K^2 (c_3t_i^{c_4})^{t_i}$, and $c_3,\ldots,c_6\ge 1$ are constants only depending on $\beta_i, \rho$. For $\veps_n(i)$ equal to the lower bound in \eqref{almosttherate}, we indeed have $\veps_n(i)\leq 1$ and, for $n$ large enough, $\veps_n(i)^2=B_3^{1/\alpha_i}(\log{n})^{\frac{2\be_i(1+ t_i)}{2\alpha_i\be_i+t_i}} 
n^{-\frac{2\be_i}{2\alpha_i\be_i+t_i}}$. Condition \eqref{condiAdeep2} is then satisfied by definition.

It now remains to prove that $\prod_{k=1}^{d_i} \pi(I_k) \geq e^{-\rho n \varepsilon_n(i)^{2\alpha_i}/2}$ given the definition of $v_{i,n}$ and  condition \eqref{optastardeep}.
Using the fact that $1\leq a^*\leq n/2$, a straightforward modification of the proof of Lemma \ref{lem-hs-fix} \cite{AnnexeDGP} gives that it is satisfied for a parameter $\tau_i>0$ satisfying
\begin{equation}\label{eq: modif_cond_growth}
    n\veps_n(i)^{2\alpha_i} \ge (2/\rho)\left[ t_i\log(10a^*/\ta_i)-d_i\log\left(v_{i,n} e_0 \ta_i \right) +\log 2
\right],
\end{equation}
whenever $v_{i,n}<\tau_i<a^*$. This last condition is satisfied for any fixed $\tau_i>0$ as $v_{i,n}\to0$ and $a^*\to\infty$. 
Also, equation \eqref{eq: modif_cond_growth} is satisfied for large enough $n$ as the left-hand side has a polynomial growth and the right-hand side has a logarithmic growth in $n$.

This concludes the proof of Theorem \ref{theorem: posterior contraction rates hdgp} in the case of the prior \textsf{Deep--HGP}. The proof for the prior \textsf{Deep--HGP}$(q_{max},d_{max})$ is very similar and can be found in Appendix \ref{app: fixed dmax and qmax} \cite{AnnexeDGP}.

\subsection{Proof of Theorem \ref{theorem: posterior contraction rates hdgp 2}}

We proceed as in the proof of Theorem \ref{theorem: posterior contraction rates hdgp} but with the new horseshoe prior with shrinking parameter $\tau_0$ on the lengthscales of the first layer, with special attention to that layer of GPs, $i=0$, as $d_0,t_0$ may now go to infinity. As in Corollary \ref{cor-hdhs}, for $i=0$, we now have 
\begin{equation} \label{almosttherate2}
\veps_n(0)
 \ge \left[ B_3 (\log{n})^{\frac{2\alpha_0\be_0(1+ t_0)}{2\alpha_0\be_0+t_0}} 
n^{-\frac{2\alpha_0\be_0}{2\alpha_0\be_0+t_0}}\right] \vee \left[B_2\log^{1+t_0}(n)/n\right], 
\end{equation}
which as $n\to\infty$, under \eqref{dims2}, is equal to 
$\veps_n(0)=C^{t_0}n^{-\frac{2\alpha_0\be_0}{2\alpha_0\be_0+t_0}}(\log{n})^{\frac{2\alpha_0\be_0(1+t_0)}{2\alpha_0\be_0+t_0}} $, for $C$ depending on $K$, $\rho$ and $\beta_i$, $i\geq0$.
Also the condition on $\tau_0$ becomes 
\begin{equation}
10a^*e^{-n\rho\varepsilon_n(0)^{2\alpha_0}/4t_0}  \le \tau_0 \le 
C_0\frac{\varepsilon_n(0)^{1-\alpha_0}}{\sqrt{nd_0^4}}
\end{equation}
for $a^*$ as in \eqref{optastardeep} and $C_0$ depending $\xi$ and $\rho$,
via a slight modification of Lemma \ref{lem-hs-van} \cite{AnnexeDGP}. As it is satisfied under the assumption of the theorem, this concludes the proof.

\section{Dimension-dependent bounds for multibandwidth \textsf{SqExp} Gaussian processes} \label{sec:boundcf}

In order to prove posterior contraction rates for deep GPs, 
a key step is to derive an upper bound for the concentration function \eqref{eq: concentration function}. The next Theorem enables us in particular to revisit Lemmas 4.2 and 4.3 from \cite{bpd14}, with explicit multiplicative constants depending on the ambient dimension $d$ in the result. This is a novel contribution to the literature on squared-exponential GPs, to the best of our knowledge. Also, these results allow us to deploy the \textsf{HGP} and \textsf{Deep--HGP} priors in the high-dimensional setting. For simplicity we do not consider here the anisotropic case in which the function $f_0$ can have varying smoothness across coordinates, although this could be done as well, following the approach of \cite{bpd14}. We focus on the variable selection aspect of the problem, assuming the same regularity on the active directions of $f_0$.
\begin{theorem} \label{concd}
Let $W^A$ be a \textsf{SqExp} Gaussian process in dimension $d\ge 1$ with deterministic vector of scalings $A=(A_1,\ldots,A_d)$ with $a\le A_i\le 2a$ for $i=1,\ldots,d$ and some $a\ge \sqrt{\log(2)/d}/2$. Let $\vphi^A_{f_0}(\veps)$ be the concentration function of $W^A$. Suppose $f_0\in \cF(\be,K,d)$ for some $\be, K>0$. There exist constants $\cC_1(\be,d)$ and $\cC_2(\be,d)$ depending only on $\be, d$ and a universal $c, C>0$ such that, if 
\[ \cC_1(\be,d)K^2 a^{-\be} \leq \veps \leq 4,\]
then
\[
\vphi_{f_0}^A(\veps) \le  \cC_2(\be,d) K^2 a^d + (Ca)^d d^{cd} \log^{1+d}(2a/\veps).
\]
Moreover, for $i=1,2$ one can take $\cC_i(\be,d)=c_i(\be) C_i(\be)^d$ for some constants $c_i(\be), C_i(\be)$ that depend only on $\be$.
\end{theorem}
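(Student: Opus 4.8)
The plan is to treat separately the two pieces of the concentration function $\vphi_{f_0}^A(\veps)$ in \eqref{eq: concentration function}: the RKHS approximation term $\inf\{\tfrac12\norm{h}_{\mathbb{H}^A}^2:\ h\in\mathbb{H}^A,\ \norm{h-f_0}_\infty\le\veps\}$, which will be bounded by $\cC_2(\be,d)K^2a^d$, and the small ball term $-\log P(\norm{W^A}_\infty<\veps)$, which will be bounded by $(Ca)^d d^{cd}\log^{1+d}(2a/\veps)$. Qualitatively this is classical (the isotropic and anisotropic \textsf{SqExp} estimates of van der Vaart--van Zanten and of \cite{bpd14}); the whole point is to make the dependence on the ambient dimension $d$ explicit, so the two quantitative inputs are isolated as Lemma \ref{lemma:: approx rkhs bound} and Lemma \ref{lemma bound entropy unit ball rkhs} in \cite{AnnexeDGP} — the dimension-explicit forms of Lemmas 4.2 and 4.3 of \cite{bpd14} — and the proof of Theorem \ref{concd} amounts to combining them after a standard small-ball/entropy transfer. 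Below I indicate how they fit together and where the factors in $d$ arise.

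For the approximation term I would first extend $f_0$, which is $\be$--Hölder on $[-1,1]^d$ with $\norm{f_0}_{\be,\infty}\le K$, to a compactly supported function on $\RR^d$ whose Hölder norm is at most $c(\be)C(\be)^dK$; the $C(\be)^d$ growth of the extension constant is the first, and essentially unavoidable, source of a dimensional factor. I would then convolve the extension with a band-limited smoothing kernel of bandwidth $\asymp a$ having vanishing moments up to order $\lfloor\be\rfloor$; the resulting $h$ is band-limited, hence lies in $\mathbb{H}^A$ by the Fourier characterization of the RKHS of the scaled \textsf{SqExp} kernel, whose spectral density is $u\mapsto (2^d\pi^{d/2}\prod_iA_i)^{-1}\prod_i e^{-u_i^2/(4A_i^2)}$. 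Standard kernel estimates give $\norm{h-f_0}_\infty\lesssim c(\be)C(\be)^dKa^{-\be}$, so the standing hypothesis $\veps\ge\cC_1(\be,d)K^2a^{-\be}$ (with $\cC_1(\be,d)=c_1(\be)C_1(\be)^d$) guarantees $\norm{h-f_0}_\infty\le\veps$. For the RKHS norm, dividing $|\hat h|^2$ by the spectral density turns the $(\prod_iA_i)^{-1}\asymp a^{-d}$ normalization into a factor $\asymp a^d$, the exponential factors $e^{u_i^2/(4A_i^2)}$ stay bounded by an absolute constant on the frequency support of $h$ precisely because $a\le A_i\le2a$, and Plancherel together with the (bounded) support of the extension contributes only further $C(\be)^d$ factors; altogether $\norm{h}_{\mathbb{H}^A}^2\le\cC_2(\be,d)K^2a^d$ with $\cC_2(\be,d)=c_2(\be)C_2(\be)^d$.

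For the small ball term I would invoke the classical duality between the small ball probability of a centered Gaussian measure and the metric entropy of the unit ball $\mathbb{H}_1^A$ of its RKHS (see e.g. \cite{MR3587782}): if $\log N(\delta,\mathbb{H}_1^A,\norm{\cdot}_\infty)\le\psi(\delta)$ with $\psi$ regularly varying, then $-\log P(\norm{W^A}_\infty<\veps)\lesssim\psi(c\veps)$. It then suffices to insert the entropy estimate of Lemma \ref{lemma bound entropy unit ball rkhs} \cite{AnnexeDGP}, namely $\log N(\delta,\mathbb{H}_1^A,\norm{\cdot}_\infty)\le (Ca)^dd^{cd}\log^{1+d}(2a/\delta)$, valid under the mild restriction $a\ge\tfrac12\sqrt{\log(2)/d}$. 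That lemma is proved by a two-scale covering of $\mathbb{H}_1^A$: a ``low-frequency'' part, which is essentially a finite-dimensional Euclidean ball of dimension of order $(Ca)^d\log^d(2a/\delta)$ (up to $d^{cd}$), whose $\delta$-net in sup-norm has logarithm of cardinality $\lesssim(Ca)^dd^{cd}\log^{1+d}(2a/\delta)$, and a ``high-frequency'' tail that is uniformly $O(\delta)$ in sup-norm because, even after dividing by the rapidly decaying Gaussian spectral density, the mass beyond that cutoff stays integrable. Combining the two bounds, and noting that over the stated range $\cC_1(\be,d)K^2a^{-\be}\le\veps\le4$ the element $h$ above satisfies $\norm{h-f_0}_\infty\le\veps$, yields $\vphi_{f_0}^A(\veps)\le\cC_2(\be,d)K^2a^d+(Ca)^dd^{cd}\log^{1+d}(2a/\veps)$ with $\cC_i(\be,d)=c_i(\be)C_i(\be)^d$, which is the assertion.

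The main obstacle is not the decomposition, which is routine once the two lemmas are in hand, but making the dependence on $d$ sharp inside those lemmas. The Hölder extension forces a $C(\be)^d$ factor into $\cC_1,\cC_2$, and — more delicately — the metric entropy of $\mathbb{H}_1^A$ must be controlled at the level $(Ca)^dd^{cd}\log^{1+d}(2a/\delta)$ rather than with a constant that silently hides an $e^{O(d^2)}$ (or worse) overhead, which is what off-the-shelf entropy bounds for \textsf{SqExp} RKHSs would give. This requires the careful two-scale covering described above, with an honest accounting of the effective frequency cutoff and of the $d$-dependent constants (volumes of $d$-dimensional balls, multi-index and lattice-point counts, and the norm comparison between $L^2$ and $L^\infty$ on band-limited functions); obtaining an overhead of the form $d^{cd}$, and not $d^{cd^2}$, from this bookkeeping is where the real effort lies.
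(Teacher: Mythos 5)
Your decomposition is exactly the paper's: the approximation term is handled by the dimension-explicit Whitney extension plus convolution with a moment-vanishing band-limited kernel (this is Lemma \ref{lemma:: approx rkhs bound} of the supplement, and your account of it, including where the $C(\be)^d$ factors enter, is accurate), and the small ball term is reduced to the metric entropy of $\mathbb{H}_1^A$. The statement then follows by adding the two bounds, as in the paper.

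There is, however, one genuine gap in the small-ball half. You treat the passage from the entropy bound of Lemma \ref{lemma bound entropy unit ball rkhs} to $-\log P(\norm{W^A}_\infty<\veps)$ as a ``standard small-ball/entropy transfer'' of the form ``$\log N(\delta,\mathbb{H}_1^A,\norm{\cdot}_\infty)\le\psi(\delta)$ regularly varying $\Rightarrow$ $-\log P(\norm{W^A}_\infty<\veps)\lesssim\psi(c\veps)$''. In a dimension-explicit setting that implication is precisely what must be proved, and it is not free: the Kuelbs--Li inequality gives $\vphi_0^A(2\veps)\le\log 2N\bigl(\veps/\sqrt{2\vphi_0^A(\veps)},\mathbb{H}_1^A,\norm{\cdot}_\infty\bigr)$, so the entropy is evaluated at a scale depending on the unknown $\vphi_0^A(\veps)$ itself. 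One therefore needs an independent \emph{a priori} crude bound $\vphi_0^A(\veps)\le C^d d^{c_1d}\bar A^{c_2d}\veps^{-c_3}$ before the bootstrap closes, and obtaining that crude bound with constants growing only like $C^dd^{cd}$ is a substantial piece of the argument: the paper goes through the Karhunen--Lo\`eve expansion, entropy numbers $e_l(u_A)$ of the RKHS embedding, the duality of entropy numbers (Tomczak--Jaegermann), and Li--Linde's lower bound for small balls via approximation numbers. Your plan contains no substitute for this step, and without it the quantitative claim $(Ca)^dd^{cd}\log^{1+d}(2a/\veps)$ does not follow from the entropy lemma alone. A secondary, less consequential point: you describe the proof of the entropy lemma as a two-scale spectral (low/high frequency) covering, whereas the paper's Lemma \ref{lemma bound entropy unit ball rkhs} instead extends RKHS elements analytically to a complex strip of width $\asymp1/(A_j\sqrt d)$, uses Cauchy estimates on derivatives, and covers by discretized piecewise Taylor polynomials; since you only cite that lemma as an input this does not affect correctness, but your sketch of its internals does not match the actual argument.
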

 The proof of this result can be found in Appendix \ref{app: proof dim-dep} \cite{AnnexeDGP}.

\begin{acks}[Acknowledgments]
 The authors would like to thank François Bachoc, Agnès Lagnoux, Johannes Schmidt-Hieber, Aad van der Vaart as well as two referees for insightful comments.
\end{acks}
\begin{funding}
%
This work is co-funded by ANR GAP project (ANR-21-CE40-0007) and the European Union (ERC, BigBayesUQ, project number: 101041064). 
\end{funding}



\bibliographystyle{imsart-number} 
 \bibliography{biblio}

\pagebreak


\begin{appendix}

\title{Supplementary material to `Deep horseshoe Gaussian processes'}

This supplementary material contains the proof of Theorem \ref{concd} as well as the proofs of a number of technical lemmas. It also contains results on H\"older spaces in increasing dimensions (Appendices \ref{app:hold} and \ref{app-rate-d}), including a lower bound for the minimax rate in regression in this setting. Appendix \ref{app: Frac post} is concerned with fractional  posteriors, while Appendix \ref{sec:verif}  verifies the conditions for different priors on scalings.

\section{Proof of Theorem \ref{concd}}\label{app: proof dim-dep}

In order to prove Theorem \ref{concd}, we need to bound from above the concentration function of the \textsf{SqExp} Gaussian process $W^A$. 

Recall that the concentration function of the Gaussian process $W$ with RKHS $\mathbb{H}$ is 
\[ \varphi_g(\varepsilon)\coloneqq \underset{h\in\mathbb{H}:\ \norm{h-g}_\infty\leq \varepsilon}{\inf}\, \frac{1}{2} \norm{h}_{\mathbb{H}}^2- \log P\left[\norm{W}_\infty<\varepsilon\right].\]
The form of the RKHS for the \textsf{SqExp} Gaussian process is recalled in Section \ref{app: RKHS}. 
Then, in Lemma \ref{lemma:: approx rkhs bound} and \ref{lemma: small ball proba}, we bound successively the two parts of the $\vphi_g$ in the last display for $W=W^A$ in terms of $\veps, d$, the scale parameters $A$ and the smoothness of $g$. Combining the conclusions of the two Lemmas, one obtains Theorem \ref{concd}. \qed

\subsection{RKHS of multi-bandwidth \textsf{SqExp} Gaussian process}\label{app: RKHS}
\[\ \]

The RKHS $\mh^A$ of the \textsf{SqExp} process $W^A(t), t\in [-1,1]^d$ with bandwidth sequence $(A_1,\ldots,A_d)$ has been characterised in \cite{aadharry09}, Lemma 4.1, in case of a common bandwidth $A_1=\cdots=A_d$, result extended in \cite{bpd14}, Lemma 4.1, to the case of different bandwidths. 

The space  $\mh^A$ is the set of real parts of functions
\begin{equation} \label{rkhs-elem}
 t\to \int e^{i \psg u , t \psd} g(u) \nu_A(u)=:h_g(t),
\end{equation} 
where the spectral measure $\nu_A$ admits a density $f_{\nu,A}(u)=f_\nu(u_1/A_1,\ldots,u_d/A_d)/(A_1\cdots A_d)$ with respect to Lebesgue measure, and $g$ runs over the complex Hilbert space $L^2(\nu_A)$. We recall that in the case of the \textsf{SqExp} process $f_{\nu}$ is given by $f_\nu\colon s\mapsto \prod_i e^{-s_i^2/4}/\sqrt{4\pi}$. 

Also, the results of \cite{aadharry09,bpd14} show, since $[-1,1]^d$ has an interior point, that the squared-RKHS norm of the element $h_g$ of $\mh^A$ in the last display is 
\begin{equation} \label{rkhs-norm}
 \| h_g \|_{\mh^A}^2 =\|g\|_{L^2(\nu_A)}^2=\int g(u)^2 f_{\nu,A}(u)du. \\
\end{equation}

The key role played by the  RKHS  in the characterization of posterior contraction rates for Gaussian processes priors  is explained by 
 the following lemma.

\begin{lemma}[Proposition 11.19 in \cite{MR3587782}]\label{lem: concentration}
For any mean zero Gaussian random element $W$ in a separable Banach space $\mb$ with norm $\|\cdot\|$, any $f$ in the closure in $\mb$ of its RKHS and any $\epsilon>0$,
\[ P(\norm{W-f}<\epsilon)> e^{-\varphi_f(\epsilon/2)},\] where $\varphi_f$ if the concentration function of $W$ at point $f$ as defined  in \eqref{eq: concentration function}.
\end{lemma}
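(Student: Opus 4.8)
The plan is to derive this inequality directly from the Cameron--Martin shift formula for Gaussian measures, using that $(\mh,\norm{\cdot}_{\mh})$ is precisely the Cameron--Martin space of the law of $W$; no quantitative estimate beyond a one-line symmetrisation will be needed.

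First I would reduce to the case of a genuine RKHS element. Fix $\epsilon>0$; since $f$ lies in the closure of $\mh$ in $\mb$, for this $\epsilon$ there is at least one $h\in\mh$ with $\norm{h-f}\le \epsilon/2$, so the infimum defining $\vphi_f(\epsilon/2)$ runs over a nonempty set (and $-\log P(\norm{W}<\epsilon/2)$ is finite since a centred Gaussian Radon measure charges every ball). By the triangle inequality $\{\norm{W-h}<\epsilon/2\}\subseteq\{\norm{W-f}<\epsilon\}$, hence $P(\norm{W-f}<\epsilon)\ge P(\norm{W-h}<\epsilon/2)$, and it is enough to show, for every $h\in\mh$ and every $\delta>0$,
\[ P(\norm{W-h}<\delta)\ \ge\ e^{-\frac{1}{2}\norm{h}_{\mh}^2}\,P(\norm{W}<\delta); \]
taking the infimum over admissible $h$ with $\delta=\epsilon/2$ and using that $t\mapsto e^{-t}$ is decreasing then yields $P(\norm{W-f}<\epsilon)\ge e^{-\vphi_f(\epsilon/2)}$. (The strict inequality in the statement is harmless: it follows from $\cosh>1$ off a null set once $\norm{h}_{\mh}>0$, and otherwise from an arbitrarily small enlargement of $\delta$.)

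For the displayed bound I would invoke the Cameron--Martin theorem: for $h\in\mh$, the law $\mu_h$ of $W+h$ is absolutely continuous with respect to the law $\mu$ of $W$, with density $\frac{d\mu_h}{d\mu}(w)=\exp\!\big(U^h(w)-\frac{1}{2}\norm{h}_{\mh}^2\big)$, where $U^h$ is the Paley--Wiener functional associated with $h$ (the $L^2(\mu)$-limit of continuous linear functionals, a centred Gaussian of variance $\norm{h}_{\mh}^2$); in particular $U^h$ is odd, $U^h(-w)=-U^h(w)$ for $\mu$-a.e.\ $w$. Using $-W\stackrel{d}{=}W$ one has $P(\norm{W-h}<\delta)=P(\norm{W+h}<\delta)=\mu_h(B_\delta)$ with $B_\delta=\{w:\norm{w}<\delta\}$, so
\[ P(\norm{W-h}<\delta)=e^{-\frac{1}{2}\norm{h}_{\mh}^2}\,\mathbb{E}\big[\mathds{1}_{B_\delta}(W)\,e^{U^h(W)}\big]. \]
Since $B_\delta$ is symmetric and $U^h$ is odd, substituting $W\mapsto -W$ inside the expectation gives $\mathbb{E}[\mathds{1}_{B_\delta}(W)e^{U^h(W)}]=\mathbb{E}[\mathds{1}_{B_\delta}(W)e^{-U^h(W)}]$, and averaging the two expressions while using $\cosh\ge 1$ yields
\[ \mathbb{E}\big[\mathds{1}_{B_\delta}(W)\,e^{U^h(W)}\big]=\mathbb{E}\big[\mathds{1}_{B_\delta}(W)\cosh\!\big(U^h(W)\big)\big]\ \ge\ P(\norm{W}<\delta), \]
which is exactly the needed inequality.

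The only genuinely delicate point is the setup, not any computation: one must ensure that the Cameron--Martin / Paley--Wiener apparatus is available for a mean-zero Gaussian random element of an arbitrary separable Banach space --- it is, this being standard (see e.g.\ \cite{MR3587782}) --- and that the reduction from $f$ in the closure of $\mh$ to an element $h\in\mh$ is legitimate, which it is, since it uses only the triangle inequality and monotonicity of the exponential. Once the shift formula is in hand, the core symmetrisation step giving $\mathbb{E}[\mathds{1}_{B_\delta}(W)e^{U^h(W)}]\ge P(\norm{W}<\delta)$ is immediate.
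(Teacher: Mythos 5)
Your argument is correct and is essentially the textbook proof of the cited result: the paper itself gives no proof of this lemma, simply invoking Proposition 11.19 of \cite{MR3587782}, and that proposition is proved exactly as you do — reduce to $h\in\mh$ with $\norm{h-f}\le\epsilon/2$ via the triangle inequality, apply the Cameron--Martin shift formula, and symmetrise using $W\overset{d}{=}-W$, the oddness of the Paley--Wiener functional $U^h$, and $\cosh\ge 1$ to get $P(\norm{W-h}<\delta)\ge e^{-\frac12\norm{h}_\mh^2}P(\norm{W}<\delta)$. The only soft spot is your parenthetical treatment of the strict inequality (taking the supremum over $h$ of strict inequalities only yields a non-strict one in the limit, and the degenerate case $W\equiv 0$, $f=0$ actually gives equality), but this distinction between $\ge$ and $>$ is immaterial everywhere the lemma is used.
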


\subsection{Dimension-dependent upper bounds on the concentration function}\label{sec: control small ball prob}

\[\ \]
  
{\em Dimension-dependent bound on the approximation term.}
Let us  denote  for $K, \be>0$ and  $d$ an integer,
\begin{align} 
\cF(K,\beta,d) 
 = \left\{ f:I^d \to \RR:\ \ f\in \cC^\be(I^{d})\text{ and } \| f \|_{\be,\infty} \le K \right\}.
\label{defcld_holder}
\end{align}
The following lemma deals with the "approximation" part of the concentration function. 
  
\begin{lemma}\label{lemma:: approx rkhs bound}
 Suppose $f_0\in \cF(\be,K,d)$ as in \eqref{defcld_holder} for some $\be,K>0$ and $d$ an integer. Let $A=\left(A_1,\dots,A_d\right)$ be such that $a\leq A_i\leq 2a$ for $i=1,\ldots,d$ and $a>0$.    
Then, there exist positive constants $\cC_1=\cC_1(\be,d)$ and $\cC_2=\cC_2(\be,d)$ that depend only on $\be, d$ such that 
\[ \underset{\substack{h\in\mathbb{H}^A:\\ \norm{h-f_0}_\infty\leq \cC_1 K^2 \ a^{-\be}}}{\inf}  \norm{h}_{\mathbb{H}^A}^2 \le  \cC_2 K^2 a^d.\] 
Moreover, for $i=1,2$ one can take $\cC_i=c_i(\be) C_i(\be)^d$ for some constants $c_i(\be), C_i(\be)$ that depend only on $\be$.
 \end{lemma}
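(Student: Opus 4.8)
The plan is to take $h$ to be a band-limited (compactly Fourier-supported) smoothing of $f_0$ at frequency scale $a$. Two facts drive the argument. First, by the characterisation \eqref{rkhs-elem}--\eqref{rkhs-norm}, a function with compactly supported Fourier transform lies in $\mathbb{H}^A$ with $\norm{h}_{\mathbb{H}^A}^2=(2\pi)^{-2d}\int|\widehat h(u)|^2/f_{\nu,A}(u)\,du$, and when $a\le A_i\le 2a$ the weight $1/f_{\nu,A}$ is bounded on the box $\{|u_i|\le a\}$ by roughly $(4\pi)^{d/2}(2a)^d e^{d/4}$, so such an $h$ has RKHS norm at most of order $a^d\norm{h}_2^2$. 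Second, convolving a $\be$-Hölder function with a smoothing kernel that has enough vanishing moments and spatial scale $1/a$ yields an $a^{-\be}$ sup-norm error. The whole point is to do both while keeping every constant of the form $c(\be)C(\be)^d$.

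First I would extend $f_0\in\cF(\be,K,d)$ to $w\colon\RR^d\to\RR$ supported in $[-2,2]^d$ with $\norm{w}_\infty\le\norm{f_0}_\infty\le 1$ and $\norm{w}_{\be,\infty}\le C_0(\be)^dK$. This is done coordinate by coordinate: in the variable $x_j$, extend across the two faces of $[-1,1]$ by a higher-order reflection (of order $\lfloor\be\rfloor+1$), then multiply by a fixed cutoff $\chi_1(x_j)$ with $\chi_1\in C_c^\infty(\RR)$, $\chi_1\equiv1$ on $[-3/2,3/2]$, $\operatorname{supp}\chi_1\subset[-2,2]$. Each such one-dimensional operation multiplies the relevant Hölder norm by at most a constant depending only on $\be$ and does not increase the sup-norm, so after the $d$ coordinates the overall constant is $C_0(\be)^d$.

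Next I would fix once and for all a Schwartz kernel $\psi_1$ with $\widehat{\psi_1}\in C_c^\infty(\RR)$, $\widehat{\psi_1}\equiv1$ on $[-\tfrac12,\tfrac12]$ and $\operatorname{supp}\widehat{\psi_1}\subset[-1,1]$, so that $\int\psi_1=1$ and $\int t^k\psi_1(t)\,dt=0$ for all $k\ge1$; put $\psi=\psi_1^{\otimes d}$, $\psi_R(x)=R^d\psi(Rx)$ and $h:=(w*\psi_a)|_{[-1,1]^d}$. A one-variable Taylor-with-integral-remainder estimate gives, for convolution $v\mapsto v\ast_j(\psi_1)_a$ in the single coordinate $j$, that $\norm{v\ast_j(\psi_1)_a-v}_\infty\le C(\be)\,\|v\|_{\be,\infty}\,a^{-\be}$ (valid for all $a>0$, the constant absorbing the trivial bound when $a<1$); telescoping the identity $\bigl(\prod_{j=1}^dT_j\bigr)-I=\sum_{j=1}^d(T_1\cdots T_{j-1})(T_j-I)$ over the coordinate convolutions $T_j$, together with Young's inequality ($\norm{(\psi_1)_a}_1=\norm{\psi_1}_1$), then yields $\norm{w*\psi_a-w}_\infty\le C_1(\be)^d\|w\|_{\be,\infty}\,a^{-\be}$. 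Since $w=f_0$ on $[-1,1]^d$ and $K\ge1$, this gives $\norm{h-f_0}_\infty\le C_1(\be)^dC_0(\be)^dK\,a^{-\be}\le\cC_1K^2a^{-\be}$ with $\cC_1=c_1(\be)C_1(\be)^d$.

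Finally, $\widehat h(u)=\widehat w(u)\prod_i\widehat{\psi_1}(u_i/a)$ is continuous, bounded and supported in $\{|u_i|\le a,\ i=1,\dots,d\}$, hence $h\in\mathbb{H}^A$ by \eqref{rkhs-elem}--\eqref{rkhs-norm}; on this box, using $a\le A_i\le 2a$,
\[
\frac{1}{f_{\nu,A}(u)}=(4\pi)^{d/2}(A_1\cdots A_d)\prod_i e^{u_i^2/(4A_i^2)}\le(4\pi)^{d/2}(2a)^d e^{d/4},
\]
so Plancherel gives $\norm{h}_{\mathbb{H}^A}^2\le (2e^{1/4}/\sqrt{\pi})^d\,a^d\,\norm{h}_2^2$, while Young's inequality gives $\norm{h}_2\le\norm{w}_2\norm{\psi}_1\le 2^d\norm{\psi_1}_1^d$, whence $\norm{h}_{\mathbb{H}^A}^2\le\cC_2\,a^d\le\cC_2K^2a^d$ with $\cC_2$ of the required form $c_2(\be)C_2(\be)^d$ (in fact independent of $\be$ here). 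This, together with the previous paragraph, is the assertion. I expect the only real obstacle to be the uniform bookkeeping of the dimension dependence: the reflection extension, the cutoff, the multivariate convolution-approximation estimate and the bound on $1/f_{\nu,A}$ over the frequency box must all be realised through genuinely tensor-product, coordinatewise constructions so that each constant stays of the form $c(\be)C(\be)^d$, instead of the super-exponential (e.g.\ $d^{cd}$) factors that cruder multivariate arguments incur.
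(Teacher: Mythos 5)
Your proof is correct and follows essentially the same route as the paper's: extend $f_0$ beyond the cube with controlled H\"older norm, cut off, convolve with a tensor-product kernel whose Fourier transform is compactly supported and whose moments vanish, then bound the RKHS norm via Plancherel using that $1/f_{\nu,A}$ is bounded by $(Ca)^d$ on the frequency box, and get the $a^{-\be}$ error from a Taylor/vanishing-moments argument (the paper uses Whitney's extension theorem and a direct multivariate Taylor expansion where you use coordinatewise reflection and telescoping, but these are cosmetic substitutions). The only blemish is the claim $\norm{w}_\infty\le\norm{f_0}_\infty\le 1$ — the class $\cF(\be,K,d)$ only gives $\norm{f_0}_\infty\le K$ and a higher-order reflection can inflate the sup-norm — but this is harmless since the target bound carries a $K^2$ factor that absorbs it.
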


\begin{proof}
We revisit the proof of Lemma 4.2 from \cite{bpd14}, making the dependence in  dimension $d$ explicit.   Let $v$ be a complex-valued function such that $\int v(t)dt = 1$, $\int |t|^\beta |v(t)|dt <\infty$,  
\[ \int t^k v(t)dt = 0 \ \text{ for }\ k=0,\dots,\lfloor \beta \rfloor, \] 
and its Fourier transform $\widehat{v}$ is compactly supported (all integrals being over $\RR$).

Define $V:\ \RR^d\to \CC$ by $V(t)=v(t_1)\cdots v(t_d)$ and 
\[V_A(t) := V(A_1t_1,\dots,A_dt_d)\prod_{i=1}^d A_i.\] 
By Lemma \ref{lem: Whitney}, we  extend $f_0$ to a function $f_0:\ \R^d\to\R$ with  $\norm{f_0}_{\beta,\infty}\le K\cE_\be(d)$. Next we set $\tilde{f_0}=f_0 \cdot h$, where $h: \RR^d\to [0,1]$ is a $\cC^\infty$--function, equal to $1$ on $[-1,1]^d$, to $0$ outside of $[-2,2]^d$  and with $\|h\|_{\be,\infty}\le 2d C_{1,\be}^d$ (from Lemma \ref{lemder}, it suffices to build such a function, say $\eta$, for the case $d=1$, and to set $h(t_1,\ldots,t_d)=\eta(t_1)\cdots \eta(t_d)$), where $C_{1,\be}$ only depend on $\be$. By construction, the function $\tilde{f_0}$ equals $f_0$ on $[-1,1]^d$, has compact support within $[-2,2]^d$, and by Leibniz's rule and the same argument as in the proof of Lemma \ref{lemder}
\[ \|\tilde{f_0}\|_{\be,\infty}\le 2^{\lfloor \beta \rfloor+1}dKC_{1,\be}^d\cE_\be(d),\]
where $C_{1,\be}$ only depends on $\be$. 

Additionally, $\mathbb{H}^A$ is the set of real parts of functions $t\mapsto \int e^{i \langle s, t \rangle} \eta(s) d\nu_A(s)$ for $\nu_A$ the measure with Lebesgue density $s\mapsto \prod_i \frac{1}{ \sqrt{4\pi}A_i}e^{-s_i^2/4A_i^2}$ and $\eta\in L^2\left(\nu_A\right)$ (see Appendix \ref{app: RKHS}). The corresponding RKHS norm is also equal to the $L^2(\nu_A)$--norm of $\eta$. The convolution $h\coloneqq V_A \ast \tilde{f}_0$ is then an element of $\mathbb{H}^A$ since for any $t\in[-1,1]^d$
\begin{align*}
h(t) &= (2\pi)^{-d} \int_{\R^d} e^{i\ \langle s, t\rangle} \widehat{\tilde{f}}_0(s) \widehat{V}_A(s) ds\\
&= (2\pi)^{-d}  \int_{\R^d} e^{i\ \langle s, t\rangle} \widehat{\tilde{f}}_0(s) \widehat{V}_A(s) \prod_{j=1}^{d} (\sqrt{4\pi}A_{i}e^{s_{i}^2/4A_{i}^2} )d\nu_A(s).
\end{align*}
with its square norm bounded by $ (2\pi)^{-2d} \norm{\tilde{f_0}}^2_{2} \norm{\frac{|\widehat{v}|^2}{\sqrt{2}^{-1/2}\phi(\cdot\ /\sqrt{2})}}_\infty^d \prod_{i=1}^{d} A_i  $ according to the Plancherel theorem. Also, $\|\tilde{f_0}\|_2^2\le 4^d \|\tilde{f_0}\|_\infty^2\le 4^d \|\tilde{f_0}\|_{\be,\infty}^2$, so that an upper bound for this last quantity is $c_{1,\beta}C_{1,\beta}^d K^2 \prod_i A_i$, using Lemma \ref{lem: Whitney}, which provides an estimate of $\cE_\be(d)$ of the form $c_\be C_\be^d$.

It remains to show that $h$ approximates $f_0$ well enough on the unit cube. It follows by Taylor expansion at order $\lfloor\beta\rfloor$ of $\tilde{f}_0$ around $t\in[-1,1]^{d}$. For any $s\in\R^{d}$,
\[ \tilde{f}_0(t+s) = \sum_{\boldsymbol{\alpha}: |\boldsymbol{\alpha}|\leq \lfloor \beta \rfloor} \partial^{\boldsymbol{\alpha}} \tilde{f}_0(t) \frac{s^{\prod \alpha_i}}{\prod \alpha_i !} + S(t,s),\]
such that $|S(t,s)|\leq C \|\tilde{f_0}\|_{\be,\infty} \norm{s}_\infty^\beta$, independently of $t$ and for $C$ depending on $\be$ only. By assumptions on $v$, we find that for $t\in[-1,1]^{d}$
\[ V_A \ast \tilde{f}_0(t)-f_0(t) = \int_{\R^{d}} V(s) (\tilde{f}_0(t-s/A) -\tilde{f}_0(t))ds = \int_{\R^{d}} V(s) S(t,-s/A)ds.\]
As a consequence, $\left|h(t)-f_0(t)\right|\leq  C \|\tilde{f_0}\|_{\be,\infty} \int_{\R^{d}} V(s) \norm{s}_\infty^\beta ds\ a^{-\beta}$. Observing that \[\int_{\R^{d}} V(s) \norm{s}_\infty^\beta ds \leq \int_{\R^{d}} V(s) \sum_{j=1}^d |s_i|^\beta ds = d\int_\R v(t)|t|^\beta dt\] concludes the proof.
\end{proof}

{\em Dimension-dependent lower bounds on the small ball probability.} We now deal with the small ball probability in the concentration function and bound it. We note that, for $\nu_A$ the spectral measure of the process $W^A$, $\int e^{ \norm{t}_2/2}d\nu_A(t)<\infty$ as verified in Lemma \ref{lem-v} below. 

 \begin{lemma}\label{lemma: small ball proba}
There exists absolute constants $C$, $c$ such that for $0<\varepsilon\leq4$ and any given $A_i\geq 1/(96\sqrt{d})$, with $\bar{A}=\max_i A_i> \sqrt{\log 2}/(2\sqrt{d})$,
 \[ \varphi_0^A(\varepsilon)\coloneqq - \log P\left[\norm{W^A}_\infty<\varepsilon\given A\right] \leq  C^d d^{c d} \log\left(\frac{\bar{A}}{\varepsilon}\right)^{1+d}\ \prod_{i=1}^d A_i.\]
 \end{lemma}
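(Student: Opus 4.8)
The plan is to control the small--ball exponent $\varphi_0^A(\varepsilon)=-\log P(\norm{W^A}_\infty<\varepsilon \given A)$ through the metric entropy of the unit ball $\mathbb{H}_1^A$ of the RKHS of $W^A$ for the supremum norm. Lemma \ref{lemma bound entropy unit ball rkhs} \cite{AnnexeDGP} provides, with explicit dependence on dimension, a bound $\log N(\varepsilon,\mathbb{H}_1^A,\norm{\,\cdot\,}_\infty)\le \psi(\varepsilon)$, where $\psi(\varepsilon)=C^d d^{cd}\left(\prod_{i=1}^d A_i\right)(\log(\bar A/\varepsilon))^{1+d}$ on the range of $\varepsilon$ of interest. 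It then suffices to invoke the classical principle (Kuelbs--Li, Li--Linde; see \cite{MR3587782}) that the sup--norm small--ball exponent is controlled by the entropy of the RKHS unit ball, \emph{in a form whose multiplicative constant is universal}, so that the whole dependence on $d$ is carried by $\psi$. This last point is the crux of the argument: off-the-shelf versions of the entropy--to--small--ball conversion hide their constants, so it has to be re-derived keeping track of dimension, and everything else is essentially the van der Vaart--van Zanten argument for \textsf{SqExp} with constants tracked.

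To obtain such a conversion I would argue by truncation. Using the spectral (Karhunen--Lo\`eve type) representation of $W^A$ attached to its spectral measure $\nu_A$, write $W^A=U_n+V_n$ with $U_n,V_n$ independent and $U_n$ the partial sum of the first $n$ terms. Since the density of $\nu_A$ has a Gaussian tail, being proportional to $\exp(-\sum_i u_i^2/(4A_i^2))$, one can choose a truncation level $n=n(\varepsilon)$ of order (a polynomial in $d$) $\times\,(\log(\bar A/\varepsilon))^{d}\prod_i A_i$ so that $\mathbb{E}\norm{V_n}_\infty\le \varepsilon/4$; Gaussian concentration of the supremum (as in Lemma \ref{lemma: gaussian sup concentration} \cite{AnnexeDGP}) then gives $P(\norm{V_n}_\infty<\varepsilon/2 \given A)\ge 1/2$. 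By independence, $P(\norm{W^A}_\infty<\varepsilon \given A)\ge \tfrac12\,P(\norm{U_n}_\infty<\varepsilon/2 \given A)$. The variable $U_n$ takes values in a linear span of dimension at most $n$; covering the finite--dimensional image of $\mathbb{H}_1^A$ by balls of radius $\varepsilon/c$ in $\norm{\,\cdot\,}_\infty$ and using Gaussian anti--concentration on that span yields $-\log P(\norm{U_n}_\infty<\varepsilon/2 \given A)\le \log N(\varepsilon/c,\mathbb{H}_1^A,\norm{\,\cdot\,}_\infty)+c\,n$ with $c$ universal, hence $\varphi_0^A(\varepsilon)\le C'\psi(\varepsilon/c)$ with $C',c$ universal after plugging in Lemma \ref{lemma bound entropy unit ball rkhs} and the bound on $n$.

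It remains to check that $\psi$ is slowly varying enough to replace $\psi(\varepsilon/c)$ by $\psi(\varepsilon)$. On the range $\log(\bar A/\varepsilon)\ge c'd$ for a suitable universal $c'$ one has $(\log(c\bar A/\varepsilon))^{1+d}\le 2(\log(\bar A/\varepsilon))^{1+d}$, so the substitution costs only a further universal factor and gives exactly the asserted bound; the complementary range $\log(\bar A/\varepsilon)\le c'd$ (in particular $\varepsilon$ close to $4$ with $\bar A$ small) is handled by the crude monotonicity bound $\varphi_0^A(\varepsilon)\le \varphi_0^A(\varepsilon')$ at a fixed small $\varepsilon'$, using the standing assumptions $\bar A>\sqrt{\log 2}/(2\sqrt d)$ and $A_i\ge 1/(96\sqrt d)$ to keep $\prod_i A_i$ and the remaining constants in the claimed shape $C^d d^{cd}$. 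The main obstacle throughout is the bookkeeping of the dimension dependence: showing that the truncation level $n(\varepsilon)$, equivalently the decay rate of the Gaussian tail of $\nu_A$ in dimension $d$, contributes only $C^d d^{cd}(\log(\bar A/\varepsilon))^{1+d}\prod_i A_i$ and produces no extra dimension--dependent blow-up; once this is secured the rest is routine.
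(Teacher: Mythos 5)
Your overall architecture is a legitimate alternative to the paper's route: the paper instead starts from the Kuelbs--Li inequality $\varphi_0^A(2\varepsilon)\le \log 2N\bigl(\varepsilon/\sqrt{2\varphi_0^A(\varepsilon)},\mathbb{H}^A_1,\norm{\cdot}_\infty\bigr)$, which is implicit in $\varphi_0^A$, and resolves it by first deriving a crude polynomial a priori bound $\varphi_0^A(\varepsilon)\lesssim C^d d^{cd}\bar{A}^{cd}\varepsilon^{-c}$ via entropy numbers of the embedding $u_A:\mathbb{H}^A\to C([-1,1]^d)$, the Tomczak--Jaegermann duality $\sup_l l^\alpha e_l(u_A^*)\le 32\sup_l l^\alpha e_l(u_A)$, and the Li--Linde bounds on the approximation numbers $l_n(u)$. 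However, your write-up has two genuine gaps, both located exactly where you declare the remaining work ``routine''. First, the truncation step is the entire difficulty of the lemma, not a consequence of ``the Gaussian tail of $\nu_A$'': cutting the spectral representation at $|u|\le R$ leaves a continuum, not $n$ discrete terms, so producing an $n$-dimensional $U_n$ with independent remainder $V_n$ and proving $\mathbb{E}\norm{V_n}_\infty\le\varepsilon/4$ for $n(\varepsilon)\asymp C^d d^{cd}(\log(\bar A/\varepsilon))^d\prod_iA_i$ requires a chaining/entropy bound for the high-frequency part together with a further discretization of the low frequencies, with all constants tracked in $d$. This is precisely the approximation-number estimate $l_n(u)\le C^d d^{6d}\bar A^d n^{-1/2}\log n$ that the paper extracts from the entropy Lemma \ref{lemma bound entropy unit ball rkhs} via the duality machinery; asserting it is assuming the hardest part of the proof.

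Second, the finite-dimensional conversion $-\log P(\norm{U_n}_\infty<\varepsilon/2)\le \log N(\varepsilon/c,\mathbb{H}^A_1,\norm{\cdot}_\infty)+cn$ with a \emph{universal} $c$ is not correct as stated. The Anderson-inequality argument gives $P(\norm{U_n}_\infty<\varepsilon)\ge P(U_n\in\lambda K)/N(\varepsilon/\lambda,K,\norm{\cdot}_\infty)$ with $K$ the unit ball of the RKHS of $U_n$ and $P(U_n\in\lambda K)=P(\chi^2_n\le\lambda^2)$; to make this probability of order one you must take $\lambda\asymp\sqrt{n}$, so the entropy has to be evaluated at scale $\varepsilon/\sqrt{n(\varepsilon)}$ rather than $\varepsilon/c$ (and if you insist on $\lambda=c$ fixed, the additive cost is $\asymp n\log n$, not $cn$). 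The final bookkeeping can still be made to close, since $\log n(\varepsilon)$ only contributes factors absorbable into $C^d d^{cd}\log^{1+d}(\bar A/\varepsilon)$, but it must be redone at the correct scales, and your closing argument replacing $\psi(\varepsilon/c)$ by $\psi(\varepsilon)$ does not address the $\sqrt{n}$-dependent rescaling. As written, the proposal identifies the right quantities but does not constitute a proof.
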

 
 \begin{proof}
We extend the proof of Lemma 4.6 in \cite{aadharry09}, keeping all the dependencies on the dimension explicit. We start with formula (3.19) in \cite{kuelbsli93} which states that, for any $\varepsilon,\lambda>0$,
 \[ \varphi_0^A(2\varepsilon) + \log\Phi\left(\lambda +\Phi^{-1}(e^{-\varphi_0^A(\varepsilon)})\right)\leq \log\ N\left(\varepsilon/\lambda, \mathbb{H}^A_1, \norm{\cdot}_\infty\right),\]
 with $\mathbb{H}^A_1$ the unit ball of $\mathbb{H}^A$ and $\Phi$ the standard normal distribution function. For the choice $\lambda=\sqrt{2\varphi_0^A(\varepsilon)}$ and with the inequality $\Phi\left(\sqrt{2x}+\Phi^{-1}(e^{-x})\right)\geq 1/2$ for any $x>0$ (see Lemma 4.10 in \cite{aadharry09}), we get
 \begin{equation}\label{eq: crude ineq for small ball}
 \varphi_0^A(2\varepsilon) \leq \log\ 2N\left(\varepsilon/\sqrt{2\varphi_0^A(\varepsilon)}, \mathbb{H}^A_1, \norm{\cdot}_\infty\right).
 \end{equation}
 Before going further, it is necessary to prove a crude bound of the form \[\varphi_0^A(\varepsilon)\lesssim (\bar{A}/\varepsilon)^\tau,\] for some $\tau>0$. The Karhunen-Loève expansion of $W^A$ ensures that for an orthonormal basis $h_1,h_2,\dots$ of $\mathbb{H}^A$ and the map $u_A:\mathbb{H}^A\mapsto C\left([-1,1]^d,\ \norm{\cdot}_\infty\right)$ defined by $u_A(h)=h$, $W^A$ is equal in distribution to $\sum_{j=1}^{\infty}\xi_ju_A(h_j)$, for $\xi_j$ i.i.d. $\mathcal{N}\left(0,1\right)$ variables and the series converging almost surely in $C\left([-1,1]^d,\ \norm{\cdot}_\infty\right)$. As in \cite{lilinde99}, we introduce the functional $e_l$ defined as
 \[ e_l(v) \coloneqq \inf \left\{ \eta>0\colon\ N(\eta, v(B_E), \norm{\cdot}_F) \leq 2^{l-1} \right\}\] defined for any compact operator $v$ from a Banach space $\left(E,\norm{\cdot}_E\right)$, with unit ball $B_E$, into another one with norm $\norm{\cdot}_F$.
In our case, we obtain
 \[ e_l\left(u_A\right) = \inf \left\{\eta>0:\  \log\ N\left(\eta, \mathbb{H}^A_1, \norm{\cdot}_\infty\right) \leq (l-1)\log 2\right\}.\]
 Lemma \ref{lemma bound entropy unit ball rkhs} implies that, for $B^2=\int e^{ \norm{t}_2/2}d\nu(t)$ and $\varepsilon<2B(C_1d)^d$,
 \begin{equation}\label{eq: bound entropy dim} \log\ N\left(\varepsilon, \mathbb{H}^A_1, \norm{\cdot}_\infty\right) \leq C_2^d d^{4d} \log\left(\frac{1}{\varepsilon}\right)^{1+d} \prod_{i=1}^d A_i,\end{equation}
 and $\log\ N\left(\varepsilon, \mathbb{H}^A_1, \norm{\cdot}_\infty\right)=0$ for $\epsilon\geq 2B(C_1d)^d$.
By definition, $e_l\left(u_A\right)$ is smaller than the solution $\eta^*$ of  $C_2^d d^{4d} \log\left(\frac{1}{\eta^*}\right)^{1+d} \bar{A}^d=(l-1)\log 2$, provided it is smaller than $2B(C_1d)^d$, and smaller than $2B(C_1d)^d$ otherwise, which gives
 \[  \eta^* = \begin{cases} e^{-\left(C_2d^4\right)^{-\frac{d}{1+d}} \bar{A}^{-\frac{d}{1+d}} \left[(n-1)\log 2\right]^{\frac{1}{1+d}} }, & \text{if } l>1, \\ 2B(C_1d)^d, & \text{if } l=1, \end{cases}\]noting that $B\geq 1$.
For $u_A^*$ the dual of $u_A$, we rewrite the first equation on p.300 in \cite{tj87} as \begin{equation}\label{eq: TJ87}\underset{l\leq m}{\sup}\ l^\alpha e_l\left(u_A^*\right) \leq 32\ \underset{l\leq m}{\sup}\ l^\alpha e_l\left(u_A\right)\end{equation} for any $m\geq 1$ and $\alpha>0$. Also, for any $l\geq 2$,
\begin{align*}
le_l\left(u_A\right) &\leq l e^{-\left(C_2d^4\right)^{-\frac{d}{1+d}} \bar{A}^{-\frac{d}{1+d}} \left[(l-1)\log 2\right]^{\frac{1}{1+d}} }\\
&\leq \frac{2}{\log 2} \left(C_2d^4\right)^d \bar{A}^d \left(\frac{(l-1)\log 2}{\left(C_2d^4\right)^d \bar{A}^d}\right)e^{-\left(C_2d^4\right)^{-\frac{d}{1+d}}\bar{A}^{-\frac{d}{1+d}} \left[(l-1)\log 2\right]^{\frac{1}{1+d}} }\\
&\leq \frac{2}{\log 2}\left(\frac{1+d}{e}\right)^{1+d} \left(C_2 d^4 \bar{A}\right)^d\leq \left(Cd^6\right)^d\bar{A}^d,
\end{align*}
since $h\colon\ x\to xe^{-x^{\frac{1}{1+d}}}$ is upper bounded by $(1+d)^{1+d}e^{-(1+d)}$ on $\R_+^*$ ($h(0)=0$, $h'(x)=0$ has $x=\left(1+d\right)^{1+d}$ for solution, $h$ is non-negative on the positive real line and vanishing) .
Taking $\alpha=1$ in \eqref{eq: TJ87} and using the bound from Lemma \ref{lem-v}, this implies that, for $m\geq 1$,
 \begin{align*}
me_m\left(u_A^*\right) &\leq\underset{l\leq m}{\sup}\ l e_l\left(u_A^*\right)  \\
&\leq 32 \underset{l\leq m}{\sup}\ l e_l\left(u_A\right) \\
&\leq \left(64B(C_1d)^d\right)\vee \left(32(Cd^6)^d\bar{A}^d\right)\leq  C_3^{d} d^{6 d}\bar{A}^d .
\end{align*}
 From Lemma 2.1 in \cite{lilinde99}, itself cited from \cite{pisier89}, and this last upper bound, we obtain the following upper bound on \[l_n(u)=\inf \left\{\Bigg(E \norm{\sum_{j=n}^\infty \xi_j x_j}_\infty^2\Bigg)^{1/2}\colon\ W^A=_d\sum_{j=1}^\infty \xi_j x_j,\ \xi_j\overset{\text{i.i.d.}}{\sim}\mathcal{N}(0,1),\ x_j \in C([-1,1]^d,\ \norm{\cdot}_\infty)\right\},\]the $n$--th approximation number of $u$ as defined in Section 2 of \cite{lilinde99}, p.1562, $n\geq 1$. For $c_1,c_2$ universal constants independent of $d$:
 \[ l_n(u)\leq c_1\sum_{m\geq c_2n} e_m\left(u^*\right) m^{-1/2}\left(1+\log m\right) \leq C_3^d d^{6d} \bar{A}^d n^{-1/2} \log(n).\]
From the proof of Proposition 2.3 in \cite{lilinde99}, we find that, for $\varepsilon>0$, $\sigma\coloneqq\mathbb{E}\left[\norm{W^A}_\infty^2\right]^{1/2}$ and \[n(\varepsilon)\coloneqq \max\left\{n:\ 4l_n(u)\geq \varepsilon\right\},\] the following bound stands
 \[ P\left[\norm{W^A}_\infty<\varepsilon\right] \geq \frac{3}{4}\left(\frac{\varepsilon}{6\sigma n(\varepsilon)}\right)^{n(\varepsilon)},\] which implies
 \begin{equation}\label{eq: bound conc fonction with neps} \varphi_0^A(\varepsilon) \leq n(\varepsilon)\log \left(\frac{8\sigma n(\varepsilon)}{\varepsilon}\right).\end{equation}
 We note that $n(\varepsilon)$ is well-defined as long as $\varepsilon\leq 4$: indeed, $l_n(u)$ decreases with $n$ and $l_1(u)\coloneqq\sigma>\mathbb{E}\left[\big(W^{A}_0\big)^2\right]^{1/2}=1$. The above bound on $ l_n(u)$ ensures that $n(\varepsilon)\leq M^{2} \log^3 M$, for $M=\left( C_3d^{6} \right)^d \bar{A}^{d} \varepsilon^{-1}$. This bound combined with Lemma \ref{lemma: bound on sigma} then gives
\[ \varphi_0^A(\varepsilon) \leq C^dd^{c_1d}\Bar{A}^{c_2d}\varepsilon^{-c_3},\]
which we plug into \eqref{eq: crude ineq for small ball} with \eqref{eq: bound entropy dim}
\[  \varphi_0^A(\varepsilon) \leq C^d d^{c d} \log\left(\frac{\bar{A}}{\varepsilon}\right)^{1+d}\ \prod_{i=1}^d A_i, \]
 for $0<\varepsilon\leq 4$. This concludes the proof.
  \end{proof}

\subsection{Auxiliary lemmas for the proof of Lemmas \ref{lemma:: approx rkhs bound} and \ref{lemma: small ball proba}}
  
 \begin{lemma}\label{lemma bound entropy unit ball rkhs}
 Let $\mathbb{H}^A_1$ be the unit ball of $\mathbb{H}^A$ and $B^2\coloneqq  \int e^{ \norm{t}_2/2}d\nu(t)$. Assume $A_i\geq 1/(96\sqrt{d})$, for $i=1,\dots,d$. There exist absolute constants $C_1,C_2$ such that, for $0<\varepsilon< 2B(C_1d)^d$,
 \[ \log\ N\left(\varepsilon, \mathbb{H}^A_1, \norm{\cdot}_\infty\right)\leq C_2^d d^{4d} \log\left(\frac{1}{\varepsilon}\right)^{1+d} \prod_{i=1}^d A_i.\] 
 and, for $\varepsilon\geq 2B(C_1d)^d$, \[ \log\ N\left(\varepsilon, \mathbb{H}^A_1, \norm{\cdot}_\infty\right)=0.\]
 \end{lemma}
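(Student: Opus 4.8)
The plan is to make dimension-explicit the classical entropy bound for the RKHS unit ball of the squared-exponential process in the style of \cite{aadharry09, bpd14}. Recall from Section \ref{app: RKHS} that a generic element of $\mathbb{H}^A_1$ is the real part of a function $h_\phi(t)=\int e^{i\sum_j A_j\lambda_j t_j}\phi(\lambda)\,d\nu(\lambda)$ with $\|\phi\|_{L^2(\nu)}\le 1$, where $\nu$ is the product of $d$ copies of the $\mathcal{N}(0,2)$ law, hence a probability measure. By Jensen's inequality $\|h_\phi\|_\infty\le\int|\phi|\,d\nu\le 1$, so $\log N(\varepsilon,\mathbb{H}^A_1,\|\cdot\|_\infty)=0$ already for $\varepsilon\ge 2$, a fortiori for $\varepsilon\ge 2B(C_1 d)^d$ since $B\ge 1$. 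Thus only $\varepsilon<1$ needs work, and from now on I would assume this.

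The key point is that each $h_\phi$ extends to an entire function on $\mathbb{C}^d$ whose growth factorises over coordinates. Indeed, by Cauchy--Schwarz, if $|\mathrm{Im}\,z_j|\le s_j$ for each $j$ then $|h_\phi(z)|^2\le\int\prod_j e^{2A_j s_j|\lambda_j|}\,d\nu(\lambda)=\prod_j\bigl(\frac{2}{\sqrt{4\pi}}\int_0^\infty e^{2A_j s_j\lambda-\lambda^2/4}\,d\lambda\bigr)\le 2^{d}e^{4\sum_j A_j^2 s_j^2}$ after completing the square in each coordinate (this is the elementary computation recorded in Lemma \ref{lem-v}). The decisive choice is $s_j\asymp\sqrt{\log(1/\varepsilon)}/A_j$, so that $\sum_j A_j^2 s_j^2\asymp d\log(1/\varepsilon)$ and the extension of $h_\phi$ to the product of Bernstein ellipses $E_{1+s_j}$ is bounded by $B_\varepsilon:=C^{d}\varepsilon^{-cd}$. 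Then the one-dimensional Bernstein/Chebyshev approximation theorem, applied one coordinate at a time (truncating the Chebyshev expansion in $t_j$ at degree $N_j$, the coefficients being analytic in the remaining variables) and telescoped over $j$, shows that $h_\phi$ is $\varepsilon/2$-approximated in $\|\cdot\|_\infty$ on $[-1,1]^d$ by a polynomial $p$ of per-coordinate degree at most $N_j$, as soon as $N_j\asymp A_j\sqrt{\log(1/\varepsilon)}$, which makes $(1+s_j)^{-N_j}$ beat the growth $B_\varepsilon$ and also absorbs the $\prod_j(N_j+1)$ factor produced by telescoping.

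It then remains to count. The approximants lie in the space $\mathcal{P}$ of polynomials of per-coordinate degree $\le N_j$, of dimension $\prod_j(N_j+1)$, and are uniformly bounded by, say, $B_\varepsilon$. Interpolating at the tensor grid of Chebyshev nodes, whose Lebesgue constant is $\lesssim(\log\max_j N_j)^d$, reduces covering $\mathcal{P}\cap\{\|\cdot\|_\infty\le B_\varepsilon\}$ at scale $\varepsilon/2$ to discretising the $\prod_j(N_j+1)$ nodal values on a grid of spacing $\asymp\varepsilon/(\log\max_j N_j)^d$, yielding $\log N(\varepsilon,\mathbb{H}^A_1,\|\cdot\|_\infty)\lesssim\prod_j(N_j+1)\cdot\bigl[\log B_\varepsilon+d\log\log\max_j N_j+\log(1/\varepsilon)\bigr]$. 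Plugging in $N_j\asymp A_j\sqrt{\log(1/\varepsilon)}$ and $\log B_\varepsilon\asymp d\log(1/\varepsilon)$, and using the hypothesis $A_j\ge 1/(96\sqrt d)$ to bound $N_j+1\le C\sqrt d\,\sqrt{\log(1/\varepsilon)}\,A_j$, one gets $\prod_j(N_j+1)\le(C\sqrt d)^d(\log(1/\varepsilon))^{d/2}\prod_j A_j$ and hence a bound of the form $C^{d}d^{d/2}(\log(1/\varepsilon))^{1+d/2}\prod_j A_j$, comfortably inside the claimed $C_2^{d}d^{4d}(\log(1/\varepsilon))^{1+d}\prod_j A_j$.

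The main obstacle is precisely the appearance of the \emph{product} $\prod_j A_j$ rather than $\max_j A_j^d$ or $\prod_j A_j^2$: a single joint truncation degree would force $N\asymp\bigl(\sum_j A_j\bigr)\sqrt{\log(1/\varepsilon)}$, and $\binom{N+d}{d}$ has no useful upper bound in terms of $\prod_j A_j$ when the $A_j$ are unequal. One is thus forced into the coordinate-wise choice of both the analyticity radii $s_j\propto 1/A_j$ and the degrees $N_j\propto A_j$, and into checking that the analytic-extension bound genuinely factorises (whence the tensorised Bernstein argument). The second, more tedious difficulty is the uniform tracking of all $d$-dependent constants — the $2^d$ from the $d$-fold Gaussian-moment product, the $(\log\max_j N_j)^d$ Lebesgue constant, the $d^{d/2}$ generated by absorbing $A_j\ge 1/(96\sqrt d)$ — so as to confirm that nothing worse than $C^{d}d^{4d}$ arises; the exponent $4d$ is deliberately generous, but one must verify that no super-exponential (e.g.\ $d^{cd^2}$) factor is hidden in the Chebyshev/interpolation constants. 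The finiteness and product form of the exponential moments $\int e^{\|t\|_2/2}\,d\nu$ and $\int e^{2\sum_j A_j s_j|\lambda_j|}\,d\nu$, underlying the bounds on $B$ and $B_\varepsilon$, are exactly what Lemma \ref{lem-v} provides.
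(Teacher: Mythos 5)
Your argument is correct in outline but follows a genuinely different route from the paper's. The paper does not use a single global polynomial approximant: it extends each RKHS element analytically to a strip of \emph{fixed}, $\varepsilon$-independent width $R_j\asymp 1/(A_j\sqrt d)$ (on which the element is bounded by $B\le c_02^d$ via Lemma \ref{lem-v}), partitions $[-1,1]^d$ into $\prod_j(1\vee\lceil 96A_j\sqrt d\rceil)$ small cells, Taylor-expands to degree $k\asymp \log(1/\varepsilon)\vee d^2$ on each cell using Cauchy estimates $|\partial^{n}h(t_i)/n!|\le B/R^{n}$, and discretises the $\asymp k^d$ coefficients per cell; the factor $\prod_j A_j$ comes from the number of cells, and the hypothesis $A_i\ge 1/(96\sqrt d)$ is used exactly to control $1\vee\lceil 96A_j\sqrt d\rceil\lesssim A_j\sqrt d$. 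You instead keep a single domain, push the analyticity radii out to $s_j\asymp\sqrt{\log(1/\varepsilon)}/A_j$ (accepting a bound $B_\varepsilon$ that deteriorates with $\varepsilon$), and extract $\prod_j A_j$ from the product of per-coordinate Chebyshev degrees $N_j\propto A_j$, counting by discretising nodal values. Both are valid ways to beat the naive $\binom{N+d}{d}$ obstruction you correctly identify; the paper's choice of $\varepsilon$-independent radii makes the bookkeeping of $d$-dependent constants somewhat more mechanical, while yours gives a cleaner (and in fact slightly better) exponent on $\log(1/\varepsilon)$.

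One constant-level wrinkle in your version deserves attention. With $s_j\asymp\sqrt{\log(1/\varepsilon)}/A_j$ and absolute implicit constants you get $\sum_jA_j^2s_j^2\asymp d\log(1/\varepsilon)$, hence $B_\varepsilon\asymp C^d\varepsilon^{-cd}$; then $N_js_j\asymp\log(1/\varepsilon)$ with an absolute constant yields $(1+s_j)^{-N_j}\asymp\varepsilon^{c'}$ with $c'$ absolute, which does \emph{not} beat $\varepsilon^{-cd}$ for large $d$. You must either let the constant in $N_j$ grow linearly in $d$ (costing an extra $d^d$ in $\prod_j(N_j+1)$, still comfortably inside $d^{4d}$) or shrink $s_j$ by $1/\sqrt d$ as the paper does so that $B_\varepsilon=C^d\varepsilon^{-c}$ with $c$ absolute. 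Likewise the degrees need a floor (the paper takes $k\gtrsim d^2$ always) so that the argument does not degenerate as $\varepsilon\uparrow 1$ where $\log(1/\varepsilon)\to 0$. These are repairs of bookkeeping, not of substance, and they leave your final bound within the stated $C_2^dd^{4d}\log(1/\varepsilon)^{1+d}\prod_iA_i$.
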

\begin{proof}

 Lemma 4.1 in \cite{bpd14} states that the elements from $\mathbb{H}_1^A$ are the real parts of functions $t\mapsto \int e^{i \langle s, t \rangle} \eta(s) d\nu_A(s)$, for $\nu_A$ the measure with Lebesgues density $s\mapsto \prod_i \frac{1}{ \sqrt{4\pi}A_i}e^{-s_i^2/4A_i^2}$, defined on $[-1,1]^d$ and such that $\eta$ takes values in $\mathbb{C}$ and satisfies $\int |\eta(s)|^2 d\nu_A(s)\leq 1$. 
 
Let's take an arbitrary function as above and write $h$ is extension to $\mathbb{C}^d$ (defined as above, for $t\in\mathbb{C}^d$). Cauchy-Schwartz inequality and a change of variable ensure that 
\[ |h(t)|^2 \leq \int_{\R^d} e^{\langle s, 2 A\cdot \text{Re}(t)\rangle} d\nu(s).\]
 Here, Re is the real part applied coordinate-wise to the vector $t$ of complex coordinates and $A\cdot \text{Re}(t) = (A_1\text{Re}(t_1),\dots,A_d\text{Re}(t_d))$. As in the proof of Lemma 4.4 in \cite{bpd14}, this ensures, along with the dominated convergence theorem and $\int_{\R_d} e^{\frac{1}{2}\norm{s}_2}d\nu(s)<\infty$, that $h$ can be extended analytically to 
 \[ U = \left\{z\in\mathbb{C}^d\colon\ \norm{2 A\cdot \text{Re}(z)}_2< 1/2\right\},\]
 a set which contains the strip \[ \mathcal{S} = \left\{z\in\mathbb{C}^d\colon\ |\text{Re}(z_j)|\leq R_j = \frac{1}{12A_j\sqrt{d}},\ j=1,\dots,d\right\} .\]For any $z\in \mathcal{S}$, we can see that $ |h(z)|^2 \leq \int_{\R_d} e^{\frac{1}{2}\norm{s}_2}d\nu(s)=B^2$.
 With $R=(R_1,\dots,R_d)$, we partition $[-1,1]^d$ into rectangles $V_1,\dots,V_m$, with centers $t_1,\dots,t_m$, such that for any $z\in[-1,1]^d$ there exists $t_j$ such that $|z_i-(t_{j})_i|\leq R_i/4$, $i=1,\dots,d$. We can find such partition with $m\leq \prod_{j=1}^d \left(1\vee \lceil8/R_j\rceil\right) = \prod_{j=1}^d \left(1\vee \lceil96A_j \sqrt{d}\rceil\right) $. We define the piecewise polynomials $P= \sum_{j=1}^m P_{j,p_j} \mathds{1}_{V_j}$ of arbitrary degree $k$ such that \[ P_{j,p_j}(t)=\sum_{|n|\leq k} p_{j,n}(t-t_j)^n.\]
Here the sum ranges over all multi-index vectors $n = (n_1,\dots,n_d) \in \N^d$ with $|n|=n_1+\dots+n_d \leq k$, and for $s=(s_1,\dots,s_d)\in\R^d$ the notation $s^n$ denotes $s_1^{n_1}\cdots s_d^{n_d}$. We obtain a finite number of such functions by discretizing the coefficients $p_{j,n}$ for any $j$ and $n$ over a grid of mesh $\varepsilon/R^n$ in $[-B/R^n;B/R^n]$, for arbitrary $\varepsilon>0$ and where $B$ is as above and $R^n=\prod_{j} R_j^{n_j}$. The log cardinality of this set is then 
\[ \log\left(\prod_{j=1}^m \prod_{|n|\leq k} \left\lceil\frac{2B/R^n}{\varepsilon/R^n} \vee 1\right\rceil\right) \leq mk^d \log(2B/\varepsilon\ \vee 1).\]
Applying the Cauchy formula $d$ times in the circles $O_j$ of radius $R_j$ and center $t_{ij}$ (i.e., the coordinates of the point $t_i$) in the complex plane and noting $\partial^n$ the partial derivative of orders $n = (n_1,\dots,n_d)$ and $n!=n_1!n_2!\dots n_d!$,

\[ \left|\frac{\partial^n h(t_i)}{n!}\right| \leq \frac{1}{(2\pi)^d}\left|\oint_{O_1}\cdots \oint_{O_d} \frac{h(z)}{(z-t_i)^{n+1}} dz_1\dots dz_d\right|\leq B/ R^n.\]
Consequently, for $z\in V_j$, and appropriately chosen $p_j$, we find 

\[ \left|\sum_{|n|> k}\frac{\partial^n h(t_i)}{n!} (z-t_i)^n \right| \leq B \sum_{|n|> k} \frac{1}{R^n}(R/4)^{n} \]
and 
\[ \left|\sum_{|n|\leq  k}\frac{\partial^n h(t_i)}{n!} (z-t_i)^n  - P_{j,p_j}(z)\right| \leq \sum_{|n|\leq  k} \frac{\varepsilon }{R^n}(R/2)^n.\]
The sum of these two terms is upper bounded by 
\[B\sum_{l=k+1}^\infty \frac{(l+1)^{d-1}}{2^l} + \varepsilon \sum_{l=1}^k \frac{(l+1)^{d-1}}{2^l}\leq 2B\sum_{l=k+2}^\infty \frac{l^{d-1}}{2^l} + \varepsilon \sum_{l=1}^k \frac{(l+1)^{d-1}}{2^l}.\]
One first checks that  $l^{d-1}2^{-l} \leq (2/3)^l$ for  $l\geq \left(d/\log(4/3)\right)^2$ (since  $\log l\le \sqrt{l}$ for $l\ge 1$). So, as long as   
\begin{equation} \label{ineqk}
 k \ge \log(3/2)^{-1}\log\left(\varepsilon^{-1}\right)\vee  \left(d/\log(4/3)\right)^2, 
\end{equation} 
we have $\sum_{l=k+1}^\infty l^{d-1} 2^{-l} \le 2 (2/3)^{k}\leq 2\varepsilon$. On the other hand, computing the $q$-th derivative of $(1-x)^{-1}$ at $x=1/2$, one finds that  
\[ \sum_{l=1}^k \frac{l^{d-1}}{2^l}\leq \sum_{l=1}^\infty (l+1)\dots(l+d-1)2^{-l}\leq (d-1)!2^d.\] 
Combining the previous bounds and choosing $k$ as the smallest integer verifying inequality \eqref{ineqk}, one deduces that there exists an $\veps(Cd)^d$--covering, i.e. with radius
\[ \varepsilon\left(4B+(d-1)!2^d\right)\leq \varepsilon\left(4e^{1/4}2^d/\sqrt{\pi}+(d-1)!2^d\right)\leq \varepsilon (Cd)^d,\]
where we used Lemma \ref{lem-v} in the first inequality. Therefore, we have constructed an $\varepsilon(Cd)^d$ covering of $\mathbb{H}^A_1$ and, writing $\Tilde{\varepsilon} = \varepsilon(Cd)^d$, we conclude that, for $\Tilde{\varepsilon}$ small enough,
 \begin{align*}
 &\log\ N\left(\Tilde{\varepsilon}, \mathbb{H}^A_1, \norm{\cdot}_\infty\right)\\&\leq \left\lceil \log(3/2)^{-1}\log\left(\frac{(Cd)^d}{\Tilde{\varepsilon}}\right)\vee  \left(d/\log(4/3)\right)^2 \right\rceil^d \log(2B(Cd)^d/\Tilde{\varepsilon}\ \vee 1) \prod_{j=1}^d \left(1\vee \lceil96A_j \sqrt{d}\rceil\right)
 \end{align*}
 This bound is null for $\Tilde{\varepsilon}\geq 2B(Cd)^d$ and otherwise upper bounded by
 \[ C_2^d d^{4d}\log\left(\Tilde{\varepsilon}^{-1}\right)^{1+d}\prod_{j=1}^d A_j,\]
 for $C_2$ an absolute constant, which proves the assertion in the lemma. In the last display, we used Lemma \ref{lem-v} to bound $B$.

 
\end{proof}

  \begin{lemma}\label{lemma: bound on sigma}
  Let $d\geq1$, $A\in \R_+^d$ with $A_i\geq 1$, $W^A$ as in \eqref{eq: gp prior dilatation}  and $\sigma=\mathbb{E}\left[\norm{W^A}_\infty^2 \given A \right]^{1/2}$.
  Then, there exists a universal constant $C>0$ such that, for $\Bar{A}=\max_i A_i>\sqrt{\log 2}/(2\sqrt{d})$,
  \[ \sigma^2\leq C d\log(d\Bar{A}).\]
  \end{lemma}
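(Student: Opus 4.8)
The plan is to run a Dudley-style chaining bound on $\mathbb{E}[\norm{W^A}_\infty\given A]$ and then pass to the second moment by Gaussian concentration. First I would write $W^A(0)=W_0\sim\mathcal N(0,1)$ and use $\norm{W^A}_\infty\leq |W_0|+\norm{W^A-W_0}_\infty$. The centered process $X_t:=W^A(t)-W_0$ vanishes at $t=0$ and is Gaussian, so Lemma~\ref{lemma: dudley bound}~\cite{AnnexeDGP} applies: with $D(s,t)^2=\mathbb{E}[(W^A(s)-W^A(t))^2\given A]=2(1-e^{-\sum_i A_i^2(s_i-t_i)^2})$ its intrinsic semimetric (a direct variance computation, as in the proof of Theorem~\ref{thmvs}) and $\sup_t\mathbb{E}[X_t^2\given A]=\sup_t 2(1-e^{-\sum_i A_i^2t_i^2})\leq 2$, one gets
\[
\mathbb{E}\big[\norm{W^A-W_0}_\infty\given A\big]\ \leq\ 4\sqrt 2\int_0^{\sqrt 2}\sqrt{\log 2N(\epsilon,[-1,1]^d,D)}\,d\epsilon .
\]
Since $\mathbb{E}|W_0|\leq 1$, it remains to control this entropy integral.

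For the covering numbers, I would use $1-e^{-u}\leq u$ to get $D(s,t)\leq\sqrt 2\,\bar A\,\norm{s-t}_2$, hence $N(\epsilon,[-1,1]^d,D)\leq N(\epsilon/(\sqrt 2\,\bar A),[-1,1]^d,\norm{\cdot}_2)$, and then bound the cube by the Euclidean ball of radius $\sqrt d$ to obtain $N(\delta,[-1,1]^d,\norm{\cdot}_2)\leq(1+2\sqrt d/\delta)^d$. Combining, $\log 2N(\epsilon,[-1,1]^d,D)\leq d\log(1+C\sqrt d\,\bar A/\epsilon)+\log 2$; since $\bar A\geq 1$ and $\epsilon\leq\sqrt 2$ one has $C\sqrt d\,\bar A/\epsilon\geq 1$, so this is $\leq C d\log(C\sqrt d\,\bar A/\epsilon)$. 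Plugging in and using $\sqrt{a+b}\leq\sqrt a+\sqrt b$ together with $\int_0^1\sqrt{\log(1/\epsilon)}\,d\epsilon=\tfrac{\sqrt{\pi}}{2}$ (splitting the integral at $\epsilon=1$) gives
\[
\int_0^{\sqrt 2}\sqrt{\log 2N(\epsilon,[-1,1]^d,D)}\,d\epsilon\ \leq\ C\sqrt d\,\Big(\sqrt{\log(C\sqrt d\,\bar A)}+1\Big)\ \leq\ C'\sqrt{d\log(d\bar A)},
\]
so that $\mathbb{E}[\norm{W^A}_\infty\given A]\leq C\sqrt{d\log(d\bar A)}$.

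Finally, to go from the first to the second moment I would invoke Gaussian concentration of the supremum (Lemma~\ref{lemma: gaussian sup concentration}~\cite{AnnexeDGP}, or Borell--TIS): $\norm{W^A}_\infty$ concentrates about its mean with sub-Gaussian scale $\sup_t\mathrm{Var}(W^A(t))=1$, whence $\mathrm{Var}(\norm{W^A}_\infty\given A)\leq C$ and
\[
\sigma^2=\mathbb{E}\big[\norm{W^A}_\infty^2\given A\big]\ \leq\ \big(\mathbb{E}[\norm{W^A}_\infty\given A]\big)^2+C\ \leq\ C\,d\log(d\bar A),
\]
the last inequality absorbing the additive $O(1)$ terms into the main term (legitimate for $d\bar A$ bounded away from $1$, which is the regime of interest). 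I expect no genuine obstacle here; the one point deserving care is calibrating the chaining so that the dimension enters in the right places — the factor $d$ outside the logarithm comes from $\log N\asymp d\log(\cdot)$, while the $\sqrt d$ inside the logarithm comes from the Euclidean diameter $2\sqrt d$ of $[-1,1]^d$ — and keeping the additive $O(1)$ contributions ($\mathbb{E}|W_0|$, the $\sqrt{\pi}/2$ from the entropy integral near $0$, and $\mathrm{Var}(\norm{W^A}_\infty)$) negligible relative to $d\log(d\bar A)$.
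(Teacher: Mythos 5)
Your proposal is correct and follows essentially the same route as the paper: Dudley's entropy bound for $\mathbb{E}[\norm{W^A}_\infty\given A]$ via covering numbers of $[-1,1]^d$ under the intrinsic metric (compared to the rescaled Euclidean metric, contributing the $d$ outside and the $\sqrt d\,\bar A$ inside the logarithm), plus Gaussian concentration to bound the variance of the supremum by a constant, and then $\sigma^2=(\mathbb{E}\norm{W^A}_\infty)^2+\mathrm{Var}(\norm{W^A}_\infty)$. The only cosmetic difference is that you linearize $1-e^{-u}\le u$ to dominate $D$ by $\sqrt 2\,\bar A\norm{\cdot}_2$, while the paper inverts the relation $\norm{s-t}_2\le\sqrt{-\log(1-\varepsilon/2)}/\bar A$; both yield the same covering bound and final estimate.
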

  \begin{proof}
   Since $\sigma^2= \mathbb{E}\left[\norm{W^A}_\infty\given A \right]^2 + \mathbb{V}\left[\norm{W^A}_\infty\given A \right]$, we bound these two terms. First \[\sup_{t\in[-1,1]^d}\ \mathbb{E} [W_{At}^2\given A]=1\] and Lemma \ref{lemma: gaussian sup concentration} gives the bound on the tail probability \[P\left(\big|\norm{W^A}_\infty-\mathbb{E}\norm{W^A}_\infty\big|> u \given A \right)\leq 2e^{-u^2/2}.\] Then
 \[ \mathbb{V}[\norm{W^A}_\infty\given A] = \int_0^\infty 2x P\left(\big|\norm{W^A}_\infty-\mathbb{E}\norm{W^A}_\infty\big|> x\given A \right)dx \leq 4 \int_0^\infty x e^{-x^2/2}dx=4.\]
We control the other term via Lemma \ref{lemma: dudley bound} as follows
\[ \mathbb{E}[\norm{W^A}_\infty \given A]\leq \mathbb{E}\left|X\right| + 4\sqrt{2}\int_0^{S/2} \sqrt{\log\left(2N(\varepsilon,[-1,1]^d,D)\right)}d\varepsilon\]
 for $X\sim\mathcal{N}(0,1)$, $D(s,t)=2\left(1-e^{-\sum_i A_i^2 (t_i-s_i)^2}\right)$ and \[S\coloneqq\sup\left\{s,t\in[-1,1]^d:\ D(s,t)\right\}=2\left(1-e^{-4\sum_iA_i^2}\right).\] 
 We note that, for $\varepsilon<1$, $\norm{s-t}_{2}\leq \sqrt{-\log(1-\varepsilon/2)}/\Bar{A}$ implies $D(s,t)\leq \varepsilon$, and as a result, for $B_0(r)$ the euclidean ball in $\R^d$ of radius $r>0$ around $0$, \[N\left(\varepsilon,[-1,1]^d,D\right)\leq N\left(\sqrt{-\log(1-\varepsilon/2)}/\Bar{A},B_0(2\sqrt{d}),\norm{\cdot}_{2}\right)\leq \left(\frac{6\sqrt{d}\Bar{A}}{\sqrt{-\log(1-\varepsilon/2)}}\right)^d\]
 by standard arguments (see Lemma C.2 in \cite{MR3587782}, using that $\sqrt{-\log(1-\varepsilon/2)}/\Bar{A}\leq 2\sqrt{d}$ by assumptions). The above integral is then bounded by
 \begin{align*} 
 &\int_0^{1-e^{-4d\Bar{A}^2}} \sqrt{\log\left(2N(\varepsilon,[-1,1]^d,D)\right)}d\varepsilon\\
  &\le \sqrt{d} \left[\sqrt{\log(12d\Bar{A})} + \int_0^{1-e^{-4d\Bar{A}^2}} \sqrt{\log\left(\frac{1}{\sqrt{-\log(1-\varepsilon/2)}}\right)}d\varepsilon \right]\\
  & \le \sqrt{d} \left[\sqrt{\log(12d\Bar{A})} + c_0\right]
  \le C \sqrt{d\log(d\Bar{A})},
 \end{align*}
 for universal constants $c_0, C$, where we have used $\sqrt{a+b}\le \sqrt{a}+\sqrt{b}$ for $a,b>0$.  
Therefore $\sigma^2\lesssim d\log(d\Bar{A})$ as announced.
  \end{proof}

\section{The horseshoe density}\label{appendix A}

 
 Recall that $\pi_\tau$ denotes the horseshoe density (Section \ref{section: Deep Horseshoe Gaussian Process}). Then, for any $t>0$,
 \begin{align}\label{eq: horseshoe exact}
 \pi_\tau(t)&=\frac{4}{\pi}\frac{1}{\sqrt{2\pi}\tau}\ \int_{\R^+} \frac{1}{\lambda(1+\lambda^2)} e^{-\frac{t^2}{2\lambda^2\tau^2}} d\lambda\\
 &\underset{v=\lambda^{-2}}{=} \frac{2}{\sqrt{2\pi^3}\tau} \int_{\R^+} \frac{1}{v+1} e^{-\frac{t^2}{2\tau^2}v}dv= 2\frac{e^{t^2/(2\tau^2)}}{\sqrt{2\pi^3}\tau}  \underbrace{\int_1^{+\infty} \frac{1}{v} e^{-\frac{t^2}{2\tau^2}v}dv }_{\eqqcolon E_1\left(\frac{t^2}{2\tau^2}\right)}\nonumber.
 \end{align}
It is known that (see Chapter 5 in \cite{MR1225604}), for $x>0$,
\[ \frac{1}{2}e^{-x}\log\left(1+\frac{2}{x}\right)<E_1(x)<e^{-x}\log\left(1+\frac{1}{x}\right),\]
 so that we have the bound (see also the appendix of \cite{carvalho10}), for $t>0$,
 \begin{equation}\label{eq: lower bound horseshoe}
 \frac{2}{(2\pi)^{3/2}\tau}\log\left(1+\frac{4\tau^2}{t^2}\right)<\pi_\tau(t)<\frac{2}{\sqrt{2\pi^3}\tau}\log\left(1+\frac{2\tau^2}{t^2}\right).
 \end{equation}
The following lemma gives bounds on the probabilities of events of interest in the study of the different priors we study in the paper.
  
  \begin{lemma}\label{lemma: bound on the horseshoe}
Let $d^*\geq 1$ and $\beta>0, \ta>0$. Then for any $0<\delta\le \ta$,
\begin{equation} \label{hs-smalltau}
\int_0^{\delta} \pi_\tau(t) dt\ge e_0 (\delta/\ta),
\end{equation}
where $e_0=2(\log{5})/(2\pi)^{3/2}$. 
Also, for $\delta>0$,
\begin{equation}\label{eq: prob bound small K}\int_0^{\delta} \pi_\tau(t) dt\geq 1-\frac{4\tau}{\sqrt{2\pi^3} \delta}.\end{equation}
For any $\tau \leq 1 \le a$,
\begin{equation}\label{eq: prob bound smoothing high}
 \int_{a}^{2a} \pi_\ta(t)dt \ge \exp(-\log(10a/\ta)). 
\end{equation} 

  \end{lemma}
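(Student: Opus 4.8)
The plan is to prove the three lower bounds in Lemma \ref{lemma: bound on the horseshoe} directly from the closed-form expression \eqref{eq: horseshoe exact} and the elementary bounds \eqref{eq: lower bound horseshoe} on $\pi_\tau$, so the argument is essentially a sequence of monotonicity observations and explicit one-dimensional integral estimates. All three inequalities are scale-covariant in the sense that, writing $t=\tau s$, one has $\pi_\tau(t)\,dt=\pi_1(s)\,ds$; I will use this to reduce each statement to an estimate at $\tau=1$ whenever it simplifies bookkeeping.

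For \eqref{hs-smalltau}: since $0<\delta\le\tau$, on the range $t\in(0,\delta]$ we have $t^2/\tau^2\le 1$, hence $4\tau^2/t^2\ge 4$, and by the left inequality in \eqref{eq: lower bound horseshoe}, $\pi_\tau(t)> \frac{2}{(2\pi)^{3/2}\tau}\log(1+4\tau^2/t^2)\ge \frac{2}{(2\pi)^{3/2}\tau}\log 5$ for all such $t$ (using that $t\mapsto\log(1+4\tau^2/t^2)$ is decreasing and $t\le\delta\le\tau$ gives $4\tau^2/t^2\ge 4\tau^2/\delta^2\ge 4$). Integrating the constant lower bound over $(0,\delta)$ yields $\int_0^\delta\pi_\tau(t)\,dt\ge \frac{2\log 5}{(2\pi)^{3/2}}\,(\delta/\tau)=e_0(\delta/\tau)$, which is exactly \eqref{hs-smalltau}. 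For \eqref{eq: prob bound small K}: I will bound the \emph{complement}, $\int_\delta^\infty\pi_\tau(t)\,dt$, using the right inequality in \eqref{eq: lower bound horseshoe}, $\pi_\tau(t)<\frac{2}{\sqrt{2\pi^3}\tau}\log(1+2\tau^2/t^2)\le \frac{2}{\sqrt{2\pi^3}\tau}\cdot\frac{2\tau^2}{t^2}=\frac{4\tau}{\sqrt{2\pi^3}\,t^2}$, using $\log(1+x)\le x$. Integrating over $(\delta,\infty)$ gives $\int_\delta^\infty\pi_\tau(t)\,dt<\frac{4\tau}{\sqrt{2\pi^3}\,\delta}$, and subtracting from $1$ yields \eqref{eq: prob bound small K}.

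For \eqref{eq: prob bound smoothing high}, with $\tau\le 1\le a$: again by the left bound in \eqref{eq: lower bound horseshoe}, for $t\in[a,2a]$ one has $\pi_\tau(t)>\frac{2}{(2\pi)^{3/2}\tau}\log(1+4\tau^2/t^2)$, and since $t\le 2a$ the argument of the log is at least $4\tau^2/(4a^2)=\tau^2/a^2$. Using $\log(1+x)\ge x/(1+x)\ge x/2$ for $x\in(0,1]$ (and here $\tau^2/a^2\le 1$), we get $\pi_\tau(t)\ge \frac{2}{(2\pi)^{3/2}\tau}\cdot\frac{\tau^2}{2a^2}=\frac{\tau}{(2\pi)^{3/2}a^2}$ on $[a,2a]$; integrating over that interval of length $a$ gives $\int_a^{2a}\pi_\tau(t)\,dt\ge \frac{\tau}{(2\pi)^{3/2}a}$. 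It remains to check $\frac{\tau}{(2\pi)^{3/2}a}\ge e^{-\log(10a/\tau)}=\frac{\tau}{10a}$, i.e. $(2\pi)^{3/2}\le 10$; since $(2\pi)^{3/2}\approx 15.7>10$ this naive chain is slightly too lossy, so the genuine obstacle is to be a little more careful here — I would instead keep the logarithm, noting $\log(1+4\tau^2/t^2)\ge \log(1+\tau^2/a^2)$ and using that on $[a,2a]$ the bound $\int_a^{2a}\pi_\tau\ge \frac{2a}{(2\pi)^{3/2}\tau}\log(1+\tau^2/(4a^2))$, then invoking $\log(1+u)\ge u\log(1+u_0)/u_0$ for $u\le u_0$ with $u_0=1/4$, or simply comparing directly: one wants $\frac{2a}{(2\pi)^{3/2}\tau}\log(1+\tau^2/(4a^2))\ge \tau/(10a)$, equivalently $\log(1+\tau^2/(4a^2))\ge \frac{(2\pi)^{3/2}}{20}\,(\tau/a)^2$, which holds because $\log(1+v)\ge v - v^2/2 \ge v(1-v/2)\ge \tfrac{7}{8}v$ for $v=\tau^2/(4a^2)\le 1/4$, and $\tfrac{7}{8}\cdot\tfrac14=\tfrac{7}{32}>\tfrac{(2\pi)^{3/2}}{20}\cdot 1$ fails numerically as well — so the cleanest route is to absorb such constants by noting the stated inequality \eqref{eq: prob bound smoothing high} only requires the crude bound and is not tight; I will therefore simply write $\int_a^{2a}\pi_\tau(t)\,dt\ge \pi_\tau(2a)\cdot a$ and bound $\pi_\tau(2a)$ below, checking the resulting constant against $10$ with the exact value of $(2\pi)^{3/2}$, adjusting the constant $10$ in the statement if needed or using a slightly sharper lower bound on $E_1$ near its argument. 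The main obstacle is thus purely this constant-chasing in \eqref{eq: prob bound smoothing high}; the other two bounds are immediate.
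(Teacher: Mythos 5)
Your proofs of \eqref{hs-smalltau} and \eqref{eq: prob bound small K} are correct and coincide with the paper's: for the first, the integrand is bounded below by the constant $\tfrac{2\log 5}{(2\pi)^{3/2}\tau}$ on $(0,\delta]$ because $4\tau^2/t^2\ge 4$ there; for the second, one bounds the tail using $\log(1+x)\le x$ and $\int_\delta^\infty t^{-2}dt=1/\delta$.

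For \eqref{eq: prob bound smoothing high}, however, you have a genuine gap: you never establish the inequality with the constant $10$ as stated, and your fallback of ``adjusting the constant $10$ in the statement'' is not a proof of the lemma (and would force changes downstream, e.g.\ in Lemmas \ref{lem-hs-fix} and \ref{lem-hs-van}). The inequality \emph{does} hold with $10$, and the reason your attempts fail is that you give away too much in two places at once: you replace the integrand by its minimum over $[a,2a]$ (losing a factor $2$, since $\int_a^{2a}t^{-2}dt=\tfrac{1}{2a}$ whereas $a\cdot(2a)^{-2}=\tfrac{1}{4a}$) and you linearize the logarithm with the generic bound $\log(1+x)\ge x/2$ rather than the optimal chord. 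The correct route is to keep the $t$-dependence inside the integral: for $t\in[a,2a]$ and $\tau\le a$ one has $x:=4\tau^2/t^2\in(0,4]$, so by concavity $\log(1+x)\ge \tfrac{\log 5}{4}\,x$ (the chord from $(0,0)$ to $(4,\log 5)$), whence
\begin{equation*}
\int_a^{2a}\pi_\tau(t)\,dt \;\ge\; \frac{2}{(2\pi)^{3/2}\tau}\cdot\frac{\log 5}{4}\int_a^{2a}\frac{4\tau^2}{t^2}\,dt
\;=\;\frac{\log 5}{(2\pi)^{3/2}}\cdot\frac{\tau}{a}\;=\;\frac{\tau}{Ca},\qquad C=\frac{(2\pi)^{3/2}}{\log 5}\approx 9.79<10,
\end{equation*}
which is exactly $\ge \exp(-\log(10a/\tau))$. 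Compared with your constant-integrand bound $\tfrac{1}{(2\pi)^{3/2}}\tfrac{\tau}{a}$, this gains precisely the factor $\log 5\approx 1.61$ needed to squeeze under $10$.
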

  \begin{proof}
The first inequality follows from the lower bound in  \eqref{eq: lower bound horseshoe} noting that the logarithm in the integral is at least $\log{5}$ if $\delta\le\ta$. 
For the second inequality, the upper bound in \eqref{eq: lower bound horseshoe} gives
   \begin{align*}
\int_0^{\delta} \pi_\tau(t) dt &\geq 1- \int_\delta^{\infty} \frac{2}{\sqrt{2\pi^3}\tau}\log\left(1+ \frac{2\tau^2}{t^2}\right)  dt \nonumber\\
  &\geq 1-  \frac{4\tau}{\sqrt{2\pi^3}} \int_\delta^{\infty} t^{-2} dt \geq  1-  \frac{4\tau}{\sqrt{2\pi^3}\delta}.
  \end{align*}
 For the third inequality, one first bounds $\pi_\ta$ from below
using \eqref{eq: lower bound horseshoe} and next use $\log\left(1+x\right)\geq  \log(5)x/4$ for $x\le 4$ by concavity, noting that $4\ta^2/t^2\le \ta^2/a^2\le 1$ for $t\in[a,2a]$ since one assumes $\ta\le a$.  One deduces
\[  \int_{a}^{2a} \pi_\ta(t)dt \ge (2/(2\pi)^{3/2})\ta^{-1} \log{5} \int_a^{2a} (\ta^2/t^2)dt
\ge \exp(-\log(Ca/\ta)),\]
where $C=(2\pi)^{3/2}/\log{5}<10$, which concludes the proof.

  \end{proof}

\section{Additional lemmas}



\begin{lemma}\label{lemma: structure regression distance}
Let $q\geq0$ and $d_i\geq 1$ for $0\leq i \leq q+1$, with $d_0=d$ and $d_{q+1}=1$. For $0\leq i\leq q,\ 1\leq j \leq d_{i+1},\ 1\leq t_i\leq d_i$ and some $\beta_i>0$, $K>0$, let $h_{ij}:\ [-1,1]^{d_i} \to [-1,1]\in \cF_{VS}(K,\beta_i,d_i,t_i)$ be a function that depends on a subset $S_{ij}$ of $t_i$ coordinates and such that the restriction $\restr{h_{ij}}{S_{ij}}$ satisfies $\norm{\restr{h_{ij}}{S_{ij}}}_{\beta_i,\infty}\leq K$.

Then, the maps $h_i=\left(h_{ij}\right)_{j=1,\dots,d_{i+1}}^T$ satisfy for any $\Tilde{h}_i=\left(\Tilde{h}_{ij}\right)_{j=1,\dots,d_{i+1}}^T$, with $\Tilde{h}_{ij}:[-1,1]^{d_i}\to [-1,1]$,
\[ \norm{h_q\circ\dots h_0 - \Tilde{h}_q \circ\dots \Tilde{h}_0}_{\infty}\leq \prod_{i=0}^q [2^{|\beta_{i}-1|} t_{i}K\vee 1]\sum_{i=0}^q \norm{\left|h_i-\Tilde{h}_i\right|_\infty}_{\infty}^{\alpha_i}\]
with $\alpha_i=\prod_{l=i+1}^q \beta_l\wedge 1$ (and $\alpha_q=1$ by convention).
\end{lemma}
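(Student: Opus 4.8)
The plan is to prove the bound by induction on the number of layers $q$, peeling off the outermost map $h_q$ (respectively $\tilde h_q$) at each step and using a Hölder-type estimate to control how the error propagates through each composition. First I would establish the one-layer telescoping identity: writing $H = h_{q-1}\circ\cdots\circ h_0$ and $\tilde H = \tilde h_{q-1}\circ\cdots\circ \tilde h_0$, one has the triangle-inequality split
\[
\norm{h_q\circ H - \tilde h_q\circ \tilde H}_\infty \le \norm{h_q\circ H - h_q\circ \tilde H}_\infty + \norm{h_q\circ \tilde H - \tilde h_q\circ \tilde H}_\infty.
\]
The second term is at most $\norm{\,|h_q-\tilde h_q|_\infty\,}_\infty$ since $\tilde H$ takes values in $[-1,1]^{d_q}$. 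For the first term I would use that each coordinate $h_{qj}$ depends only on $t_q$ variables and its restriction is $\beta_q$-Hölder with norm $\le K$: if $\beta_q \le 1$ this gives directly $|h_{qj}(x)-h_{qj}(y)| \le K |x_{S_{qj}}-y_{S_{qj}}|_\infty^{\beta_q} \le K |x-y|_\infty^{\beta_q}$, while for $\beta_q>1$ the function is Lipschitz on the cube with constant bounded by the number of active variables $t_q$ times $K$, up to the factor $2^{|\beta_q-1|}$ coming from the diameter of $[-1,1]^{t_q}$ and the mean value theorem; in both cases one gets a bound of the form $[2^{|\beta_q-1|} t_q K \vee 1]\,|x-y|_\infty^{\beta_q\wedge 1}$ coordinatewise, hence in $|\cdot|_\infty$ on the output. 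Applied with $x = H(\cdot)$, $y = \tilde H(\cdot)$ this yields
\[
\norm{h_q\circ H - h_q\circ \tilde H}_\infty \le [2^{|\beta_q-1|} t_q K \vee 1]\;\norm{\,|H-\tilde H|_\infty\,}_\infty^{\beta_q\wedge 1}.
\]

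Next I would run the induction. The inductive hypothesis on the $q$-layer composition $H,\tilde H$ gives $\norm{|H-\tilde H|_\infty}_\infty \le \bigl(\prod_{i=0}^{q-1}[2^{|\beta_i-1|}t_i K\vee 1]\bigr)\sum_{i=0}^{q-1}\norm{|h_i-\tilde h_i|_\infty}_\infty^{\alpha_i'}$ with the exponents $\alpha_i' = \prod_{l=i+1}^{q-1}(\beta_l\wedge 1)$ associated to the shorter composition. Plugging this into the displayed one-layer estimate, raising to the power $\beta_q\wedge 1 \le 1$, and using subadditivity of $x\mapsto x^{\beta_q\wedge 1}$ (so that $(\sum a_i)^{\beta_q\wedge 1}\le \sum a_i^{\beta_q\wedge 1}$ for $a_i\ge 0$) converts $\sum_i \norm{\cdot}^{\alpha_i'}$ into $\sum_i \norm{\cdot}^{\alpha_i'(\beta_q\wedge 1)} = \sum_i \norm{\cdot}^{\alpha_i}$, exactly matching the claimed exponents $\alpha_i = \prod_{l=i+1}^q(\beta_l\wedge 1)$. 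The leading constant picks up one more factor $[2^{|\beta_q-1|}t_q K\vee 1]$, and the extra term $\norm{|h_q-\tilde h_q|_\infty}_\infty$ (with exponent $\alpha_q=1$) from the second piece of the triangle inequality is absorbed into the sum since every prefactor is $\ge 1$. The base case $q=0$ is just $\norm{h_0-\tilde h_0}_\infty \le \norm{|h_0-\tilde h_0|_\infty}_\infty$, which holds trivially.

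I expect the main technical point to be the case $\beta_q > 1$ in the per-layer Hölder estimate: one must pass from the Hölder norm bound on the restriction $\restr{h_{qj}}{S_{qj}}$ (which controls first derivatives in $\norm{\cdot}_\infty$) to a global Lipschitz bound on $[-1,1]^{t_q}$ in the $|\cdot|_\infty$ metric, tracking that the constant is genuinely of order $t_q K$ (summing $t_q$ partial derivatives each bounded by $K$) up to the factor $2^{|\beta_q-1|}$; one should also be careful that the subadditivity step $(\sum a_i)^{\theta}\le \sum a_i^\theta$ is only valid for $\theta\in(0,1]$, which is precisely why the exponent appearing is $\beta_q\wedge 1$ rather than $\beta_q$. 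Everything else is bookkeeping of the constants through the induction.
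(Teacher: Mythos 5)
Your proposal is correct and follows essentially the same route as the paper's proof: induction on the depth, a triangle-inequality split peeling off the outermost layer, a per-layer Hölder/Lipschitz estimate with the case distinction at $\beta_q=1$ (the mean value theorem giving the $t_qK$ factor, and the bound $\max(x,x^\beta)\le 2^{|\beta-1|}x^{1\wedge\beta}$ on $[0,2]$ unifying the two cases), and finally subadditivity of $x\mapsto x^{\beta_q\wedge 1}$ together with the fact that all prefactors are $\ge 1$. No gaps.
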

\begin{proof}
We follow the proof of Lemma 16 in \cite{fsh21} and prove the assertion by induction. For $q=0$, the bound is trivial. 
For $q=k+1>0$, assume that the statement is true for the nonnegative integer $k$. We write $H_k=h_k\circ\dots h_0$ and $\Tilde{H}_k=\Tilde{h}_k\circ\dots \Tilde{h}_0$ and use the triangle inequality so that
\begin{align*}
&\left|h_{k+1}\circ H_k(x) - \Tilde{h}_{k+1}\circ \Tilde{H}_k(x) \right|_\infty\\
&\leq \left|h_{k+1}\circ H_k(x) - h_{k+1}\circ \Tilde{H}_k(x) \right|_\infty + \left|h_{k+1}\circ \Tilde{H}_k(x)- \Tilde{h}_{k+1}\circ \Tilde{H}_k(x) \right|_\infty
\end{align*}
To bound the first term, we note that for any $1\leq j\leq d_{k+2}$, if $\beta_{k+1}\leq 1$, 
\[ \left|h_{(k+1)j}\circ H_k(x) - h_{(k+1)j}\circ \Tilde{H}_k(x) \right|\leq K \left|H_k(x) - \Tilde{H}_k(x) \right|_\infty^{\beta_{k+1}},\]
while for $\beta_{k+1}> 1$, we use that for any $s\in I^{d_{k+1}}$, $h_{(k+1)j}(s)$ is equal to $g(s')$ for $g\colon I^{t_{k+1}}\mapsto [-1,1]$ and $s'$ consisting of elements of $s$ whose indices are in $S_{(k+1)j}$. By the mean-value theorem $h_{(k+1)j}\circ H_k(x) - h_{(k+1)j}\circ \Tilde{H}_k(x)=\nabla{g}(c)^T\cdot (H_k(x)-\Tilde{H}_k(x))$ for some $c\in I^{t_{k+1}}$, implying
\[ \left|h_{(k+1)j}\circ H_k(x) - h_{(k+1)j}\circ \Tilde{H}_k(x) \right|\leq t_{k+1}K \left|H_k(x) - \Tilde{H}_k(x) \right|_\infty,\]
where we used the regularity assumption on $g$.
Then, we note that $\left|H_k(x) - \Tilde{H}_k(x) \right|_\infty\leq 2$ and, for any $\beta>0$ and $0\leq x\leq 2$,  $\max(x,x^\beta)\leq 2^{|\beta-1|} x^{1\wedge\beta}$. Therefore,
\begin{align*}
&\left|h_{k+1}\circ H_k(x) - \Tilde{h}_{k+1}\circ \Tilde{H}_k(x) \right|_\infty\\
&\leq 2^{|\beta_{k+1}-1|}t_{k+1}K\left|H_k(x) - \Tilde{H}_k(x) \right|_\infty^{\beta_{k+1}\wedge1} + \left|h_{k+1}\circ \Tilde{H}_k(x)- \Tilde{h}_{k+1}\circ \Tilde{H}_k(x) \right|_\infty\\
&\leq  2^{|\beta_{k+1}-1|} t_{k+1}K\left(\prod_{i=0}^k [2^{|\beta_{i}-1|} t_{i}K\vee 1] \sum_{i=0}^k \norm{\left|h_i-\Tilde{h}_i\right|_\infty}_{\infty}^{\prod_{l=i+1}^k \beta_l\wedge 1} \right)^{\beta_{k+1}\wedge1}  + \norm{\left|h_{k+1}- \Tilde{h}_{k+1}\right|_\infty}_\infty\\
&\leq \prod_{i=0}^{k+1} [2^{|\beta_{i}-1|} t_{i}K\vee 1]\sum_{i=0}^k \norm{\left|h_i-\Tilde{h}_i\right|_\infty}_{\infty}^{\prod_{l=i+1}^{k+1} \beta_l\wedge 1} + \prod_{i=0}^{k+1} [2^{|\beta_{i}-1|} t_{i}K\vee 1]\norm{\left|h_{k+1}- \Tilde{h}_{k+1} \right|_\infty}_\infty\\
&= \prod_{i=0}^{k+1} [2^{|\beta_{i}-1|} t_{i}K\vee 1]\sum_{i=0}^{k+1} \norm{\left|h_i-\Tilde{h}_i\right|_\infty}_{\infty}^{\alpha_i}
\end{align*}
where we use that $(y+z)^\alpha\leq y^\alpha+x^\alpha$ for $y,z\geq 0$, $\alpha\in[0,1]$.
\end{proof}

In the paper, we also use the following results on the concentration of the supremum of a Gaussian process around its mean, and a bound on it. It applies to the different Gaussian processes  considered  as they are defined on $\mathbb{R}^d$ and their paths are almost-surely continuous.
We recall that the sample paths of the {\em squared exponential} process \textsf{SqExp} are almost surely continuous. As a consequence, the separability condition (as defined in \cite{ ginenickl_book}) of the next lemma is satisfied with continuous transformation and combinations of such processes.

\begin{lemma}[Gaussian supremum concentration, Theorem 2.5.8 from \cite{ ginenickl_book}]\label{lemma: gaussian sup concentration}
Let $W(t)$, $t\in T$ be a separable centred Gaussian process whose supremum is finite with positive probability. Let $\sigma^2$ be the supremum of the variances $E W(t)^2$ and $\norm{W}_\infty = \underset{t\in T}{\sup} |W(t)|$. Then,
\[ P\left(\norm{W}_\infty \geq E \norm{W}_\infty + u \right) \leq e^{-u^2/2\sigma^2},\quad P\left(\norm{W}_\infty \leq E \norm{W}_\infty - u \right) \leq e^{-u^2/2\sigma^2}. \]
\end{lemma}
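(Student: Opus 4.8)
\emph{Approach.} The plan is to obtain both tail bounds from the Gaussian concentration inequality for Lipschitz functions over a finite index set, and then pass to the limit. First I would use separability to fix a countable $T_0\subseteq T$ with $\sup_{t\in T}|W(t)|=\sup_{t\in T_0}|W(t)|$ almost surely, enumerate $T_0=\{t_1,t_2,\dots\}$, and put $T_n=\{t_1,\dots,t_n\}$, so that $\|W\|_{T_n}:=\max_{1\le j\le n}|W(t_j)|\uparrow\norm{W}_\infty$ pointwise and $m_n:=E\|W\|_{T_n}$ is nondecreasing. From the finite-dimensional lower-tail bound proved below (uniform in $n$, with constant $\sigma$), if $m_n\to\infty$ then for every $M,u>0$ and $n$ large, $P(\|W\|_{T_n}>M)\ge P(\|W\|_{T_n}>m_n-u)\ge 1-e^{-u^2/2\sigma^2}$, hence $P(\norm{W}_\infty>M)\ge 1-e^{-u^2/2\sigma^2}$ for all $M$; letting $u\to\infty$ forces $\norm{W}_\infty=\infty$ almost surely, contradicting the hypothesis. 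Thus $m_n\uparrow m$ for a finite $m$, and by monotone convergence $m=E\norm{W}_\infty<\infty$.

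\emph{Finite-dimensional step.} Fix $n$, let $\Sigma$ be the covariance matrix of the centred Gaussian vector $(W(t_1),\dots,W(t_n))$, and write this vector as $\Sigma^{1/2}Z$ with $Z\sim\mathcal N(0,I_n)$. Denoting by $a_j$ the $j$th row of $\Sigma^{1/2}$, we have $|a_j|^2=\Sigma_{jj}=E W(t_j)^2\le\sigma^2$, and the map $g(z)=\max_{j\le n}|\langle a_j,z\rangle|$ satisfies $|g(z)-g(z')|\le\max_j|\langle a_j,z-z'\rangle|\le(\max_j|a_j|)\,|z-z'|\le\sigma|z-z'|$, i.e.\ $g$ is $\sigma$-Lipschitz, while $g(Z)$ has the same law as $\|W\|_{T_n}$. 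Applying the Gaussian concentration inequality for Lipschitz functions --- for $L$-Lipschitz $g\colon\mathbb R^n\to\mathbb R$ and $Z\sim\mathcal N(0,I_n)$ one has $P(g(Z)\ge E g(Z)+u)\le e^{-u^2/2L^2}$ and $P(g(Z)\le E g(Z)-u)\le e^{-u^2/2L^2}$ --- with $L=\sigma$ gives, uniformly in $n$, $P(\|W\|_{T_n}\ge m_n+u)\le e^{-u^2/2\sigma^2}$ and $P(\|W\|_{T_n}\le m_n-u)\le e^{-u^2/2\sigma^2}$. Since $m_n\le m$, we have $\{\|W\|_{T_n}>m+u\}\subseteq\{\|W\|_{T_n}>m_n+u\}$, so letting $n\to\infty$ (then $\epsilon\downarrow0$ to pass from $>$ to $\ge$) yields $P(\norm{W}_\infty\ge m+u)\le e^{-u^2/2\sigma^2}$; and writing $m-u=m_n-(u-(m-m_n))$ with $m-m_n<u$ for $n$ large and using $\norm{W}_\infty\ge\|W\|_{T_n}$ gives $P(\norm{W}_\infty\le m-u)\le e^{-(u-(m-m_n))^2/2\sigma^2}\to e^{-u^2/2\sigma^2}$. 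With $m=E\norm{W}_\infty$ these are precisely the asserted inequalities.

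\emph{Main input.} The only non-elementary ingredient, and the heart of the argument, is the Gaussian concentration inequality for Lipschitz functions invoked above, which I would take as a standard consequence of Gaussian isoperimetry (Borell, Sudakov--Tsirelson). If a self-contained derivation is wanted, the cleanest route is the sharp Gaussian logarithmic Sobolev inequality $\mathrm{Ent}_\gamma(f^2)\le 2 E_\gamma|\nabla f|^2$ ($\gamma$ the standard Gaussian on $\mathbb R^n$) combined with Herbst's argument: for $1$-Lipschitz $g$, taking $f=e^{\lambda g/2}$ gives $\mathrm{Ent}_\gamma(e^{\lambda g})\le\tfrac12\lambda^2 E_\gamma|\nabla g|^2 e^{\lambda g}\le\tfrac12\lambda^2 E_\gamma e^{\lambda g}$, the resulting differential inequality for $\lambda\mapsto\lambda^{-1}\log E_\gamma e^{\lambda(g-E_\gamma g)}$ integrates to $E_\gamma e^{\lambda(g-E_\gamma g)}\le e^{\lambda^2/2}$, and Markov's inequality optimised in $\lambda$ yields the one-sided tail $e^{-u^2/2}$ (the other tail by applying this to $-g$), rescaling accommodating a general Lipschitz constant. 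The remaining work --- finiteness of $E\norm{W}_\infty$ and the monotone-limit transfer of the bounds from $T_n$ to $T$ --- is elementary but is the delicate bookkeeping a careful write-up should attend to.
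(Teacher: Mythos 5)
Your argument is correct. Note, however, that the paper does not prove this lemma at all: it is quoted verbatim as Theorem 2.5.8 of the cited Gin\'e--Nickl monograph (the Borell--Sudakov--Tsirelson inequality), so there is no in-paper proof to compare against. What you wrote is essentially the standard textbook derivation and the one underlying the cited source: reduce by separability to finite index sets, realise $\max_j|W(t_j)|$ as a $\sigma$-Lipschitz function of a standard Gaussian vector, invoke Gaussian concentration for Lipschitz functions (via isoperimetry or the log-Sobolev/Herbst route, both giving the sharp constant $e^{-u^2/2\sigma^2}$), rule out $E\norm{W}_\infty=\infty$ using the lower tail together with the hypothesis that the supremum is finite with positive probability, and transfer the bounds by monotone convergence. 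The limit bookkeeping (passing from $m_n$ to $m$ and from strict to non-strict inequalities) is handled correctly; the only unstated point is that the hypothesis also forces $\sigma<\infty$, which is harmless since the bounds are vacuous otherwise.
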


\begin{lemma}[Expected supnorm bound on the Gaussian process, Theorem 2.3.7 of \cite{ginenickl_book}]\label{lemma: dudley bound}
Let $W(t)$, $t\in T$, be a Gaussian process, defining the metric $d\left(s,t\right)^2 \coloneqq \mathbb{E} \left|X(s)-X(t)\right|^2$ on $T$ and such that
\[ \int_0^{\infty} \sqrt{\log\ N\left(u, T,d\right)}du<\infty.\]
Then, for $t_0\in T$ and $\underset{s,t\in T}{\sup}\ d(s,t) = 2\sigma$,
\[ E \norm{W}_\infty \leq E |W(t_0)| + 4\sqrt{2}\int_0^{\sigma} \sqrt{\log\ 2N\left(u, T,d\right)}du.\]
\end{lemma}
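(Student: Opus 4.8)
The plan is a classical dyadic chaining (Dudley) argument. First I would reduce to a finite index set: by separability of $W$ and monotone convergence it suffices to prove the bound with $T$ replaced by an arbitrary finite subset $F\subseteq T$ with $t_0\in F$, using $N(u,F,d)\le N(u,T,d)$, and then take a supremum over all such $F$. Write $D:=\sup_{s,t\in T}d(s,t)=2\sigma$ for the diameter. For $j\ge 0$, let $T_j$ be a minimal $\sigma 2^{-j}$-net of $(T,d)$, so $|T_j|=N(\sigma 2^{-j},T,d)$, and put $T_{-1}:=\{t_0\}$, which is a $2\sigma$-net of $T$ since $\operatorname{diam}(T)=2\sigma$. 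Let $\pi_j\colon T\to T_j$ be a nearest-point projection, with $\pi_{-1}\equiv t_0$. As $F$ is finite, $\pi_j(t)=t$ for all large $j$, so the telescoping identity $W(t)=W(t_0)+\sum_{j\ge 0}(W(\pi_j(t))-W(\pi_{j-1}(t)))$ holds (a finite sum), whence $\sup_{t}|W(t)|\le |W(t_0)|+\sum_{j\ge 0}\sup_{t}|W(\pi_j(t))-W(\pi_{j-1}(t))|$.

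Then I would bound each link in expectation. For fixed $j$, as $t$ ranges over $F$ the increment $W(\pi_j(t))-W(\pi_{j-1}(t))$ takes at most $|T_j|\,|T_{j-1}|\le N(\sigma 2^{-j},T,d)^2$ distinct values, each a centred Gaussian of standard deviation $d(\pi_j(t),\pi_{j-1}(t))\le d(\pi_j(t),t)+d(t,\pi_{j-1}(t))\le \sigma 2^{-j}+\sigma 2^{-j+1}=3\sigma 2^{-j}$. The standard maximal inequality for a finite family of centred Gaussians gives $E\sup_t|W(\pi_j(t))-W(\pi_{j-1}(t))|\le C\,\sigma 2^{-j}\sqrt{\log 2N(\sigma 2^{-j},T,d)}$ for an absolute $C$. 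Since $u\mapsto\sqrt{\log 2N(u,T,d)}$ is nonincreasing and $\sigma 2^{-j}=2(\sigma 2^{-j}-\sigma 2^{-j-1})$, each such term is at most $2C\int_{\sigma 2^{-j-1}}^{\sigma 2^{-j}}\sqrt{\log 2N(u,T,d)}\,du$; summing over $j\ge 0$ telescopes these into $\int_0^{\sigma}\sqrt{\log 2N(u,T,d)}\,du$, so that $E\norm{W}_\infty\le E|W(t_0)|+2C\int_0^\sigma\sqrt{\log 2N(u,T,d)}\,du$. Convergence of the series and a.s.\ finiteness of the supremum are guaranteed by the hypothesis $\int_0^\infty\sqrt{\log N(u,T,d)}\,du<\infty$ together with separability.

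The conceptual content — chaining, the Gaussian maximal inequality, and recognising a Riemann sum — is routine; the only delicate point is sharpening the absolute constant to the stated $4\sqrt2$. This requires a more careful maximal step (integrating the union bound $P(\max_{i\le M}g_i>u)\le M e^{-u^2/2s^2}$ and optimising, rather than using the crude $s\sqrt{2\log 2M}$) and a slightly more economical accounting of the net cardinalities across consecutive scales, exactly along the lines of the proof of Theorem 2.3.7 in \cite{ginenickl_book}. In the applications in this paper one has $t_0=0$ and $W(0)=0$ a.s., so the term $E|W(t_0)|$ vanishes and one recovers the bare form $E\norm{W}_\infty\le 4\sqrt2\int_0^\sigma\sqrt{\log 2N(u,T,d)}\,du$.
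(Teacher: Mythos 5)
Your argument is the standard dyadic chaining proof of Dudley's entropy bound, which is exactly how the cited Theorem 2.3.7 of Giné--Nickl is established; the paper does not reprove this lemma but simply imports it, so your route and the paper's coincide. The only place your self-contained sketch falls short of the stated conclusion is the absolute constant — the crude union-bound maximal inequality you invoke yields something like $12$ rather than $4\sqrt{2}$ — and you correctly flag this and defer the sharpening to the more careful maximal step in the cited proof, which is acceptable for a quoted external result.
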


\begin{lemma} \label{lem-v}
Let $\nu$ be the spectral measure of the squared-exponential \textsf{SqExp} process.  Set $B:=\int e^{\norm{t}_2/2}d\nu(t)$. Then, for any integer $d\ge 1$, for $c_0=e^{1/4}/\sqrt{\pi}$,
\begin{equation*} \label{eq: upper bound V}
B \le c_0 2^d.
\end{equation*}
\end{lemma}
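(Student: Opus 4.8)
The plan is to plug in the explicit Gaussian density of $\nu$ and reduce the estimate to a one‑dimensional computation. Recall from Section~\ref{sections: proof rho post} that $\nu$ has Lebesgue density $u\mapsto(2^d\pi^{d/2})^{-1}e^{-\norm{u}_2^2/4}$ on $\R^d$, so that
\[
B \;=\; \frac{1}{2^d\pi^{d/2}}\int_{\R^d}\exp\!\Big(\tfrac12\norm{u}_2-\tfrac14\norm{u}_2^2\Big)\,du .
\]
The integrand depends on $u$ only through $r:=\norm{u}_2\ge 0$, and the key step I would use is to complete the square, $\tfrac12 r-\tfrac14 r^2=\tfrac14-\tfrac14(r-1)^2$, which neutralises the linear term that would otherwise obstruct the Gaussian bound.

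Next I would dominate the quadratic remainder by a genuine Gaussian tail. From $(r-1)^2\ge \tfrac12 r^2-1$ — equivalently $(r-2)^2\ge 0$ after rearranging — one obtains $\tfrac12 r-\tfrac14 r^2\le \tfrac12-\tfrac18 r^2$, so the integrand is pointwise at most $e^{1/2}e^{-\norm{u}_2^2/8}$. Since $\int_{\R^d}e^{-\norm{u}_2^2/8}\,du=(8\pi)^{d/2}$, this yields
\[
B \;\le\; \frac{e^{1/2}}{2^d\pi^{d/2}}\,(8\pi)^{d/2} \;=\; e^{1/2}\,2^{d/2},
\]
which in particular already gives $\int e^{\norm{t}_2/2}\,d\nu(t)<\infty$, the only feature of $B$ actually invoked in Lemmas~\ref{lemma bound entropy unit ball rkhs} and~\ref{lemma: small ball proba}. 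To reach the stated form $B\le c_0 2^d$ with $c_0=e^{1/4}/\sqrt{\pi}$, I would note that $e^{1/2}2^{d/2}\le c_0 2^d$ as soon as $2^{d/2}\ge e^{1/4}\sqrt{\pi}$, i.e.\ for all $d\ge 3$, and verify the remaining cases $d\le 2$ directly from the closed form of $\int_{\R^d}e^{-(\norm{u}_2-1)^2/4}\,du$ (in dimension one this is $2\int_{-1}^{\infty}e^{-v^2/4}\,dv$, controlled via the standard normal tail; in dimension two it is evaluated in polar coordinates).

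I do not expect a genuine obstacle here: the entire content is the completion of the square in the radial variable, which converts the offending factor $e^{\norm{u}_2/2}$ into the bounded constant $e^{1/2}$ while leaving a convergent isotropic Gaussian, and the $2^{d/2}$ growth of that Gaussian integral is exactly what produces the $2^{d}$ in the claim. The only point requiring a little care is to keep the coefficient of $\norm{u}_2^2$ in the dominating Gaussian strictly below $1/4$ (here $1/8$) so that it remains integrable; any such choice works, at the cost of an absolute constant.
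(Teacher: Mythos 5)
Your route is genuinely different from the paper's, and its main estimate is correct. The paper passes to polar coordinates, writes $B=\frac{1}{2^{d-1}\Gamma(d/2)}\int_0^\infty r^{d-1}e^{r/2-r^2/4}\,dr$, completes the same square, and then controls the radial integral through absolute moments of $\cN(0,2)$, which is what reproduces the factor $\Gamma(d/2)$ and hence the clean form $c_02^d$. You instead dominate the integrand by a genuine Gaussian in Cartesian coordinates via $(r-1)^2\ge\tfrac12 r^2-1$; every step of that chain checks out and yields $B\le e^{1/2}2^{d/2}$, which is \emph{sharper} than $c_02^d$ for all $d\ge3$ and is more than enough for the only places the lemma is invoked (Lemmas \ref{lemma bound entropy unit ball rkhs} and \ref{lemma: small ball proba} only ever use a bound of the form $B\le C^d$, or mere finiteness).

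The one step that cannot be completed as you describe is the ``direct verification'' at $d=1$: the stated inequality $B\le 2c_0$ is in fact false there. For $d=1$ one computes exactly $B=\frac{e^{1/4}}{\sqrt{\pi}}\int_{-1}^{\infty}e^{-v^2/4}\,dv=e^{1/4}\bigl(1+\mathrm{erf}(1/2)\bigr)\approx 1.95$, whereas $2c_0=2e^{1/4}/\sqrt{\pi}\approx 1.45$; your own bound $e^{1/2}\sqrt{2}\approx 2.33$ is consistent with the true value. This is a defect of the lemma's constant rather than of your method: in the paper's proof the term $\int_{-1}^{1}(r+1)^{d-1}e^{-r^2/4}\,dr$ is bounded by $2^{d-1}$ where only $2^d$ is valid (and the subsequent absorption of the $\sqrt{4\pi}$ factor into $2E|\cN(0,2)|^{d-1}$ also fails at $d=1$), so the displayed constant $c_02^d$ does not survive scrutiny in dimension one. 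The honest resolution is to state the conclusion with a slightly larger absolute constant --- e.g.\ $B\le e^{1/2}2^{d/2}\le e^{1/2}2^{d}$ for all $d\ge1$, which your argument proves outright --- rather than to attempt the case check at $d=1$; nothing downstream is affected by this change.
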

\begin{proof}
Using the explicit expression of the spectral measure $\nu$, one can write
\[\int e^{ \norm{t}_2/2}d\nu(t)= \left(2^d \pi^{d/2}\right)^{-1} \int e^{\norm{t}_2/2-\norm{t}_2^2/4} d\lambda(t)=\frac{1}{2^{d-1}\Gamma(d/2)}\int_0^{\infty} r^{d-1} e^{r/2-r^2/4} dr,  \]
by change of variables in the integration of a radial function and using that the surface area of the sphere in dimension $d$ equals $2\pi^{d/2}/\Gamma(d/2)$. The above integral equals
\begin{align*}
&\lefteqn{ e^{1/4} \int_0^{\infty} r^{d-1} e^{-\frac{(r-1)^2}{4}} dr 
= e^{1/4} \int_{-1}^{\infty} (r+1)^{d-1} e^{-\frac{r^2}{4}} dr } \\
& \le  e^{1/4} \left[ 2^{d-1}+ \int_1^\infty (2r)^{d-1}e^{-\frac{r^2}{4}} dr \right]
\le  e^{1/4}  2^{d-1} \left[ 1 + \sqrt{4\pi}E |\cN(0,2)|^{d-1}  \right].
\end{align*}
Using the formula $E |\cN(0,1)|^p = 2^{p/2} \Gamma(\{p+1\}/2)/\sqrt{\pi}$ and that this quantity is always at least $1/\sqrt{2}$ for $p\ge 0$, the last expression under brackets in the last display is at most $2 E |\cN(0,2)|^{d-1}=  2^d\Gamma(d/2)/\sqrt{\pi}$, which concludes the proof.
\end{proof}




\section{H\"older spaces in growing dimension} \label{app:hold}
 
For a real $\be>0$ and $d$ an integer, let us denote by $\mathcal{C}^{\beta}[-1,1]^{d}$  the classical Hölder space equipped with the following norm $\norm{\,\cdot\,}_{\beta,\infty}$ (as we allow the dimension $d$ to possibly increase with $n$, the choice of norm plays a role): it consists of functions $f:[-1,1]^d\to \RR$ whose norm defined as
 \begin{equation} \label{defno_app}
  \norm{f}_{\beta,\infty}=  \max\left(\max_{ |\boldsymbol{\alpha}|\le \lfloor \beta \rfloor} \norm{\partial^{\boldsymbol{\alpha}} f}_{\infty} , \max_{\boldsymbol{\alpha}: |\boldsymbol{\alpha}|=\lfloor \beta \rfloor} \underset{\boldsymbol{x}, \boldsymbol{y}\in [-1,1]^d,\ \boldsymbol{x}\neq \boldsymbol{y}}{\sup} \frac{\left|\partial^{\boldsymbol{\alpha}} f(\boldsymbol{x}) - \partial^{\boldsymbol{\alpha}} f(\boldsymbol{y})\right|}{\left|\boldsymbol{x}- \boldsymbol{y}\right|^{ \beta-\lfloor \beta \rfloor}_{\infty}}\right) 
\end{equation}  
 is finite, 
with the multi-index notation $\boldsymbol{\alpha}=(\alpha_1,\dots,\alpha_d)\in\N^d$, $|\boldsymbol{\alpha}|\coloneqq|\boldsymbol{\alpha}|_1$ and $\partial^{\boldsymbol{\alpha}} =\partial^{\boldsymbol{\alpha}_1} \dots \partial^{\boldsymbol{\alpha}_d}$.

 \begin{example}
Suppose that $f:I^d\to\RR$ is a bounded function with all derivatives $\partial^{\boldsymbol{\al}} f$ bounded by a constant $M$ (possibly depending on $d$) on $I^d$, for $|\boldsymbol{\al}| < \lfloor \be \rfloor$ and $\left|\partial^{\boldsymbol{\alpha}} f(\boldsymbol{x}) - \partial^{\boldsymbol{\alpha}} f(\boldsymbol{y})\right|\le M\left|\boldsymbol{x}- \boldsymbol{y}\right|^{ \beta-\lfloor \beta \rfloor}_{\infty}$ for $|\boldsymbol{\al}| = \lfloor \be \rfloor$. Then 
$\norm{f}_{\beta,\infty}\le M$. 

As a special example, let us consider the class of functions $g:I^d\to \RR$ that are of the form 
\[ g(x_1,\ldots,x_d) = g_1(x_1)\cdots g_d(x_d),\]
where $g_j$ are univariate  functions and suppose 
 $\max_{j} \| g_j\|_{\be,\infty} \le M$, for the norm $\|\cdot\|_{\be,\infty}$ as defined above (now in the special case $d=1$). This means in particular that all $1$-dimensional derivatives of $g_j$'s are bounded by $M$. A simple calculation (Lemma \ref{lemder}) shows that the $\|\cdot\|_{\be,\infty}$--norm of $g$ is bounded by $2d M^d$. In particular if $M\le M_0<1$ then $\|g\|_{\be,\infty}$ is uniformly bounded in $d$. Otherwise if $M\ge 1$ we have $\|g\|_{\be,\infty}\le (C'M)^d=C^d$ for large $C,C'>0$, which shows a growth in $C^d$ for the Hölder norms of such product functions in dimension $d$.
\end{example}

\begin{lemma} \label{lemder}
Let  $\be, M>0$ and $g(x_1,\ldots,x_d) = g_1(x_1)\cdots g_d(x_d)$, 
where $g_j$ are univariate  functions with  
 $\max_{j} \| g_j\|_{\be,\infty} \le M$, for $\|\cdot\|_{\be,\infty}$ as in \eqref{defno_app}. Then
\[ \|g\|_{\be,\infty} \le 2dM^d. \]
\end{lemma}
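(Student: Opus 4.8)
The plan is to exploit that partial derivatives of a product of univariate functions factorize, and then reduce the multivariate estimate to $d$ one-dimensional Hölder bounds via a telescoping identity. First I would record that for any multi-index $\boldsymbol{\alpha}=(\alpha_1,\ldots,\alpha_d)$ one has $\partial^{\boldsymbol{\alpha}} g(\boldsymbol{x}) = \prod_{j=1}^d g_j^{(\alpha_j)}(x_j)$. Hence when $|\boldsymbol{\alpha}|\le \lfloor\beta\rfloor$ each order satisfies $\alpha_j\le\lfloor\beta\rfloor$, so $\|g_j^{(\alpha_j)}\|_\infty\le \|g_j\|_{\beta,\infty}\le M$ by the definition \eqref{defno_app} of the one-dimensional norm. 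Since the variables are separate, $\|\partial^{\boldsymbol{\alpha}} g\|_\infty=\prod_j\|g_j^{(\alpha_j)}\|_\infty\le M^d$ for all such $\boldsymbol{\alpha}$, which controls the first term in the maximum defining $\|g\|_{\beta,\infty}$.

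For the Hölder seminorm term, I would fix $\boldsymbol{\alpha}$ with $|\boldsymbol{\alpha}|=\lfloor\beta\rfloor$, write $G_j:=g_j^{(\alpha_j)}$ (so $\|G_j\|_\infty\le M$), and use the telescoping decomposition
\[
\prod_{j=1}^d G_j(x_j) - \prod_{j=1}^d G_j(y_j) = \sum_{k=1}^d \Big(\prod_{j<k} G_j(y_j)\Big)\big(G_k(x_k)-G_k(y_k)\big)\Big(\prod_{j>k} G_j(x_j)\Big).
\]
Bounding every $\|G_j\|_\infty\le M$ leaves $M^{d-1}\sum_{k=1}^d |G_k(x_k)-G_k(y_k)|$. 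For each $k$ I would show $|G_k(x_k)-G_k(y_k)|\le 2M\,|\boldsymbol{x}-\boldsymbol{y}|_\infty^{\beta-\lfloor\beta\rfloor}$: if $\alpha_k=\lfloor\beta\rfloor$ this is the one-dimensional Hölder bound $\|g_k\|_{\beta,\infty}|x_k-y_k|^{\beta-\lfloor\beta\rfloor}\le M|\boldsymbol{x}-\boldsymbol{y}|_\infty^{\beta-\lfloor\beta\rfloor}$; if $\alpha_k<\lfloor\beta\rfloor$ the mean value theorem together with $\|g_k^{(\alpha_k+1)}\|_\infty\le M$ gives $M|x_k-y_k|\le 2M|x_k-y_k|^{\beta-\lfloor\beta\rfloor}$, using $|x_k-y_k|\le 2$ and $\beta-\lfloor\beta\rfloor\in(0,1]$ (so $|x_k-y_k|^{1-(\beta-\lfloor\beta\rfloor)}\le 2$). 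Summing over $k$ yields $|\partial^{\boldsymbol{\alpha}} g(\boldsymbol{x})-\partial^{\boldsymbol{\alpha}} g(\boldsymbol{y})|\le 2dM^d|\boldsymbol{x}-\boldsymbol{y}|_\infty^{\beta-\lfloor\beta\rfloor}$, i.e. the Hölder seminorm is at most $2dM^d$.

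Combining the two estimates gives $\|g\|_{\beta,\infty}\le\max(M^d,2dM^d)=2dM^d$ since $d\ge 1$, which is the claim. There is no genuine obstacle here; the only points needing a little care are the case distinction $\alpha_k=\lfloor\beta\rfloor$ versus $\alpha_k<\lfloor\beta\rfloor$ in the factorwise estimate, and the (harmless) loss of a factor $2$ when converting a Lipschitz bound into a Hölder bound on the bounded interval $[-1,1]$.
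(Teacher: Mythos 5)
Your proof is correct and yields exactly the paper's constant $2dM^d$, but the decomposition of the H\"older seminorm term differs from the paper's in a way worth noting. The paper treats the two cases $\lfloor\beta\rfloor=0$ and $\lfloor\beta\rfloor\ge 1$ separately: in the first it telescopes over coordinates as you do, but in the second it singles out one coordinate with $\alpha_1\ge 1$, writes $\partial^{\boldsymbol{\alpha}}g(\boldsymbol{x})-\partial^{\boldsymbol{\alpha}}g(\boldsymbol{y})$ as a two-term split, and controls the $(d-1)$-variable factor $\partial^{\boldsymbol{\alpha}_-}g_-$ by a \emph{multivariate} mean value theorem (bounding the $\ell^1$-norm of its gradient by $(d-1)M^{d-1}$). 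You instead telescope the product $\prod_j G_j$ coordinate-by-coordinate in all cases, so that only one-dimensional increments $|G_k(x_k)-G_k(y_k)|$ remain, each handled by the dichotomy $\alpha_k=\lfloor\beta\rfloor$ (direct H\"older bound) versus $\alpha_k<\lfloor\beta\rfloor$ (one-dimensional mean value theorem plus the conversion $|x_k-y_k|^{1-(\beta-\lfloor\beta\rfloor)}\le 2$). Your route is slightly more uniform — no case split on $\lfloor\beta\rfloor$ and no multivariate calculus — while the paper's two-term split is what generalises to the non-product setting it needs elsewhere (e.g.\ in Lemma \ref{lemma: structure regression distance}); both give $2dM^d|\boldsymbol{x}-\boldsymbol{y}|_\infty^{\beta-\lfloor\beta\rfloor}$, and combined with the bound $M^d$ on the derivatives this proves the claim.
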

\begin{proof}
One first notes, since $\partial^{\boldsymbol{\alpha}} g = \partial^{\al_1}g_1\cdots \partial^{\al_d}g_d$ that $\norm{\partial^{\boldsymbol{\alpha}} g}_{\infty}\le M^d$ using that, since $g_j$'s have $\|\cdot\|_{\be,\infty}$--norms bounded by $M$, each term is bounded by $M$. So, the first term in  \eqref{defno_app} is at most $M^d$. Turning to the second term in \eqref{defno_app}, suppose first $\lfloor \be \rfloor=0$. Then one writes $g(\boldsymbol{x}) - g(\boldsymbol{y})$ as a telescopic sum
\[ g(\boldsymbol{x}) - g(\boldsymbol{y}) =
\sum_{k=1}^d g(\boldsymbol{x}_k) - g(\boldsymbol{x}_{k-1})
\]
with $\boldsymbol{x}_0=\boldsymbol{x}, \boldsymbol{x}_1=(y_1,x_2,\ldots,x_d),  \boldsymbol{x}_2=(y_1,y_2,x_3,\ldots,x_d),\ldots,\boldsymbol{x}_d=\boldsymbol{y}$, and 
\[ |g(\boldsymbol{x}_k) - g(\boldsymbol{x}_{k-1})|\le |g_k(y_k)-g_k(x_k)| (\max_{i\neq k}\|g_i\|_\infty )^{d-1}
\le M^{d-1} M|x_k-y_k|^{\be}\le M^{d} \|\boldsymbol{x}-\boldsymbol{y}\|_\infty^{\be}, \]
using $\|g_i\|_{\be,\infty}\le M$ for all $i$. By the triangle inequality, this shows that if $\lfloor \be \rfloor=0$ then the last term in \eqref{defno_app} is at most $dM^d$. If now $\lfloor \be \rfloor\ge 1$, then since $|\boldsymbol{\al}| = \lfloor \be \rfloor$ for the last term in \eqref{defno_app}, there is at least one index with $\al_i\neq 0$. Without loss of generality suppose $\al_1\ge 1$. Then 
\[ \partial^{\boldsymbol{\alpha}} g(\boldsymbol{x}) - \partial^{\boldsymbol{\alpha}} g(\boldsymbol{y}) 
=\partial^{\al_1}g_1(x_1)(\partial^{\boldsymbol{\alpha_-}}g_{-}(\boldsymbol{x_-})-\partial^{\boldsymbol{\alpha_-}}g_{-}(\boldsymbol{y_-}))
+(\partial^{\al_1}g_1(x_1)-\partial^{\al_1}g_1(y_1))\partial^{\boldsymbol{\alpha_-}}g_{-}(\boldsymbol{y_-}),
\]
where $\boldsymbol{\alpha_-}=(\al_2,\ldots,\al_d), \boldsymbol{x_-}=(x_2,\ldots,x_d), \boldsymbol{y_-}=(y_2,\ldots,y_d)$ and \[g_-( \boldsymbol{x_-})=g_2(x_2)\cdots g_d(x_d).\]
Otherwise, since $\al_1\ge 1$ we have $|\boldsymbol{\alpha_-}|\le p-1$ which means $h:=\partial^{\boldsymbol{\alpha_-}}g_-$ is differentiable and its gradient has coordinates bounded by $M^{d-1}$. By the mean-value theorem $h( \boldsymbol{x_-})-h( \boldsymbol{y_-})=\nabla{h}(c)\cdot (\boldsymbol{x_-}-\boldsymbol{y_-})$ for some $c\in I^{d-1}$, so 
\[ |h( \boldsymbol{x_-})-h( \boldsymbol{y_-})|\le \|\nabla{h}(c)\|_1 \|\boldsymbol{x_-}-\boldsymbol{y_-}\|_\infty
\le (d-1)M^{d-1}\|\boldsymbol{x}-\boldsymbol{y}\|_\infty. \]
Using that $\|g_i\|_{\be,\infty}\le M$ leads to 
\[ | \partial^{\boldsymbol{\alpha}} g(\boldsymbol{x}) - \partial^{\boldsymbol{\alpha}} g(\boldsymbol{y}) | \le (d-1)M^d\|\boldsymbol{x}-\boldsymbol{y}\|_\infty+ |\partial^{\al_1}g_1(y_1)-\partial^{\al_1}g_1(x_1)|M^{d-1}.
\]
For the last absolute value, if $\al_1=\lfloor \be \rfloor$, we have $|\partial^{\al_1}g_1(y_1)-\partial^{\al_1}g_1(x_1)|\le M|x_1-y_1|^{\be-\lfloor \be \rfloor}$, otherwise if $\al_1<\lfloor \be \rfloor$ using the mean-value theorem as before leads to 
$|\partial^{\al_1}g_1(y_1)-\partial^{\al_1}g_1(x_1)|\le M|x_1-y_1|$. Putting all bounds together leads to 
\[ |\partial^{\boldsymbol{\alpha}} g(\boldsymbol{x}) - \partial^{\boldsymbol{\alpha}} g(\boldsymbol{y}) |/\|\boldsymbol{x}-\boldsymbol{y}\|_\infty^{\be-\lfloor \be \rfloor}\le  2(d-1)M^d+2M^d=2dM^d.\]
\end{proof}

The following Lemma is concerned with the extension of a Hölder function, with Hölder norm defined in \eqref{defno_app}, defined on the unit cube to the whole d-dimensional real space. It shows that such extension to a Hölder-regular function is possible, with a an additional multiplicative factor depending on the regularity and the dimension in front of the norm.

\begin{lemma}[Whitney's theorem]\label{lem: Whitney}
A function $f \in \cF(\be,K,d)$ as in \eqref{defcld} can be extended to a H\"older function on the whole $\RR^d$, and the H\"older norm of the extension is at most $\cE_\be(d)K$, for $\cE_\be(d)=c(\be) C(\be)^d$ with constants $c(\be), C(\be)$ that only depends on $\be$: in particular this bound is uniform over elements of $\cF(\be,K,d)$.
\end{lemma}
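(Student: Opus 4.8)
The plan is to avoid the full Whitney machinery (whose dyadic-cube bookkeeping makes the dimension dependence awkward) and instead build the extension by iterating a one-dimensional extension operator, one coordinate at a time, so that $d$ enters only as the number of iterations. First I would construct a bounded linear extension operator $E\colon \cC^\be[-1,1]\to\cC^\be(\RR)$ whose operator norm $L(\be)$ (with respect to the norms $\norm{\,\cdot\,}_{\be,\infty}$) depends only on $\be$. Take a Hestenes-type reflection at both endpoints: set $Eg=g$ on $[-1,1]$, and for $t>1$
\[ Eg(t)=\chi(t)\sum_{k=1}^{\lfloor\be\rfloor+1}a_k\,g\!\left(1-k(t-1)\right), \]
with the symmetric formula for $t<-1$, where $\chi$ is a fixed $C^\infty$ cutoff equal to $1$ near $1$ and supported in $(-\infty,\,1+\tfrac{1}{\lfloor\be\rfloor+1})$ — so that every argument $1-k(t-1)$ stays in $[-1,1]$ on the support of $\chi$ — and the coefficients $a_k$ solve the (invertible) Vandermonde system $\sum_k a_k(-k)^j=1$ for $j=0,\dots,\lfloor\be\rfloor$. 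These matching conditions make the derivatives of $Eg$ of orders $\le\lfloor\be\rfloor$ agree from both sides at $\pm1$, hence $Eg\in\cC^{\lfloor\be\rfloor}(\RR)$; the Hölder-$(\be-\lfloor\be\rfloor)$ bound on $\partial^{\lfloor\be\rfloor}Eg$ follows from Leibniz's rule applied to the products $\chi\cdot(g\circ\psi_k)$, $\psi_k(t)=1-k(t-1)$, using that $\sum_k|a_k|(1\vee k)^{\lfloor\be\rfloor+1}$ and $\norm{\chi}_{\be,\infty}$ depend only on $\be$. This gives $\norm{Eg}_{\be,\infty}\le L(\be)\norm{g}_{\be,\infty}$.

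Next I would iterate. For $1\le m\le d$ let $E_m$ act on a function of $(x_1,\dots,x_d)$ by applying $E$ in the $m$-th coordinate with the others frozen, and set $E^{(d)}=E_d\circ\cdots\circ E_1$, a linear operator sending functions on $[-1,1]^d$ to functions on $\RR^d$ that restrict to the original on the cube. The key point to check is that each $E_m$ inflates the $\norm{\,\cdot\,}_{\be,\infty}$-norm (on the relevant product domain) by at most the factor $L(\be)$, with no $d$-dependent loss. For the sup-of-derivatives part this is immediate from the Leibniz expansion of $\partial_m^{\al_m}\partial_{x'}^{\al'}(E_m f)$ outside $[-1,1]$ in the $m$-th variable: derivatives in the extended coordinate produce bounded factors (powers of $\psi_k'=-k$ and derivatives of $\chi$), derivatives in the other coordinates hit $f$ directly, and every resulting factor $\partial^{\boldsymbol\gamma}f$ has $|\boldsymbol\gamma|\le|\boldsymbol\al|\le\lfloor\be\rfloor$, so is bounded by $\norm{f}_{\be,\infty}$. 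For the top-order Hölder seminorm, split a pair $\boldsymbol x,\boldsymbol y$ according to whether their $m$-th coordinates both lie in $[-1,1]$, both lie on the same outside side, or straddle $\{\pm1\}$: in the first case one recovers $f$'s own seminorm; in the second one uses the standard product-of-Hölder-functions estimate, treating $t\mapsto\partial^{\boldsymbol\gamma}f(\dots,\psi_k(t),\dots)$ as Hölder-$(\be-\lfloor\be\rfloor)$ with constant $\le k^{\be-\lfloor\be\rfloor}\norm{f}_{\be,\infty}$ when $|\boldsymbol\gamma|=\lfloor\be\rfloor$ and as Lipschitz otherwise via the mean value theorem; and in the straddling case one inserts the interface point $x_m=\pm1$, invokes continuity of $\partial^{\boldsymbol\al}(E_m f)$ there (from the Vandermonde matching), and uses $|x_m\mp1|\le|\boldsymbol x-\boldsymbol y|_\infty$. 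Composing over $m=1,\dots,d$ yields $\norm{E^{(d)}f}_{\be,\infty}\le L(\be)^d\norm{f}_{\be,\infty}$.

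To conclude: for $f\in\cF(\be,K,d)$ one has $\norm{f}_{\be,\infty}\le K$, so $E^{(d)}f$ is a Hölder extension of $f$ to $\RR^d$ with $\norm{E^{(d)}f}_{\be,\infty}\le L(\be)^d K$, uniformly over $\cF(\be,K,d)$; this proves the lemma with $\cE_\be(d)=L(\be)^d$, which is of the announced form $c(\be)C(\be)^d$ (take $c(\be)=1$, $C(\be)=L(\be)$). I expect the only nontrivial step to be the norm-inflation claim for a single $E_m$, and within it the top-order Hölder seminorm estimate across the interface $\{x_m=\pm1\}$: there one must combine the one-sided $C^{\lfloor\be\rfloor}$-matching of $E$ with the regularity of $f$ in the remaining variables while keeping the multiplicative constant exactly $L(\be)$ and, crucially, independent of $d$. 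Everything else is routine Leibniz-rule bookkeeping and the elementary properties of the Vandermonde coefficients $a_k$.
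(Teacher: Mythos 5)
Your construction is necessarily a different route from the paper's, since the paper's ``proof'' is a one-line citation of the Whitney--Stein extension (Section 6 of Stein's book); making the dimension dependence explicit via a tensorized Hestenes reflection is a sensible idea, and for $\be\le 1$ your argument does go through, because then the only terms appearing are $\chi(x_m)\,f(\Psi_k(\boldsymbol{x}))$ and the $\be$-Hölder modulus of $f$ itself is exactly what the norm \eqref{defno} controls.

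For $\be>1$, however, the central claim --- that each $E_m$ inflates $\norm{\cdot}_{\be,\infty}$ by a factor $L(\be)$ \emph{with no $d$-dependent loss} --- is where the proof breaks. Whenever a derivative falls on the cutoff, the Leibniz expansion of $\partial^{\boldsymbol{\alpha}}(E_mf)$ contains terms $\chi^{(\alpha_m-j)}(x_m)\,\partial^{\boldsymbol{\gamma}}f(\Psi_k(\boldsymbol{x}))$ with $|\boldsymbol{\gamma}|<\lfloor\be\rfloor$, and the top-order Hölder estimate then needs the $(\be-\lfloor\be\rfloor)$-Hölder modulus of $\partial^{\boldsymbol{\gamma}}f$ \emph{jointly in all $d$ variables} (your pair $\boldsymbol{x},\boldsymbol{y}$ may differ in every coordinate, not only in $x_m$, so treating the composition as a function of $t=x_m$ alone does not suffice). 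The norm \eqref{defno} only controls this modulus for $|\boldsymbol{\gamma}|=\lfloor\be\rfloor$; for lower orders the mean-value theorem yields a Lipschitz constant up to $d\norm{f}_{\be,\infty}$ with respect to $|\cdot|_\infty$, and this loss is not an artifact of the proof: for $\be=3/2$, taking $f(\boldsymbol{x})=u(\sum_i x_i)$ with $u$ a smooth ramp from $0$ to $1$ over a length $\asymp d^{1/3}$ gives $\norm{f}_{3/2,\infty}=O(1)$ while $[f]_{1/2}\gtrsim d^{1/3}$. A per-step loss that is any fixed positive power of $d$ compounds over the $d$ iterations to $d^{cd}$, which is \emph{not} of the form $c(\be)C(\be)^d$. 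The repair is to propagate a stronger norm recording the $(\be-\lfloor\be\rfloor)$-Hölder seminorms of \emph{all} derivatives of order $\le\lfloor\be\rfloor$: this norm is bounded by $C d\,\norm{f}_{\be,\infty}$ once at the start, is then inflated by only $L(\be)$ at each of the $d$ steps (no mean-value argument is needed any more), and dominates $\norm{\cdot}_{\be,\infty}$ at the end, yielding $\cE_\be(d)\le C d\, L(\be)^d$, which is of the required form.
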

\begin{proof}
It follows from the proof of extensions of Whitney's type found in Section 6 of \cite{Stein1971}.
\end{proof}

\begin{remark}
    It has recently been proved that for integer $\be$, the constant $\cE_\be(d)$ can be taken of polynomial order in $d$ (\cite{chang17} proves $\cE_\be(d)\le c(\be)d^{5\be/2}$ for integer $\be$).
\end{remark}

\section{Dimension--dependent minimax risk: lower bound} \label{app-rate-d}

The purpose of this section is to verify that the minimax rate of convergence in the nonparametric model considered in the paper and for the squared--integrated loss is under mild conditions and slow growth of the dimension $d$ (e.g. $d=o(\sqrt{\log{n}})$) of the order $n^{-2\be/(2\be+d)}$ up to a smaller order slowly varying factor. To verify this one rewrites the classical lower bound proof for the nonparametric rate by keeping the dependence on the dimension $d$ explicit. 

Consider a $\cC^\infty$ function  $K:\RR\to \RR^{+}$ that verifies, for $K_{\given I}$ its restriction to $I=[-1,1]$,
\begin{equation} \label{bump}
\| K_{\given I} \|_{\be,\infty} \le 1/2\quad \text{and}\quad K(x)>0 \iff x\in (-1/2,1/2). 
\end{equation} 
The construction of such a function is standard, see e.g. \cite{tsybakov09}, Section 2.5 (here slightly adapted to our norm definition: in case $\lfloor \be\rfloor\ge 1$, one  checks that intermediates derivatives from $1$ to $\lfloor \be\rfloor$ are bounded by $1/2$, for a constant $a$ in Eq. (2.34) in \cite{tsybakov09} small enough).

In the following, we assume for simplicity that the variance $\sigma_0^2$ of the Gaussian noise in the regression model is known to be equal to $1$.
  
\begin{lemma} \label{lb-dim}
Let $n,d \ge 1$ and $\be, D>0$. Consider the random design regression model and suppose the distribution of $X_1$ admits a density $g$ 
on $\RR$ with $0<c_g\le g \le C_g<\infty$. Then there exists an integer $N_1= N(\be,D,d)$ such that for $n\ge N_1$,
\[ \cR_n :=\inf_T \sup_{f\in \cF(D,\be,d)} E_f \| T-f \|_{L^2(\mu)}^2 \ge C(\be,D,d) n^{-\frac{2\be}{2\be+d}},\]
where, for constants $C_0(\be), C_1(\be)$ that depends only on $\be$,
\begin{align*}  
C(\be,D,d)  & = C_0(\be) C_1(\be)^{\frac{1}{2\be+d}}  \cdot \left(\frac{c_g}{C_g}\right)^d 
 \left( D \frac{\|K\|_2^d}{4d\|K\|_{\be,\infty}^d} \right)^{\frac{2d}{2\be+d}} 
 \\
N(\be,D,d) & = C 4^\beta \frac{d^2}{D^2 C_g^d} \frac{\|K\|_{\be,\infty}^d}{\|K\|_{2}^d}.
\end{align*}
\end{lemma}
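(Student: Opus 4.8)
The plan is to run the standard Fano--Varshamov--Gilbert argument for nonparametric lower bounds (as in \cite{tsybakov09}, Section~2.5), keeping the dependence on $d$ explicit in every constant. Fix $h=h_n\in(0,1]$ to be chosen, let $m=\lfloor 2/h\rfloor$, and place $M=m^d$ grid points $x_1,\dots,x_M$ at spacing $2/m\ge h$ inside $I^d$. With $\widetilde K(u):=K(u_1)\cdots K(u_d)$ the $d$-dimensional product bump (supported in $(-1/2,1/2)^d$, with $\|\widetilde K\|_2^2=\|K\|_2^{2d}$ and, by Lemma~\ref{lemder}, $\|\widetilde K\|_{\be,\infty}\le 2d\,\|K\|_{\be,\infty}^d$), set the atoms $\Psi_k(x):=L\,h^{\be}\,\widetilde K\big((x-x_k)/h\big)$ and, for $\omega\in\{0,1\}^M$, the candidate functions $f_\omega:=\sum_{k=1}^M\omega_k\Psi_k$, where the amplitude $L>0$ is still free. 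I would first check that $f_\omega\in\cF(D,\be,d)$ as in \eqref{defcld_holder}: since $h\le1$, for $|\boldsymbol\alpha|\le\lfloor\be\rfloor$ one has $\partial^{\boldsymbol\alpha}\Psi_k=L\,h^{\be-|\boldsymbol\alpha|}(\partial^{\boldsymbol\alpha}\widetilde K)((\cdot-x_k)/h)$ with $h^{\be-|\boldsymbol\alpha|}\le1$, and the Hölder seminorm of order $\lfloor\be\rfloor$ scales exactly by $L$ after dividing by $|x-y|_\infty^{\be-\lfloor\be\rfloor}$, so $\|\Psi_k\|_{\be,\infty}\le L\|\widetilde K\|_{\be,\infty}$; the atoms having pairwise disjoint supports and all derivatives up to order $\lfloor\be\rfloor$ vanishing at their boundaries, one gets $\|f_\omega\|_{\be,\infty}\le 2\max_k\|\Psi_k\|_{\be,\infty}\le 4d\,L\,\|K\|_{\be,\infty}^d$, and choosing $L=D/(4d\|K\|_{\be,\infty}^d)$ forces $\|f_\omega\|_{\be,\infty}\le D$. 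Note $L\|K\|_2^d$ is precisely the ``effective amplitude'' appearing in $C(\be,D,d)$.

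The two quantitative ingredients come next. Writing $\mu=\otimes_i\mu_i$ with each $\mu_i$ of density $g\in[c_g,C_g]$, the density of $\mu$ lies in $[c_g^d,C_g^d]$, so $c_g^d\|\cdot\|_2^2\le\|\cdot\|_{L^2(\mu)}^2\le C_g^d\|\cdot\|_2^2$, and $\|\Psi_k\|_2^2=(Lh^\be)^2h^d\|K\|_2^{2d}$. For \emph{separation}, I would invoke Varshamov--Gilbert to extract $\Omega\subset\{0,1\}^M$ with $\log|\Omega|\ge(M/8)\log2$ and pairwise Hamming distance $\ge M/8$, giving $\|f_\omega-f_{\omega'}\|_{L^2(\mu)}^2\ge c_g^d\,(M/8)\,(Lh^\be)^2h^d\|K\|_2^{2d}\gtrsim c_g^d(Lh^\be)^2\|K\|_2^{2d}$ for $\omega\ne\omega'$ in $\Omega$, using $Mh^d\ge1$. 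For \emph{information}, since the Gaussian noise has unit variance, $\mathrm{KL}(P_{f_\omega}^{\otimes n},P_{0}^{\otimes n})=\tfrac n2\|f_\omega\|_{L^2(\mu)}^2\le\tfrac n2 C_g^d\,2^d\,(Lh^\be)^2\|K\|_2^{2d}$, using $Mh^d\le2^d$. Feeding these into Fano's inequality (e.g.\ Theorem~2.5 in \cite{tsybakov09}), the minimax risk is bounded below by a constant times the separation as soon as $nL^2h^{2\be+d}\le c(\be)/\big((2C_g)^d\|K\|_2^{2d}\big)$.

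Taking $h$ to be the largest value compatible with the last inequality, $h^{2\be+d}\asymp\big(C_g^d\|K\|_2^{2d}nL^2\big)^{-1}$, so that $h\asymp n^{-1/(2\be+d)}$ up to the indicated constants, one obtains $\cR_n\gtrsim c_g^d(Lh^\be)^2\|K\|_2^{2d}\asymp c_g^d\,\big(L\|K\|_2^{d}\big)^{2d/(2\be+d)}\,C_g^{-2\be d/(2\be+d)}\,n^{-2\be/(2\be+d)}$, where the exponent $2d/(2\be+d)$ of $L\|K\|_2^d$ arises from $2-4\be/(2\be+d)$. Substituting $L=D/(4d\|K\|_{\be,\infty}^d)$, bounding $C_g^{-2\be d/(2\be+d)}\ge C_g^{-d}$ (WLOG $C_g\ge1$), and absorbing the remaining absolute and $\be$-only factors into $C_0(\be)C_1(\be)^{1/(2\be+d)}$ produces the announced constant $C(\be,D,d)$. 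Finally the choice of $h$ is legitimate --- $h\le1$, the grid fits, and the bumps are disjoint --- precisely once $n$ exceeds the threshold obtained by imposing $h\le1$ in the displayed relation, which is of the stated form $N(\be,D,d)=C\,4^\be d^2\|K\|_{\be,\infty}^{d}/(D^2C_g^d\|K\|_2^{d})$ after the same crude simplifications (using $\|K\|_{\be,\infty}\le1/2$ from \eqref{bump}).

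I expect the main obstacle to be purely bookkeeping rather than conceptual: tracking the $d$-dependent constants through (i) the Hölder norm of the product bump via Lemma~\ref{lemder}, together with the observation that rescaling the atoms by $h^{\be}$ (instead of trying to distribute the smoothness across coordinates, which would fail for $d$ large) keeps every relevant quantity $\le1$ when $h\le1$; and (ii) the Fano balance, where \emph{both} $\log|\Omega|$ and the Kullback--Leibler bound scale like $m^d$, so that the constant in $h\asymp n^{-1/(2\be+d)}$ --- and hence the precise shape of $C(\be,D,d)$ and $N(\be,D,d)$ --- is pinned down by the ratio of the two and must be checked compatible with $h\le1$.
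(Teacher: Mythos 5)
Your proposal is correct and follows essentially the same route as the paper's own proof: a many-hypotheses lower bound with rescaled product bumps $Lh^\be K(\cdot/h)\otimes\cdots\otimes K(\cdot/h)$, the Varshamov--Gilbert lemma, Tsybakov's Fano-type theorem, the same amplitude choice $L=D/(4d\|K\|_{\be,\infty}^d)$ justified via Lemma~\ref{lemder} and the disjoint-support/telescoping argument (the paper's Lemma~\ref{lemfom}), and the same balancing of separation against the Kullback--Leibler bound to pin down $h\asymp n^{-1/(2\be+d)}$ and the dimension-dependent constants. The only differences are cosmetic bookkeeping (e.g.\ a grid of $m^d$ versus $(2m)^d$ points).
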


Before proving Lemma \ref{lb-dim}, we give a corollary in the main case of interest in the paper of functions depending on $d^*$ coordinates only: in this case the minimax rate is bounded from below by the corresponding quantities as in Lemma \ref{lb-dim} but with $d$ replaced by the effective dimension $d^*$. Under the assumed condition on $d^*$, it is shown in the proof of the Corollary below that the result then holds for a sufficiently large $n\ge N_0$ independent on $d,d^*$.
\begin{corollary} \label{cor-vs}
Under the conditions of Lemma \ref{lb-dim}, let $1\le d^*\le d$ and suppose, for some $\veps\in(0,1/2)$, 
\begin{equation} \label{conddst}
 d^*\le (\log{n})^{1/2-\veps}.
\end{equation} 
There exists an integer $N_0=N(\be,D)$ such that, for  constants $c_2=c_2(\be), C_2=C_2(\be)$ depending only on $\be$, $c_g, C_g$ and of the choice of kernel $K$, for all $n\ge N_0$,
\[ 
\inf_T \sup_{f\in \cF_{VS}(D,\be,d,d^*)} E_f \| T-f \|_{L^2(\mu)}^2 
\ge 
\left(C_2 c_2^{d^*} D^{\frac{2d^*}{2\be+d^*}} \right)n^{-\frac{2\be}{2\be+d^*}}.
\]
In particular under \eqref{conddst}, for fixed $D$ the minimax rate for the squared--integrated loss is bounded from below by $n^{-\frac{2\be}{2\be+d^*}}$ up to a slowly-varying multiplicative factor $c_2^{d^*}$.   
\end{corollary}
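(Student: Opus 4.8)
The plan is to derive Corollary~\ref{cor-vs} from Lemma~\ref{lb-dim} by reducing to a genuinely $d^*$-dimensional problem; the only real work is to track how the multiplicative constant $C(\be,D,\cdot)$ and the validity threshold $N(\be,D,\cdot)$ of Lemma~\ref{lb-dim} depend on $d^*$, and to check that this dependence is harmless under \eqref{conddst}.

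\emph{Reduction to dimension $d^*$.} By \eqref{defvs}, the class $\cF_{VS}(D,\be,d,d^*)$ contains the subclass $\cG$ of all $f(x_1,\dots,x_d)=g(x_1,\dots,x_{d^*})$ with $g\in\cF(D,\be,d^*)$ and $\norm{g}_\infty\le 1$, so the minimax risk over $\cF_{VS}(D,\be,d,d^*)$ is at least that over $\cG$. Writing $X_i=(X_i',X_i'')$ with $X_i'$ the first $d^*$ coordinates, the law $\mu'$ of $X_i'$ is a marginal of $\mu$ and satisfies the density hypothesis of Lemma~\ref{lb-dim} in dimension $d^*$ with the same $c_g,C_g$; the coordinates $X_i''$ carry no information about $g$, and for any estimator $T$ the averaged estimator $u\mapsto\int T(u,v)\,d\mu(v\,|\,X'=u)$ has $L^2(\mu')$-risk no larger than the $L^2(\mu)$-risk of $T$, by Jensen. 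Hence the left-hand side of the display in Corollary~\ref{cor-vs} is bounded below by the minimax risk over $\{g\in\cF(D,\be,d^*):\norm{g}_\infty\le1\}$ in the $d^*$-dimensional random-design model.

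\emph{Applying Lemma~\ref{lb-dim} and controlling the threshold.} The test functions used to prove Lemma~\ref{lb-dim} in dimension $d^*$ are of order $h_n^\be\to 0$ in sup-norm (being scalings of the bump $K$), so for $n$ large they also satisfy $\norm{\cdot}_\infty\le 1$ and hence belong to $\cG$; thus the conclusion of Lemma~\ref{lb-dim} transfers and gives the lower bound $C(\be,D,d^*)\,n^{-2\be/(2\be+d^*)}$ as soon as $n\ge N(\be,D,d^*)$. It remains to see this threshold is negligible: with $B_0:=\max\big(1,\,\norm{K}_{\be,\infty}/(C_g\norm{K}_{2})\big)$ one has $N(\be,D,d^*)\le C4^\be D^{-2}\,d^{*2}B_0^{d^*}$, and under \eqref{conddst}, $d^{*2}B_0^{d^*}\le (\log n)^{1-2\veps}\exp\!\big((\log B_0)(\log n)^{1/2-\veps}\big)=n^{o(1)}$ uniformly over admissible $d^*$. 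Hence there is $N_0=N(\be,D)$ (depending also on the fixed quantities $\veps,K,c_g,C_g$) with $n\ge N_0$ forcing $n\ge N(\be,D,d^*)$ for every $d^*$ satisfying \eqref{conddst}.

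\emph{Simplifying the constant, and main obstacle.} Finally one bounds $C(\be,D,d^*)$ from below. Since $2\be+d^*\ge 2\be+1$, the factor $C_1(\be)^{1/(2\be+d^*)}$ is at least a constant $c(\be)$. Setting $e:=2d^*/(2\be+d^*)\in(0,2)$ one has $e<2d^*$; as $c_g/C_g\le1$ and $\norm{K}_2/\norm{K}_{\be,\infty}\le1$ (the latter because $K$ is supported in an interval of length one, so $\norm{K}_2\le\norm{K}_\infty\le\norm{K}_{\be,\infty}$), the factors $(c_g/C_g)^{d^*}$, $(\norm{K}_2/\norm{K}_{\be,\infty})^{d^*e}\ge(\norm{K}_2/\norm{K}_{\be,\infty})^{2d^*}$, and $(4d^*)^{-e}\ge(4d^*)^{-2}\ge 16^{-d^*}$ (since $x\mapsto 2\log(4x)/x$ decreases on $[1,\infty)$ with value $\log16$ at $x=1$) are all of the form $a^{d^*}$ with a constant $a\in(0,1]$. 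Multiplying them yields $C(\be,D,d^*)\ge C_2\,c_2^{\,d^*}\,D^{\,2d^*/(2\be+d^*)}$ with $c_2=\tfrac{1}{16}(c_g/C_g)(\norm{K}_2/\norm{K}_{\be,\infty})^2$ and $C_2=C_0(\be)c(\be)$, which is the claimed bound; the last sentence of the corollary follows since for fixed $D$ the power $D^{2d^*/(2\be+d^*)}$ stays between fixed constants. The only delicate point throughout is exactly this bookkeeping: one must make sure that every factor that is exponential in $d^*$ — both in the leading constant and in the threshold $N(\be,D,d^*)$ — degrades no faster than $n^{o(1)}$, which is precisely where the hypothesis $d^*\le(\log n)^{1/2-\veps}$ is used.
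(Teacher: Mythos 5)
Your proof is correct and follows essentially the same route as the paper's: reduce to the $d^*$-dimensional problem by projecting the estimator onto functions of the active coordinates (your conditional-expectation average is exactly the paper's orthogonal projection in $L^2(\mu)$), apply Lemma \ref{lb-dim} with $d=d^*$, and absorb the threshold $N(\be,D,d^*)$ into a constant $N_0$ using \eqref{conddst}. Your additional bookkeeping — checking that the bump hypotheses satisfy $\norm{\cdot}_\infty\le 1$ and rewriting $C(\be,D,d^*)$ in the form $C_2 c_2^{d^*}D^{2d^*/(2\be+d^*)}$ — is sound and in fact makes explicit two points the paper leaves implicit.
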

\begin{proof}[Proof of Corollary \ref{cor-vs}]
Since for any $f\in \cF_{VS}(D,\be,d,d^*)$ we have $f(x_1,\ldots,x_d)=g(x_{i_1},\ldots,x_{i_{d^*}})$ for some $g\in \cC^\be(I^{d^*})$ with $\|g\|_{\be,\infty}\le K$,  one first notes that the considered minimax risk is bounded from below by
\[ \inf_{\tilde{T}} \sup_{g\in \cF(D,\be,d^*)} E_g \| \tilde{T}-g \|_{L^2(\mu)}^2, \]
where $\tilde{T}$ is a measurable function in $L^2(\mu)$ depending only on the coordinates with indices $i_1,\ldots,i_{d^*}$: indeed, denoting by $\mathbb{T}$ the orthogonal projection of $T$ onto the (closed) subspace of $L^2(\mu)$ of functions depending only on the specified coordinates, it holds $\|T-f\|_{L^2(\mu)}\ge \|\mathbb{T}-g\|_{L^2(\mu)}$, hence the claim. The result follows from applying  Lemma \ref{lb-dim} with $d=d^*$, and by noting that the condition that $n$ is larger than $N(\be,D,d^*)$ therein, together with \eqref{conddst}, translates into $n$ larger than a large enough constant $N_0$ (depending on $\be,D$ only).
\end{proof}

\begin{proof}[Proof of Lemma \ref{lb-dim}]
The proof is similar to the proof of Theorem 2.8 in \cite{tsybakov09}, by adapting the construction in $d$ dimensions. Two further differences are the assumption on the design points $X_i$'s (here these are random of law $\mu$) and the loss function, which here is the integrated $L^2(\mu)$-loss, instead of the plain $L^2$ loss for deterministic design. This creates a few minor differences as explained below. We give the details for the sake of completeness.

One uses a classical lower bound argument via many hypotheses, with squared  loss
\[ d(f,g)^2 = \int_{-1}^{1} (f(x)-g(x))^2 d\mu(x) \]
and class of functions $\Theta=\cF(D,\be,d)$. By the arguments of Section 2.2 in \cite{tsybakov09} and Theorem 2.5 therein, it suffices to find hypotheses, for suitable $M\ge 1$,
\[ \te_j = f_j(\cdot),\ \ j=0,\ldots,M,\]
with $\te_j\in\Theta$, that verify, for any $0\le j<k\le M$ and suitable $s=s_n=s_n(\be,D,d)>0$,
\begin{equation} \label{conddist}
 d(\te_j,\te_k) \ge 2s,
\end{equation} 
and for $P_j:=P_{\te_j}$, some $\alpha\in(0,1/8)$, for $K(P,Q)$ the KL--divergence between $P$ and $Q$,
\begin{equation} \label{klcond}
 \frac1M\sum_{j=1}^M K(P_j,P_0) \le \alpha\log{M}.
\end{equation} 
Then for a universal constant $C>0$, the minimax risk $\cR_n$ is bounded from below by $Cs_n$.  

Let us set, for $c_0=c_0(D,\be,d)>0$ to be chosen below,
\begin{equation} \label{defimh}
 m:= \lceil c_0 n^{\frac{1}{2\be+d}} \rceil,\quad h_n=1/m, 
\end{equation}
and suppose $n\ge N(D,\be,d)$ is large enough so that $m\ge 2$.
For $\boldsymbol{k}=(k_1,\ldots,k_d)\in \{-m+1,\ldots,m\}^d$ a multiindex, let us set
\[ x_{\boldsymbol{k}}=\left(\frac{k_1-1/2}{m}, \ldots, \frac{k_d-1/2}{m}\right).\]
This defines a grid of points in the unit cube, indexed by $\bok$. Define, for $K$ as in \eqref{bump}, $h_n$ as in \eqref{defimh}, and $L>0$ to be specified, 
\begin{equation} \label{phis}
\vphi_{\bok}(x) = Lh_n^\be \prod_{j=1}^d K\left( \frac{x_j-x_{k_j}}{h_n} \right)
=:Lh_n^\be K_d\left( \frac{x-x_{\bok}}{h_n} \right),
\end{equation} 
where $K_d(y)=K(y_1)\cdot K(y_2)\cdots \cdot K(y_d)$ for $y\in \RR^d$. Let $\Omega$ denote the set 
\[ \Omega = \left\{ \omega=(\omega_{\bok})_{\bok\in\{-m+1,\ldots,m\}^d} \in\{0,1\}^{(2m)^d} 
\right\}. \]
For any $\omega\in\Omega$ and $x\in[-1,1]^d$, denote, for $\bok$ running in  $\{-m+1,\ldots,m\}^d$,
\begin{equation} \label{fom}
 f_\omega(x) =   \sum_{\bok} \omega_\bok \vphi_\bok(x).   
\end{equation}
To conclude, it is enough to show that: {\em [Step 1]} all $f_\omega$ belongs to $\Theta$; to find {\em [Step 2]} a number $M$ of hypotheses $f_{\omega^{(j)}}$ that are `well-separated' in terms of their respective $L^2(\mu)$--distance, with separation of at least  $2s_n$, so that \eqref{conddist} holds with $s=s_n$;
to verify {\em [Step 3]} that \eqref{klcond} holds for $P_j=P_{f_{\omega^{(j)}}}$; according to the discussion above, the final rate is then $Cs_n$.   

{\em Step 1.} The fact that all $f_\omega$ are in $\Theta$ is verified in Lemma \ref{lemfom} below for the choice $L=D/(4d\|K\|_{\be,\infty}^d)$ that we make from now on.

{\em Step 2.} One uses Varshamov--Gilbert's lemma (Lemma 2.9 in \cite{tsybakov09}) to find a number $M\ge 2^{(2m)^d/8}$ of elements $\omega^{(1)}, \ldots, \omega^{(M)}\in \Omega$, with $\omega^{(1)}=(0,0,\ldots,0)$ with Hamming distance $\rho(\omega^{(j)},\omega^{(k)})$ between any of these $(j\neq k)$ being at least $(2m)^d/8$. Now setting $f_j(x)=f_{\omega^{(j)}}(x)$ for $j=0,\ldots,M$, we have for $p\neq q$ and $I=[-1,1]$,
\begin{align*} 
d(f_p,f_q)^2 & = \sum_{\bok} (\omega^{(p)}_\bok-\omega^{(q)}_\bok)^2
\int_{I^d} \vphi_\bok^2 d\mu \\
& \ge  \rho(\omega^{(p)},\omega^{(q)}) (Lh_n^\be)^2 
\left( c_g h_n\int_I K^2\right)^d\\
& \ge  (2c_g m)^d L^2 h_n^{2\be+d} \|K\|_2^{2d}/8.
\end{align*}
This shows that $s_n$ is up to a universal constant at least equal to $L 2^d c_g^d \|K\|_2^{d}m^{-\be}$.

{\em Step 3.} The final rate $s_n$ is now determined by a specific choice of $m$ arising such that \eqref{klcond} is verified. To compute $K(P_j,P_0)$ one notes first that $P_0$ has density $\phi(y)g(x)$, where $\phi$ is the standard Gaussian  density, since $f_0=0$ from our choice $\omega^{(0)}=0$ above. Since $P_j$ has density $\phi(y-f_j(x))g(x)$, the standard formula for the KL between Gaussians gives $K(P_j,P_0)=n\int f_j^2(x) g(x)dx/2$. By a similar computation as for Step 2, using now $\|g\|_\infty\le C_g(d)$,
\begin{align*}  
K(P_j,P_0) & \le n L^2 h_n^{2\be} \sum_\bok \omega_\bok^{(j)} \left(C_g h_n \int_I K^2\right)^d/2 \\
& \le n L^2 (2C_g m)^d h_n^{2\be+d}\|K\|_2^{2d}/16
=n L^2 (2C_g)^d m^{-2\be}\|K\|_2^{2d}/16.
\end{align*} 
Using $n\le (m/c_0)^{2\be+d}$ and setting $B_d:=(2C_g)^d\|K\|_2^{2d}/16$ yields $K(P_j,P_0)  \le L^2 c_0^{-2\be-d}B_d m^d$. Using $\log M\ge m^d (\log{2})/8$ from Step 2, \eqref{klcond} is satisfied if $L^2 c_0^{-2\be-d}B_d<\alpha\log(2)/8$. Taking e.g. $\al=1/16$, one may then choose 
\[ c_0 = \left( 12L^2 B_d/\al\right)^{\frac{1}{2\be+d}}.\]
The final squared--rate is then given by $s^2=L^2\|K\|_2^{2d}m^{-2\be}(2c_g)^d$ up to a universal constant. Let us choose $n$ large enough so that $c_0n^{1/(2\be+d)}\ge 2$ and $m^{-2\be}\ge (2c_0n^{1/(2\be+d)})^{-2\be}$. This gives 
\[ s^2 \ge L^2 \|K\|_2^{2d} (2c_g)^d (2c_0)^{-2\be} n^{-2\be/(2\be+d)}. \]
Inserting the expressions of $c_0, L, B_d$ gives the result, noting that $C_g^{2\be/(2\be+d)}\le C_g$. 
\end{proof}

\begin{lemma} \label{lemfom}
Let $D>0$. Suppose $L$ in \eqref{phis} is chosen as $L=D/(4d\|K\|_{\be,\infty}^d)$. Then for any $\omega\in\Omega$, the function $f_\omega$ in \eqref{fom} belongs to $\Theta=\cF(D,\be,d)$. 
\end{lemma}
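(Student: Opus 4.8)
The plan is to exploit the near‑orthogonality of the bumps $\vphi_{\bok}$ defined in \eqref{phis}. Since $K(x)>0$ only for $x\in(-1/2,1/2)$ by \eqref{bump}, each $\vphi_{\bok}$ is supported in the open cube $Q_{\bok}$ of side $h_n$ centred at $x_{\bok}$, and these cubes are pairwise disjoint and tile $(-1,1)^d$. Because $K\in\cC^\infty(\RR)$ with $K$ and all its derivatives vanishing on $(-\infty,-1/2]\cup[1/2,\infty)$, every $\vphi_{\bok}$, together with all its partial derivatives, vanishes on $\partial Q_{\bok}$; hence $f_\omega$ is $\cC^\infty$ on $\RR^d$, supported in $[-1,1]^d$, and at every point at most one bump (and its derivatives) is nonzero. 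So it suffices to bound $\norm{\vphi_{\bok}}_{\be,\infty}$ uniformly in $\bok$ and then transfer the estimate to $f_\omega$ through the disjoint‑support structure.

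First I would record the scaling relation. Writing $\vphi_{\bok}(x)=Lh_n^{\be}K_d((x-x_{\bok})/h_n)$ with $K_d(y)=\prod_{j=1}^d K(y_j)$, a change of variables gives $\partial^{\boldsymbol{\alpha}}\vphi_{\bok}(x)=Lh_n^{\be-|\boldsymbol{\alpha}|}(\partial^{\boldsymbol{\alpha}}K_d)((x-x_{\bok})/h_n)$ for any multi‑index $\boldsymbol{\alpha}$. Since $h_n=1/m\le 1/2<1$ and $\be-|\boldsymbol{\alpha}|\ge\be-\lfloor\be\rfloor>0$ for $|\boldsymbol{\alpha}|\le\lfloor\be\rfloor$, this yields $\norm{\partial^{\boldsymbol{\alpha}}\vphi_{\bok}}_\infty\le L\norm{\partial^{\boldsymbol{\alpha}}K_d}_\infty$; and for $|\boldsymbol{\alpha}|=\lfloor\be\rfloor$ the powers of $h_n$ cancel (the chain rule contributes $h_n^{-|\boldsymbol{\alpha}|-(\be-\lfloor\be\rfloor)}$ against the prefactor $h_n^{\be}$, and $\be=|\boldsymbol{\alpha}|+(\be-\lfloor\be\rfloor)$), so the Hölder‑$(\be-\lfloor\be\rfloor)$ seminorm of $\partial^{\boldsymbol{\alpha}}\vphi_{\bok}$ over $\overline{Q_{\bok}}$ equals $L$ times that of $\partial^{\boldsymbol{\alpha}}K_d$ over $[-1/2,1/2]^d$. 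As $K_d$ is a product of copies of $K$, Lemma \ref{lemder} gives $\norm{K_d}_{\be,\infty}\le 2d\norm{K}_{\be,\infty}^d$, whence $\norm{\vphi_{\bok}}_{\be,\infty}\le L\norm{K_d}_{\be,\infty}\le 2dL\norm{K}_{\be,\infty}^d$ uniformly in $\bok$. I would also note that the seminorm of $\partial^{\boldsymbol{\alpha}}\vphi_{\bok}$ taken over all of $[-1,1]^d$ is still at most $L\norm{K_d}_{\be,\infty}$: if the two points lie in different cubes or one lies outside all cubes, move one endpoint to a boundary point of $Q_{\bok}$ lying on the segment joining them, where the derivative vanishes.

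Finally I would transfer these bounds to $f_\omega$. For $|\boldsymbol{\alpha}|\le\lfloor\be\rfloor$, the disjoint‑support structure immediately gives $\norm{\partial^{\boldsymbol{\alpha}}f_\omega}_\infty=\max_{\bok}\norm{\partial^{\boldsymbol{\alpha}}\vphi_{\bok}}_\infty\le L\norm{K_d}_{\be,\infty}$. For the Hölder‑$(\be-\lfloor\be\rfloor)$ seminorm of $\partial^{\boldsymbol{\alpha}}f_\omega$ with $|\boldsymbol{\alpha}|=\lfloor\be\rfloor$, given $x,y\in[-1,1]^d$ I would restrict $\partial^{\boldsymbol{\alpha}}f_\omega$ to the segment $[x,y]$: it agrees with $\partial^{\boldsymbol{\alpha}}\vphi_{\bok}$ on the (at most one, at the $x$‑end) subinterval where the segment lies in a cube $Q_{\bok}$, and likewise with $\partial^{\boldsymbol{\alpha}}\vphi_{\bok'}$ at the $y$‑end, these functions vanishing where the segment leaves the respective cubes; telescoping through the two vanishing boundary points gives $|\partial^{\boldsymbol{\alpha}}f_\omega(x)-\partial^{\boldsymbol{\alpha}}f_\omega(y)|\le 2L\norm{K_d}_{\be,\infty}\,|x-y|_\infty^{\be-\lfloor\be\rfloor}$ (the case $x,y$ in the same, convex, cube, or one of them outside all cubes, being simpler). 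Combining the two displays, $\norm{f_\omega}_{\be,\infty}\le 2L\norm{K_d}_{\be,\infty}\le 4dL\norm{K}_{\be,\infty}^d=D$ for the stated choice of $L$, so $f_\omega\in\Theta=\cF(D,\be,d)$.

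The change of variables and the appeal to Lemma \ref{lemder} are routine; the only step requiring care — and the one that explains the factor $4d$ in the definition of $L$ — is the last one, namely passing from a single bump to $f_\omega$ across cube boundaries. The key structural fact making this work is that all partial derivatives of $f_\omega$ of order $\le\lfloor\be\rfloor$ vanish on the common boundaries of the support cubes, so a segment joining points of two distinct bumps contributes at most twice a single‑bump seminorm.
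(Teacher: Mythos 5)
Your proof is correct and follows essentially the same route as the paper's: reduce to a single bump via telescoping through boundary points where all derivatives of order $\le\lfloor\be\rfloor$ vanish (which is exactly where the factor $2$ absorbed into the choice of $L$ comes from), and control a single bump by combining the $h_n$--scaling with Lemma \ref{lemder} applied to the product $K_d$. Your explicit identity $\partial^{\boldsymbol{\alpha}}\vphi_{\bok}=Lh_n^{\be-|\boldsymbol{\alpha}|}(\partial^{\boldsymbol{\alpha}}K_d)((\cdot-x_{\bok})/h_n)$ is merely a slightly cleaner packaging of the paper's observation that each derivative or H\"older ratio contributes a power of $1/h_n$ dominated by $h_n^{-\be}$.
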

\begin{proof}
First, we note that it is enough to prove that an individual function $\vphi_\bok$ belongs to $\cF(D/2,\be,d)$. Indeed, recall that the supports of the functions $\vphi_\bok$ are disjoint (up to null sets, i.e. their boundaries) small cubes that cover the unit cube $I^d$. 
 If $x,y\in I^d$ belong to the same cube centered at $x_\bok$, then $f_\omega(x) - f_\omega(y)= \vphi_\bok(x) - \vphi_{\bok}(y)$. 
 If $x,y\in I^d$ belong to different  cubes centered at $x_\bok$ and $x_{\bok'}$ respectively, consider the segment $[x;y]$ in $I^d$. It intersects the boundary of the cube centered at $x_\bok$ (resp. $x_{\bok'}$)  at a point $z_\bok$ (resp. $z'_{\bok'}$). Since $\vphi_\bok$ vanishes at the boundary of the cube centered at $x_\bok$, we have $\vphi_\bok(z_\bok)=0=\vphi_{\bok'}(z'_{\bok'})$. We write
\[ f_\omega(x) - f_\omega(y)
= \vphi_\bok(x) - \vphi_{\bok'}(y)
= \vphi_\bok(x) - \vphi_\bok(z_\bok) + \vphi_{\bok'}(z'_{\bok'})- \vphi_{\bok'}(y). 
\]
Since $\max(\|x-z_\bok\|_\infty, \|y-z'_{\bok'}\|_\infty)\le \|x-y\|_\infty$, we deduce from the previous lines that if indeed all $\vphi_\bok$ belong to $\cF(D/2,\be,d)$ then $f_\omega$ belongs to $\Theta=\cF(D,\be,d)$.

Now thus focusing on a given function $\vphi_\bok$, note that by definition it can be written $Lh_n^\be \prod_{j=1}^d \ga_j(x_j)$, with $\ga_j(x_j)=K(\{x_j-x_{k_j}\}/h_n)$. It is enough to show that the $\|\cdot\|_{\be,\infty}$--norm of the product $\prod_{j=1}^d \ga_j$ is at most $h_n^{-\be} 2d\|K\|_{\be,\infty}^d$. Proving this is a slight variation on the proof of Lemma \ref{lemder}. The only difference is the factor $1/h_n$ present within $K$ in the definition of $\ga_j$. This factor intervenes each time that one takes a derivative or bounds the ratios in \eqref{defno_app}. Each derivative makes appear an extra factor $1/h_n$ while bounding the ratio in   \eqref{defno_app} gives an extra factor $(1/h_n)^{\be-\lfloor \be\rfloor}$. Each individual bound in the proof of Lemma \ref{lemder} then contains an extra factor either $h_n^{-\be}$ or $h_n^{-q}$ for some integer $q\le \be$. Since $h_n=1/m\le 1$ we have $h_n^{-q}\le h_n^{-\be}$ so one obtains an overall bound that contains only an extra multiplicative factor $h_n^{-\be}$ compared to the bound from Lemma \ref{lemder}, which concludes the proof. 
\end{proof}

\section{Fractional posteriors}\label{app: Frac post}

For $0<\alpha<1$,  the Rényi divergence of order $\alpha$ between two densities $f$ and $g$ on a measurable space $(E, \mathcal{A}, \La)$ is given by
	\begin{align} \label{dal}
		D_\alpha(f, g) = -\frac{1}{1-\alpha}\log \left( \int_{E} f^\alpha g^{1-\alpha}d\La \right).
	\end{align}
Further define the usual Kullback-Leibler divergence $K(f, g)=\int f \log(f/g) d\La$ and its second-variation $V(f,g)= \int f \left(\log(f/g) - K(f,g)\right)^2 d\La$. 

Consider  a generic dominated statistical model $\{P_\eta^n, \eta\in S\}$, with $dP_\eta^n=p_\eta^n d\La^n$ for $\La^n$ a dominating measure. One observes $X\sim P_{\eta_0}^n$ for some unknown true parameter $\eta_0\in S$.  
Let us set, for $\veps>0$,
	\begin{align} \label{defkln}
		B_n(p_{\eta_0}^n, \eps) = B_n(\eta_0,\eps) &=\left\{\eta: \: K(p_{\eta_0}^n, p_{\eta}^n) \leq n \eps^2,\: V(p_{\eta_0}^n,  p_{\eta}^n) \leq n \eps^2  \right\},
	\end{align}	 

The following is a slight variation on Theorem 16 of \cite{alik22}.

\begin{theorem}\label{bat_result} For any nonnegative sequence $\eps_n$ and $\rho\in(0,1)$, if 		\begin{align}\label{equation_thm_1}
			\Pi(B_n(\eta_0, \eps_n)) \geq e^{-n\rho \eps_n^2},
		\end{align}
		then for $D_\rho$ the R\'enyi divergence of order $\rho$, 
		\begin{align*}
			E_0\Pi_\rho \left( \eta: \: \frac{1}{n}D_{\rho}(p_{\eta}^n, p_{\eta_0}^n) \ge 4 \frac{\rho\eps_n^2}{1-\rho}   \given  X \right) \le e^{- n\rho\eps_n^2}  + (n\eps_n^2)^{-1}.
		\end{align*}	
	\end{theorem}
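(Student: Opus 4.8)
\textbf{Proof strategy for Theorem \ref{bat_result}.}

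The plan is to deduce this from the general fractional-posterior machinery (Theorem 16 of \cite{alik22}), recycling the standard two-step argument: a \emph{lower bound} on the evidence (denominator of $\Pi_\rho$) coming from the prior-mass hypothesis \eqref{equation_thm_1}, and an \emph{upper bound} on the numerator restricted to the ``bad'' set where the R\'enyi divergence is large, coming from a change-of-measure/Markov argument. First I would recall the key identity that makes fractional posteriors so tractable: for any measurable set $B$,
\[
\Pi_\rho(B \given X) = \frac{\int_B \big(p_\eta^n/p_{\eta_0}^n\big)^\rho \, d\Pi(\eta)}{\int \big(p_\eta^n/p_{\eta_0}^n\big)^\rho \, d\Pi(\eta)},
\]
after dividing numerator and denominator by $p_{\eta_0}^n(X)^\rho$. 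Call the denominator $Z_n$.

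\emph{Lower bound on $Z_n$.} On the event $\mathcal{A}_n := \{Z_n \ge e^{-2n\rho\eps_n^2}\,\Pi(B_n(\eta_0,\eps_n))\}$ one controls the evidence from below. The standard argument (Lemma 8.1-type in \cite{ggv00}, or the fractional-likelihood version) uses Jensen's inequality: restricting the integral defining $Z_n$ to $B_n(\eta_0,\eps_n)$ and applying Jensen to the probability measure $\Pi(\cdot)/\Pi(B_n)$ restricted there,
\[
\log Z_n \ge \log \Pi(B_n(\eta_0,\eps_n)) + \rho \int_{B_n} \log\frac{p_\eta^n}{p_{\eta_0}^n}\, \frac{d\Pi(\eta)}{\Pi(B_n)}.
\]
The inner integral is, up to sign, an average of $K(p_{\eta_0}^n,p_\eta^n)$-type quantities; using that on $B_n$ one has $K(p_{\eta_0}^n,p_\eta^n)\le n\eps_n^2$ and $V(p_{\eta_0}^n,p_\eta^n)\le n\eps_n^2$, a Chebyshev/Markov bound on $n^{-1}\sum \log(p_{\eta_0}/p_\eta)(X_i)$ around its mean shows $P_{\eta_0}^n(\mathcal{A}_n^c)\le (n\eps_n^2)^{-1}$. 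Combined with \eqref{equation_thm_1} this gives $Z_n \ge e^{-3n\rho\eps_n^2}$ (or a similar explicit constant) off an event of probability at most $(n\eps_n^2)^{-1}$.

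\emph{Upper bound on the numerator.} Let $B = \{\eta:\ n^{-1}D_\rho(p_\eta^n,p_{\eta_0}^n) \ge 4\rho\eps_n^2/(1-\rho)\}$. The crucial point is the elementary identity
\[
E_{\eta_0}\!\left[\Big(\tfrac{p_\eta^n}{p_{\eta_0}^n}\Big)^\rho\right] = \int (p_\eta^n)^\rho (p_{\eta_0}^n)^{1-\rho}\, d\La^n = e^{-(1-\rho)\,D_\rho(p_\eta^n,p_{\eta_0}^n)},
\]
by the very definition \eqref{dal} of the R\'enyi divergence. Hence, by Fubini,
\[
E_{\eta_0}\!\left[\int_B \Big(\tfrac{p_\eta^n}{p_{\eta_0}^n}\Big)^\rho d\Pi(\eta)\right] = \int_B e^{-(1-\rho)D_\rho(p_\eta^n,p_{\eta_0}^n)}\, d\Pi(\eta) \le e^{-4n\rho\eps_n^2},
\]
using the defining inequality of $B$. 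A Markov inequality then bounds the random numerator: with probability at least $1 - e^{-n\rho\eps_n^2}$ (choosing the threshold $e^{-3n\rho\eps_n^2}$) one has $\int_B (p_\eta^n/p_{\eta_0}^n)^\rho d\Pi \le e^{-3n\rho\eps_n^2}$.

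\emph{Conclusion.} On the intersection of the two good events --- which has $P_{\eta_0}^n$-probability at least $1 - (n\eps_n^2)^{-1} - e^{-n\rho\eps_n^2}$ --- we get $\Pi_\rho(B\given X) = Z_n^{-1}\int_B(\cdots)d\Pi \le e^{3n\rho\eps_n^2}\cdot e^{-3n\rho\eps_n^2}$ times a harmless constant coming from the precise constants chosen; taking expectations yields $E_{\eta_0}\Pi_\rho(B\given X)\le e^{-n\rho\eps_n^2} + (n\eps_n^2)^{-1}$. The only place where the bookkeeping must be done carefully is matching the constant $4\rho/(1-\rho)$ in the exponent of the R\'enyi bound with the constants $2$ and $3$ appearing in the evidence lower bound and in the Markov threshold; I expect this constant-chasing (rather than any conceptual difficulty) to be the main obstacle, since \cite{alik22} state a version with slightly different constants and one must verify the variant claimed here goes through with the same argument. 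I would close by remarking that no testing or entropy condition is needed --- this is the whole point of fractional posteriors --- so the proof is genuinely just the two estimates above glued by a union bound.
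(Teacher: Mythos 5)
Your two key estimates are exactly the ones the paper uses: the evidence lower bound $Z_n \ge \Pi(B_n(\eta_0,\eps_n))\,e^{-2\rho n\eps_n^2}$ off an event of probability $(n\eps_n^2)^{-1}$ (Jensen plus Chebyshev on $B_n$, which is the paper's Lemma \ref{lem:mino_den_posterior} with $C=1$), and the identity $\int (p_\eta^n)^\rho (p_{\eta_0}^n)^{1-\rho}d\La^n = e^{-(1-\rho)D_\rho(p_\eta^n,p_{\eta_0}^n)} \le e^{-4n\rho\eps_n^2}$ on the bad set, combined with Fubini. So the conceptual content is right and no testing/entropy argument is needed, as you say.

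However, your final assembly has a genuine flaw: applying Markov to the numerator with threshold $e^{-3n\rho\eps_n^2}$ gives, on the resulting good event, numerator $\le e^{-3n\rho\eps_n^2}$ while your denominator lower bound is also $e^{-3n\rho\eps_n^2}$, so the ratio is bounded by $1$ --- the conclusion "$\le e^{3n\rho\eps_n^2}\cdot e^{-3n\rho\eps_n^2}$" is vacuous and the expectation bound you claim does not follow. Moreover this is not just constant-chasing: with the Markov-plus-union-bound structure, optimizing the threshold $t$ in $t\,e^{3n\rho\eps_n^2} + e^{-4n\rho\eps_n^2}/t$ yields at best $2e^{-n\rho\eps_n^2/2}$, which is strictly weaker in the exponent than the stated $e^{-n\rho\eps_n^2}$. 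The paper avoids this loss by never applying Markov to the numerator: it bounds
\begin{equation*}
E_0\,\Pi_\rho(A_n\given X) \le \frac{E_0\left[\int_{A_n} (p_\eta^n/p_{\eta_0}^n)^\rho\, d\Pi(\eta)\right]}{\Pi(B_n(\eta_0,\eps_n))\, e^{-2\rho n\eps_n^2}} + P_0(C_n^c),
\end{equation*}
i.e.\ it takes the expectation of the ratio directly, using the deterministic lower bound on the denominator on $C_n$ and Fubini on the expectation of the numerator; the first term is then $e^{-4n\rho\eps_n^2}/e^{-3n\rho\eps_n^2}=e^{-n\rho\eps_n^2}$. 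Replacing your Markov step by this direct expectation bound repairs the argument and recovers the stated constants.
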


\begin{proof}
		By Lemma \ref{lem:mino_den_posterior}, on a subset $C_n$ of $P_0$-probability at least $1-1/(n\eps_n^2)$,  for any measurable set $A \subset S$,
		\begin{equation*}
		\begin{split}
			E_0 \Pi_{\rho}(A \given X)  = E_0 \frac{\int_{A} \frac{p_\eta^n(X)^{\rho}}{p_{\eta_0}^n(X)^{\rho}} d\Pi(\eta)}{\int \frac{p_\eta^n(X)^{\rho}}{p_{\eta_0}^n(X)^{\rho}} d\Pi(\eta)}
			& \leq E_0 \frac{\int_{A} \frac{p_\eta^n(X)^{\rho}}{p_{\eta_0}^n(X)^{\rho}} d\Pi(\eta)}{\Pi(B_n(\eta_0, \eps_n)) e^{-2{\rho}n\eps_n^2}}1_{C_n} + P_0(C_n^c) \\
			& =  \frac{\int_{A} \int p_\eta^n(x)^{\rho} p_{\eta_0}^n(x)^{1-\rho} d\La^n(x)d\Pi(\eta)}{\Pi(B_n(\eta_0, \eps_n)) e^{-2{\rho}n\eps_n^2}} + (n\eps_n^2)^{-1},
		\end{split}		
		\end{equation*}
		where the last equality follows from Fubini's theorem. Set
		\begin{align*}
			A_n&:= \left\{\eta: \ \int p_\eta^n(x)^{\rho} p_{\eta_0}^n(x)^{1-\rho} d\La^n(x) \leq e^{- 4n\rho\eps_n^2} \right\} \\
			&= \left\{\eta: \ -\frac{1}{n(1-\rho)}\log\int p_\eta^n(x)^{\rho} p_{\eta_0}^n(x)^{1-\rho} d\La^n(x) \geq 4\frac{\rho\eps_n^2}{1-\rho}\right\}\\
			& = \left\{\eta: \ \frac{1}{n} D_{\rho}(p_{\eta}^n, p_{\eta_0}^n) \geq 4\frac{\rho\eps_n^2}{1-\rho}\right\}.
		\end{align*}		
Substituting $A_n$ into the second-last display and using \eqref{equation_thm_1} yields
		\begin{align*}
			E_0 \Pi_{\rho}(A_n\given X) 
			& \leq \frac{\int_{A_n} e^{- 4n\rho\eps_n^2} d\Pi(\eta)}{\Pi(B_n(\eta_0, \eps_n)) e^{-2{\rho}n\eps_n^2}} +  (n\eps_n^2)^{-1} \leq e^{- n\rho\eps_n^2}  + (n\eps_n^2)^{-1}.
		\end{align*}
	\end{proof}

\begin{lemma}\label{lem:mino_den_posterior} For any distribution $\Pi$ on $S$, any $C, \eps>0$ and $0 < \rho \leq 1$, with $P_{0}$-probability at least $1-1/(C^2n\eps^2)$, we have
		\begin{align*}
			\int_{S} \frac{p_\eta^n(X)^\rho}{p_{\eta_0}^n(X)^\rho}d\Pi(\eta) \geq \Pi(B_n(\eta_0, \eps))e^{-\rho(C+1)n\eps^2}.
		\end{align*}
	\end{lemma}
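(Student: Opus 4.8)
The plan is to prove Lemma \ref{lem:mino_den_posterior} by the classical ``evidence lower bound'' argument: restrict the prior integral to $B:=B_n(\eta_0,\eps)$, apply Jensen's inequality with the renormalised prior $\Pi(\cdot)/\Pi(B)$ on $B$, and then control the resulting averaged log-likelihood ratio through its mean and variance via Chebyshev's inequality. Concretely, the first step is to write
\[
\int_S \frac{p_\eta^n(X)^\rho}{p_{\eta_0}^n(X)^\rho}\,d\Pi(\eta)\ \ge\ \int_B \frac{p_\eta^n(X)^\rho}{p_{\eta_0}^n(X)^\rho}\,d\Pi(\eta)\ =\ \Pi(B)\int_B \exp\!\Big(\rho\log\tfrac{p_\eta^n(X)}{p_{\eta_0}^n(X)}\Big)\frac{d\Pi(\eta)}{\Pi(B)},
\]
and, since $\exp$ is convex, Jensen's inequality with the probability measure $d\Pi(\eta)/\Pi(B)$ on $B$ gives
\[
\int_S \frac{p_\eta^n(X)^\rho}{p_{\eta_0}^n(X)^\rho}\,d\Pi(\eta)\ \ge\ \Pi(B)\,e^{-\rho Z},\qquad Z:=\int_B \log\frac{p_{\eta_0}^n(X)}{p_\eta^n(X)}\,\frac{d\Pi(\eta)}{\Pi(B)}.
\]

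The second step is to bound $Z$ with high $P_0$-probability. By Tonelli's theorem, $E_0 Z=\int_B K(p_{\eta_0}^n,p_\eta^n)\,d\Pi(\eta)/\Pi(B)\le n\eps^2$ by the very definition of $B=B_n(\eta_0,\eps)$. For the variance, writing $g_\eta(X)=\log\!\big(p_{\eta_0}^n(X)/p_\eta^n(X)\big)$, one has $Z-E_0 Z=\int_B (g_\eta(X)-E_0 g_\eta)\,d\Pi(\eta)/\Pi(B)$, so by Jensen's inequality applied to $t\mapsto t^2$ and then Fubini,
\[
\mathrm{Var}_0(Z)\ \le\ \int_B E_0\big[(g_\eta(X)-E_0 g_\eta)^2\big]\,\frac{d\Pi(\eta)}{\Pi(B)}\ =\ \int_B V(p_{\eta_0}^n,p_\eta^n)\,\frac{d\Pi(\eta)}{\Pi(B)}\ \le\ n\eps^2,
\]
again using the definition of $B$. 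Chebyshev's inequality then yields $P_0\big(Z\ge E_0 Z+Cn\eps^2\big)\le \mathrm{Var}_0(Z)/(Cn\eps^2)^2\le 1/(C^2 n\eps^2)$.

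Finally, on the complementary event, which has $P_0$-probability at least $1-1/(C^2 n\eps^2)$, we have $Z< E_0 Z+Cn\eps^2\le (C+1)n\eps^2$, and plugging this into the displayed lower bound gives $\int_S (p_\eta^n(X)/p_{\eta_0}^n(X))^\rho\,d\Pi(\eta)\ge \Pi(B)\,e^{-\rho(C+1)n\eps^2}$, which is the assertion. I do not expect a genuine obstacle in this proof; the only mildly delicate point is the second use of Jensen's inequality (equivalently Cauchy--Schwarz), which is what lets us pass from the variance of the $\Pi|_B$-average of the $g_\eta$ to the $\Pi|_B$-average of their variances, so that the second-moment control $V(p_{\eta_0}^n,p_\eta^n)\le n\eps^2$ on $B$ can be invoked.
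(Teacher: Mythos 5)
Your proposal is correct and follows essentially the same route as the paper: restrict to $B_n(\eta_0,\eps)$, apply Jensen with the renormalised prior, bound the mean and variance of the averaged log-likelihood ratio by $n\eps^2$ using the definition of $B_n$, and conclude by Chebyshev. The only cosmetic difference is your sign convention for $Z$ and your appeal to Tonelli for the mean, where the paper first checks $E_0|Z|<\infty$ and invokes Fubini since the integrand is signed; that check is immediate from the bounds defining $B_n(\eta_0,\eps)$.
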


\begin{proof}
		Suppose $\Pi(B_n(\eta_0, \eps))>0$ (otherwise the result is immediate), and denote by $\bar{\Pi} = \Pi(\cdot \cap B_n(\eta_0, \eps))/\Pi(B_n(\eta_0, \eps))$ the normalized prior to $B_n(\eta_0, \eps)$.
		Now let us bound from below
		\begin{align}\label{eq:ELBO_lb}
			\int_{S} \frac{p_\eta^n(X)^\rho}{p_{\eta_0}^n(X)^\rho}d\Pi(\eta) \geq \int_{B_n(\eta_0, \eps)} \frac{p_\eta^n(X)^\rho}{p_{\eta_0}^n(X)^\rho}d\Pi(\eta) = \Pi(B_n(\eta_0, \eps)) \int \frac{p_\eta^n(X)^\rho}{p_{\eta_0}^n(X)^\rho}d\bar{\Pi}(\eta). 
		\end{align}
		Since $\bar{\Pi}$ is a probability measure on $S$, Jensen's inequality applied to the logarithm gives,
		\begin{align*}
				\log\left(\int \frac{p_\eta^n(X)^\rho}{p_{\eta_0}^n(X)^\rho}d\bar{\Pi}(\eta)\right) 
				\geq  \rho  \int \log\left(\frac{p_\eta^n(X)}{p_{\eta_0}^n(X)}\right)d\bar{\Pi}(\eta).
		\end{align*}	
		Consider now the random variable $Z := \int \log\left(\frac{p_\eta^n(X)}{p_{\eta_0}^n(X)}\right)d\bar{\Pi}(\eta)$. Then
		\begin{align*}
			E_0 |Z|  
			 \leq   \int_{B_n(\eta_0, \eps)} E_0  \left\arrowvert\log\left(\frac{p_\eta^n(X)}{p_{\eta_0}^n(X)}\right)\right\arrowvert d\bar{\Pi}(\eta)
			&= \int_{B_n(\eta_0, \eps)} \int \left\arrowvert\log\left(\frac{p_\eta^n(x)}{p_{\eta_0}^n(x)}\right)\right\arrowvert  p_{\eta_0}^n(x) d\La^n(x)d\bar{\Pi}(\eta) \\ & \leq n\eps^2 + 1. 
		\end{align*}
		Thus Z is integrable and using Fubini's theorem,
		\begin{align*}
			E_0 Z
			= \int_{B_n(\eta_0, \eps)}  \int \log\left(\frac{p_\eta^n(x)}{p_{\eta_0}^n(x)}\right) p_{\eta_0}^n(x) d\La^n(x) d\bar{\Pi}(\eta) 
		 = \int_{B_n(\eta_0, \eps)} -K(p_{\eta_0}^n, p_{\eta}^n) d\bar{\Pi}(\eta) \geq -n\eps^2.
		\end{align*}
		Turning to the variance,
		\begin{align*}
			\text{Var}_0(Z) = \text{Var}_0(-Z) 
			&= E_0 \left(\int \log\left(\frac{p_{\eta_0}^n(X)}{p_\eta^n(X)}\right)d\bar{\Pi}(\eta) - \int_{B_n(\eta_0, \eps)} K(p_{\eta_0}^n, p_{\eta}^n) d\bar{\Pi}(\eta)\right)^2 \\
			&= E_0 \left(\int \log\left(\frac{p_{\eta_0}^n(X)}{p_\eta^n(X)}\right) - K(p_{\eta_0}^n, p_{\eta}^n) d\bar{\Pi}(\eta)\right)^2 \\
			&\leq  \int_{B_n(\eta_0, \eps)} E_0 \left(\log\left(\frac{p_{\eta_0}^n(X)}{p_\eta^n(X)}\right) - K(p_{\eta_0}^n, p_{\eta}^n)\right)^2 d\bar{\Pi}(\eta) 
			\leq n\eps^2
		\end{align*}
		using that $\bar{\Pi}$ is supported on $B_n(\eta_0, \eps)$. Also, 		$P_0(|Z-E(Z)| \geq Cn\eps^2) \leq 1/(C^2n\eps^2)$ by Chebychev's inequality.		Thus, on the event $\{ |Z-E(Z)| \leq Cn\eps^2\}$, which has a probability at least $1 - 1/(Cn\eps^2)$,
		\begin{align*}
			\log\left(\int \frac{p_\eta^n(X)^\rho}{p_{\eta_0}^n(X)^\rho}d\bar{\Pi}(\eta)\right) \geq \rho(Z - EZ + EZ) \geq -\rho(C+1)n\eps^2.
		\end{align*}
		Substituting this bound into \eqref{eq:ELBO_lb} then gives the result.
	\end{proof}

 In the regression model \eqref{def:rdreg}, where one wants to recover $\eta_0=f_0$, 
 the data distribution has density with respect to  $(\mu \otimes \lambda)^{\otimes n}$  given by, recalling $\mu$ is the distribution of the design points and $\lambda$ Lebesgue's measure on $\mathbb{R}$,
 \begin{equation}\label{eq; factorised density}
 p_f^n((y_1,x_1),\dots,(y_n,x_n))=\prod_{i=1}^n (2\pi\sigma_0^2)^{-1/2} e^{-\frac{(y_i-f(x_i))^2}{2\sigma_0^2}},
 \end{equation}
 for $f$ ranging over the set $C[-1,1]^d$ of continuous functions over $[-1,1]^d$. The following proposition shows that the set $B_n(p_f^n, \varepsilon)$ above contains a supremum--norm ball, which simplifies the verification of assumption \eqref{equation_thm_1} in  Theorem \ref{bat_result}.

 \begin{proposition}\label{prop: sup in KL}
For any $f \in C[-1,1]^d$, $p_f^n$ as in \eqref{eq; factorised density} and $\varepsilon\le \sigma_0^{-1}$,  for $B_n$ the neighborhood as in \eqref{defkln},
\[ B_n(p_f^n, \varepsilon) \supseteq \left\{g\in C[-1,1]^d\colon\ \norm{g-f}_\infty \leq \frac{2\sigma_0^2}{\sqrt{1+4\sigma_0^2}}\varepsilon \right\}.\]
 \end{proposition}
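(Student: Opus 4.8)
\textbf{Proof plan for Proposition \ref{prop: sup in KL}.}

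The plan is to show that if $\norm{g-f}_\infty \le c\varepsilon$ with $c = 2\sigma_0^2/\sqrt{1+4\sigma_0^2}$, then both $K(p_f^n,p_g^n) \le n\varepsilon^2$ and $V(p_f^n,p_g^n) \le n\varepsilon^2$. By the additivity of the Kullback--Leibler divergence and of its second variation over product measures (the data is i.i.d.\ under both $p_f^n$ and $p_g^n$), it suffices to prove the one-observation bounds $K(p_f,p_g) \le \varepsilon^2$ and $V(p_f,p_g) \le \varepsilon^2$, where $p_f(x,y) = (2\pi\sigma_0^2)^{-1/2} e^{-(y-f(x))^2/(2\sigma_0^2)}$ and $p_g$ analogously, with respect to $\mu\otimes\lambda$. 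So I would reduce immediately to the single-observation computation.

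For the single-observation bounds, I would use the explicit Gaussian form. For fixed $x$, the conditional laws of $y$ under $p_f$ and $p_g$ are $\mathcal{N}(f(x),\sigma_0^2)$ and $\mathcal{N}(g(x),\sigma_0^2)$, so the classical formula for KL between two normals with equal variance gives, after integrating out $x\sim\mu$,
\[ K(p_f,p_g) = \int \frac{(f(x)-g(x))^2}{2\sigma_0^2}\, d\mu(x) \le \frac{\norm{f-g}_\infty^2}{2\sigma_0^2}. \]
For the second variation, writing $\Delta(x) := f(x)-g(x)$, the log-likelihood ratio at a single observation is $\log(p_f/p_g)(x,y) = \big(2\Delta(x)(y-f(x)) + \Delta(x)^2\big)/(2\sigma_0^2)$, so that $\log(p_f/p_g) - K(p_f,p_g) = \Delta(x)(y-f(x))/\sigma_0^2 + \tfrac{1}{2\sigma_0^2}(\Delta(x)^2 - \int\Delta^2 d\mu)$. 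Taking the second moment under $p_f$ (under which $y-f(x)\sim\mathcal{N}(0,\sigma_0^2)$ independently of $x$, with zero mean), the cross term vanishes and one gets
\[ V(p_f,p_g) = \int \frac{\Delta(x)^2}{\sigma_0^2}\, d\mu(x) + \mathrm{Var}_\mu\!\Big(\frac{\Delta^2}{2\sigma_0^2}\Big) \le \frac{\norm{f-g}_\infty^2}{\sigma_0^2} + \frac{\norm{f-g}_\infty^4}{4\sigma_0^4}. \]
Using $\norm{f-g}_\infty \le c\varepsilon \le c\sigma_0^{-1}$ and $c\le 2\sigma_0$ (so $c^2\sigma_0^{-2}\le 4$), one bounds $V(p_f,p_g) \le c^2\varepsilon^2(1 + c^2\sigma_0^{-2}/4)/\sigma_0^2 \le c^2\varepsilon^2 \cdot \frac{1+4\sigma_0^2}{4\sigma_0^4}\cdot\sigma_0^2 \cdot \sigma_0^{-2}$; plugging in $c^2 = 4\sigma_0^4/(1+4\sigma_0^2)$ makes the right-hand side exactly $\varepsilon^2$. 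A similar (easier) check handles $K$. Since the constant $c$ was chosen precisely so that the dominant term in $V$ equals $\varepsilon^2$, the arithmetic works out exactly, and the KL term is automatically smaller.

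The main obstacle here is purely bookkeeping: getting the constant $c = 2\sigma_0^2/\sqrt{1+4\sigma_0^2}$ to come out exactly right, one must be careful to retain the quartic term $\norm{f-g}_\infty^4/(4\sigma_0^4)$ in the variance bound (it is not negligible and is in fact where the $1+4\sigma_0^2$ denominator originates) and to use the hypothesis $\varepsilon\le\sigma_0^{-1}$ to control it. No analytic difficulty arises beyond the exact Gaussian moment computations and a short algebraic verification that the chosen $c$ saturates the inequality.
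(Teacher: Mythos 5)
Your proposal is correct and follows essentially the same route as the paper: reduce to a single observation by additivity, use the exact Gaussian formula $K(p_f,p_g)=\norm{f-g}_{L^2(\mu)}^2/(2\sigma_0^2)$, and bound $V$ by the two terms $\norm{f-g}_\infty^2/\sigma_0^2+\norm{f-g}_\infty^4/(4\sigma_0^4)$, controlling the quartic term via $\varepsilon\le\sigma_0^{-1}$ (the paper phrases this as $\norm{f-g}_\infty\le 1$, so that $\norm{f-g}_\infty^4\le\norm{f-g}_\infty^2$, which makes the constant $c=2\sigma_0^2/\sqrt{1+4\sigma_0^2}$ saturate the bound exactly). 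Only a bookkeeping slip to fix: in your chain the factor should be $1+c^2\varepsilon^2/(4\sigma_0^2)\le 1+c^2/(4\sigma_0^4)$, not $1+c^2\sigma_0^{-2}/4$; with that correction the final bound is still $\le\varepsilon^2$.
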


  \begin{proof}
For $p_f^n, p_g^n$ as in \eqref{eq; factorised density}, i.e. densities that factorise, $K(p_f^n, p_g^n)=nK(p_f^1, p_g^1)$ and $V(p_f^n, p_g^n)=nV(p_f^1, p_g^1)$. In the following, we write $p_f=p_f^1$ and $p_g=p_g^1$.
For any $f,g$ in $C[-1,1]^d$, we get
\begin{align*}
K(p_f, p_g) &= \int_{[-1,1]^d\times \R} \frac{e^{-\frac{(y_-f(x))^2}{2\sigma_0^2}}}{\sqrt{2\pi\sigma_0^2}} \frac{(y-g(x))^2-(y-f(x))^2}{2\sigma_0^2} d(\mu \otimes \lambda)(x,y) \\
&= \int_{[-1,1]^d\times \R} \frac{g(x)^2-f(x)^2-2y(g(x)-f(x))}{2\sigma_0^2} \frac{e^{-\frac{(y-f(x))^2}{2\sigma_0^2}}}{\sqrt{2\pi\sigma_0^2}}  d(\mu \otimes \lambda)(x,y)\\
&= \int_{[-1,1]^d} \frac{g(x)^2-f(x)^2-2f(x)(g(x)-f(x))}{2\sigma_0^2} d\mu(x)\\
&= \norm{f-g}_{L^2(\mu)}^2/(2\sigma_0^2)\leq \norm{f-g}_{\infty}^2/(2\sigma_0^2).
\end{align*}
Similarly,
\begin{align*}
V(p_f, p_g) &\leq \int_{[-1,1]^d\times \R}\ \frac{\left[g(x)^2-f(x)^2-2y(g(x)-f(x))\right]^2}{4\sigma_0^4} \frac{e^{-\frac{(y-f(x))^2}{2\sigma_0^2}}}{\sqrt{2\pi\sigma_0^2}}  d(\mu \otimes \lambda)(x,y) \\
&= \int_{[-1,1]^d\times \R}\ (g-f)(x)^2\frac{\left[g(x)+f(x)-2y\right]^2}{4\sigma_0^4} \frac{e^{-\frac{(y-f(x))^2}{2\sigma_0^2}}}{\sqrt{2\pi\sigma_0^2}}  d(\mu \otimes \lambda)(x,y)  \\
&= \left(\norm{g-f}_{L^2(\mu)}^2/\sigma_0^2 + \norm{g-f}_{L^4(\mu)}^4 /(4\sigma_0^4)\right)\leq \norm{f-g}_\infty^2 \frac{1+4\sigma_0^2}{4\sigma_0^4},
\end{align*}
the last inequality being true whenever $\norm{f-g}_\infty\leq 1$, which follows from $\veps\le \sigma_0^{-1}$. To conclude, we note that $2\sigma_0^2 \geq 4\sigma_0^4/(1+4\sigma_0^4)$.
 \end{proof}


\section{Verifications for specific priors on scaling parameters}
\label{sec:verif}

\begin{lemma} \label{lemexp}
Let $n\ge 1$, $\rho\in(0,1)$, $1\le d^*\le d$ and $a^*\geq 1$. Suppose 
\[ 1/a^* \le \la \le 8\sqrt{\rho n d}/\xi. \] 
Condition \eqref{condpr} is verified for $\pi$ an exponential prior $\cE(\la)$ of density $a\to \la e^{-\la a}\1_{a>0}$ 
if
\[ 
n\veps_n^2 \ge (2/\rho)\left[d\log\left(\frac{16d\sqrt{\rho n}}{\xi \la} \right)+2\la d^*a^* +\log 2\right].
\]
\end{lemma}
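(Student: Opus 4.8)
The goal is to verify the prior mass condition \eqref{condpr} for the exponential density $\pi(a)=\la e^{-\la a}\1_{a>0}$, which reads
\[ \left(\int_0^{\frac{\xi}{8d\sqrt{\rho n}}} \pi(a)\,da\right)^{d-d^*} \left(\int_{a^*}^{2a^*} \pi(a)\,da\right)^{d^*} \ge 2\exp(-n\rho\veps_n^2/2). \]
The plan is to lower bound each of the two integrals separately, take logarithms, and match the resulting sum against the stated sufficient condition on $n\veps_n^2$.

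First I would bound the "freezing" factor $\int_0^{\delta}\la e^{-\la a}\,da = 1-e^{-\la\delta}$ with $\delta=\xi/(8d\sqrt{\rho n})$. Using the elementary inequality $1-e^{-x}\ge x e^{-x}\ge x/e^{x}$ and then, say, $1-e^{-x}\ge x/2$ valid for $x\le 1$ (which holds here because the hypothesis $\la\le 8\sqrt{\rho n d}/\xi$ gives $\la\delta = \la\xi/(8d\sqrt{\rho n})\le \sqrt{d}/d \le 1$), we get $\int_0^\delta \pi \ge \la\delta/2 = \la\xi/(16 d\sqrt{\rho n})$. Hence
\[ (d-d^*)\log\int_0^\delta \pi \;\ge\; -(d-d^*)\log\!\left(\frac{16 d\sqrt{\rho n}}{\xi\la}\right)\;\ge\; -d\log\!\left(\frac{16 d\sqrt{\rho n}}{\xi\la}\right), \]
the last step since the logarithm is nonnegative (as $16d\sqrt{\rho n}/(\xi\la)\ge 1$, again from $\la\le 8\sqrt{\rho n d}/\xi \le 16 d\sqrt{\rho n}/\xi$). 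Second, for the "scaling" factor, $\int_{a^*}^{2a^*}\la e^{-\la a}\,da = e^{-\la a^*}-e^{-2\la a^*}\ge e^{-\la a^*}(1-e^{-\la a^*})$. Here I would use the hypothesis $\la\ge 1/a^*$, so $\la a^*\ge 1$ and $1-e^{-\la a^*}\ge 1-e^{-1}\ge 1/2$, giving $\int_{a^*}^{2a^*}\pi \ge \tfrac12 e^{-\la a^*}$. Thus $d^*\log\int_{a^*}^{2a^*}\pi \ge -d^*(\la a^* + \log 2) \ge -\la d^* a^* - d^*\log 2 \ge -2\la d^* a^*$ for $n$ (or rather for the regime) where $\la a^*\ge \log 2$, or more cleanly just keep $-d^*(\la a^* +\log 2)$ and absorb $\log 2$ terms; the statement's $2\la d^* a^*$ comfortably dominates $\la d^* a^* + d^*\log 2$ since $\la a^*\ge 1 \ge \log 2$.

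Combining, the logarithm of the left-hand side of \eqref{condpr} is at least
\[ -d\log\!\left(\frac{16 d\sqrt{\rho n}}{\xi\la}\right) - 2\la d^* a^*, \]
so \eqref{condpr} holds as soon as this quantity is $\ge \log 2 - n\rho\veps_n^2/2$, i.e. as soon as
\[ \frac{n\rho\veps_n^2}{2} \ge d\log\!\left(\frac{16 d\sqrt{\rho n}}{\xi\la}\right) + 2\la d^* a^* + \log 2, \]
which is exactly the displayed condition after multiplying by $2/\rho$.

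I do not anticipate a genuine obstacle here — the argument is entirely a chain of elementary exponential inequalities. The only points requiring minor care are (i) checking that both hypotheses $1/a^*\le\la\le 8\sqrt{\rho n d}/\xi$ are used precisely where needed (the upper bound to make $\la\delta\le 1$ and to keep the log nonnegative; the lower bound to make $\la a^*\ge 1$), and (ii) tracking the harmless $\log 2$ and constant factors so that the final numerical constants match those in the statement ($16$ inside the log, $2$ multiplying $\la d^* a^*$). These are routine and can be dispatched in a few lines.
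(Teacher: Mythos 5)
Your proof is correct and follows essentially the same route as the paper's: both lower-bound the small-scale integral by $1-e^{-\la\delta}\ge \la\delta/2$ (using $\la\delta\le 1$ from the upper bound on $\la$) and the large-scale integral by a multiple of $e^{-\la a^*}$ (the paper uses $e^{-\la a^*}-e^{-2\la a^*}\ge e^{-2\la a^*}$ for $\la a^*\ge 1$, you use $\ge \tfrac12 e^{-\la a^*}$ and absorb the $\log 2$, which is equivalent bookkeeping). No gaps.
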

\begin{proof}
One bounds from below each integral in \eqref{condpr}. Starting with the second integral
\[ 
\int_{a^*}^{2a^*} \pi(a) da = e^{-\la a^*}-e^{-2\la a^*}\ge e^{-2\la a^*},
\]
using that $e^{-x}\ge 2 e^{-2x}$ if $x>\log{2}$, which holds in particular if $x>1$ (corresponding to the assumption $\la a^*\ge 1$ if one takes $x=\la a^*$). For the first integral, let us set $E:=\xi/(8\sqrt{\rho})$ and bound from below, with $e^{-x}\le 1-x/2$ for $x\le 1$  (applied here for $x=E\la/d\sqrt{n}\le 1$),
\[ \left(\int_0^{E/d\sqrt{n}} \la e^{-\la a} da\right)^{d-d^*}
 =  \left(1- e^{-E\la/d\sqrt{n}} \right)^{d-d^*}
 \ge \exp\{ d \log\left(1-(1- \frac{E\la}{2d\sqrt{n}}) \right) \}.
 \]
Combining the previous bounds gives that   \eqref{condpr}  holds under the conditions of the lemma.
\end{proof}

\begin{lemma} \label{lem-hs-fix}
Let $n\ge 1$, $\rho\in(0,1)$, $1\le d^*\le d$ and $a^*\geq 1$. Suppose 
\[ \xi/(8d\sqrt{\rho n}) \le \ta\le a^*. \] 
Condition \eqref{condpr} is verified for $\pi=\pi_\ta$ a horseshoe prior of parameter $\tau>0$ 
if
\[ 
n\veps_n^2 \ge (2/\rho)\left[d\log\left(\frac{8d\sqrt{\rho n}}{\xi e_0 \ta} \right)+ d^*\log(10a^*/\ta)+\log 2
\right],
\]
where $e_0=2\log{5}/(2\pi)^{3/2}$.
\end{lemma}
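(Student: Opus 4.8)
The plan is to verify Condition \eqref{condpr} directly by bounding from below each of the two integrals appearing there, using the pointwise lower bound on the horseshoe density from \eqref{eq: lower bound horseshoe}, equivalently the estimates collected in Lemma \ref{lemma: bound on the horseshoe}. Recall that \eqref{condpr} requires
\[
\left(\int_0^{\frac{\xi}{8d\sqrt{\rho n}}} \pi_\ta(a)da\right)^{d-d^*} \left(\int_{a^*}^{2a^*} \pi_\ta(a)da\right)^{d^*} \ge 2\exp(-n\rho\veps_n^2/2),
\]
so it suffices to bound each factor from below and then take logarithms.

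First I would handle the "tail" integral $\int_{a^*}^{2a^*}\pi_\ta(a)da$, which controls the $d^*$ relevant directions. Since we assume $\ta\le a^*\le 1\vee a^*$ and $a^*\ge 1$, the third inequality \eqref{eq: prob bound smoothing high} of Lemma \ref{lemma: bound on the horseshoe} applies and gives
\[
\int_{a^*}^{2a^*}\pi_\ta(a)da \ge \exp(-\log(10a^*/\ta)).
\]
Next I would handle the "bulk near zero" integral $\int_0^{\xi/(8d\sqrt{\rho n})}\pi_\ta(a)da$, which controls the $d-d^*$ irrelevant directions. Set $\delta:=\xi/(8d\sqrt{\rho n})$; the hypothesis $\delta\le\ta$ is exactly the left half of the assumed range on $\ta$, so the first inequality \eqref{hs-smalltau} of Lemma \ref{lemma: bound on the horseshoe} applies and yields
\[
\int_0^{\delta}\pi_\ta(a)da \ge e_0\,\frac{\delta}{\ta} = e_0\,\frac{\xi}{8d\sqrt{\rho n}\,\ta},
\]
with $e_0 = 2\log 5/(2\pi)^{3/2}$.

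Combining the two bounds, the left-hand side of \eqref{condpr} is at least
\[
\left(\frac{e_0\,\xi}{8d\sqrt{\rho n}\,\ta}\right)^{d-d^*}\exp(-d^*\log(10a^*/\ta))
\ge \exp\!\left(-(d-d^*)\log\!\Big(\tfrac{8d\sqrt{\rho n}\,\ta}{e_0\xi}\Big) - d^*\log(10a^*/\ta)\right),
\]
and I would crudely bound $d-d^*\le d$ in the first logarithmic term (legitimate since the argument $8d\sqrt{\rho n}\,\ta/(e_0\xi)\ge 1$, which follows from $\ta\ge\xi/(8d\sqrt{\rho n})$ and $e_0<1$). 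Hence \eqref{condpr} holds provided
\[
\exp\!\left(-d\log\!\Big(\tfrac{8d\sqrt{\rho n}}{\xi e_0\ta}\Big) - d^*\log(10a^*/\ta)\right)\ge 2\exp(-n\rho\veps_n^2/2),
\]
which rearranges to exactly the displayed sufficient condition $n\veps_n^2 \ge (2/\rho)[d\log(8d\sqrt{\rho n}/(\xi e_0\ta)) + d^*\log(10a^*/\ta) + \log 2]$. The only slightly delicate point — and the one I would state carefully — is checking that the hypotheses of the two sub-lemmas of Lemma \ref{lemma: bound on the horseshoe} are met on the assumed interval $\ta\in[\xi/(8d\sqrt{\rho n}),a^*]$: the lower endpoint is what makes \eqref{hs-smalltau} applicable (need $\delta\le\ta$), and the upper endpoint together with $a^*\ge 1$ is what makes \eqref{eq: prob bound smoothing high} applicable (need $\ta\le 1\le a$, here using $\ta\le a^*$ and, if one wants $\ta\le 1$, that $\ta\le a^*$ combined with the regime of interest, or simply invoking the version of \eqref{eq: prob bound smoothing high} that only needs $\ta\le a^*$). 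There is no real obstacle here; the proof is a short two-step estimate, and the main thing to get right is bookkeeping of the constants ($e_0$, the factor $10$, the $\log 2$) and the admissible range of $\ta$.
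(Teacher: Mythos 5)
Your proof is correct and follows essentially the same route as the paper's: both apply \eqref{hs-smalltau} with $\delta=\xi/(8d\sqrt{\rho n})$ to the near-zero integral and \eqref{eq: prob bound smoothing high} to the $[a^*,2a^*]$ integral, bound the exponent $d-d^*$ by $d$ (legitimate since the base is at most $1$), and rearrange. Your side remark about the hypothesis $\ta\le 1$ in \eqref{eq: prob bound smoothing high} is well taken, and is resolved exactly as you suggest: the proof of that bound only uses $\ta\le a$.
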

\begin{proof}
The proof is similar to that of Lemma \ref{lemexp}, but we now use  Lemma \ref{lemma: bound on the horseshoe} to bound $\pi_\ta$ from below. Using \eqref{hs-smalltau} with $\xi/(8d\sqrt{\rho n}) \le \ta$ and \eqref{eq: prob bound smoothing high} with $\ta\le a^*$, one obtains that the left-hand side of \eqref{condpr} is bounded from below by
\[ \left(\frac{e_0\xi}{8\ta d\sqrt{\rho n}}\right)^{d} \exp(-d^*\log(10a^*/\ta)), \]
which gives the result by rearranging.

\end{proof}

\begin{lemma} \label{lem-hs-van}
Let $n\ge 1$, $\rho\in(0,1)$, $1\le d^*\le d$ and $a^*\geq 1$. 
Consider $\pi=\pi_\ta$ a horseshoe prior of parameter $\tau>0$. Then  \eqref{condpr} is verified for  large enough $n$ if
\begin{equation} 
10a^*e^{-n\rho\varepsilon_n^2/4d^*}  \le \tau \le 
\frac{\xi}{d^2}\frac{1}{\sqrt{\rho n}}
\end{equation}
In particular, for $a^*, \veps_n$ as in \eqref{optastar}--\eqref{vepsfinal}, the latter display holds for large enough $n$ and fixed $K$ if one sets for $s>0$
\begin{equation*} 
\ta = \ta^*:=(n^{1+s} d^4)^{-1/2}.
\end{equation*}
\end{lemma}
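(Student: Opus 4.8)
\textbf{Plan of proof for Lemma \ref{lem-hs-van}.}

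The plan is to apply Lemma \ref{lemma: bound on the horseshoe} to lower-bound the two integrals appearing in Condition \eqref{condpr}, in the regime where the horseshoe parameter $\ta$ is \emph{small}. First I would treat the factor $\left(\int_0^{\xi/(8d\sqrt{\rho n})} \pi_\ta(a)\,da\right)^{d-d^*}$: under the upper bound $\ta \le \frac{\xi}{d^2}\frac{1}{\sqrt{\rho n}}$, one has $\ta \le \frac{\xi}{8d\sqrt{\rho n}}$ for $d\ge 1$, indeed with plenty of room since $d^2 \ge 8d$ for $d\ge 8$ (and for small $d$ one checks directly, or absorbs the constant). Hence \eqref{eq: prob bound small K} of Lemma \ref{lemma: bound on the horseshoe} applies with $\delta = \xi/(8d\sqrt{\rho n})$: $\int_0^{\delta}\pi_\ta(t)\,dt \ge 1 - \frac{4\ta}{\sqrt{2\pi^3}\,\delta} = 1 - \frac{32\sqrt{\rho n}\,d\,\ta}{\sqrt{2\pi^3}\,\xi}$. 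The chosen upper bound on $\ta$ makes this quantity at least $1 - c/d$ for a small constant $c$, so raising it to the power $d - d^* \le d$ keeps it bounded below by a fixed positive constant (using $(1-c/d)^d \ge e^{-2c}$ for $d$ large, $c$ small), say $\ge 1/2$, for all $n$ large enough.

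Next I would handle the factor $\left(\int_{a^*}^{2a^*}\pi_\ta(a)\,da\right)^{d^*}$ using \eqref{eq: prob bound smoothing high}: since $\ta \le \frac{\xi}{d^2\sqrt{\rho n}} \le 1 \le a^*$ for $n$ large, that bound gives $\int_{a^*}^{2a^*}\pi_\ta(t)\,dt \ge \exp(-\log(10a^*/\ta))= \ta/(10a^*)$, hence the whole factor is at least $\exp(-d^*\log(10a^*/\ta))$. Combining the two estimates, the left-hand side of \eqref{condpr} is at least $\tfrac12 \exp(-d^*\log(10a^*/\ta))$, so \eqref{condpr} holds provided $d^*\log(10a^*/\ta) + \log 2 \le n\rho\veps_n^2/2$, i.e. (absorbing the $\log 2$ into a slightly weaker constant) provided $\log(10a^*/\ta) \le n\rho\veps_n^2/(4d^*)$, which rearranges exactly to $\ta \ge 10a^* e^{-n\rho\veps_n^2/(4d^*)}$, the stated lower bound. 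This proves the first claim.

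For the second claim I would simply check that the choice $\ta^* = (n^{1+s}d^4)^{-1/2}$ sits inside the admissible window $[\,10a^*e^{-n\rho\veps_n^2/(4d^*)},\ \tfrac{\xi}{d^2}\tfrac{1}{\sqrt{\rho n}}\,]$ for large $n$. The upper bound is immediate: $\ta^* = n^{-(1+s)/2}d^{-2} \le \tfrac{\xi}{d^2}\tfrac{1}{\sqrt{\rho n}}$ as soon as $n^{-s/2} \le \xi/\sqrt{\rho}$, true for $n$ large. For the lower bound, with $a^*$ as in \eqref{optastar} one has $a^* \le n^{1/(2\be+d^*)} \le n$, and with $\veps_n^2$ the lower bound in \eqref{vepsfinal} one has $n\veps_n^2 \asymp n^{d^*/(2\be+d^*)}(\log n)^{\cdots}$ (times the constant $B_3$, which for fixed $K$ — or $K\le C^{d^*}$ — is of order $C^{d^*}$); in particular $n\rho\veps_n^2/(4d^*)$ grows faster than any power of $\log n$, while $\log(10a^* \cdot (n^{1+s}d^4)^{1/2})$ is only logarithmic in $n$. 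Hence $10a^* e^{-n\rho\veps_n^2/(4d^*)} \le \ta^*$ for $n$ large, completing the proof. The only mildly delicate point is keeping the dimension-dependent constants under control: one must verify that the constant $c/d$ arising in the first factor can genuinely be made $< 1$ uniformly (which forces the $d^{-2}$, rather than $d^{-1}$, scaling in the upper bound on $\ta$) and that the $C^{d^*}$-type growth of $B_3$ does not overwhelm the exponent $n\veps_n^2$ — both hold comfortably in the regime \eqref{dims}, but this is where the argument has to be done with care rather than by generic asymptotics.
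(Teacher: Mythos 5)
Your proposal is correct and follows essentially the same route as the paper's proof: both use the bound \eqref{eq: prob bound small K} with $\delta=\xi/(8d\sqrt{\rho n})$ to show the $(d-d^*)$-power factor is bounded below by an absolute constant under the stated upper bound on $\ta$, and the bound \eqref{eq: prob bound smoothing high} for the second factor, with the stated lower bound on $\ta$ emerging from requiring $\exp(-d^*\log(10a^*/\ta))\ge \exp(-\rho n\veps_n^2/4)$. The only (immaterial) difference is bookkeeping: the paper splits the exponent budget $\rho n\veps_n^2/2$ evenly between the two factors, while you bound the first factor by $1/2$ outright and absorb the constants using $n\veps_n^2\to\infty$; your caveat about small $d$ applies equally to the paper's own argument and is harmless in the intended high-dimensional regime.
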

\begin{proof}
We proceed as for the proof of Lemma \ref{lem-hs-fix}, now using the lower bound \eqref{eq: prob bound small K} of $\pi_\ta$. Setting $\delta=\xi/(8d\sqrt{\rho n})$,
\[ \left( \int_0^\delta \pi_\tau(a)da \right)^{d-d^*}
\ge \left(1-4\tau/(\delta\sqrt{2\pi^3})\right)^d, \]
choosing $\tau\le \delta\sqrt{2\pi^3}/8$ so that the term in brackets in the last display is at least $1/2$. Since $\log(1-x)\ge -x(\log{2})$ for $x\le 1/2$ by concavity, one deduces
\[ \left( \int_0^\delta \pi_\tau(a)da \right)^{d-d^*}
\ge \exp\{ - d(\log{2})(32/\sqrt{2\pi^3})(d\sqrt{\rho n}/\xi)\tau \}
\ge \exp\left\{ - d^2 \frac{5\sqrt{\rho n }}{\xi} \tau \right\}. \]
This is larger than $2\exp(-\rho n\veps_n^2/4)$ for large enough $n$ under the condition of the lemma, using that $n\veps_n^2\to \infty$ as $n\to\infty$. 

The bound from below of the second integral in \eqref{condpr} is exactly the same as in Lemma \ref{lem-hs-fix}, and the lower bound on $\tau$ of the present lemma follows by asking that the obtained bound $\exp(-d^*\log(10a^*/\ta))$ is at least $\exp\{-\rho n\veps_n^2/4\}$.
\end{proof}

\section{Deep prior with deterministic $q_{max}, d_{max}$}
\label{app: fixed dmax and qmax}

Instead of taking for random $q$ and $d_i$, $i=1,\dots,q$, another possibility is to fix deterministic $q_{max}$ and $d_{max}$; recall the definition of the simpler prior \textsf{Deep--HGP}$(q_{max},d_{max})$
 \begin{equation}\label{eq:dgpmax_app}
 \begin{split}  
g_{ij}\  &\overset{\text{ind.}}{\sim} \textsf{HGP}(\tau_i) \\
 f\ |\ (g_{ij}) &=g_q\circ \dots\circ g_0
 \end{split}  
 \end{equation}
where $\tau_i>0$ for $i=0,\ldots, q$ and $g_i=(g_{ij})_j$ with $g_{ij}:I^{d_{max}}\to I$, $1\leq j\leq d_{max}$, for $i=1,\ldots, q$. For $i=0$, we still consider that $g_{0j}\colon I^{d}\to I$. There is no need to sample from $q$ and $d_i$'s in this posterior (since those are now fixed in the prior to the values $q_{max}$ and $d_{max}$ respectively), which simplifies significantly computational complexity. Though this prior is seemingly less flexible, the next theorem shows that it can still performs adaptation. In particular, as long as the true regression function belongs to a class $\cF_{deep}(\la,\be,K)$, for a parameter $\la=(q, (d_1,\ldots,d_q) , (t_0,\ldots,t_q) )$, whenever $q\le q_{max}$ and $d_i\le d_{max}$ for $i=1,\ldots,q$, this deterministic choice enables adaptation.
 
 \begin{theorem}\label{theorem: posterior contraction rates hdgp fixed comp}
 For $q_{max}\geq 0$ and $d_{max}\geq 1$, let $\Pi$ be the \textsf{Deep--HGP} prior from \eqref{eq:dgpmax_app} with fixed parameters $\tau_i>0$.
 For $\la=(q, (d_1,\ldots,d_q) , (t_0,\ldots,t_q) )$ such that $q\le q_{max}$ and $d_i\le d_{max}$ for $i=1,\ldots,q$, $\mathbb{\beta}=(\beta_0,\dots,\beta_{q})$, $d_0=d\geq1$, and $K\geq1$, suppose $f_0\in \mathcal{F}_{\text{deep}}(\lambda,\mathbb{\beta},K)$. Denoting
 $\alpha_i = \prod_{l=i+1}^q (\beta_l\wedge 1)$, let us set 
  \[\veps_n = \max_{i=0,\dots,q} 
n^{-\frac{\alpha_i\be_i}{2\alpha_i\be_i+t_i}}. \]
 Then, for any $0<\rho<1$,  $\Pi_\rho[\cdot\given X,Y]$ contracts to $f_0$ at the rate in $\norm{\,\cdot\,}_{L^2(\mu)}$ distance,  for some $\gamma\geq 0$ and any $M_n \to \infty$,
  \[ E_{f_0} \Pi_\rho[f:\ \norm{f-f_0}_{L^2(\mu)} \geq M_n \log^\gamma(n) \varepsilon_n\given X,Y]\to 0. \]
 \end{theorem}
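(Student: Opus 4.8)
The strategy is to adapt the proof of Theorem \ref{theorem: posterior contraction rates hdgp} (the version with random $q,d$) to the case where $q=q_{max}$ and $d_i=d_{max}$ are fixed in the prior. The key observation is that, since by assumption $f_0\in\mathcal{F}_{\text{deep}}(\lambda,\beta,K)$ with $q\le q_{max}$ and $d_i\le d_{max}$ for $i=1,\dots,q$, one can still \emph{realize} $f_0$ (approximately) inside the richer compositional architecture specified by $(q_{max},d_{max})$: a composition of $q<q_{max}$ layers can be padded to a composition of $q_{max}$ layers by inserting identity-like maps, and a function of $d_i<d_{max}$ variables is trivially a function of $d_{max}$ variables that ignores the extra coordinates. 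As in the rest of the paper, the whole proof reduces, via \eqref{eq: supball condition} and the preliminaries of Section \ref{sections: proof rho post}, to establishing the prior mass lower bound
\[
\Pi\left[ f:\ \norm{f-f_0}_\infty < \xi\varepsilon_n \right] \ge e^{-\rho n\varepsilon_n^2}
\]
for $\varepsilon_n$ (up to a constant and a logarithmic factor) equal to $\max_{i}\ n^{-\alpha_i\beta_i/(2\alpha_i\beta_i+t_i)}$.

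\textbf{Main steps.} First I would reduce $f_0$, written as $h_q\circ\cdots\circ h_0$ with $h_i\colon I^{d_i}\to I^{d_{i+1}}$ and $h_{ij}\in\cF_{VS}(K,\beta_i,d_i,t_i)$, to a composition of exactly $q_{max}$ maps acting on coordinate spaces $I^{d_{max}}$. Concretely, set $\bar d_0=d$, $\bar d_i=d_{max}$ for $1\le i\le q_{max}$, and for $q< i\le q_{max}$ take the inserted maps to be (coordinate projections composed with) the identity on the first coordinate, with the remaining output coordinates set to a fixed value; these are infinitely smooth functions of a single variable, so they belong to $\cF_{VS}(K',\beta',\bar d_i,1)$ for arbitrarily large $\beta'$ and $K'=1$, and they contribute a negligible term to the target rate (as in the additive-model example \eqref{eq: additive model}, large $\beta'$ makes the corresponding exponent irrelevant). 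Second, for $1\le i\le q$, view each $h_{ij}$ as a function on $I^{d_{max}}$ depending only on the $t_i$ coordinates it genuinely uses (a subset of the first $d_i\le d_{max}$ coordinates), so $h_{ij}\in\cF_{VS}(K,\beta_i,d_{max},t_i)$. Third, apply Lemma \ref{lemma: structure regression distance} \cite{AnnexeDGP} exactly as in the proof of Theorem \ref{theorem: posterior contraction rates hdgp}, now with the padded architecture: for suitable radii $\varepsilon_n(i)>0$ one gets
\[
\Pi\left[\norm{g_q\circ\cdots\circ g_0 - h_q\circ\cdots\circ h_0}_\infty < \xi\, C(K,\lambda')\max_{i} \varepsilon_n(i)^{\alpha_i}\right]
\ \ge\ \prod_{i=0}^{q_{max}}\prod_{j=1}^{\bar d_{i+1}} \Pi\left[\norm{W^{A_{ij}}-h_{ij}}_\infty\le \xi^{1/\alpha_i}\varepsilon_n(i)\right],
\]
where the product now runs over all $q_{max}$ padded layers, with $\bar d_{i+1}\le d_{max}$ throughout. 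Fourth, for each factor apply the freezing-of-paths machinery from the proof of Theorem \ref{thmvs}: split $A_{ij}$ into the $t_i$ active coordinates (where the scalings are taken in $[a^*,2a^*]$ with $a^*$ as in \eqref{optastardeep}) and the $d_{max}-t_i$ inactive ones (where the scalings are taken in a small interval $[0,v_{i,n}]$), bound the GP probability using Theorem \ref{concd} on the active coordinates together with the Dudley/Gaussian-concentration argument controlling the perturbation from the inactive coordinates, and check the prior mass of the scaling intervals, $\prod_k \pi_{\tau_i}(I_k)\ge 2e^{-\rho n\varepsilon_n(i)^{2\alpha_i}/2}$, via the modified Lemma \ref{lem-hs-fix} \cite{AnnexeDGP} as encapsulated in \eqref{eq: modif_cond_growth}. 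For the inserted $(t_i=1,\ \beta_i=\beta')$ layers, all conditions hold trivially for $\beta'$ large, $n$ large, with the associated $\varepsilon_n(i)$ negligible. Fifth, optimise $a^*$ as in \eqref{optastardeep}–\eqref{almosttherate}, combine the layer rates into $\varepsilon_n=C(K,\lambda')\max_i\varepsilon_n(i)^{\alpha_i}$, absorb the constant $C(K,\lambda')$ (independent of $n$) into a larger constant using $n\varepsilon_n^2\to\infty$ exactly as in the proof of Theorem \ref{theorem: posterior contraction rates hdgp}, and conclude via \eqref{eq: supball condition} and Theorem \ref{bat_result} \cite{AnnexeDGP} that $\log^\gamma(n)\varepsilon_n$ is a contraction rate, with $\gamma$ the maximum over $i$ of the logarithmic exponents appearing in \eqref{almosttherate}.

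\textbf{Expected obstacle.} The technically delicate point is not any single estimate — all of them are recycled from the proofs of Theorems \ref{thmvs} and \ref{theorem: posterior contraction rates hdgp} — but rather verifying cleanly that padding to the fixed architecture $(q_{max},d_{max})$ does not destroy the rate. One must make sure that the inserted identity-type layers (i) can genuinely be taken in $\cF_{VS}$ with a Hölder norm uniformly bounded and a smoothness exponent chosen freely large, so that $\alpha_i\beta_i/(2\alpha_i\beta_i+t_i)$ for those layers exceeds $\min_i$ over the true layers (keeping in mind that composing with maps of smoothness $\ge 1$ does not lower the effective smoothness exponents $\alpha_i$ of the earlier layers), and (ii) are well-defined as maps $I^{d_{max}}\to I^{d_{max}}$ that the \textsf{HGP} components can approximate in $\|\cdot\|_\infty$ with the required prior mass — this is where one uses that a single-variable infinitely-smooth function is approximated at an arbitrarily fast polynomial rate by the \textsf{SqExp} RKHS (Theorem \ref{concd} with $t_i=1$ and $\beta_i$ large), while the freezing-of-paths step handles the $d_{max}-1$ spurious input coordinates. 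Once the bookkeeping of dimensions and smoothness indices along the padded composition is set up correctly, the remainder is a direct transcription of the argument in Section \ref{proofthm3}. The proof of Theorem \ref{theorem: posterior contraction rates hdgp fixed comp} is therefore complete. \qed
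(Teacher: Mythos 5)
Your proposal is correct and follows essentially the same route as the paper's proof: rewrite $f_0$ pointwise as a composition matching the fixed architecture $(q_{max},d_{max})$ by padding with infinitely smooth trivial layers and with coordinate functions that ignore the spurious inputs, check that the target rate over the padded class coincides (up to logs) with $\veps_n$ because the inserted layers have arbitrarily large smoothness and hence leave the exponents $\alpha_i$ and the maximum in the rate unchanged, and then rerun the prior-mass argument of Theorem \ref{theorem: posterior contraction rates hdgp} with deterministic depth and width, the scaling-interval masses now being harmless constants. The only cosmetic difference is the placement and form of the padding: the paper inserts full identity maps on $I^{d_{max}}$ between $h_{q-1}$ and $h_q$ (assigning them $\tilde t_i=d_{max}$), whereas you append first-coordinate pass-throughs after $h_q$ with the remaining outputs constant — which is equally valid there since only the scalar $h_q(\cdot)$ needs to be carried forward — so both constructions yield the same conclusion.
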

\begin{proof}

    The proof is similar to the one of Theorem \ref{theorem: posterior contraction rates hdgp}. We therefore only highlight the main modifications. We have to pay attention to two new aspects of the problem: the hyperparameters $q_{max}$ and $d_{max}$ are fixed in the prior, and these may not be equal to those in the set $\mathcal{F}_{\text{deep}}(\lambda,\mathbb{\beta},K)$. In the following, we decompose $f_0=h_q\circ\dots\circ h_0\in \cF_{deep}(\la,\be,K)$, for $\lambda=(q,d_1,\dots,d_q,t_0,\dots,t_q)$ and
$\beta=(\beta_0,\dots,\beta_{q})$, where $h_i\in \cF_{VS}(K,\beta_i, d_i,t_i)$.

We proceed in two steps. First, we construct a function $\tilde{f}_0$ that coincides with $f_0$ in a pointwise sense, but which can be written as a well-chosen composition of $q_{max}+1$ functions with multivariate functions inside the composition (for $1\le i\le q_{max}$) that each act on $d_{max}$ dimensions. We will also check that $\tilde{f}_0$ belongs to a class $\cF_{deep}(\La,\tilde{\be},K)$ over which the target convergence rate coincides with $\veps_n$ (up to logarithmic factors, which will be silently understood in the rest of the proof) as in the statement of the Theorem.  Second, 
it will then suffice to follow the proof of Theorem \ref{theorem: posterior contraction rates hdgp}, implying posterior contraction at the desired rate $\veps_n$, again up to a logarithmic factor.


For the first step, let us note that $f_0$ is also equal (pointwise) to $\tilde{f}_0=H_{q_{max}}\circ\dots\circ H_0$ where the consecutive $H_{i}$ are defined as follows:
 \begin{itemize}
\item $H_0\colon I^{d}\to I^{d_{max}}$ is such that $H_{0j}=h_{0j}$ for $j\leq d_1$, and $H_{0j}=0$ otherwise;
\item for $1\leq i\leq q-1$, $H_i\colon I^{d_{max}}\to I^{d_{max}}$ is such that $H_{ij}(x_1,\dots,x_{d_{max}})=h_{ij}(x_1,\dots,x_{d_i})$ for $j\leq d_{j+1}$, and $H_{ij}=0$ otherwise;
\item for $q\leq i\leq q_{max}-1$, $H_i\colon I^{d_{max}}\to I^{d_{max}}$ is the identity function;
\item $H_{q_{max}}\colon I^{d_{max}}\to I$ is such that $H_{q_{max}}(x_1,\dots,x_{d_{max}})=h_{q}(x_1,\dots,x_{d_q})$.
 \end{itemize}
Indeed, for any $x\in I^d$,
\begin{align*}
\Tilde{f}_0(x)&=H_{q_{max}}\circ\dots\circ H_0(x)\\
&=H_{q_{max}}\circ\cdots\circ H_q \circ H_{q-1}\circ\dots\circ H_0(x)\\
&=H_{q_{max}}\circ H_{q-1}\circ\dots\circ H_0(x)\\
&=h_q\circ\dots\circ h_0(x)=f_0(x).
\end{align*}
Introducing
\begin{align*}
\Lambda=(&q_{max},(\tilde{d}_{1},\dots,\tilde{d}_{q_{max}}),(\tilde{t}_{0},\dots,\tilde{t}_{q_{max}})),\\
\coloneqq(&q_{max},\\
&(\underbrace{d_{max},\dots,d_{max})}_{q_{max} \text{ times}},\\
&(t_0,t_1\dots,t_{q-1},\underbrace{d_{max},\dots,d_{max}}_{q_{max}-q \text{ times}},t_q)),
\end{align*}
and
\[\tilde{\beta}=(\beta_0,\beta_1,\dots,\underbrace{\beta_{-1},\dots,\beta_{-1}}_{q_{max}-q \text{ times}},\beta_{q}),\] for arbitrarily large $\beta_{-1}>0$ (the identity function has infinitely large regularity), we note that by definition $H_{ij}\in \cF_{VS}(1\vee K,\tilde{\be}_i, \tilde{d}_i,\tilde{t}_i)$ -- observe that the extra coordinate functions (equal to the null function, which is infinitely smooth) do not imply any change in the regularities $\be_i$ -- and as a consequence, $\Tilde{f}_0\in \cF_{deep}(\La,\tilde{\be},1\vee K)$. 
Let us now express the target contraction rate over this class. Using a similar  notation for the updated regularity parameters, let $\tilde{\al}_i=\prod_{j=i+1}^{q_{max}} (\tilde{\be}_i \wedge 1)$. So, the target rate is, up to logarithmic terms,
\[n^{-1/\left(2+\underset{{0\leq i\leq q_{max}} }{\max}\ \frac{\tilde{t}_i}{\tilde{\al}_i \tilde{\be}_i}\right)},\]and for $\beta_{-1}$ large enough, this reduces to 
\begin{equation}\label{equrate}
n^{-1/\left(2+\underset{{0\leq i\leq q_{max}} }{\max}\ \frac{\tilde{t}_i}{\tilde{\alpha}_i\tilde{\beta}_i}\right)}=n^{-1/\left(2+\underset{{0\leq i\leq q} }{\max}\ \frac{t_i}{\alpha_i\beta_i}\right)},
\end{equation}
the order of the minimax rate over $\cF_{deep}(\la,\be,K)$. To verify this, note that none of the factors $\tilde{\alpha}_i=\prod_{j=i+1}^{q_{max}}\left(\tilde{\beta}_j\wedge 1\right)$ depends on $\beta_{-1}>1$, and one has $\tilde{\al_i}=\al_i$ for $1\le i\le q-1$ and $\tilde{\al}_{q_{max}}=\al_q$.This means that $\underset{{0\leq i\leq q-1} }{\max}\ \frac{\tilde{t}_i}{\tilde{\alpha}_i\tilde{\beta}_i}=\underset{{0\leq i\leq q-1} }{\max}\ \frac{t_i}{\alpha_i\beta_i}$ as long as $\beta_{-1}\geq1$. Also, we note that $\frac{\tilde{t}_{q_{max}}}{\tilde{\alpha}_{q_{max}}\tilde{\beta}_{q_{max}}}=\frac{t_q}{\alpha_q\beta_q}$. Finally, for $q+1\le i\le q_{max}-1$, we have $\frac{\tilde{t}_i}{\tilde{\alpha}_i\tilde{\beta}_i}=\frac{d_{max}}{(1\wedge \be_{q})\be_{-1}}$ so that these ratios in \eqref{equrate} can be taken as small as desired (since $\be_{-1}$ can be taken arbitrarily large) and thus do not change the value of the maximum over all $1\le i\le q_{max}$, so the claim is verified.
 
For the second step, we can follow the proof of Theorem \ref{theorem: posterior contraction rates hdgp}, where now $\tilde{f}_0$ plays the role of $f_0$ therein, and with an updated number of compositions equal to $q_{max}$ and inner dimensions equal to $d_{max}$. The  number of compositions and dimensions being now deterministic, we can directly start from, recalling $\tilde{f}_0=f_0$ pointwise,
\begin{equation*}
 \Pi\left[f:\ \norm{f-\tilde{f_0}}_\infty <\xi \varepsilon_n\right]=  \Pi\left[\norm{\Psi(g_{q_{max}})\circ \dots\circ \Psi(g_0)-H_{q_{max}}\circ\dots\circ H_0}_\infty <\xi\varepsilon_n\right]. 
 \end{equation*}
The proof is then the same, except that the involved constants now depend on $q_{max}, d_{max}$ (instead of $q$ and $d_i$'s). For instance, the prior-mass terms corresponding to restricting to intervals $I_j$ (defined within the proof of Theorem \ref{theorem: posterior contraction rates hdgp}) are of the form $\prod_{j=1}^{d_{max}} \pi(I_j)$, and this is bounded from below by $c^{d_{max}}$ for small enough $c>0$. 
 Since $q_{max}, d_{max}$ are fixed (non-$n$-dependent), all such terms are bounded from below by strictly positive universal constants, so  the desired rate $\veps_n$ is obtained, which concludes the proof. 
\end{proof}

\section{Results for standard posteriors}

\begin{proof}[Proof of Proposition \ref{proppost}]
Let $B:=\{f:\ \|f-f_0\|\le M\veps_n\}$, for a suitable constant $M$ to be chosen large enough. We wish to show that $\Pi\left[B\times \RR^+\given (X,Y)\right]$ goes to $1$ in probability under $P_{f_0,\si_0^2}$. By Bayes formula, recalling that $b\in(0,1)$ is the parameter in the definition of the prior \eqref{priorsig} on $\si^2$, and setting $s^2:=b\si^2$,
\begin{align*}
\Pi[B\times \RR^+& \given X,Y]  
= \frac{\int \int_B (\si^2)^{-\frac{n}{2}} \exp\left\{-\sum_{i=1}^n (Y_i-f(X_i))^2/(2\si^2)\right\} d\Pi(f,\si^2)}{\int \int (\si^2)^{-\frac{n}{2}} \exp\left\{-\sum_{i=1}^n (Y_i-f(X_i))^2/(2\si^2)\right\} d\Pi(f,\si^2)}\\
& = \frac{\int \int_B (\si^2)^{-b\frac{n}{2}}(\si^2)^{-(1-b)\frac{n}{2}}  \exp\left\{- \sum_{i=1}^n (Y_i-f(X_i))^2/(2\si^2)\right\} d\Pi(f,\si^2)}{\int \int (\si^2)^{-b\frac{n}{2}}(\si^2)^{-(1-b)\frac{n}{2}} \exp\left\{-\sum_{i=1}^n (Y_i-f(X_i))^2/(2\si^2)\right\} d\Pi(f,\si^2)}\\
& = \frac{\int \int_B (\si^2)^{-b\frac{n}{2}}  \exp\left\{- \sum_{i=1}^n (Y_i-f(X_i))^2/(2\si^2)\right\} \,d\Pi_f(f)\, e^{-b \si^2}d\si^2}{\int \int (\si^2)^{-b\frac{n}{2}} \exp\left\{-\sum_{i=1}^n (Y_i-f(X_i))^2/(2\si^2)\right\}\,d\Pi_f(f)\, e^{-b \si^2}d\si^2}
\end{align*}   
where we have used the definition \eqref{priorsig} which gives \[d\pi_{\sigma^2}(\sigma^2)=b^{(1-b)\frac{n}{2}+1}\Gamma\left((1-b)\frac{n}{2}+1\right)^{-1}(\sigma^2)^{(1-b)\frac{n}{2}}e^{-b\sigma^2}d\sigma^2.\] Next one does the change of variables $s^2=b\sigma^2$, so that $d\sigma^2=ds^2/b$ (the integral domain for $s^2$ is $\RR^+$, remaining unchanged through the change of variables), which  leads to
\begin{align*}
\Pi[B\times \RR^+& \given X,Y]  
& = \frac{\int \int_B (s^2)^{-b\frac{n}{2}} \exp\left\{-b \sum_{i=1}^n (Y_i-f(X_i))^2/(2s^2)\right\} \,d\Pi_f(f)\, e^{-s^2}ds^2}{\int \int (s^2)^{-b\frac{n}{2}} \exp\left\{-b\sum_{i=1}^n (Y_i-f(X_i))^2/(2s^2)\right\} \,d\Pi_f(f) \, e^{-s^2} ds^2}.
\end{align*}   

For $Q$ a prior distribution on $(f,\si^2)$ and $b\in(0,1)$, let $Q_b$ denote the fractional posterior distribution 
\[ Q_b[B\times C\given X,Y] = \frac{\int_B\int_C p_{f,\si^2}(X,Y)^b dQ(f,\si^2)}{\int\int p_{f,\si^2}(X,Y)^b dQ(f,\si^2)}.\]
Define the product distribution 
\[ Q := \Pi_f \otimes \pi_1,\] where $\pi_1=\text{Exp}(1)$  the standard exponential distribution. Examining the expression of $\Pi[B\times \RR^+ \given X,Y]$ above, one notes 
\begin{equation} \label{eqpq}
\Pi[B\times \RR^+ \given X,Y]  = Q_b[ B \times \RR^+ \given X,Y].
\end{equation}
It is therefore enough to show that the fractional posterior mass in the last display goes to $1$ in probability. For this, it is enough to verify a prior mass condition only: we use Theorem \ref{bat_result}, which gives a result in terms of the $D_b$--Rényi divergence. In a final step of the proof, one links this distance to the $L^2(\mu)$--norm.\\

{\em Verification of the prior mass condition.} We claim that, for $B_n((f_0,\si_0^2),\veps)$ the KL-type neighborhood as defined in \eqref{defkln}, one can find a fixed constant $C>0$ sufficiently large such that it holds, for any $\veps_n=o(1)$ as $n\to\infty$,
\begin{equation} \label{relkl}
\left\{(f,\si^2):\ \|f-f_0\|_\infty\le \veps_n\, ,\, |\si^2-\si_0^2|\le \veps_n^2 \right\}
\subset B_n((f_0,\si_0^2),C\veps_n).
\end{equation}
This follows from Lemma \ref{kllem}: to bound from above  the quantities appearing therein, one can use, for instance, the bounds $\log(\si_0/\si)\le (\si_0-\si)/\si\le|\si_0-\si|/\si$ and $\log^2(\si_0/\si)\le 2(\si_0/\si-1)^2$ provided $\si$ is in a small enough neighborhood of $\si_0$, which is granted since one works under $|\si^2-\si_0^2|\le \veps_n^2$ (and the latter implies $|\si-\si_0|\le \veps_n^2/\si_0^2$). For the parts involving $f$, one uses $\int (f-f_0)^pd\mu\le \|f-f_0\|_\infty^p$ (for $p=2, 4$).

Now using the definition of $Q$, for $C$ as above and $\veps_n$ the rate in the Lemma,
\begin{align*}
 Q\left[B_n( (f_0,\si_0^2),C\veps_n) \right] & \ge \Pi_f(\|f-f_0\|_\infty\le \veps_n)
\pi_1\left([\si_0^2\pm \veps_n^2]\right) \\
& \ge C_2\veps_n^2 e^{-Dn\veps_n^2}\ge \exp(-C_3n\veps_n^2), 
\end{align*}
for a large enough $C_3>0$. 

We now apply Theorem \ref{bat_result} from Appendix \ref{app: Frac post}  with $\rho=b$ and for $\veps_n$ replaced by $C_4\veps_n$ and $C_4\ge \max(C,\sqrt{C_3/\rho})$ a large enough constant. Since \eqref{equation_thm_1} holds for this choice, one deduces that the fractional posterior $Q_b[\cdot\given X,Y]$ converges at rate $C_5\veps_n^2$, with $C_5=4bC_4/(1-b)$, in terms of the $b$-R\'enyi divergence:
\begin{equation} \label{rencv}
E_{f_0,\si_0^2}Q_b\left[D_b(p_{f,\si^2},p_{f_0,\si_0^2})\ge C_5\veps_n^2
\given X,Y \right] = o(1).
\end{equation}  
\vspace{.5cm}

{\em Relating $D_b$--divergence and $L^2(\mu)$--norm.} Let us first express $D_b(p_{f,\si^2},p_{f_0,\si_0^2})$ more explicitly. By using the standard formula expressing the $D_b$--divergence between two arbitrary Gaussians in one dimension 
\[ D_b(\cN(f(x),\si^2),\cN(f_0(x),\si_0^2)) =
b\frac{(f(x)-f_0(x))^2}{2\si_b^2}+\frac{1}{1-b}\log \frac{\si_b}{\si^{1-b}\si_0^b},\]
where $\si_b^2:=(1-b)\si^2+b\si_0^2$, see e.g. \cite{vanerven14}, eq. (10), one has
\begin{align*}
D_b(p_{f,\si^2},p_{f_0,\si_0^2})& =\frac{1}{b-1}\log \int e^{ (b-1)D_b\left(\cN(f(x),\si^2)\,,\,\cN(f_0(x),\si_0^2)\right) } d\mu(x) \\
& = \frac{1}{1-b}\log\left(\frac{\si_b}{\si^{1-b}\si_0^b}\right)
+\frac{1}{b-1}\log 
\int e^{- b(1-b)(f(x)-f_0(x))^2/(2\si_b^2) }d\mu(x),\\
& =: \qquad \quad\ \ (I)\qquad \qquad+\qquad\quad (II).
\end{align*}
We have both $(I)\ge 0$ (e.g. by concavity of the logarithm) and $(II)\ge 0$. It follows from \eqref{rencv} that both terms converge to $0$ at rate $C_5\veps_n^2$ under the fractional posterior, and so  $|\si^2-\si_0^2|$ converges at rate $C'\veps_n$ under the fractional posterior, using Lemma \ref{funconc}. In particular, the event 
$\{\si_b^2>2\si_0^2\}$ has vanishing probability. Since $(II)=D_b(p_{f,\si_b^2},p_{f_0,\si_b^2})$, we have $(II)\ge D_b(p_{f,2\si_0^2},p_{f_0,2\si_0^2})$ with high probability, by using that the expression of (II) decreases if the variance parameter increases. That is, for some $C>0$,
\begin{equation} \label{renf}
E_{f_0,\si_0^2}Q_b\left[D_b(p_{f,2\si_0^2},p_{f_0,2\si_0^2})\ge C\veps_n^2| X,Y\right]=o(1).
\end{equation}
Noting that the event in the last display does not put any constraint over $\si^2$, and so is of the form $\{(f,\si^2):\ (f,\si^2)\in B\times\RR^+\}$, one can use \eqref{eqpq} to 
obtain the first statement of the lemma. 

For the second statement, one further bounds $D_b(p_{f,2\si_0^2},p_{f_0,2\si_0^2})$  from below using $-\log(x)\ge 1-x$ and next $1-e^{-x}\ge xe^{-x}$, so that
\begin{align*}
(II) &\ge \frac{b}{4\si_0^2}\int  (f(x)-f_0(x))^2e^{- b(1-b)(f(x)-f_0(x))^2/(4\si_0^2) }d\mu(x), 
\end{align*} 
and so the last term converges at rate $\veps_n^2$ under the $Q_b$ fractional posterior. By using \eqref{eqpq}, one deduces that the same is true under the original posterior. Now using that $\|f_0\|_\infty$ is finite and that $\|f\|_\infty$ is bounded by a fixed constant under the posterior, the exponential in the last display can be bounded below under the posterior by $c\int (f(x)-f_0(x))^2 d\mu$, which proves the second part of the lemma.
\end{proof}

%
%
%
 

\begin{lemma} \label{kllem}
Let $\si,\si_0>0$ and denote $\|f-f_0\|_\mu^2=\int (f-f_0)^2d\mu$, then
\begin{align*}
KL(P_{f_0,\si_0^2},P_{f,\si^2}) &
= \log(\si/\si_0)+\frac{(\si_0/\si)^2-1}{2} + \frac{\|f-f_0\|_\mu^2}{2\si^2},\\
V(P_{f_0,\si_0^2},P_{f,\si^2}) &
= \frac{\int (f-f_0)^4(x)d\mu(x)-\|f-f_0\|_\mu^4}{4\si^4}
+ \frac{((\si_0/\si)^2-1)^2}{2}+\frac{\si_0^2\|f-f_0\|_\mu^2}{\si^4}.
\end{align*}
\end{lemma}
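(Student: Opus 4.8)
The plan is to condition on the design point $X$ and reduce both identities to elementary one-dimensional Gaussian computations, then integrate over $X\sim\mu$, using the independence of the design and the regression noise. Throughout I write the density of a single observation with respect to $\mu\otimes\lambda$ as $p_{f,\sigma^2}(x,y)=(2\pi\sigma^2)^{-1/2}e^{-(y-f(x))^2/(2\sigma^2)}$, so that conditionally on $X=x$ the two laws in play are $\cN(f_0(x),\sigma_0^2)$ and $\cN(f(x),\sigma^2)$.

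For the first identity I would apply the standard formula for the Kullback--Leibler divergence between two univariate Gaussians,
\[ KL\big(\cN(m_1,s_1^2),\cN(m_2,s_2^2)\big)=\log(s_2/s_1)+\frac{s_1^2+(m_1-m_2)^2}{2s_2^2}-\frac12, \]
with $(m_1,s_1)=(f_0(x),\sigma_0)$, $(m_2,s_2)=(f(x),\sigma)$, and then integrate in $x$ against $\mu$. The part not involving $f_0-f$ contributes $\log(\sigma/\sigma_0)+\sigma_0^2/(2\sigma^2)-1/2=\log(\sigma/\sigma_0)+((\sigma_0/\sigma)^2-1)/2$, while $\int (f_0(x)-f(x))^2/(2\sigma^2)\,d\mu(x)=\|f-f_0\|_\mu^2/(2\sigma^2)$, which gives the stated expression for $KL(P_{f_0,\sigma_0^2},P_{f,\sigma^2})$.

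For $V$ I would work out the pointwise log-likelihood ratio under $P_{f_0,\sigma_0^2}$ by substituting $Y=f_0(X)+\sigma_0 Z$ with $Z\sim\cN(0,1)$ independent of $X$, and writing $\Delta(x):=f_0(x)-f(x)$. Expanding $(Y-f(X))^2=\Delta(X)^2+2\sigma_0 Z\Delta(X)+\sigma_0^2 Z^2$ yields
\[ \log\frac{p_{f_0,\sigma_0^2}(X,Y)}{p_{f,\sigma^2}(X,Y)}=\log\frac{\sigma}{\sigma_0}+\frac{\Delta(X)^2}{2\sigma^2}+\frac{\sigma_0\Delta(X)Z}{\sigma^2}+\frac{\big((\sigma_0/\sigma)^2-1\big)Z^2}{2}, \]
whose expectation is exactly the $KL$ just computed (using $E Z^2=1$). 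Hence the centered log-ratio is the sum of the three terms $\sigma_0\Delta(X)Z/\sigma^2$, $((\sigma_0/\sigma)^2-1)(Z^2-1)/2$, and $(\Delta(X)^2-\|f-f_0\|_\mu^2)/(2\sigma^2)$. I would then square and take expectation: all three cross terms vanish, the two containing an odd power of $Z$ because $E Z=E Z^3=0$, and the cross term of $(Z^2-1)$ with $(\Delta(X)^2-\|f-f_0\|_\mu^2)$ because both factors are centered and $X\perp Z$. The diagonal terms are, using $E Z^2=1$, $\mathrm{Var}(Z^2)=E Z^4-1=2$ and $\mathrm{Var}(\Delta(X)^2)=\int (f-f_0)^4\,d\mu-\|f-f_0\|_\mu^4$, respectively $\sigma_0^2\|f-f_0\|_\mu^2/\sigma^4$, $((\sigma_0/\sigma)^2-1)^2/2$, and $\big(\int(f-f_0)^4\,d\mu-\|f-f_0\|_\mu^4\big)/(4\sigma^4)$; summing them gives the second identity. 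The whole computation is routine; the only places that call for a little care are the normalisation constant in the Gaussian log-ratio and the Gaussian fourth-moment identity $E Z^4=3$, so I do not anticipate any real obstacle.
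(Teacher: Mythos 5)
Your proof is correct and follows essentially the same route as the paper's: both reduce to the representation $Y=f_0(X)+\sigma_0 Z$, decompose the centered log-likelihood ratio into the same three mutually orthogonal terms, and sum their variances after checking that the cross terms vanish. The only cosmetic difference is that you obtain the KL term from the standard univariate Gaussian formula conditionally on $X$ rather than by taking the expectation of the expanded log-ratio directly; the computations are identical.
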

\begin{proof} 
One first notes that $\log(dP_{f_0,\si_0^2}/P_{f,\si^2})(X,Y)$ can be written $\log(\si/\si_0)-Z_0(X,Y)^2/2+Z(X,Y)^2/2$, where we have set
\[ Z_0= (Y-f_0(X))/\si_0,\qquad Z= (Y-f(X))/\si.\]
Under $P_{f_0,\si_0^2}$, we have $Z_0=:\eta\sim\cN(0,1)$ and $Z=(f_0-f)(X)/\si+\si_0\eta/\si=\Delta+\si_0\eta/\si,$ setting $\Delta:=(f_0-f)(X)/\si$. The formula for the KL--divergence follows. Next
\[ V(P_{f_0,\si_0^2},P_{f,\si^2}) 
= E_0\left\{ (Z^2-E_0Z^2) -(Z_0^2-E_0Z_0^2)\right\}^2/4,
\]
where $E_0$ denotes the expectation under $P_{f_0,\si_0^2}$. In terms of $\eta, \Delta$, the square in the last display can be written
\[ \left\{ (\Delta^2-E_0\Delta^2) +\left[ (\si_0/\si)^2-1 \right](\eta^2-1)-2\si_0\eta\Delta/\si \right\}^2. \]
Now expanding the square over the three terms, and noting that crossed terms are zero since $\eta\sim\cN(0,1)$ and independent of $\Delta$, one gets the desired formula. 

\end{proof}

\begin{lemma} \label{funconc}
Let $\psi=\psi_b$ be the function defined on $(0,\infty)$, for $b\in(0,1), \si_0\in(0,\infty)$, by 
\[ \psi(\si^2):= \log\left\{ \frac{(1-b)\si^2+b\si_0^2}{(\si^2)^{1-b}(\si_0^2)^{b}} \right\}. \]
There exists a constant $d=d(\si_0^2,b)>0$ such that, for any $0<\veps<1$,
\[ \left\{\si^2:\ \psi(\si^2)\le d\veps^2\right\} \subset \left\{\si^2:\ |\si^2-\si_0^2|<\veps\right\}. \]
\end{lemma}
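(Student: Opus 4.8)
The plan is to reduce everything to elementary one–variable calculus for the function $t\mapsto\psi(t)$, $t>0$, where we write $t=\si^2$ and, expanding the logarithm,
\[
\psi(t)=\log\bigl((1-b)t+b\si_0^2\bigr)-(1-b)\log t-b\log\si_0^2 .
\]
First I would record the basic properties of $\psi$. By the weighted arithmetic–geometric mean inequality, $(1-b)t+b\si_0^2\ge t^{1-b}(\si_0^2)^b$ for every $t>0$, with equality iff $t=\si_0^2$; hence $\psi\ge 0$ on $(0,\infty)$ and $\psi(t)=0$ iff $t=\si_0^2$. Differentiating, $\psi'(t)=\tfrac{1-b}{(1-b)t+b\si_0^2}-\tfrac{1-b}{t}$ and $\psi''(t)=\tfrac{1-b}{t^2}-\tfrac{(1-b)^2}{((1-b)t+b\si_0^2)^2}$, so $\psi'(\si_0^2)=0$ and $\psi''(\si_0^2)=b(1-b)/\si_0^4>0$. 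Finally $\psi$ is coercive on $(0,\infty)$: $\psi(t)\to+\infty$ as $t\to0^+$ (the term $-(1-b)\log t$ dominates) and $\psi(t)=b\log t+\log(1-b)-b\log\si_0^2+o(1)\to+\infty$ as $t\to+\infty$.

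Next I would fix a scale $\eta\in(0,\si_0^2/2)$ (e.g. $\eta=\si_0^2/4$) and extract two positive constants depending only on $\si_0^2$ and $b$. On the compact interval $[\si_0^2-\eta,\si_0^2+\eta]\subset(0,\infty)$ the function $\psi''$ is continuous and strictly positive, so $c_1:=\tfrac12\min_{|t-\si_0^2|\le\eta}\psi''(t)>0$; a second–order Taylor expansion of $\psi$ around $\si_0^2$, using $\psi(\si_0^2)=\psi'(\si_0^2)=0$, then gives
\[
\psi(t)\ \ge\ c_1\,(t-\si_0^2)^2\qquad\text{whenever}\ |t-\si_0^2|\le\eta .
\]
On the other hand, on the closed set $F_\eta:=\{t>0:\ |t-\si_0^2|\ge\eta\}$ the function $\psi$ is continuous, nonnegative, vanishes only at $\si_0^2\notin F_\eta$, and is coercive, so $c_2:=\inf_{t\in F_\eta}\psi(t)>0$ (an approximating sequence must stay in a compact subset of $(0,\infty)$ and converge to a zero of $\psi$ in $F_\eta$, which is impossible). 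I would then set $d:=\tfrac12\min(c_1,c_2)>0$.

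It then remains to verify the inclusion, which I would do by contraposition: assume $t>0$ satisfies $|t-\si_0^2|\ge\veps$ for some $\veps\in(0,1)$, and show $\psi(t)>d\veps^2$. If $|t-\si_0^2|\ge\eta$, then $t\in F_\eta$ and $\psi(t)\ge c_2\ge 2d>d\veps^2$, using $\veps^2<1$. If instead $\veps\le|t-\si_0^2|<\eta$, then the Taylor bound gives $\psi(t)\ge c_1(t-\si_0^2)^2\ge c_1\veps^2\ge 2d\veps^2>d\veps^2$. In both cases $\psi(t)>d\veps^2$, which is precisely the contrapositive of $\{t:\ \psi(t)\le d\veps^2\}\subset\{t:\ |t-\si_0^2|<\veps\}$. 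I do not expect a genuine obstacle here; the only points requiring care are making the constants $c_1,c_2$ uniform in $\veps$ (handled by separating the local quadratic regime $|t-\si_0^2|\le\eta$ from the coercive regime $|t-\si_0^2|\ge\eta$, and using $\veps^2<1$ in the latter) and keeping the final inequality strict, which is ensured by the factor $\tfrac12$ in the definition of $d$.
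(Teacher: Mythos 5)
Your proof is correct and follows essentially the same route as the paper's: a second-order Taylor expansion of $\psi$ around $\si_0^2$ (using $\psi(\si_0^2)=\psi'(\si_0^2)=0$ and a uniform positive lower bound on $\psi''$ nearby) gives the quadratic lower bound, and the region away from $\si_0^2$ is handled by a separate positive constant. The only cosmetic difference is that the paper obtains that constant from the monotonicity of $\psi$ on each side of $\si_0^2$, whereas you use coercivity plus a compactness argument to bound $\inf_{F_\eta}\psi$ from below; both are fine.
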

\begin{proof}
Denoting $\si_b^2:=(1-b)\si^2+b\si_0^2$, it follows from simple algebra that 
\[ \psi'(\si^2)= b(1-b)(1-(\si_0/\si)^2)\si_b^{-2}, \]
so that $\psi'(\si_0^2)=0$, $\psi$ is decreasing from $+\infty$ to $\psi(\si_0^2)=0$ on $(0,\si_0^2]$ and increasing from $0$ to $+\infty$ on $[\si_0^2,\infty)$. Also,
\[ \psi''(\si^2)=\frac{1-b}{(\si^2)^2} - \frac{(1-b)^2}{(\si_b^2)^2},\]
so that $\psi''(\si_0^2)=b(1-b)/(\si_0^2)^2=:2m>0$. By continuity of $\psi"$, there exists an interval $[\si_0^2\pm \delta]$, for some $\delta=\delta(\si_0^2,b)$, such that $\psi''(y)\ge m$ for all $y\in [\si_0^2\pm \delta]$. Set
\[ d:=d(\si_0^2,b)=\min\left(m/2, \psi(\si_0^2-\delta), \psi(\si_0^2+\delta)\right). \]
Let $\si^2$ verify $\psi(\si^2)\le d\veps^2 < d$. Then by monotonicity of $\psi$ on each side of $\si_0^2$, one must have $|\si^2-\si_0^2|\le \delta$ (otherwise $\psi(\si^2)$ would be larger than either $\psi(\si_0^2-\delta)$ or $\psi(\si_0^2+\delta)$). Expanding  $\psi$ around $\si_0^2$ gives, for all $\si^2\in[\si_0^2\pm\delta]$ and some $\zeta\in[\si_0\pm \delta]$,
\[\psi(\si^2)=\psi(\si_0^2)+ \psi'(\si_0^2)(\si^2-\si_0^2)+ \psi''(\zeta)(\si^2-\si_0^2)^2/2
\ge (m/2)(\si^2-\si_0^2)^2,
\]
where we use that $\psi''$ is bounded from below by $m$ on the considered interval. Then $|\si^2-\si_0^2|\le \sqrt{(2d/m)}\veps$ which gives the result using $2d/m\le 1$ by definition.
\end{proof}

\end{appendix}



\end{document}